\newcommand{\RN}[1]{%
  \textup{\uppercase\expandafter{\romannumeral#1}}%
}
\numberwithin{equation}{section}
\crefname{equation}{}{}
\newtheorem{theorem}{Theorem}[section]
\newtheorem{lemma}[theorem]{Lemma}
\newtheorem{proposition}[theorem]{Proposition}
\newtheorem{corollary}[theorem]{Corollary}
\newtheorem{conjecture}[theorem]{Conjecture}
\theoremstyle{definition}
\newtheorem{definition}[theorem]{Definition}
\newtheorem{example}[theorem]{Example}
\theoremstyle{remark}
\newtheorem{remark}[theorem]{Remark}
\tikzstyle{line}=[draw] 
\author{Yu Zhao}
\title{Derived Blow-ups and Birational Geometry of Nested Quiver Varieties}
\date{\today}
\newcommand\mf{\mathfrak}
\newcommand\mc{\mathcal}
\newcommand\mb{\mathbb}
\newcommand\mbf{\mathbf}
\begin{document}
\maketitle
\begin{center}
    \footnotesize\it  Dedicated to Professor Hiraku Nakajima on the occasion of his 60th birthday
\end{center}

\begin{abstract}
  Given a quiver, Nakajima introduced the quiver variety and the Hecke correspondence, which is a closed subvariety of Cartesian products of quiver varieties. In this paper, we consider two nested quiver varieties as fiber products of Hecke correspondences along natural projections. After blowing up the diagonal, we prove that they are isomorphic to a quadruple moduli space which Negu\c{t} observed for the Jordan quiver. The main ingredient is Jiang's derived projectivization theory and Hekking's derived blow-up theory, while we obtain the local model when both the two derived schemes are quasi-smooth. 
\end{abstract}
\section{Introduction}

\subsection{The main theorem}

Let $Q=(I,E)$ be a quiver and $\mbf{w}\in \mb{Z}_{\geq 0}^{I}$ be a vector with $\mbf{w}\neq 0$. In a series of great work \cite{10.1215/S0012-7094-94-07613-8}\cite{10.1215/S0012-7094-98-09120-7}\cite{10.2307/2646205}, Nakajima
\begin{enumerate}
\item introduced the Nakajima quiver variety $\mf{M}(\mbf{v},\mbf{w})$, given $\mbf{v}\in \mb{Z}_{\geq  0}^{I}$;
\item introduced the Hecke correspondence $\mf{P}(\mbf{v}^{1},\mbf{v}^{2})$ (see \cref{def:4.1} for the definition), which is a closed smooth subvariety of $\mf{M}(\mbf{v}^{1},\mbf{w})\times \mf{M}(\mbf{v}^{2},\mbf{w})$. Here $\mbf{v}^{1},\mbf{v}^{2}\in \mb{Z}_{\geq 0}^{I}$ and there exists $k\in I$ such that $\mbf{v}^{2}-\mbf{v}^{1}=(\delta_{kj})_{j\in I}$, where $\delta_{kj}$ is the Kronecker symbol;
\item  proved that the tautological line bundles on $\mf{P}(\mbf{v}^{1},\mbf{v}^{2})$ generate an action of quantum loop groups on the Grothendieck group of quiver varieties if $Q$ has no edge loops (the similar construction for cohomology was also obtained by Nakajima \cite{10.1215/S0012-7094-98-09120-7}).
\end{enumerate}

Given $k\in I$ and $\mbf{v}^{0},\mbf{v}^{1},\mbf{v}^{1'},\mbf{v}^{2}\in \mb{Z}_{\geq 0}^{I}$ such that $\mbf{v}^{1}=\mbf{v}^{1'}$ and  $\mbf{v}^{2}-\mbf{v}^{1}=\mbf{v}^{1'}-\mbf{v}^{0}=(\delta_{kj})_{j\in I}$, in this paper we study the geometry of the following triple moduli spaces
\begin{align*}
  \mf{Z}_{k}^{-}(\mbf{v}^{1}):=\mf{P}(\mbf{v}^{0},\mbf{v}^{1})\times_{\mf{M}(\mbf{v}^{0},\mbf{w})}\mf{P}(\mbf{v}^{0},\mbf{v}^{1'})  \\
 \mf{Z}_{k}^{+}(\mbf{v}^{1}):=\mf{P}(\mbf{v}^{1},\mbf{v}^{2})\times_{\mf{M}(\mbf{v}^{2},\mbf{w})}\mf{P}(\mbf{v}^{1'},\mbf{v}^{2}).
\end{align*}

Let
$$\Delta_{p(\mbf{v}^{0},\mbf{v}^{1})}:\mf{P}(\mbf{v}^{0},\mbf{v}^{1})\to \mf{Z}_{k}^{-}(\mbf{v}^{1}),\quad \Delta_{q(\mbf{v}^{1},\mbf{v}^{2})}:\mf{P}(\mbf{v}^{1},\mbf{v}^{2})\to \mf{Z}_{k}^{+}(\mbf{v}^{1}).$$
be the diagonal embeddings of the fiber products respectively. It is well-known that (see \cref{012531} or \cite{MR1711344}) that $\mf{Z}^{-}_{k}(\mbf{v})-\mf{P}(\mbf{v}^{0},\mbf{v}^{1})\cong \mf{Z}^{+}_{k}(\mbf{v})-\mf{P}(\mbf{v}^{1},\mbf{v}^{2})$.

Given a closed embedding of schemes $f:X\to Y$, we denote $Bl_{f}$ as the blow-up of $X$ in $Y$. In this paper, we prove that

\begin{theorem}[\cref{0213711}]
  \label{thm:main}
  There exists a smooth variety $\mf{Y}_{k}(\mbf{v}^{1})$ such that
  \begin{equation}
    \label{eq:1.1}
  Bl_{\Delta_{p(\mbf{v}^{0},\mbf{v}^{1})}}\cong \mf{Y}_{k}(\mbf{v}^{1})\cong Bl_{\Delta_{q(\mbf{v}^{1},\mbf{v}^{2})}},    
  \end{equation}
and \cref{eq:1.1} has an enhancement in derived schemes. Moreover, if $dim(\mf{P}(\mbf{v}^{0},\mbf{v}^{1}))>dim(\mf{M}(\mbf{v}^{0},\mbf{w}))$ (resp. $dim(\mf{P}(\mbf{v}^{1},\mbf{v}^{2}))>dim(\mf{M}(\mbf{v}^{2},\mbf{w}))$), then $\mf{Z}_{k}^{-}(\mbf{v}^{1})$ (resp. $\mf{Z}_{k}^{+}(\mbf{v}^{1})$) is a canonical singularity.
\end{theorem}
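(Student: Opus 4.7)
The plan is to construct a quadruple moduli space $\mf{Y}_{k}(\mbf{v}^{1})$ parametrizing compatible nested chains $V^{0}\subset V^{1}, V^{0}\subset V^{1'}, V^{1}\subset V^{2}, V^{1'}\subset V^{2}$ of quiver representations, generalizing Negu\c{t}'s construction from the Jordan quiver to arbitrary $Q$, together with natural forgetful projections to both $\mf{Z}_{k}^{\pm}(\mbf{v}^{1})$. The smoothness of $\mf{Y}_{k}$ should follow either from a direct deformation argument or by exhibiting it as an iterated Hecke correspondence (respectively a projective bundle) over a smooth base, leveraging Nakajima's smoothness of $\mf{P}(\mbf{v}^{i},\mbf{v}^{j})$.

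Next, I would upgrade both $\mf{Z}_{k}^{\pm}$ to derived fiber products and invoke Hekking's derived blow-up theory to make sense of $Bl_{\Delta_{p}}$ and $Bl_{\Delta_{q}}$ in the derived setting. Since the maps $\mf{P}(\mbf{v}^{0},\mbf{v}^{1})\to \mf{M}(\mbf{v}^{0},\mbf{w})$ and $\mf{P}(\mbf{v}^{1},\mbf{v}^{2})\to \mf{M}(\mbf{v}^{2},\mbf{w})$ are morphisms between smooth varieties, the diagonals $\Delta_{p},\Delta_{q}$ are quasi-smooth closed immersions with explicit two-term derived conormal complexes computable from the deformation theory of the quiver data. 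Jiang's derived projectivization theorem then identifies each derived blow-up with the derived projectivization of the corresponding conormal complex, and the main step is to match both projectivizations with a single tautological structure on $\mf{Y}_{k}$.

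Third, I would construct the isomorphisms in \cref{eq:1.1} by verifying the universal property of derived blow-ups: the tautological flag data on $\mf{Y}_{k}$ induces the required universal quotient line on each side, and conversely the natural projections $\mf{Y}_{k}\to \mf{Z}_{k}^{\pm}$ factor through the blow-ups because the preimage of each diagonal is a Cartier divisor (the locus where the square of flags degenerates). For the canonical singularity assertion, the dimension hypothesis ensures that the excess conormal sheaf is of positive rank, so $\mf{Y}_{k}\to \mf{Z}_{k}^{-}$ becomes a projective birational morphism from a smooth variety whose exceptional divisor is a projective bundle; a direct discrepancy computation via the conormal exact sequence then yields the canonical singularity condition.

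The principal obstacle will be the derived-geometric identification of both blow-ups with the single smooth variety $\mf{Y}_{k}$: the two derived conormal complexes live naturally over different ambient spaces $\mf{M}(\mbf{v}^{0},\mbf{w})$ and $\mf{M}(\mbf{v}^{2},\mbf{w})$, and proving that their projectivizations agree requires tracking the deformation theory of both fiber products along the diagonals simultaneously. Controlling this coincidence is precisely where the interplay between Jiang's and Hekking's frameworks becomes essential, and extracting the common quadruple moduli description is the conceptual heart of the argument; the smoothness of $\mf{Y}_{k}$ then follows \emph{a posteriori} from the fact that both derived blow-ups must be classical and smooth.
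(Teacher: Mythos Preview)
Your overall architecture is close to the paper's, but there is one genuine confusion and one inverted piece of logic that would stall the argument as written.

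First, you write that ``Jiang's derived projectivization theorem then identifies each derived blow-up with the derived projectivization of the corresponding conormal complex.'' This is not correct: the derived projectivization $\mb{P}_{X}(C_{\Phi})$ is only the \emph{exceptional divisor} of $\mb{B}l_{\Phi}$, not the whole blow-up. In the paper the two projectivizations $\mb{P}_{\mf{P}(\mbf{v}^{0},\mbf{v}^{1})}(L_{p})$ and $\mb{P}_{\mf{P}(\mbf{v}^{1},\mbf{v}^{2})}(L_{q})$ are matched not with $\mf{Y}_{k}(\mbf{v}^{1})$ but with the triple moduli space $\mf{J}_{k}(\mbf{v}^{1})$ sitting inside it as a divisor; this identification (via homological projective duality for two-term complexes and the Lagrangian property of the Hecke correspondence) is the technical core, but it only handles the exceptional locus.

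Second, the logical order is the reverse of what your last paragraph suggests. Smoothness of the derived blow-ups is \emph{not} known a priori and cannot be used to deduce smoothness of $\mf{Y}_{k}(\mbf{v}^{1})$. The paper first proves directly, by a tangent-space computation at points of the diagonal, that $\mf{Y}_{k}(\mbf{v}^{1})$ is smooth and that $\mf{J}_{k}(\mbf{v}^{1})$ is a regular divisor. Independently, smoothness of $\mf{J}_{k}(\mbf{v}^{1})\cong \mb{P}(L_{p})\cong \mb{P}(L_{q})$ and of the open complement $\mf{Z}_{k}^{\pm}(\mbf{v}^{1})-\mf{P}$ feed into a gluing criterion (if the exceptional divisor and the complement of the center are both smooth, the derived blow-up is smooth) to conclude that each $\mb{B}l_{\mb{R}\Delta}$ is smooth and hence agrees with the classical blow-up. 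Only \emph{then} does the Cartier-divisor factorization you mention produce maps $\overline{\alpha^{\pm}}:\mf{Y}_{k}(\mbf{v}^{1})\to Bl_{\Delta}$, which are isomorphisms because they restrict to isomorphisms on $\mf{J}_{k}(\mbf{v}^{1})$ and on its complement. The canonical-singularity clause also comes from this gluing package (rational singularity via $\mbf{R}pr_{*}\mc{O}_{\mb{B}l}\cong\mc{O}$ when the relative virtual dimension is negative), rather than from an ad hoc discrepancy computation on $\mf{Z}_{k}^{\pm}$.
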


In \cref{intro2}, \cref{intro3} and \cref{intro4}, we will introduce the geometry of $\mf{Y}_{k}(\mbf{v}^{1})$ and the meaning of the derived enhancement.

\subsection{Negu\c{t}'s quadruple moduli space $\mf{Y}_{k}(\mbf{v}^{1})$}
\label{intro2}
The smooth variety $\mf{Y}_{k}(\mbf{v}^{1})$ can be realized as a closed subvariety of
$$\mf{M}(\mbf{v}^{0},\mbf{w})\times \mf{M}(\mbf{v}^{1},\mbf{w})\times \mf{M}(\mbf{v}^{1'},\mbf{w})\times \mf{M}(\mbf{v}^{2},\mbf{w})$$
in the following way: given $k\in I$, let $g_{k}$ be the number of edge loops with source $k$. For any ADHM datum $[B,i,j]$ (see \cref{s020915} for the definition of ADHM datum) and $l$ is an edge loop in $E^{\#}:=E\cup \bar{E}$ with source $k$. The trace $tr(B_{l})$ is invariant under the gauge group action. Hence we could consider the function $tr_{k}$ such that
$$tr_{k}([B,i,j])=(tr(B_{l}))_{l\in E^{\#},in(l)=out(l)=k}$$

\begin{definition}[\cref{021354}, Negu\c{t} \cite{negut2017shuffle}\cite{neguct2018hecke} for the Jordan quiver]
  \label{def:1.2}
  We define $\mf{Y}_{k}(\mbf{v}^{1})$ as the variety which consists of quadruples
  $$(s^{0},s^{1},s^{1'},s^{2})\in \mf{M}(\mbf{v}^{0},\mbf{w})\times \mf{M}(\mbf{v}^{1},\mbf{w})\times \mf{M}(\mbf{v}^{1'},\mbf{w})\times \mf{M}(\mbf{v}^{2},\mbf{w})$$
  such that
  \begin{enumerate}
  \item $(s^{0},s^{1})\in \mf{P}(\mbf{v}^{0},\mbf{v}^{1})$ and $(s^{0},s^{1'})\in \mf{P}(\mbf{v}^{0},\mbf{v}^{1'})$
  \item $(s^{1},s^{2})\in \mf{P}(\mbf{v}^{1},\mbf{v}^{2})$ and $(s^{1'},s^{2})\in \mf{P}(\mbf{v}^{1'},\mbf{v}^{2})$
  \item $tr_{k}(s^{1})-tr_{k}(s^{0})=tr_{k}(s^{2})-tr_{k}(s^{1'})$.
  \end{enumerate}
\end{definition}
\begin{remark}
    \cref{def:1.2} was first observed by Negu\c{t} \cite{negut2017shuffle}\cite{neguct2018hecke} for the Jordan quiver. We reformulate his definition to make it well-defined for all the quiver varieties and prove its smoothness in \cref{021147}.
\end{remark}

By forgetting $s^{2}$ and $s^{0}$ respectively, we induce morphisms
$$\alpha^{-}:\mf{Y}_{k}(\mbf{v}^{1})\to \mf{Z}_{k}^{-}(\mbf{v}^{1}),\quad \alpha^{+}:\mf{Y}_{k}(\mbf{v}^{1})\to \mf{Z}_{k}^{+}(\mbf{v}^{1})$$
In \cref{0213711}, we strengthen \cref{thm:main} by proving that $\alpha^{\pm}$ are projection morphisms of respective blow-ups.

\begin{remark}
  Nakajima \cite{10.1215/S0012-7094-98-09120-7} proved that if $Q$ has no edge loops, $\mf{P}(\mbf{v}^{1},\mbf{v}^{2})$ is a Lagragian of $\mf{M}(\mbf{v}^{1},\mbf{w})\times \mf{M}(\mbf{v}^{2},\mbf{w})$. When $Q$ has edge loops, we need a slight modification: we consider the function
  $$t(\mbf{v}^{1},\mbf{v}^{2}):\mf{P}(\mbf{v}^{1},\mbf{v}^{2})\to \mb{C}^{2g_{k}},\quad (s^{1},s^{2})\to tr_{k}(s^{2})-tr_{k}(s^{1}).$$
In \cref{020527}, we prove that under the closed embedding $p\times q\times t(\mbf{v}^{1},\mbf{v}^{2})$, $\mf{P}(\mbf{v}^{1},\mbf{v}^{2})$ is a Lagragian of $\mf{M}(\mbf{v}^{1},\mbf{w})\times \mf{M}(\mbf{v}^{2},\mbf{w})\times \mb{C}^{2g_{k}}$.
\end{remark}
\subsection{Jiang's derived projectivization and Hekking's derived blow-up theory}
\label{intro3}
Let $f:X\to Y$ be a closed embedding of smooth varieties. Let $C_{f}$ be the co-normal bundle of $X$ in $Y$. Then we have the following commutative diagram:
\begin{equation}
  \label{eq:1.2}
  \begin{tikzcd}
    P_{X}(C_{f})\ar{r}\ar{d} & Bl_{f}\ar{d} \\
    X \ar{r} & Y
  \end{tikzcd}
\end{equation}
such that $P_{X}(C_{f})$ is the exceptional divisor of $Bl_{f}$ and $Bl_{f}-P_{X}(C_{f})\cong Y-X$.

We want to extend the diagram \cref{eq:1.2} to a closed embedding of quasi-smooth (derived) schemes, which require a universal theory of the projectivization of $F\in QCoh(X)$ when $X$ is a derived scheme and a universal theory of the derived blow-up for a closed embedding of derived schemes. They were realized recently by Jiang \cite{jiang2022derived} and Hekking \cite{Hekking_2022}\cite{hekking2021graded} respectively. In this paper we denote
 $$\mb{P}_{X}(F)$$
 where $F\in QCoh(X)$ as the derived projectivization (or Jiang's projectivization), and denote
 $$\mb{B}l_{f},$$
 where $f$ is a closed embedding of derived schemes, as the derived blow-up (or Hekking's blow-up), to distinguish with Grothendieck's projectivization or the classical blow-up theory. 
\begin{theorem}[Theorem 5.3 and Proposition 5.5 of \cite{Hekking_2022}]
Let $L_{f}$ be the cotangent complex of $f$, and we denote the conormal complex $C_{f}:=L_{f}[-1]$. Then we have a commutative diagram
\begin{equation*}
   \begin{tikzcd}
    \mb{P}_{X}(C_{f})\ar{r}\ar{d} & \mb{B}l_{f}\ar{d}{pr_{f}} \\
    X \ar{r} & Y
  \end{tikzcd}
\end{equation*}
such that $\mb{P}_{X}(C_{f})$ is a (virtual) Cartier divisor of $\mb{B}l_{f}$ and $\mb{B}l_{f}-\mb{P}_{X}(C_{f})\cong Y-X$. The projection morphism $pr_{f}$ is proper.
\end{theorem}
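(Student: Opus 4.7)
The plan is to realize $\mb{B}l_{f}$ as the derived Proj of a Rees-type $\mb{Z}_{\geq 0}$-graded $E_{\infty}$-algebra on $Y$, and then read off the exceptional locus using Jiang's derived projectivization applied to $C_{f}$. Concretely, let $I_{f}$ denote the derived ideal, i.e. the fiber of $f^{*}\mc{O}_{Y}\to \mc{O}_{X}$, which is a perfect complex of controlled Tor-amplitude when $f$ has reasonable regularity. I would form a derived Rees algebra $R_{f}$ built from the iterated tensor/symmetric powers of $I_{f}$ in Hekking's filtered $\infty$-categorical framework, and set $\mb{B}l_{f}:=\mbf{Proj}_{Y}(R_{f})$. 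The tautological line bundle on this derived Proj is what supplies the virtual Cartier divisor structure on the fiber over $X$.

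Next, I would characterize $\mb{B}l_{f}$ by the universal property that it is the terminal object among derived $Y$-schemes on which the base change of $f$ becomes a virtual Cartier divisor. This universal property, the derived analog of the statement ``the blow-up is the universal way to make the ideal invertible,'' yields two of the three desired conclusions at once. The open-complement statement $\mb{B}l_{f}-\mb{P}_{X}(C_{f})\cong Y-X$ follows because $I_{f}$ is already invertible (trivially) away from $X$, so by universality the restriction of $pr_{f}$ over $Y-X$ factors through an isomorphism. Properness is read off the derived Proj construction, given that $R_{f}$ is finitely generated in the quasi-smooth setting where $C_{f}$ is perfect.

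The identification of the exceptional divisor is the crux. Classically, the graded pieces of the Rees algebra modulo the next level reproduce the symmetric algebra of the conormal bundle. In the derived setting the associated graded of $R_{f}$ at level zero should recover the derived symmetric algebra of $C_{f}=L_{f}[-1]$, and then Jiang's theorem identifies $\mbf{Proj}_{X}(\mathrm{Sym}(C_{f}))$ with $\mb{P}_{X}(C_{f})$. The hard part would be precisely this last step: controlling the associated graded of the derived Rees filtration and matching it with the symmetric algebra of the shifted cotangent complex. This is where derived Koszul-type relations between $\mathrm{Sym}(L_{f}[-1])$ and the cotangent complex of symmetric algebras enter, and is the technical heart of Hekking's construction in \cite{Hekking_2022,hekking2021graded}; the naive classical identifications break down because quotients by ``powers'' of a derived ideal must be defined via filtered colimits rather than honest intersections.
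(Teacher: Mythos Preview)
The paper does not prove this statement at all: it is quoted verbatim as Theorem 5.3 and Proposition 5.5 of Hekking's thesis \cite{Hekking_2022}, with no argument supplied. So there is no ``paper's own proof'' to compare your proposal against; the author simply imports the result as a black box and uses it downstream.

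That said, your sketch is a reasonable high-level summary of what Hekking actually does: the derived blow-up is constructed as a derived $\mathrm{Proj}$ of a Rees-type graded algebra, the virtual Cartier divisor arises from the tautological line bundle, and the universal property gives the isomorphism on the open complement. Your identification of the technical crux --- that the associated graded of the derived Rees filtration must be matched with the derived symmetric algebra of $C_{f}$ --- is accurate. One caution: your description of $I_{f}$ as ``the fiber of $f^{*}\mc{O}_{Y}\to \mc{O}_{X}$'' is slightly off, since $f^{*}\mc{O}_{Y}\cong \mc{O}_{X}$; the relevant fiber sequence lives on $Y$, not on $X$. But in any case, for the purposes of this paper no proof is expected here.
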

\begin{remark}
  Before Jiang and Hekking, there were already many attempts to describe the derived projectivization or blow-up in classical geometry (under certain circumstances), like Kirwan \cite{10.2307/1971369}, Kiem-Li \cite{10.2307/43302830}, Vakil-Zinger \cite{10.2140/gt.2008.12.1}, Savvas \cite{savvas2018generalized}, Negu\c{t} \cite{negut2017shuffle}, Jiang-Leung \cite{jiang2018derived}, Jiang \cite{jiang_2021}, Addington-Takahashi \cite{addington2020categorical} and the author \cite{zhao2020categorical}\cite{zhao2021moduli}.
\end{remark}

\subsection{Derived blow-up for quasi-smooth derived schemes}
\label{intro4}
 
We propose the following conjecture about the Tor-amplitude:
\begin{conjecture}
    Given an integer $n\geq 0$ and a closed embedding of derived schemes $f:X\to Y$. If $L_{X}$ and $L_{Y}$ both have Tor-amplitude $[0,n]$, then $L_{\mb{B}l_{f}}$ also has Tor-amplitude $[0,n]$.
\end{conjecture}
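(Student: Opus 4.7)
The plan is to compute the cotangent complex $L_{\mb{B}l_f}$ through the distinguished triangle
$$pr_f^* L_Y \to L_{\mb{B}l_f} \to L_{pr_f},$$
associated to the proper projection $pr_f:\mb{B}l_f \to Y$, and to control each term separately. The first term poses no difficulty: Tor-amplitude $[0,n]$ is preserved under derived pullback, so $pr_f^* L_Y$ already lies in the desired range. Thus the task reduces to bounding the Tor-amplitude of the relative cotangent complex $L_{pr_f}$.

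Since $\mb{B}l_f - \mb{P}_X(C_f) \cong Y - X$, the complex $L_{pr_f}$ is supported on the exceptional virtual Cartier divisor $\mb{P}_X(C_f)$, so the problem is local near this divisor. Using Hekking's realization of $\mb{B}l_f$ via a derived Rees algebra together with Jiang's explicit formula for the cotangent complex of a derived projectivization, $L_{pr_f}$ can be expressed in terms of the conormal complex $C_f = L_f[-1]$ twisted by the tautological line bundle $\mc{O}_{\mb{B}l_f}(1)$.

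A priori the cotangent triangle for $f$ places $L_f$ in Tor-amplitude one step worse than $L_X$ and $L_Y$, so the naive bound on $L_{pr_f}$ is $[0,n+1]$. The key cancellation comes from the Euler-type sequence for the derived projectivization $\mb{P}_X(C_f)\to X$: Jiang's formula inserts a copy of $\mc{O}(-1)\otimes pr^*C_f$ that absorbs the top Tor degree contributed by $C_f$, pushing the bound back down to $[0,n]$. Combined with the control of $pr_f^*L_Y$, this would yield the desired bound on $L_{\mb{B}l_f}$.

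The main obstacle I expect is transferring this cancellation from the exceptional divisor $\mb{P}_X(C_f)$, where the Euler sequence lives naturally, to a global statement on $\mb{B}l_f$. A natural strategy is to filter along the infinitesimal thickenings of $\mb{P}_X(C_f)$ in $\mb{B}l_f$ given by powers of the ideal of the virtual Cartier divisor; each graded piece involves a twist of $L_f$ by $\mc{O}(-m)$, and one argues inductively that each sits in Tor-amplitude $[0,n]$ by reapplying the tautological cancellation. The quasi-smooth case $n=1$ is essentially the content behind \cref{thm:main} and should serve as both the base of the induction and a sanity check for the bookkeeping; the principal technical work is then to verify that Jiang's formula degenerates in the required way for general $n$, and that the spectral sequence associated to the thickening filtration converges strongly enough to preserve the Tor-amplitude bound.
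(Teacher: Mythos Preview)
The statement you are addressing is a \emph{conjecture} in the paper, not a theorem; the paper offers no proof for general $n$. The author remarks that $n=0$ is classical and then proves only $n=1$, recorded as \cref{cor11}. So there is no general proof in the paper to compare your proposal against.

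For $n=1$ the paper's method is quite different from yours. Rather than analysing the cotangent triangle for $pr_f$ or invoking an Euler sequence, it uses Hekking's local model (\cref{020664}): any closed embedding $\Phi:X\to Y$ of quasi-smooth schemes is Zariski-locally presented as $X=\mb{R}\mc{Z}(g^\vee)\hookrightarrow Y=\mb{R}\mc{Z}(h^\vee)$ inside a smooth ambient $Z$, for locally free $V,W$, sections $g:V\to\mc{O}_Z$, $h:W\to\mc{O}_Z$, and $\phi:W\to V$ with $g\circ\phi=h$. In this presentation $\mb{B}l_\Phi$ is exhibited as the derived zero locus of a section of a locally free sheaf on $\mb{B}l_{\mf{g}}$, the classical blow-up of smooth $X$ in smooth $Z$; quasi-smoothness of $\mb{B}l_\Phi$ is then immediate. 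This is short and explicit, but it hinges on embedding into a smooth ambient with a two-term Koszul presentation, which is precisely what is unavailable for $n\geq 2$.

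Your strategy for general $n$ is a plausible line of attack, but the gap you yourself flag is genuine and not obviously closable. Jiang's Euler fiber sequence computes $L_{\mb{P}_X(C_f)/X}$, not $L_{\mb{B}l_f/Y}$; passing the cancellation from the exceptional divisor to all of $\mb{B}l_f$ via a filtration by powers of the divisor ideal would require control over the extension data in the resulting spectral sequence, and Tor-amplitude bounds do not in general survive such extensions without further input. Also, your reference to \cref{thm:main} for the $n=1$ case is misplaced: that theorem concerns nested quiver varieties; the relevant result is \cref{cor11}.
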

In the case that $n=0$, the proof is standard, like Theorem 22.3.10 of \cite{vakil2017rising}. We prove the case that $n=1$:
\begin{theorem}[\cref{cor11}]
  If both $X$ and $Y$ are quasi-smooth, the derived blow-up $\mb{B}l_{f}$ is also quasi-smooth.
\end{theorem}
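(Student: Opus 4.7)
The strategy is to reduce, via a local model and derived base change, to the case of a classical blow-up of a smooth subvariety in a smooth variety, where the $n=0$ case already gives smoothness of the blow-up.

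Since the statement is Zariski-local on $Y$, I work locally. Using the quasi-smoothness of $Y$, I choose a closed embedding $Y\hookrightarrow \tilde Y$ realizing $Y$ as the derived vanishing locus of a section of a vector bundle $V$ on a smooth variety $\tilde Y$. Then, using the quasi-smoothness of $X$ and the composite closed embedding $X\hookrightarrow Y\hookrightarrow \tilde Y$, I construct a smooth closed subvariety $\tilde X\hookrightarrow \tilde Y$ such that
\[
\begin{tikzcd}
X\arrow[r,"f"]\arrow[d,hook] & Y\arrow[d,hook] \\
\tilde X\arrow[r,hook,"\tilde f"'] & \tilde Y
\end{tikzcd}
\]
is a derived Cartesian square, i.e.\ $X\cong \tilde X\times^h_{\tilde Y}Y$.

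By the derived base-change property of Hekking's blow-up, one then has $\mb{B}l_f\cong \mb{B}l_{\tilde f}\times^h_{\tilde Y}Y$. Since $\tilde f$ is a regular closed embedding of smooth varieties, $\mb{B}l_{\tilde f}$ agrees with the classical blow-up and is itself smooth. The cotangent complex of $\mb{B}l_f$ then fits into the base-change fiber sequence
\[
q^*L_{\mb{B}l_{\tilde f}}\to L_{\mb{B}l_f}\to p^*L_{Y/\tilde Y},
\]
where $p\colon\mb{B}l_f\to Y$ and $q\colon\mb{B}l_f\to \mb{B}l_{\tilde f}$. Here $L_{\mb{B}l_{\tilde f}}$ is locally free (Tor-amplitude $[0,0]$), and $L_{Y/\tilde Y}\simeq V^{\vee}|_Y[1]$ is a shifted vector bundle (Tor-amplitude $[1,1]$), since $Y\hookrightarrow \tilde Y$ is the derived vanishing of a section of a vector bundle. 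Chasing the long exact sequence of homotopy sheaves, $L_{\mb{B}l_f}$ has Tor-amplitude $[0,1]$, so $\mb{B}l_f$ is quasi-smooth.

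\textbf{Main obstacle.} The delicate step is the construction of the local model so that the resulting square is \emph{derived} Cartesian, not merely classically Cartesian. Concretely, if $Y=V(s_1,\dots,s_m)$ in $\tilde Y$ and one extends to a presentation $X=V(s_1,\dots,s_m,t_1,\dots,t_p)$ by further local equations, the additional $t_j$ must be chosen with linearly independent differentials at each point of $X$, so that $\tilde X:=V(t_1,\dots,t_p)$ is smooth and the derived intersection $\tilde X\times^h_{\tilde Y}Y$ recovers $X$ with its correct derived structure. Establishing that such generators exist locally, despite the fact that the embedding $X\hookrightarrow Y$ need not itself be quasi-smooth (its $L_f$ can have Tor-amplitude $[0,2]$), is where the bulk of the work lies.
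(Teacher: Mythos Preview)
Your overall strategy---reduce to a derived base change from a classical blow-up of smooth varieties---is appealing, but the local model you propose does not always exist, and this is a genuine gap rather than a technicality to be filled in.

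Here is a counterexample. Take $\tilde Y=\mathbb{A}^2$ with coordinates $x,y$, let $Y=\mathbb{R}\mathcal{Z}(xy)$, and let $X=\mathbb{R}\mathcal{Z}(x,y)$ be the origin. Both $X$ and $Y$ are quasi-smooth and $f:X\hookrightarrow Y$ is a closed embedding. For your square to be derived Cartesian one needs $\dim\tilde X=\mathrm{vdim}(X)-\mathrm{vdim}(Y)+\dim\tilde Y=0-1+2=1$, so $\tilde X$ would be a smooth curve through the origin, say $\tilde X=V(t)$ with $dt|_0\neq 0$. Then $\tilde X\times^h_{\tilde Y}Y=\mathbb{R}\mathcal{Z}(t,xy)$. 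But $xy\in(x,y)^2$, so $(t,xy)$ can never equal the maximal ideal $(x,y)$ as ideals in the local ring; hence already the \emph{classical} truncations disagree (one gets either a length-$2$ fat point or an entire curve, never the reduced origin). Thus no smooth $\tilde X$ with the required derived Cartesian property exists. The obstacle you flagged---choosing the extra generators $t_j$ with independent differentials---is not merely delicate; it is in general impossible, precisely because $L_f$ can have Tor-amplitude $[0,2]$.

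The paper avoids this by using a different local model. Rather than seeking a smooth $\tilde X$, it presents \emph{both} $X$ and $Y$ as derived zero loci on a common smooth $Z$, via sections $g:V\to\mathcal{O}_Z$ and $h:W\to\mathcal{O}_Z$ together with a factorization $h=g\circ\phi$ for some bundle map $\phi:W\to V$ (in the example above: $V=\mathcal{O}^2$, $g=(x,y)$, $W=\mathcal{O}$, $h=xy$, $\phi=(y,0)$). Hekking's explicit computation (Proposition~3.13 of \cite{hekking2022stabilizer}) then identifies $\mathbb{B}l_\Phi$ as the derived zero locus of a section of a vector bundle on the smooth variety $\mathbb{P}_Z(V)$, which is manifestly quasi-smooth. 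The key point is that this model only requires the map $\phi$ of vector bundles on $Z$, not a smooth subvariety, and such a $\phi$ always exists locally since the components of $h$ lie in the ideal generated by the components of $g$.
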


From now on we assume that both $X$ and $Y$ are quasi-smooth. For a quasi-smooth derived scheme $M$,  Ciocan-Fontanine and Kapranov \cite{ciocan-fontanine07:virtual} defined $K$-theoretic virtual fundamental class $[M]_{K}^{vir}$ on the underlying scheme $\pi_{0}(M)$. As $Y$ are $\mb{B}l_{f}$ are both quasi-smooth, it is natrual to compare $[\mb{B}l_{f}]_{K}^{vir}$ and $[Y]_{K}^{vir}$ through the push-forward of $pr_{f}$.

Let $r:=vdim(X)-vdim(Y)$ be the relative virtual dimension. When $r<0$, we prove that
\begin{theorem}[\cref{020655},  vanishing theorem]
  \label{thm:van}
  Given $r<l\leq  0$, we have
  $$\mbf{R}pr_{f*}\mc{O}_{\mb{B}l_{f}}(l)\cong \mc{O}_{Y}.$$
Here $\mbf{R}pr_{f*}$ is the derived push-forward functor.
\end{theorem}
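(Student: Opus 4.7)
The plan is descending induction on $l$ starting from the base case $l=0$. The engine is the fundamental short exact sequence attached to the virtual exceptional Cartier divisor $j:E:=\mb{P}_{X}(C_{f})\hookrightarrow \mb{B}l_{f}$ provided by the Hekking theorem quoted above: with the standard convention $\mc{O}(1)\cong \mc{O}(-E)$, twisting $0\to \mc{O}(-E)\to \mc{O}\to j_{*}\mc{O}_{E}\to 0$ by $\mc{O}_{\mb{B}l_{f}}(l)$ gives
$$0\to \mc{O}_{\mb{B}l_{f}}(l+1)\to \mc{O}_{\mb{B}l_{f}}(l)\to j_{*}\mc{O}_{E}(l)\to 0.$$
Applying $\mbf{R}pr_{f*}$ and using the factorization $pr_{f}\circ j=f\circ\pi$, where $\pi:E\to X$ is the derived projectivization and $f_{*}$ is exact because $f$ is a closed embedding, each inductive step from $l+1$ down to $l$ reduces to the single vanishing statement $\mbf{R}\pi_{*}\mc{O}_{\mb{P}_{X}(C_{f})}(l)=0$.

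That vanishing is supplied by Jiang's derived projective bundle formula. Indeed, the quasi-smoothness of $X$ and $Y$ combined with the cofibre sequence $f^{*}L_{Y}\to L_{X}\to L_{f}$ forces $C_{f}=L_{f}[-1]$ to be perfect of Tor-amplitude $[0,1]$ and virtual rank $-r>0$, so Jiang's formula yields $\mbf{R}\pi_{*}\mc{O}_{\mb{P}_{X}(C_{f})}(l)=0$ for $l\in\{r+1,\dots,-1\}$. This is exactly the range traversed as $l$ decreases from $-1$ to $r+1$, so the induction propagates the isomorphism $\mbf{R}pr_{f*}\mc{O}_{\mb{B}l_{f}}(l+1)\cong \mbf{R}pr_{f*}\mc{O}_{\mb{B}l_{f}}(l)$ across the whole interval.

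The base case $\mbf{R}pr_{f*}\mc{O}_{\mb{B}l_{f}}\cong \mc{O}_{Y}$ is the derived analog of the classical identity for smooth blow-ups; I would extract it from the semi-orthogonal decomposition of the derived category of $\mb{B}l_{f}$ that accompanies Jiang's and Hekking's theory, which in particular asserts the full faithfulness of $pr_{f}^{*}$ and hence $\mbf{R}pr_{f*}pr_{f}^{*}\mc{O}_{Y}\cong\mc{O}_{Y}$. The main obstacle, to my mind, is not the induction itself but confirming that the quasi-smoothness of $\mb{B}l_{f}$ established in \cref{cor11} plugs into Jiang's formula in the precise form needed, and that no subtle higher Tors spoil the vanishing on $\mb{P}_{X}(C_{f})$ when $C_{f}$ is genuinely two-term rather than a vector bundle.
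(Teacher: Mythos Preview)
Your inductive strategy via the virtual Cartier divisor sequence is sound and genuinely different from the paper's argument. The paper does not induct at all: it passes to the local model where $X=\mb{R}\mc{Z}(g^{\vee})$ and $Y=\mb{R}\mc{Z}(h^{\vee})$ sit inside a smooth $Z$, realises $\mb{B}l_{f}$ as the derived zero locus of $\bar{h}^{\vee}$ on the smooth blow-up $\mb{B}l_{\mf{g}}\subset \mb{P}_{Z}(V)$, and then pushes the Koszul resolution $\bigl\{\wedge^{k}W\otimes \mc{O}_{\mb{B}l_{\mf{g}}}(-k)\bigr\}$ of $\mc{O}_{\mb{B}l_{f}}$ forward term by term. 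Each twist $l-k$ lands in the Serre vanishing window $(-v,0]$ for $\mb{B}l_{\mf{g}}\to Z$, so the pushforward is the Koszul complex of $h$ on $Z$, i.e.\ $\mc{O}_{Y}$. All values of $l$, including $l=0$, are handled simultaneously with no separate base case.

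The weak point in your proposal is precisely that base case. Invoking full faithfulness of $pr_{f}^{*}$ or an SOD for $\mb{B}l_{f}$ is circular: those statements are typically \emph{proved} by first establishing $\mbf{R}pr_{f*}\mc{O}_{\mb{B}l_{f}}\cong \mc{O}_{Y}$, not the other way around. You would need an independent source for the $l=0$ case --- either the Khan--Rydh result for quasi-smooth immersions, or exactly the local Koszul computation the paper carries out. Once that is in hand, your induction is clean; your worry about higher Tors on $\mb{P}_{X}(C_{f})$ is unfounded, since the cofibre sequence $f^{*}L_{Y}\to L_{X}\to L_{f}$ with $f$ a closed immersion forces $C_{f}$ to have Tor-amplitude $[0,1]$ and rank $-r>0$, squarely in the regime of Jiang's derived Serre vanishing. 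What your approach buys is a cleaner conceptual picture (divisor sequence plus projective-bundle vanishing); what the paper's buys is self-containment and a uniform treatment that makes the base case a non-issue.
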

As a corollary, we have
$$ \pi_{0}(pr_{f})_{*}[\mb{B}l_{f}]_{K}^{vir}=[Y]_{K}^{vir}.$$

If $X$ is smooth and the relative dimension $r:=vdim(X)-vdim(Y)\geq 0$, we propose the following conjecture

\begin{conjecture}
    \label{conj:1}
  There exists $D_{l}\in D^{b}_{coh}(Y)$, $-r-1\leq l\leq  0$ such that
  \begin{enumerate}
  \item $D_{-r-1}\cong\mc{O}_{Y}$ and $D_{0}\cong \mbf{R}pr_{f*}\mc{O}_{\mb{B}l_{f}}$;
  \item We have triangles:
    \begin{equation}
      \label{eq:triangle}
     D_{l-1}\to D_{l}\to \mbf{R}f_{*}(det(C_{f})^{-1}\wedge^{l-r}(C_{f}^{\vee}[1]))[1-l]
    \end{equation}
  \end{enumerate}
\end{conjecture}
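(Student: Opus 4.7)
The plan is to define $D_{l} := \mbf{R}pr_{f*}\mc{O}_{\mb{B}l_{f}}(l)$ for $-r-1 \leq l \leq 0$, where $\mc{O}_{\mb{B}l_{f}}(l)$ denotes the $l$-th twist by the tautological line bundle of the virtual exceptional divisor $\iota\colon E:=\mb{P}_{X}(C_{f}) \hookrightarrow \mb{B}l_{f}$. The identification $D_{0} \cong \mbf{R}pr_{f*}\mc{O}_{\mb{B}l_{f}}$ is then immediate from the definition, and the triangles \cref{eq:triangle} follow by applying $\mbf{R}pr_{f*}$ to the fundamental exact triangle
\[
\mc{O}_{\mb{B}l_{f}}(l-1) \to \mc{O}_{\mb{B}l_{f}}(l) \to \iota_{*} \mc{O}_{E}(l)
\]
associated to the virtual Cartier divisor, and using the factorization $pr_{f} \circ \iota = f \circ \pi$ where $\pi\colon E \to X$ is the structure projection.

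The crux is then to identify the resulting cofiber as
\[
\mbf{R}\pi_{*} \mc{O}_{E}(l) \cong \det(C_{f})^{-1} \wedge^{l-r}(C_{f}^{\vee}[1])[1-l],
\]
which is a Serre-duality / Borel-Weil-Bott computation for Jiang's derived projectivization. When $C_{f}$ is a classical vector bundle of rank $n$, this reduces via the Koszul identity $\wedge^{k}(V[1]) \cong \mathrm{Sym}^{k} V [k]$ (valid for $V$ in degree zero) to the standard formula $\mbf{R}\pi_{*} \mc{O}_{\mb{P}(C_{f})}(l) \cong \det(C_{f})^{-1} \otimes \mathrm{Sym}^{-n-l}(C_{f}^{\vee})[1-n]$ for $l \leq -n$, after substituting the virtual rank $n = -r$. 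The derived version should follow from Jiang's pushforward formula applied to the perfect complex $C_{f}$, with $\wedge^{*}$ and $\mathrm{Sym}^{*}$ interpreted as derived Schur functors; the Serre-duality shift $[1-l]$ arises from the virtual relative dimension $-r-1$ of $\pi$.

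The remaining boundary value $D_{-r-1} \cong \mc{O}_{Y}$ should come from a sharpening of \cref{thm:van} to the critical twist $l = -r-1$: one shows directly that $\mbf{R}pr_{f*}\mc{O}_{\mb{B}l_{f}}(-r-1) \cong \mc{O}_{Y}$ by the same inductive argument along the virtual exceptional divisor used to prove the vanishing theorem, using that the derived pushforwards from $E$ are under control in this critical range and the projection formula for $pr_{f}$.

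The main obstacle is matching the derived cofiber with the precise conjectural form: while the classical Serre-duality / Koszul identities give a clean answer when $C_{f}$ is a vector bundle, establishing the analogous identification when $C_{f}$ is a genuine perfect complex of nontrivial Tor-amplitude (as arises from the quasi-smooth hypothesis on $Y$) requires a careful interaction with the derived Schur functors in Jiang's theory. A secondary difficulty is ensuring that the intermediate $D_{l}$ lie in $D^{b}_{coh}(Y)$, which should follow from the properness of $pr_{f}$ together with the perfectness of the derived exterior powers $\wedge^{l-r}(C_{f}^{\vee}[1])$ in the relevant range.
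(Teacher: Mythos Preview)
Your proposed definition $D_l := \mbf{R}pr_{f*}\mc{O}_{\mb{B}l_f}(l)$ cannot satisfy the boundary condition $D_{-r-1} \cong \mc{O}_Y$. The vanishing theorem you appeal to is stated and proved only for relative virtual dimension $r<0$; the conjecture concerns $r\geq 0$, and in that regime no twist of $\mc{O}_{\mb{B}l_f}$ pushes forward to $\mc{O}_Y$ in general. For a concrete failure, take $Z=\mb{A}^1$, $X=\{0\}$, $V=\mc{O}_Z^{\oplus 2}$ with section $\mf{h}=(x,0)$, so that $vdim(Y)=-1$ and $r=1$. Then $Bl_g=\mb{A}^1$, the divided section $\bar h$ has a unit component, hence $\mb{B}l_f=\emptyset$; every one of your $D_l$ vanishes while $\mc{O}_Y\neq 0$. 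The phenomenon is structural: when $r\geq 0$ the center $X$ virtually fills $Y$, the derived blow-up can be arbitrarily small (even empty), and $\mc{O}_Y$ is simply not visible as $\mbf{R}pr_{f*}$ of anything on $\mb{B}l_f$.

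The paper's argument (carried out only in the local model where $Y$ is a derived zero locus in a smooth $Z$) builds the filtration on a larger space that remembers $Y$. One works on the \emph{classical} blow-up $Bl_g$ of $X$ in $Z$, where $\mc{O}_{\mb{B}l_f}$ is the Koszul complex of the divided section $\bar h$ and $\mc{O}_Y$ is the pulled-back Koszul complex of $\mf{h}$. The objects $\mc{D}^a$ are \emph{hybrid} Koszul complexes on $Bl_g$: in degrees $0\geq m\geq a$ one takes $\wedge^{-m}(pr_g^*V)$ with differential from $\mf{h}$, and in degrees $m<a$ one takes $\wedge^{-m}(pr_g^*V)\otimes\mc{O}_{Bl_g}(m-a)$ with differential from $\bar h$. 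At $a=0$ this is the Koszul resolution of $\mc{O}_{\mb{B}l_f}$; at $a=-r-1$ every twist lands in the Serre-vanishing window for $pr_g:Bl_g\to Z$, so the pushforward is exactly the Koszul resolution of $\mc{O}_Y$. The successive quotients $\mc{D}^a/\mc{D}^{a-1}$ are supported on the exceptional $\mb{P}_X(C_g)\subset Bl_g$, and ordinary Serre duality there yields the cofiber terms. The idea you are missing, then, is that the interpolation from $\mc{O}_{\mb{B}l_f}$ to $\mc{O}_Y$ must take place on an ambient model containing both, not on $\mb{B}l_f$ itself.
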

If \cref{conj:1} is true, we have the following enumerative counting formula:
\begin{equation}
  \label{eq:noref}
  \pi_{0}(pr_{f})_{*}[\mb{B}l_{f}]_{K}^{vir}=[Y]_{K}^{vir}+f_{*}(\sum_{l=-r}^{0}(-1)^{l-1}det(C_{f})^{-1}\wedge^{l+r}(C_{f}^{\vee}[1])).
\end{equation}
In \cref{sec:app}, we prove \cref{conj:1} if $Y$ is the derived zero locus of $f\in \Gamma(Z,V)$ where $V$ is a locally free sheaf over a smooth scheme $Z$.

Now we describe the birational geometry of $\mb{B}l_{f}$ and the comparison with $Bl_{\pi_{0}(f)}$. We have that
\begin{theorem}[Theorem 3.5.5 of \cite{Hekking_2022} and \cref{020651}]
  The scheme $Bl_{\pi_{0}(f)}$ is a closed subscheme of $\pi_{0}(\mb{B}l_{f})$. If $\pi_{0}(Y)-\pi_{0}(X)$ is not empty and $\mb{B}l_{f}$ is smooth, then $Bl_{\pi_{0}(f)}\cong \mb{B}l_{f}$.
\end{theorem}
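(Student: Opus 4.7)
The first clause is Theorem 3.5.5 of Hekking, so the real content is the second statement: under the two hypotheses, the closed embedding $Bl_{\pi_{0}(f)}\hookrightarrow \pi_{0}(\mb{B}l_{f})$ is an isomorphism. The plan is the classical strategy of showing that the closed subscheme contains a dense open of the ambient and concluding by reducedness. Since $\mb{B}l_{f}$ is smooth, hence classical, the ambient simplifies to $\mb{B}l_{f}$ itself.

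First I would identify a common open dense subscheme of source and target. By the theorem recalled in \cref{intro3}, $\mb{B}l_{f}-\mb{P}_{X}(C_{f})\cong Y-X$, with classical truncation $\pi_{0}(Y)-\pi_{0}(X)$ non-empty by assumption; the classical blow-up $Bl_{\pi_{0}(f)}$ likewise contains $\pi_{0}(Y)-\pi_{0}(X)$ as an open subscheme, and the embedding from the first assertion restricts to the identity there. The crucial step is density of $Y-X$ inside $\mb{B}l_{f}$: since $\mb{B}l_{f}$ is smooth, the virtual Cartier divisor $\mb{P}_{X}(C_{f})$ specialises to an honest effective Cartier divisor — locally cut out by a non-zero-divisor — and therefore contains no generic point of $\mb{B}l_{f}$. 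Every irreducible component of the smooth scheme $\mb{B}l_{f}$ thus meets $Y-X$, and since a smooth connected scheme is irreducible, $Y-X$ is dense. Consequently $Bl_{\pi_{0}(f)}$ is a closed subscheme of the reduced scheme $\mb{B}l_{f}$ containing a dense open, hence equals $\mb{B}l_{f}$.

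The only non-formal ingredient is the passage from virtual to actual Cartier divisor when $\mb{B}l_{f}$ is smooth, and I expect it to be the main obstacle. Concretely, one must check that the tautological section $\mc{O}_{\mb{B}l_{f}}(-1)\to \mc{O}_{\mb{B}l_{f}}$ is classically injective; otherwise its cofibre, which is $\mc{O}_{\mb{P}_{X}(C_{f})}$, would pick up an $H^{-1}$ term and $\mb{P}_{X}(C_{f})$ would fail to be classical. The forcing point is that on each integral component of $\mb{B}l_{f}$ the source is a line bundle, so either the section is identically zero — impossible since $\mb{P}_{X}(C_{f})\neq \mb{B}l_{f}$ by the non-emptiness assumption — or it is injective.
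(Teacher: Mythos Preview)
Your approach is correct and is essentially what the paper does --- with the difference that the paper states this as a bare corollary of Hekking's Theorem~3.5.5 and gives no argument at all. Note that the paper records Hekking's result in a slightly sharper form than you invoke: not merely that $Bl_{\pi_0(f)}$ is a closed subscheme of $\pi_0(\mb{B}l_f)$, but that it is the \emph{schematic closure} of $\pi_0(Y)-\pi_0(X)$ there. With that formulation the deduction is immediate once $\mb{B}l_f$ is smooth (hence classical and reduced) and the open $Y-X$ is dense; your route via ``closed subscheme of a reduced scheme containing a dense open'' reaches the same conclusion by the same density input, just phrased one step longer.

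One small gap in your last paragraph: you argue on each integral component that the section $\mc{O}_{\mb{B}l_f}(-1)\to\mc{O}_{\mb{B}l_f}$ is either identically zero or injective, and then exclude ``identically zero'' because $\mb{P}_X(C_f)\neq\mb{B}l_f$. But non-emptiness of $Y-X$ only rules out the section vanishing on \emph{every} component; it does not by itself prevent a whole connected component of $\mb{B}l_f$ from sitting inside the exceptional locus. Closing this requires knowing that no component of $\mb{B}l_f$ maps entirely into $X$ --- a point the paper's unproved corollary does not address either.
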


In this paper, we give a criterion for the smoothness of $\mb{B}l_{f}$, 
\begin{theorem}[\cref{020656}, Gluing theorem]
  If both $\mb{P}_{Y}(C_{f})$ and $Y-X$ are smooth, then the derived blow up $\mb{B}l_{f}$ is also smooth. 
\end{theorem}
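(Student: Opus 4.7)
The plan is to prove smoothness of $\mb{B}l_{f}$ by showing its cotangent complex $L_{\mb{B}l_{f}}$ is concentrated in cohomological degree zero. Since $X$ and $Y$ are both quasi-smooth, \cref{cor11} implies that $\mb{B}l_{f}$ is quasi-smooth, so $L_{\mb{B}l_{f}}$ is perfect with Tor-amplitude in $[-1,0]$. By upper-semicontinuity for perfect complexes, smoothness of $\mb{B}l_{f}$ is then equivalent to the pointwise vanishing $H^{-1}\bigl(L_{\mb{B}l_{f}}\otimes^{\mb{L}}k(p)\bigr)=0$ at every closed point $p\in \mb{B}l_{f}$, which can be verified on each piece of a suitable cover.

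On the open subset $\mb{B}l_{f}-\mb{P}_{X}(C_{f})\cong Y-X$ the pointwise vanishing is immediate from the smoothness hypothesis on $Y-X$. To handle points on the virtual Cartier divisor, write $j:\mb{P}_{X}(C_{f})\hookrightarrow \mb{B}l_{f}$ for the inclusion and let $\mc{L}$ be its tautological conormal line bundle; Hekking's theory provides an identification $L_{j}\cong \mc{L}^{\vee}[1]$. The cotangent-complex triangle for $j$ then rotates to an exact triangle
\[
\mc{L}\longrightarrow j^{*}L_{\mb{B}l_{f}}\longrightarrow L_{\mb{P}_{X}(C_{f})}\xrightarrow{+1}.
\]
Both outer terms are vector bundles in degree zero ($\mc{L}$ by construction, and $L_{\mb{P}_{X}(C_{f})}=\Omega^{1}_{\mb{P}_{X}(C_{f})}$ by the smoothness hypothesis), so the associated long exact sequence forces $j^{*}L_{\mb{B}l_{f}}$ to be a vector bundle in degree zero as well, fitting into a short exact sequence
\[
0\to \mc{L}\to j^{*}L_{\mb{B}l_{f}}\to \Omega^{1}_{\mb{P}_{X}(C_{f})}\to 0.
\]

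In particular the derived fiber of $L_{\mb{B}l_{f}}$ at any point $p\in \mb{P}_{X}(C_{f})$ is concentrated in degree zero, and combined with the smoothness on the complement this yields the fiberwise vanishing of $H^{-1}$ everywhere on $\mb{B}l_{f}$. Nakayama for perfect complexes then upgrades this to the statement that $L_{\mb{B}l_{f}}$ is locally free, i.e.\ $\mb{B}l_{f}$ is smooth. The main technical obstacle is isolating the exact triangle above rigorously within Hekking's derived blow-up formalism --- in particular, verifying that the conormal complex $L_{j}[-1]$ of the virtual Cartier divisor $\mb{P}_{X}(C_{f})$ in $\mb{B}l_{f}$ really is the tautological line bundle $\mc{L}$ in degree zero; once this identification is secured, the remainder of the argument reduces to a routine long-exact-sequence chase together with the standard semicontinuity for perfect complexes.
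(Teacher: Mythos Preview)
Your argument is correct. Both your proof and the paper's proceed by decomposing $\mb{B}l_{f}$ into the open piece $Y-X$ and the virtual Cartier divisor $\mb{P}_{X}(C_{f})$, but the technical implementation differs. The paper invokes its general \cref{gluingscheme}, which in turn rests on a commutative-algebra lemma about regular sequences: one writes the quasi-smooth embedding $\mb{P}_{X}(C_{f})\hookrightarrow \mb{B}l_{f}$ Zariski-locally as in \cref{020644} and then uses that if a quotient of a regular local ring by a sequence of the correct codimension is regular, so are all intermediate quotients. Your route bypasses the local model entirely by working directly with the cotangent-complex triangle $\mc{L}\to j^{*}L_{\mb{B}l_{f}}\to L_{\mb{P}_{X}(C_{f})}$ coming from the virtual Cartier divisor structure; this is more intrinsic and avoids the commutative-algebra digression, at the cost of needing to cite Hekking's identification $C_{j}\cong \mc{L}$ as input. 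The concern you flag at the end is not really an obstacle: that $C_{j}$ is a line bundle is precisely the defining property of a virtual Cartier divisor in Hekking's formalism, so it comes for free from the blow-up construction (cf.\ the paper's citation of Theorem 5.3 and Proposition 5.5 of \cite{Hekking_2022}).
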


As $Y$ and $\mb{B}l_{f}$ are both quasi-smooth, by Sch\"{u}rg-To\"{e}n-Vezzosi \cite{SchurgToenVezzosi+2015+1+40} and Jiang \cite{jiang2022derived}, the determinant of $L_{X}$ and $L_{\mb{B}l_{f}}$ are well defined, which we denote as $K_{Y}$ and $K_{\mb{B}l_{f}}$ respectively. If $\mb{B}l_{f}$ is smooth and $\pi_{0}(Y)-\pi_{0}(X)$ is not empty, we have 
\begin{theorem}[Discrepancy formula   \cref{lem:1.5}]
  \label{disc}
  We have $K_{\mb{B}l_{f}}\cong \mbf{L}pr_{f}^{*}K_{Y}\otimes \mc{O}_{\mb{B}l_{f}}(vdim(X)-vdim(Y)+1)$.
\end{theorem}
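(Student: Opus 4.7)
The plan is to identify $K_{\mb{B}l_{f}}\otimes (\mbf{L}pr_{f}^{*}K_{Y})^{-1}$ as a power of the tautological line bundle $\mc{O}_{\mb{B}l_{f}}(1)$ on the smooth scheme $\mb{B}l_{f}$, and pin down the exponent by restricting to the exceptional divisor $E:=\mb{P}_{X}(C_{f})$. Since $pr_{f}$ restricts to an isomorphism $\mb{B}l_{f}-E\cong Y-X$ and $\pi_{0}(Y)-\pi_{0}(X)$ is nonempty by assumption, this line bundle is trivial on a nonempty open complement of the (virtual) Cartier divisor $E$. On the smooth, hence normal, $\mb{B}l_{f}$, using Hekking's identification $\mc{O}_{\mb{B}l_{f}}(-E)\cong \mc{O}_{\mb{B}l_{f}}(1)$ together with the irreducibility of $\pi_{0}(E)$, this forces
\[
K_{\mb{B}l_{f}}\cong \mbf{L}pr_{f}^{*}K_{Y}\otimes \mc{O}_{\mb{B}l_{f}}(j)
\]
for a unique integer $j$; the rest of the proof determines $j$.

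The second step is to restrict to $E$ and compute $K_{E}$ in two independent ways. Write $\pi:E\to X$ for the projection and $c:=\operatorname{rk}(C_{f})=vdim(Y)-vdim(X)$. First, since $E$ is a virtual Cartier divisor in the smooth $\mb{B}l_{f}$ with conormal $\mc{O}_{E}(1)$, the cotangent triangle of $E\hookrightarrow \mb{B}l_{f}$ yields, upon taking determinants,
\[
K_{E}\cong K_{\mb{B}l_{f}}|_{E}\otimes \mc{O}_{E}(-1)\cong \pi^{*}f^{*}K_{Y}\otimes \mc{O}_{E}(j-1).
\]
Second, Jiang's derived Euler triangle for $\pi$ supplies $K_{E/X}\cong \pi^{*}\det(C_{f})\otimes \mc{O}_{E}(-c)$, while the cotangent triangle $f^{*}L_{Y}\to L_{X}\to C_{f}[1]$ of $f$ gives $K_{X}\cong f^{*}K_{Y}\otimes \det(C_{f})^{-1}$. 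Composing,
\[
K_{E}\cong \pi^{*}K_{X}\otimes K_{E/X}\cong \pi^{*}f^{*}K_{Y}\otimes \mc{O}_{E}(-c).
\]
Equating the two expressions for $K_{E}$ gives $j-1=-c$, whence $j=1-c=vdim(X)-vdim(Y)+1$, as claimed.

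The main subtlety lies in the bookkeeping of determinants of perfect complexes in the derived setting: verifying that Jiang's Euler triangle yields the stated relative canonical when $C_{f}$ is only a perfect complex of Tor-amplitude $[0,1]$ rather than a vector bundle, and tracking signs in $\det(K[1])\cong \det(K)^{-1}$ when one passes from the cotangent triangle of $f$ to the determinant formula for $K_{X}$. The identifications $\mc{O}_{\mb{B}l_{f}}(-E)\cong \mc{O}_{\mb{B}l_{f}}(1)$ and $L_{E/\mb{B}l_{f}}\cong \mc{O}_{E}(1)[1]$ used above (for $E$ as a virtual Cartier divisor inside a smooth derived scheme) should be extracted from Hekking's foundational results already cited in the excerpt.
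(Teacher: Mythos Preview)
Your approach differs substantially from the paper's. The paper works entirely in the local model: with $Z$ smooth, $Y=\mb{R}\mc{Z}(h^{\vee})$ and $X=\mb{R}\mc{Z}(g^{\vee})$ for compatible morphisms $h:W\to\mc{O}_{Z}$, $g:V\to\mc{O}_{Z}$ with $g\circ\phi=h$, there is an explicit presentation of $\mb{B}l_{\Phi}$ as the derived zero locus of $(h'\oplus\bar{g})^{\vee}$ inside $\mb{P}_{Z}(V)$. One then reads off $K_{\mb{B}l_{\Phi}}$ in two lines as $K_{\mb{P}_{Z}(V)}$ twisted by the inverse determinant of the bundle being cut out, and the formula falls out from $K_{\mb{P}_{Z}(V)}=pr_{V}^{*}(K_{Z}\otimes\det V)\otimes\mc{O}(-v)$ together with $K_{Y}=\det(W)^{-1}\otimes K_{Z}|_{Y}$. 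The smoothness hypothesis on $\mb{B}l_{f}$ is used only to guarantee that these local computations glue to a global isomorphism of line bundles.

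Your global Picard-group argument is conceptually appealing, and the computation of $K_{E}$ via Jiang's Euler sequence and the cotangent triangle for $f$ is correct. However, there is a genuine gap: you invoke irreducibility of $\pi_{0}(E)$ without justification, and nothing in the hypotheses guarantees it. The scheme $X$ need not be connected, and even over connected $X$ the classical truncation $\pi_{0}(\mb{P}_{X}(C_{f}))=P_{\pi_{0}(X)}(H^{0}(C_{f}))$ can be reducible when $H^{0}(C_{f})$ jumps rank. Without irreducibility, a line bundle trivial on $\mb{B}l_{f}\setminus E$ is only of the form $\mc{O}(\sum_{i}a_{i}E_{i})$, and your restriction step becomes entangled with the intersection pattern of the $E_{i}$ rather than yielding a single exponent $j$. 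A related issue is that concluding $j-1=-c$ from $\mc{O}_{E}(j-1)\cong\mc{O}_{E}(-c)$ requires $n\mapsto\mc{O}_{E}(n)$ to be injective into $\mathrm{Pic}(E)$, which also needs an argument. Both difficulties are bypassed by the paper's direct local computation.
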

\begin{corollary}[Grauert-Riemenschneider vanishing theorem, \cref{grau}]
  \label{cor:gra}
  If the relative virtual dimension $vdim(X)-vdim(Y)<0$, $\mb{B}l_{f}$ is smooth and $\pi_{0}(Y)-\pi_{0}(X)$ is not empty, we have $\mbf{R}pr_{f*}K_{\mb{B}l_{f}}\cong K_{Y}$.
\end{corollary}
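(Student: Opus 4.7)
The plan is to obtain the Grauert--Riemenschneider vanishing as an immediate consequence of the discrepancy formula (\cref{disc}) and the vanishing theorem (\cref{thm:van}), tied together by the projection formula for the proper morphism $pr_{f}$. Set $r := vdim(X) - vdim(Y)$, which is strictly negative by hypothesis. The smoothness of $\mb{B}l_{f}$ together with the quasi-smoothness of $Y$ ensures that both $K_{\mb{B}l_{f}}$ and $K_{Y}$ are genuine line bundles (via Sch\"urg--To\"en--Vezzosi and Jiang), so the pullbacks and tensor products below are all well-defined in the usual sense.

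First I would apply the discrepancy formula to rewrite
$$K_{\mb{B}l_{f}} \cong \mbf{L}pr_{f}^{*} K_{Y} \otimes \mc{O}_{\mb{B}l_{f}}(r+1).$$
Since $pr_{f}$ is proper, the projection formula in the derived category yields
$$\mbf{R}pr_{f*} K_{\mb{B}l_{f}} \cong K_{Y} \otimes \mbf{R}pr_{f*} \mc{O}_{\mb{B}l_{f}}(r+1).$$
It then suffices to prove $\mbf{R}pr_{f*} \mc{O}_{\mb{B}l_{f}}(r+1) \cong \mc{O}_{Y}$.

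For this I would invoke \cref{thm:van} with twist $l = r+1$. Because $r < 0$, the integer $l = r+1$ satisfies $r < l \leq 0$, which is precisely the admissible range in the vanishing theorem. Hence $\mbf{R}pr_{f*} \mc{O}_{\mb{B}l_{f}}(r+1) \cong \mc{O}_{Y}$, and substituting back gives $\mbf{R}pr_{f*} K_{\mb{B}l_{f}} \cong K_{Y}$, as desired.

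The real work has already been carried out in the two inputs, so no serious new obstacle remains at this stage. The only point that deserves a careful check is the numerical matching of the twist $l = r+1$ with the strict inequality $r < l$ in \cref{thm:van}; this is exactly why the hypothesis $vdim(X) - vdim(Y) < 0$ is imposed strictly rather than merely as $\leq 0$. The non-emptiness of $\pi_{0}(Y) - \pi_{0}(X)$ is already built into the hypotheses of \cref{disc} and guarantees that the comparison of canonical bundles is genuine rather than degenerate.
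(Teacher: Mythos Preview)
Your proof is correct and follows exactly the route the paper intends: the paper states \cref{grau} as an immediate corollary of the discrepancy formula \cref{lem:1.5} and the vanishing theorem \cref{020655}, combined via the projection formula, with the twist $l=r+1$ landing in the admissible range precisely because $r<0$. There is nothing to add.
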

\begin{conjecture}
  \label{conj:2}
We can remove the assumptions that $\mb{B}l_{f}$ is smooth and $\pi_{0}(Y)-\pi_{0}(X)$ is not empty in \cref{disc} and \cref{cor:gra}.  
\end{conjecture}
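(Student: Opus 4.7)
The plan is to replace the reduction to the classical blow-up $Bl_{\pi_{0}(f)}$, which forces the two auxiliary hypotheses in \cref{disc} and \cref{cor:gra}, by a direct derived computation of the cotangent complex of $\mb{B}l_{f}$ using only Hekking's and Jiang's theories. These are insensitive both to whether $\pi_{0}(Y) - \pi_{0}(X)$ is empty and to whether $\mb{B}l_{f}$ happens to be smooth as a classical scheme.

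First I would establish the discrepancy formula at the level of determinants of the cotangent complex. From the cotangent triangle $\mbf{L}pr_{f}^{*} L_{Y} \to L_{\mb{B}l_{f}} \to L_{\mb{B}l_{f}/Y}$, taking determinants---which are well-defined since $Y$ and $\mb{B}l_{f}$ are both quasi-smooth by \cref{cor11}---yields
$$ K_{\mb{B}l_{f}} \cong \mbf{L}pr_{f}^{*} K_{Y} \otimes \det L_{\mb{B}l_{f}/Y}. $$
It then remains to identify $\det L_{\mb{B}l_{f}/Y} \cong \mc{O}_{\mb{B}l_{f}}(vdim(X)-vdim(Y)+1)$. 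For this I would combine Hekking's construction of $\mb{B}l_{f}$ as carrying a universal virtual Cartier divisor $\mb{P}_{X}(C_{f}) \hookrightarrow \mb{B}l_{f}$ with conormal $\mc{O}_{\mb{B}l_{f}}(1)|_{\mb{P}_{X}(C_{f})}$, with Jiang's cotangent-complex formula for $\mb{P}_{X}(C_{f})$ over $X$. In the classical case this recovers the standard identification $\det L_{Bl/Y} \cong \mc{O}((c-1)E)$, where $c$ is the codimension and $E$ the exceptional divisor; the task is to upgrade this to the derived Proj of the derived Rees algebra.

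Once the discrepancy formula is unconditional, the Grauert-Riemenschneider vanishing follows from \cref{thm:van} and the projection formula: writing $r = vdim(X)-vdim(Y)$,
$$ \mbf{R}pr_{f*} K_{\mb{B}l_{f}} \cong K_{Y} \otimes \mbf{R}pr_{f*} \mc{O}_{\mb{B}l_{f}}(r+1). $$
When $r < 0$ we have $r < r+1 \leq 0$, so \cref{thm:van} applies and yields $\mbf{R}pr_{f*} \mc{O}_{\mb{B}l_{f}}(r+1) \cong \mc{O}_{Y}$, giving $\mbf{R}pr_{f*} K_{\mb{B}l_{f}} \cong K_{Y}$ as required.

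The hard part will be the determinant computation of $L_{\mb{B}l_{f}/Y}$ in the genuinely quasi-smooth setting. Hekking's model for $\mb{B}l_{f}$ is the derived Proj of a graded derived Rees algebra, and tracking how the grading contributes to $\det L_{\mb{B}l_{f}/Y}$, as opposed to merely its rank, is delicate when the relative cotangent complex carries nontrivial $H^{-1}$. A secondary subtlety is the case $\pi_{0}(Y) = \pi_{0}(X)$: the entire $\mb{B}l_{f}$ then lies over the exceptional locus, so one cannot extend the formula from the open complement, and every step must be verified intrinsically via Jiang's projectivization and Hekking's universal property rather than by deformation from the non-degenerate regime.
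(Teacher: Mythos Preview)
The statement you are addressing is \cref{conj:2}, which the paper leaves as an open conjecture; there is no proof in the paper to compare against. What the paper does contain is a proof of both the discrepancy formula and the Grauert--Riemenschneider statement in the \emph{local model} (the appendix, \cref{lem:1.5} and \cref{grau}), without any smoothness or non-emptiness hypotheses. The remark immediately following \cref{conj:2} explains why the paper stops there: ``To prove \cref{conj:1} and \cref{conj:2}, we will need a descent theory (or a global theory), to prove that the local computations descend to a global one.'' So the missing ingredient, as the paper sees it, is a gluing argument showing the local isomorphisms of line bundles are compatible on overlaps.

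Your strategy is sound as an outline and in one place is genuinely complete: the deduction of \cref{cor:gra} from \cref{disc} via the projection formula and \cref{thm:van} is correct and uses only results the paper already proves unconditionally. This matches exactly how the paper derives \cref{grau} from \cref{lem:1.5} in the local model. For the discrepancy formula itself, your reduction to identifying $\det L_{\mb{B}l_f/Y}$ is the natural intrinsic reformulation, but you correctly flag that this computation is the hard part and do not carry it out. That computation is essentially the same obstacle the paper names: the local models depend on a choice of presentation of $X$ and $Y$ as derived zero loci, and one must show that the resulting local identifications of $\det L_{\mb{B}l_f/Y}$ with $\mc{O}_{\mb{B}l_f}(r+1)$ are canonical enough to glue. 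Your proposal to extract this from Hekking's derived Rees algebra description is a reasonable line of attack, but it is a plan rather than a proof, and it does not yet go beyond what the paper's own remark anticipates.
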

\begin{remark}
  To prove the theorems in this subsection, we develop the local theory of derived blow-ups, i.e. describe the (Zariski) local neighborhood of $\mb{B}l_{f}$. To prove \cref{conj:1} and \cref{conj:2}, we will need a descent theory (or a global theory), to prove that the local computations descend to a global one.
\end{remark}

Finally, we apply the derived blow-up theory to the triple moduli spaces.
We consider the following derived enhancement of $\mf{Z}_{k}^{\pm}(\mbf{v}^{1})$
$$\mb{R}\mf{Z}_{k}^{-}(\mbf{v}^{1}):=\mf{P}(\mbf{v}^{1},\mbf{v}^{2})\times^{\mb{L}}_{\mf{M}(\mbf{v}^{0},\mbf{w})}\mf{P}(\mbf{v}^{1'},\mbf{v}^{2}),\quad \mb{R}\mf{Z}_{k}^{+}(\mbf{v}^{1}):=\mf{P}(\mbf{v}^{1},\mbf{v}^{2})\times_{\mf{M}(\mbf{v}^{2},\mbf{w})}^{\mb{L}}\mf{P}(\mbf{v}^{1'},\mbf{v}^{2})$$
together with the diagonal embeddings
\begin{align*}
  \mb{R}\Delta_{p(\mbf{v}^{0},\mbf{v}^{1})}:\mf{P}(\mbf{v}^{0},\mbf{v}^{1})\to \mb{R}\mf{Z}_{k}^{-}(\mbf{v}^{1}),\quad \mb{R}\Delta_{q(\mbf{v}^{1},\mbf{v}^{2})}:\mf{P}(\mbf{v}^{1},\mbf{v}^{2})\to \mb{R}\mf{Z}_{k}^{+}(\mbf{v}^{1}).
\end{align*}
The following theorem gives the enhancement of the blow-up of the diagonal embeddings:
\begin{theorem}[\cref{020658}]
  Both $\mb{B}l_{\mb{R}\Delta_{p(\mbf{v}^{0},\mbf{v}^{1})}}$ and $\mb{B}l_{\mb{R}\Delta_{q(\mbf{v}^{1},\mbf{v}^{2})}}$ are smooth schemes, and we have
$$\mb{B}l_{\mb{R}\Delta_{p(\mbf{v}^{0},\mbf{v}^{1})}}\cong Bl_{\Delta_{p(\mbf{v}^{0},\mbf{v}^{1})}}, \quad \mb{B}l_{\mb{R}\Delta_{q(\mbf{v}^{1},\mbf{v}^{2})}}\cong Bl_{\Delta_{q(\mbf{v}^{1},\mbf{v}^{2})}}.$$
\end{theorem}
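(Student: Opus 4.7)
The plan is to treat the $-$ case (the $+$ case is identical) and prove the two assertions in order: smoothness of $\mb{B}l_{\mb{R}\Delta_{p(\mbf{v}^0,\mbf{v}^1)}}$ via the gluing theorem \cref{020656}, and then the comparison $\mb{B}l_{\mb{R}\Delta_{p(\mbf{v}^0,\mbf{v}^1)}}\cong Bl_{\Delta_{p(\mbf{v}^0,\mbf{v}^1)}}$ via \cref{020651}. As a first step I observe that $\mb{R}\mf{Z}^{-}_{k}(\mbf{v}^{1})$, being the derived fiber product of smooth schemes over a smooth scheme, is quasi-smooth, and the source $\mf{P}(\mbf{v}^{0},\mbf{v}^{1})$ of the diagonal embedding is itself smooth. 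Applying \cref{cor11} to this closed embedding of quasi-smooth derived schemes, I conclude that $\mb{B}l_{\mb{R}\Delta_{p(\mbf{v}^{0},\mbf{v}^{1})}}$ is at least quasi-smooth.

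To promote quasi-smoothness to smoothness using the gluing theorem, I need to verify smoothness of the two pieces that glue to form $\mb{B}l_{\mb{R}\Delta_{p}}$. The open complement $\mb{R}\mf{Z}_{k}^{-}(\mbf{v}^{1})-\mf{P}(\mbf{v}^{0},\mbf{v}^{1})$ is handled by noting that, away from the diagonal locus, the two Hecke projections to $\mf{M}(\mbf{v}^{0},\mbf{w})$ meet transversally, so the derived fiber product agrees with the classical one; by \cref{012531} this open piece is identified with $\mf{Z}_{k}^{+}(\mbf{v}^{1})-\mf{P}(\mbf{v}^{1},\mbf{v}^{2})$, which is a smooth open subscheme (a standard property of Hecke correspondences). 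For the virtual exceptional divisor $\mb{P}_{\mf{P}(\mbf{v}^{0},\mbf{v}^{1})}(C_{\mb{R}\Delta_{p}})$, I use the standard triangle for the diagonal of a derived fiber product to identify $C_{\mb{R}\Delta_{p}}$ with a two-term perfect complex of locally free sheaves built from $\Omega_{\mf{P}(\mbf{v}^{0},\mbf{v}^{1})}$ and $p^{*}\Omega_{\mf{M}(\mbf{v}^{0},\mbf{w})}[-1]$, then invoke Jiang's derived projectivization theory to realize this as a smooth projective bundle over $\mf{P}(\mbf{v}^{0},\mbf{v}^{1})$. Once both pieces are smooth, \cref{020656} yields smoothness of $\mb{B}l_{\mb{R}\Delta_{p}}$, and since $\pi_{0}(\mb{R}\mf{Z}_{k}^{-})-\pi_{0}(\mf{P}(\mbf{v}^{0},\mbf{v}^{1}))$ is nonempty, \cref{020651} delivers $\mb{B}l_{\mb{R}\Delta_{p}}\cong Bl_{\Delta_{p}}$.

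The main obstacle is the smoothness of the virtual exceptional divisor $\mb{P}_{\mf{P}(\mbf{v}^{0},\mbf{v}^{1})}(C_{\mb{R}\Delta_{p}})$, because smoothness is not automatic for Jiang's projectivization of an arbitrary two-term complex: one needs the local presentation of the conormal complex to have the shape of a perfect obstruction with constant rank behavior, so that the projectivization acquires a genuine projective bundle structure rather than only a derived one. This is where the specific geometry of Nakajima quiver varieties enters — the ADHM moment map equations, together with the local presentation of the Hecke correspondence recorded in the earlier sections, force the cotangent complex of $\mb{R}\Delta_{p}$ to take precisely the form that makes Jiang's theory produce a smooth variety. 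Conceptually this is the same input which guarantees smoothness of Negu\c{t}'s quadruple space $\mf{Y}_{k}(\mbf{v}^{1})$ in \cref{021147}, and in fact the derived enhancement of \cref{thm:main} identifies $\mb{B}l_{\mb{R}\Delta_{p}}$ with $\mf{Y}_{k}(\mbf{v}^{1})$, so smoothness of the virtual exceptional divisor may also be read off from the explicit quadruple description. The verification for $\mb{R}\Delta_{q(\mbf{v}^{1},\mbf{v}^{2})}$ is entirely parallel, completing the proof.
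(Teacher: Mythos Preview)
Your overall architecture matches the paper's: apply the gluing criterion (via \cref{cor7}, which packages \cref{020656} and \cref{020651}) after checking smoothness of the open complement and of the virtual exceptional divisor. The open complement is indeed handled by \cref{012531}, as you say.

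The genuine gap is the step you yourself flag as the main obstacle: smoothness of $\mb{P}_{\mf{P}(\mbf{v}^{0},\mbf{v}^{1})}(C_{\mb{R}\Delta_{p}})$. You never actually prove it. The identification $C_{\mb{R}\Delta_{p}}\cong L_{p(\mbf{v}^{0},\mbf{v}^{1})}$ is correct, but asserting that ``the ADHM moment map equations \ldots\ force the cotangent complex to take precisely the form that makes Jiang's theory produce a smooth variety'' is not a proof, and the fallback appeal to \cref{thm:main} is circular: that theorem is proved \emph{using} the present statement (see the proof of \cref{0213711}, which cites \cref{020658}).

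What the paper does here is the content of \cref{0206412}. The mechanism is: (i) the Lagrangian property \cref{020527} and \cref{020543} give $L_{p(\mbf{v}^{0},\mbf{v}^{1})}\cong T_{q\times t(\mbf{v}^{0},\mbf{v}^{1})}[1]$; (ii) since $\mf{P}(\mbf{v}^{0},\mbf{v}^{1})$ and $\mf{P}(\mbf{v}^{1},\mbf{v}^{2})$ are themselves derived projectivizations of $\mc{I}_{k}(\mbf{v}^{1})$ and $\mc{I}_{k}^{\vee}(\mbf{v}^{1})$ over $\mf{M}(\mbf{v}^{1},\mbf{w})\times\mbf{L}_{k}$ (\cref{020147}), Jiang's homological projective duality \cref{0205410} identifies the underlying scheme of $\mb{P}_{\mf{P}(\mbf{v}^{0},\mbf{v}^{1})}(L_{p})$ with $\mf{J}_{k}(\mbf{v}^{1})$; (iii) the independent smoothness and dimension computation for $\mf{J}_{k}(\mbf{v}^{1})$ in \cref{020531} then matches the virtual dimension of the derived projectivization, forcing the latter to be classical and smooth. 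None of these ingredients---the Lagrangian identity, the projective-duality step, or the prior smoothness of $\mf{J}_{k}(\mbf{v}^{1})$---appears in your argument, and they are not replaceable by a generic statement about two-term complexes.
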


\subsection{Relation with the quantum loop groups}
After Nakajima \cite{10.2307/2646205}, there are several approaches generalizing the quantum loops to quivers with edge loops, like Maulik-Okounkov \cite{MR3951025}, Schiffmann-Vasserot \cite{schiffmann17:hall} and Kontsevich-Soibelman \cite{kontsevich2010cohomological}. Given a quiver $Q=(I,E)$ with a maximal set of deformation parameters, Negu\c{t}-Sala-Schiffmann \cite{neguct2021shuffle} defined a quantum loop group $U_{Q}^{+}$, with generators indexed by $I\times \mb{Z}$ and some explicit quadratic and cubic relations. Moreover, they proved that $U_{Q}^{+}$ is isomorphic to localized $K$-theoretic Hall algebra of $Q$, and thus acts on the Grothendieck group of Nakajima quiver varieties. Moreover, they constructed a Drinfeld double of $U_{Q}^{+}$, which they denoted as $U$.

As a corollary of \cref{thm:main}, in \cite{zhao2023} we will prove that
\begin{theorem}[\cite{zhao2023}]
  The action of $U_{Q}^{+}$ on the Grothendieck group of quiver varieties can be generalized to an action of $U$ through the Hecke correspondences. Moreover, the (categorical) commutator of positive and negative parts can be explicitly represented by complexes of tautological locally free sheaves on the quiver varieties.
\end{theorem}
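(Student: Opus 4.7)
The strategy is to produce the $U$-action by convolution with Hecke correspondences, then compute the mixed $E$--$F$ commutator by replacing the classical triple moduli spaces $\mf{Z}^{\pm}_{k}(\mbf{v}^{1})$ with their derived enhancements $\mb{R}\mf{Z}^{\pm}_{k}(\mbf{v}^{1})$ and invoking the common resolution $\mf{Y}_{k}(\mbf{v}^{1})$ of \cref{thm:main} together with the pushforward formulas of \cref{intro4}.

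First I would set up the action. For each $k\in I$ and $d\in \mb{Z}$, define $E_{k,d}$ (respectively $F_{k,d}$) as the convolution operator on $\bigoplus_{\mbf{v}} K(\mf{M}(\mbf{v},\mbf{w}))$ obtained from the Hecke correspondence $\mf{P}(\mbf{v}^{1},\mbf{v}^{2})$ twisted by the $d$-th power of its tautological line bundle, using the $q$-pushforward after $p$-pullback (respectively with $p,q$ swapped). The positive half coincides with the $U_{Q}^{+}$-action of Negu\c{t}-Sala-Schiffmann, so the quadratic and cubic relations among the $E_{k,d}$'s (and dually among the $F_{k,d}$'s) are already established; the Cartan generators are defined by their action on the classes of the tautological sheaves, and the mixed Cartan-to-$E,F$ relations reduce to the projection formula on $\mf{P}$.

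The core of the argument is the double-shuffle relation
\[
  E_{k}(z)F_{l}(w)-F_{l}(w)E_{k}(z)\;=\;\delta_{kl}\,(\text{Cartan current}),
\]
which for $k\neq l$ follows from transversality of the two Hecke correspondences and for $k=l$ requires a derived base-change on $\mb{R}\mf{Z}^{\pm}_{k}(\mbf{v}^{1})$. By the derived enhancement of \cref{thm:main}, both $\mb{R}\mf{Z}^{\pm}_{k}(\mbf{v}^{1})$ admit $\mf{Y}_{k}(\mbf{v}^{1})$ as the common derived blow-up along the diagonals $\mb{R}\Delta_{p}$ and $\mb{R}\Delta_{q}$. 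Writing $EF$ and $FE$ as derived pushforwards from $\mb{R}\mf{Z}^{+}_{k}$ and $\mb{R}\mf{Z}^{-}_{k}$ respectively, the vanishing theorem \cref{thm:van} (on the side of negative relative virtual dimension) and the formula \cref{eq:noref} supplied by \cref{conj:1} (on the other side) express each side as a common pushforward from $\mf{Y}_{k}(\mbf{v}^{1})$ plus a correction supported on the diagonal Hecke correspondence $\mf{P}$. The common piece cancels in the commutator.

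The surviving diagonal contributions must be matched with an explicit complex of tautological bundles. Since $\mf{P}(\mbf{v}^{1},\mbf{v}^{2})$ is cut out inside the ambient product of quiver varieties by a tautological ADHM-type section, the conormal complexes $C_{\Delta_{p}}$ and $C_{\Delta_{q}}$ unfold into complexes built from the tautological sheaves on $\mf{M}(\mbf{v}^{1},\mbf{w})$; plugging these into the triangle \cref{eq:triangle} and applying the discrepancy formula \cref{disc} yields a concrete exterior-algebra expression that one identifies with the Cartan current, giving the categorified commutator. The main obstacle I expect is the asymmetric status of the two inputs \cref{thm:van} and \cref{conj:1}: one must verify that $\mb{R}\Delta_{p}$ and $\mb{R}\Delta_{q}$ fall within the zero-locus setting of \cref{sec:app}, where \cref{conj:1} is proved, so that the formula for $\mbf{R}pr_{f*}\mc{O}_{\mb{B}l_{f}}(l)$ is available for all $-r\leq l\leq 0$ required to exhaust the Cartan current. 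A secondary difficulty is lifting the identity from $K$-theory to a genuine equality in $D^{b}_{coh}$, which forces one to upgrade the convolutions to Fourier-Mukai functors and to check that the derived pushforward formulas of \cref{intro4} hold functorially and not merely at the level of classes.
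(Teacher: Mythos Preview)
The paper does not contain a proof of this theorem. The statement is an announcement of a result to appear in the separate paper \cite{zhao2023}; in the present text it is introduced by ``As a corollary of \cref{thm:main}, in \cite{zhao2023} we will prove that'' and no argument is given beyond that sentence. There is therefore nothing here to compare your proposal against.

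Your sketch is broadly consistent with the spirit of the announcement: the paper signals that the derived blow-up isomorphism of \cref{thm:main} and the pushforward machinery of \cref{intro4} are the intended inputs. But a few points in your proposal go beyond what this paper supplies. You invoke \cref{conj:1} (and hence \cref{eq:noref}) as if it were available for the diagonals $\mb{R}\Delta_{p}$, $\mb{R}\Delta_{q}$; in this paper \cref{conj:1} is only a conjecture, proved in \cref{sec:app} under the hypothesis that $Y$ is a derived zero locus in a smooth ambient and $X$ is smooth, and you would need to check that the diagonal embeddings into $\mb{R}\mf{Z}^{\pm}_{k}(\mbf{v}^{1})$ fit that setting (they do, via \cref{nest} and \cref{020934}, but this is work you must do). You also appeal to \cref{disc} for the discrepancy formula, but in the present paper that formula is only established when $\mb{B}l_{f}$ is smooth (see the remark after \cref{020656} and \cref{conj:2}); smoothness of $\mb{B}l_{\mb{R}\Delta_{p}}$, $\mb{B}l_{\mb{R}\Delta_{q}}$ is indeed supplied by \cref{020658}, so this is fine, but you should say so explicitly. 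Finally, your last paragraph correctly flags the categorical lift as a genuine additional step not addressed here.
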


\subsection{Relation with the stable objects in an abelian category of projective dim $2$}
While this paper focuses on quiver varieties, i.e. the moduli space  of stable representations of preprojective algebras, all the machinery in this paper could be easily generalized to the stable objects in an abelian category of projective dimension $2$, under mild conditions of Chern characters. We will explain the details in further work, and see \cite{zhao2020categorical}\cite{zhao2021moduli} for the abelian category of coherent sheaves over an algebraic surface. 

\subsection{Organization} In \cref{sec2}, we introduce the cohomology theory of preprojective algebra representations. In \cref{sec3}, we introduce Crawley-Boevey's construction and Nakajima quiver variety. In \cref{sec4}, we introduce the Hecke correspondence. In \cref{sec5}, we introduce several nested quiver varieties. We also introduce Negu\c{t}'s quadruple moduli space and prove its smoothness. In \cref{sec6}, we introduce the derived blow-up for the diagonal and prove \cref{thm:main}. In the appendix, we review Jiang's projectivization theory and Hekking's blow-up theory, with a focus on quasi-smooth schemes. We also explain the relationship between the derived blow-up and the virtual fundamental class.

\subsection{Notations}

\subsubsection{Quivers}
A quiver $Q=(I,E)$ is a directed graph, where $I$ is the vertex set and $E$ is the arrow set. We denote $in,out: E\to I$ as the source and target map. For any $e\in E$, we define $\bar{e}$ as an arrow with opposite orientation, i.e. $in(e)=out(\bar{e})$ and $in(\bar{e})=out(e)$. We denote $\overline{E}=\{\bar{e}|e\in E\}$ and $E^{\#}=E\cup \overline{E}$. We denote $Q^{\#}=(I,E^{\#})$ as the associated double quiver. For $\bar{e}\in \overline{E}$, we denote $\overline{\bar{e}}:=e$.

An edge $e\in E^{\#}$ is called a loop if $in(e)=out(e)$, and we say that the origin of $e$ is $in(e)$. Given $k\in I$, we denote the quiver $Q_{k}:=(\{k\},E_{k})$, where
$$E_{k}:=\{e\in E|in(e)=out(e)=k\}.$$
We denote $g_{k}$ as the number of loops in $E_{k}$.
\subsubsection{$I$-graded vector spaces and dimension vectors}
Let $I$ be a finite set. We denote a dimension vector $\mbf{v}$ as an element in $\mb{Z}_{\geq 0}^{I}$. We denote an $I$-graded vector space as $H=(H_{k})_{k\in I}$, where all $H_{k}$ are finite dimensional vector spaces. We define $dim(H)=(dim(H_{k}))_{k\in I}$ as the dimension of $H$. We say $H'$ is an $I$-graded subspace of $H$ if $H'=(H_{k}')_{k\in I}$, where for each $k\in I$, $H_{k'}$ is a subspace oh $H_{k}$.

Given two $I$-graded vector spaces $G,H$, we denote the $I$-graded vector space $I(G,H)=\oplus_{k}Hom(G_{k},H_{k})$. An element $\zeta\in I(G,H)$ is called an $I$-graded homomorphism. We denote $ker(\zeta):=\oplus_{k\in I} ker(\zeta_{k})$ which is an $I$-graded subspace of $G$. We denote $im(\zeta):=\oplus_{k\in I}im(\zeta_{k})$, which is an $I$-graded subspace of $H$. We denote $coker(\zeta):=\oplus_{k\in I}coker(\zeta_{k})$, which is also $I$-graded.

  Given an $I$-graded vector space $G$ and $C\in I(G,G)$, we denote $tr_{I}(C):=(tr(C_{h}))_{h\in I}\in \mb{C}^{I}$ and $tr(C):=\sum_{h\in I}tr(C_{h})\in \mb{C}$.

For $k\in I$, we define the dimension vector $\delta_{k}:=(\delta_{ki})_{i\in I}$, where $\delta_{kj}$ is the Kronecker symbol. For each dimension vector $\mbf{v}$, we can associate an $I$-graded vector space $\mb{C}^{\mbf{v}}:=(\mb{C}^{\mbf{v}_{k}})_{k\in I}$. For $k\in I$, we denote $\mb{C}_{k}:=\mb{C}^{\delta_{k}}$.

  For the sake of convenience, we will use a bold font like $\mbf{v},\mbf{w},$, etc. to denote the dimension vectors, and the capital letters like $V,W$, etc. to denote the $I$-graded vector space $\mb{C}^{\mbf{v}},\mb{C}^{\mbf{w}}$, etc.

\subsubsection{$E^{\#}$-graded homomorphisms and multiplication}
Let $Q=(I,E)$ be a quiver. Given two $I$-graded vector spaces $G,H$, we define
\begin{align*}
   E(G,H):= \bigoplus_{h\in E^{\#}}Hom(G_{in(h)},H_{out(h)}).
\end{align*}

\begin{example}
  Given $k\in I$, for any $I$-graded vector space $G$, we have
  \begin{align*}
  I(G,\mb{C}_{k})=G_{k}^{\vee},\quad E(G,\mb{C}_{k})=\oplus_{out(h)=k}G_{in(h)}^{\vee}, \\
  I(\mb{C}_{k},G)=G_{k},\quad E(\mb{C}_{k},G)=\oplus_{in(h)=k}G_{out(h)}.
\end{align*}
\end{example}
Given $B\in E(G,H)$ and $G'$ as an $I$-graded subspace of $G$, we say that $G'$ is $B$-invariant if for any $h\in E^{\#}$, $B_{h}G'_{in(h)}\subset G'_{out(h)}$. We define the multiplication in the following way:
\begin{enumerate}
\item for $B=(B_{h})\in E(G,H)$ and $C=(C_{h})\in E(H,K)$, we define
$$CB:=(\sum_{in(h)=k}C_{h}B_{\bar{h}})_{k}\in I(G,K);$$
\item for $B=(B_{k})\in I(G,H)$ and  $C=(C_{h})\in E(H,K)$, we define
$$CB:=(C_{h}B_{in(h)})_{h\in E^{\#}}\in E(G,K);$$
\item for $B=(B_{h})\in E(G,H)$ and  $C=(C_{k})\in I(H,K)$, we define
$$CB:=(C_{out(h)}B_{h})_{h\in E^{\#}}\in E(G,K);$$
\item for $B=(B_{k})\in I(G,H), C=(C_{k})\in I(H,V_{K})$, we define $$CB=(C_{k}B_{k})_{k\in I}\in I(G,K).$$
\end{enumerate}

\subsubsection{Derived schemes}
A derived scheme is a pair $X=(|X|,\mc{O}_{X})$,where $|X|$ is a topological space and $\mc{O}_{X}$ is a sheaf on $|X|$ with values in the $\infty$-category of simplicial commutative rings such that the ringed space $X^{cl}=(|X|,\pi_{0}(\mc{O}_{X}))$ is a scheme and the sheaf $\pi_{n}\mc{O}_{X}$ is a quasi-coherent $\pi_{0}\mc{O}_{X}$-module for each $n>0$. We will also denote $X^{cl}$ as $\pi_{0}(X)$, and say it as the underlying scheme of $X$. Here, all derived schemes will be defined over $\mb{C}$, hence derived schemes can be modeled
locally by dg-algebras rather than simplicial ones.

For a morphism of derived schemes $f:X\to Y$, we denote $L_{f}$ as the cotangent complex $f$. We define the tangent complex $T_{f}$ as the derived dual of $L_{f}$ and the conormal complex $C_{f}:=L_{f}[-1]$. We say that the morphism $f:X\to Y$ is quasi-smooth if $L_{f}$ is perfect with Tor-amplitude $[0,1]$ and constant rank. We say that the morphism $f:X\to Y$ is smooth if $L_{f}$ is a locally free sheaf of constant rank over $X$. Given a quasi-smooth morphism $f:X\to Y$, we define the virtual dimension $vdim(f):=rank(L_{f})$ and the canonical line bundle $K_{f}:=det(L_{f})$.
\begin{example}
When $f:X\to Y$ is a morphism of smooth schemes, we have
$$L_{f}\cong \{f^{*}T^{*}Y\to T^{*}X\},\quad T_{f}\cong \{TX\to f^{*}TY\}[-1]$$
and thus $f$ is quasi-smooth.
\end{example}
\begin{example}
    The composition of two quasi-smooth morphisms is also quasi-smooth.
\end{example}
We say that a derived scheme $X$ is classical if $\pi_{0}(X)\cong X$. In this paper, we always assume that $X$ is finite type over $Spec(\mb{C})$. Given a derived scheme $X$, we say $X$ is smooth or quasi-smooth if the canonical morphism to $Spec(\mb{C})$ is smooth or quasi-smooth.
Given a derived scheme $X$ and a global section of locally free sheaf $f:\mc{O}_{X}\to V$, we denote $\mb{R}\mc{Z}(f)$ as the derived zero locus of $f$. The closed embedding $\mb{R}\mc{Z}(f)\to X$ is always quasi-smooth.

\subsubsection{The derived projectivization and the derived blow up}
Let $X$ be a derived scheme and $F\in QCoh(X)$, we denote $\mb{P}_{X}(F)$ as the derived projectivization of Jiang \cite{jiang2022derived}, and $pr_{F}:\mb{P}_{X}(F)\to X$ as the projection morphisms. Over $\mb{P}_{X}(F)$, there is a universal line bundle which we denote as $\mc{O}_{\mb{P}_{X}(F)}(1)$.

Let $f:X\to Y$ be a closed embedding of derived schemes, we denote $\mb{B}l_{f}$ as the derived blow-up of $f$, and $pr_{f}:\mb{B}l_{f}\to Y$ as the projection morphism. We denote $\mc{O}_{\mb{B}l_{f}}(-1)$ as the virtual exceptional divisor of $\mb{B}l_{f}$.
\subsection{Acknowledgements} The author would like to thank Andrei Negu\c{t}, Qingyuan Jiang, and Yukinobu Toda for many helpful discussions, with special thanks to Jeroen Hekking for sharing his thesis with the author. The author is supported by World Premier International Research Center Initiative (WPI initiative), MEXT, Japan, and Grant-in-Aid for Scientific Research grant  (No. 22K13889) from JSPS Kakenhi, Japan.
\section{Quivers, and Representations of Preprojective Algebras}
\label{sec2}

In this section, we review the cohomology theory of preprojective algebra representations. In this section, we fix a quiver $Q=(I,E)$.

\subsection{The preprojective algebra and representations}
We fix a function $\epsilon: E^{\#}\to \mathbb{C}^{*}$ such that $\epsilon(h)+\epsilon(\bar{h})=0$ for any $h\in E$. Given $B\in E(G,H)$, we denote $\epsilon B:=\oplus_{h\in E^{\#}}\epsilon(h)B_{h}$. For any $B\in E(G,H)$ and $C\in E(H,K)$, we have $\epsilon BC+B\epsilon C=0$.

\begin{example}
  Given $k\in I$, we have
  $$E(\mb{C}_{k},\mb{C}_{k})=\oplus_{l\in E_{k}^{\#}}\mb{C}\cong \mb{C}^{2g_{k}}$$
  and we denote it as $\mbf{L}_{k}$. We denote $x=(x_{l})_{l\in E_{k}^{\#}}$ as an element of $\mbf{L}_{k}$. There is a symplectic form on $\mbf{L}_{k}$:
$$\omega_{k}(x^{1},x^{2}):= \epsilon(l)x_{l}^{1}x_{l}^{2}.$$
\end{example}

Let $kQ^{\#}$ be the path algebra of $Q^{\#}$, and we denote the preprojective algebra of $Q$ as $$\Pi_{Q}:=kQ^{\#}/\sum_{h\in E^{\#}}\epsilon(h)h\bar{h}.$$ 
 \begin{definition}
  We define an $I$-graded $\Pi_{Q}$-module as a pair $(H,B_{H})$, where $H$ is a finite dimensional $I$-graded vector space $H$ and $B_{H}\in E(H,H)$ such that $\epsilon B_{H}B_{H}=0$. 
 \end{definition}

In this paper, we will omit $B_{H}$ and just say $H$ is an $I$-graded $\Pi_{Q}$-module. We define the dimension of $H$ as its dimension as an $I$-graded vector space.
   Given two $I$-graded $\Pi_{Q}$-modules $G,H$, we define the $Hom$-space
   $$Hom_{\Pi_{Q}}(G,H)=\{\zeta\in I(G,H)|\zeta B_{G}=B_{H}\zeta\}.$$
   Given $\zeta\in Hom_{\Pi_{Q}}(G,H)$, we notice that $ker(\zeta)$, $im(\zeta)$ and $coker(\zeta)$ are also $I$-graded $\Pi_{Q}$-modules. Hence all the $I$-graded $\Pi_{Q}$-modules forms an abelian category.

\begin{example}
Given $k\in I$, for any $x\in \mbf{L}_{k}\cong E(\mb{C}_{k},\mb{C}_{k})$, we have $(\epsilon x)x=0$ and thus $x$ could be regarded as an $I$-graded $\Pi_{Q}$-module.
\end{example}
\subsection{$\mc{E}xt$-complex}
Given two $I$-graded $\Pi_{Q}$ modules $G,H$, we define the complex
$$\mc{E}xt^{\bullet}_{\Pi_{Q}}(G,H):=\{I(G,H)\xrightarrow{\sigma(G,H)}E(G,H)\xrightarrow{\tau(G,H)}I(G,H)\},$$
where
$$\sigma(G,H)(\zeta)=B_{H}\zeta-\zeta B_{G}, \quad \tau(G,H)(J)=\epsilon B_{H}J+\epsilon JB_{G}.$$
We denote
\begin{gather*}
   Ext^{0}_{\Pi_{Q}}(G,H):=ker(\sigma(G,H)),\quad Ext^{2}_{\Pi_{Q}}(G,H):=coker(\tau(G,H)), \\
  Ext^{1}_{\Pi_{Q}}(G,H):=ker(\tau(G,H))/im(\sigma(G,H)).
\end{gather*}
Then we have $Hom_{\Pi_{Q}}(G,H)\cong Ext^{0}_{\Pi_{Q}}(G,H)$.

Given $I$-graded $\Pi_{Q}$-modules $G,H,K$, the multiplication of $I$-graded homomorphisms naturally induces a morphism:
$$\mc{E}xt^{\bullet}_{\Pi_{Q}}(G,H)\otimes \mc{E}xt^{\bullet}_{\Pi_{Q}}(H,K)\to \mc{E}xt^{\bullet}_{\Pi_{Q}}(G,K),$$
and hence induces natural morphisms
$$\theta:Ext^{i}_{\Pi_{Q}}(G,H)\otimes  Ext^{j}_{\Pi_{Q}}(H,K)\to Ext^{i+j}_{\Pi_{Q}}(G,K).$$

The perfect pairing $tr_{\epsilon}$ between $E(G,H)$ and $E(H,G)$:
$$(J^{1},J^{2})\to tr(\epsilon J^{1}J^{2}).$$
induces an isomorphism $\tau(G,H)\cong \sigma(H,G)^{\vee}$ and hence we have the perfect pairing:
\begin{align*}
  \Omega(G,H):Ext^{l}_{\Pi_{Q}}(G,H)\otimes Ext^{2-l}_{\Pi_{Q}}(H,G)\to \mb{C},\quad   a\otimes b \to tr(\theta(a\otimes b)).
\end{align*}
We notice that $\Omega(G,H)(a\otimes b)=(-1)^{m(2-m)}\Omega(H,G)(b\otimes a)$, for $a\in Ext^{m}(G,H)$ and $b\in Ext^{2-m}(H,G)$. Moreover, $\Omega(G,G)$ is an symplectic form on $Ext^{1}(G,G)$ for any $I$-graded $\Pi_{Q}$-module $G$.

Given two dimension vectors $\mbf{v}^{1},\mbf{v}^{2}$, we introduce the bilinear pairing
\begin{gather*}
  \mbf{v}^{1}\bullet \mbf{v}^{2}:=\sum_{k\in I}\mbf{v}_{k}^{1}\mbf{v}_{k}^{2},\\
  <\mbf{v}^{1},\mbf{v}^{2}>_{Q}:=-\sum_{h\in E^{\#}}\mbf{v}_{in(h)}^{1}\mbf{v}_{out(h)}^{2}+2\mbf{v}^{1}\bullet \mbf{v}^{2}.
\end{gather*}
Then for any two $I$-graded $\Pi_{Q}$-modules $G,H$, we have
\begin{equation}
  \label{eq:2.1}
  \sum_{l=0}^{2}(-i)dim(Ext^{i}_{\Pi_{Q}}(G,H))=<dim(G),dim(H)>_{Q}.
\end{equation}
\subsection{Long exact sequences and extensions}
Let
$$ 0\to G\xrightarrow{\zeta_{GH}} H\xrightarrow{\zeta_{HK}} K\to 0$$
be a short exact sequence of $I$-graded $\Pi_{Q}$-modules. Then for another $I$-graded $\Pi_{Q}$-module $L$, we have the short exact sequence of complexes
\begin{align*}
  0\to \mc{E}xt^{\bullet}_{\Pi_{Q}}(L,G)\xrightarrow{\zeta_{GH}\circ -} \mc{E}xt^{\bullet}_{\Pi_{Q}}(L,H)\xrightarrow{\zeta_{HK}\circ -} \mc{E}xt^{\bullet}_{\Pi_{Q}}(L,K)\to 0 \\
   0\to \mc{E}xt^{\bullet}_{\Pi_{Q}}(K,L)\xrightarrow{-\circ \zeta_{HK}}\mc{E}xt^{\bullet}_{\Pi_{Q}}(H,L)\xrightarrow{-\circ\zeta_{GH}}\mc{E}xt^{\bullet}_{\Pi_{Q}}(G,L)\to 0.
\end{align*}
which induce the long exact sequences:
\begin{equation}
  \label{e011103}
      \begin{tikzcd}[column sep=12ex]
   0\ar{r}&   Ext^{0}(K,L)\ar{r}{\theta(\zeta_{HK}\otimes -)}  & Ext^{0}(H,L) \ar{r}{\theta(\zeta_{G H}\otimes -)} & Ext^{0}(G,L)   \\
       \ar{r}{\theta(\zeta_{KG}\otimes -)}&   Ext^{1}(K,L)\ar{r}{\theta(\zeta_{HK}\otimes -)} & Ext^{1}(H,L)\ar{r}{\theta(\zeta_{GH}\otimes -)} & Ext^{1}(G,L) & \\
       \ar{r}{\theta(\zeta_{KG}\otimes -)} &  Ext^{2}(K,L)\ar{r}{\theta(\zeta_{HK}\otimes -)} & Ext^{2}(H,L)\ar{r}{\theta(\zeta_{GH}\otimes -)} & Ext^{2}({B}^{1},L) & \\
       \ar{r} & 0
      \end{tikzcd}
    \end{equation}
    \begin{equation}
      \label{e011104}
      \begin{tikzcd}[column sep=12ex]
        0\ar{r} & Ext_{\Pi_{Q}}^{0}(L,G)\ar{r}{\theta(-\otimes \zeta_{GH})} & Ext_{\Pi_{Q}}^{0}(L,H)\ar{r}{\theta(-\otimes \zeta_{HK})} & Ext_{\Pi_{Q}}^{0}(L,K) &\\
        \ar{r}{\theta(-\otimes\zeta_{KG})} & Ext_{\Pi_{Q}}^{1}(L,G)\ar{r}{\theta(-\otimes \zeta_{GH})} & Ext_{\Pi_{Q}}^{1}(L,H)\ar{r}{\theta(-\otimes \zeta_{HK})} & Ext_{\Pi_{Q}}^{1}(L,K) &\\
        \ar{r}{\theta(-\otimes \zeta_{KG})} & Ext_{\Pi_{Q}}^{2}(L,G)\ar{r}{\theta(-\otimes \zeta_{GH})} & Ext_{\Pi_{Q}}^{2}(L,H)\ar{r}{\theta(-\otimes \zeta_{HK})} & Ext_{\Pi_{Q}}^{2}(L,K) \\
        \ar{r}& 0. 
      \end{tikzcd}
    \end{equation}
    Here $\zeta_{KG}\in Ext^{1}_{\Pi_{Q}}(K,G)$ is the image of $id\in Ext_{\Pi_{Q}}^{0}(G,G)$ in \cref{e011103} where we replace $L$ by $G$, and we denote $\zeta_{KG}$ as the extension class of the short exact sequence.

    \begin{definition}
      Given two $I$-graded $\Pi_{Q}$-modules $G,K$, an extension of $G$ and $K$ is a short exact sequence:
      $$C=\{0\to G\to H\to K\to 0\}$$
      We say two extensions $C$, $C'$ are scalar equivalent, if there exists a commutative diagram of $I$-graded $\Pi_{Q}$-modules:
      \begin{equation*}
        \begin{tikzcd}
          0\ar{r} & G\ar{r} \ar{d}{\lambda_{1}id}& H\ar{r} \ar{d}{\zeta} & K\ar{r}\ar{d}{\lambda_{3}id} & 0 \\
          0 \ar{r} & G\ar{r} & H^{'}\ar{r} & K\ar{r} & 0
        \end{tikzcd}
      \end{equation*}
      such that $\lambda_{1},\lambda_{3}\in \mb{C}^{*}$ and $\zeta$ is an isomorphism. We say $C$ and $C'$ are equivalent if we have the above diagram with $\lambda_{1}=\lambda_{3}=1$. We denote $\mc{M}_{\Pi_{Q}}^{ext}(G,K)$ (resp. $\mc{M}^{\mb{C}^{*}-ext}_{\Pi_{Q}}(G,K)$) the moduli space of extensions of $G$ and $K$ modulo the equivalent (resp. scalar equivalent) relations.
    \end{definition}
    \begin{lemma}
      \label{020916}
  The correspondence
  $$\{0\to G\to H\to K\to 0\} \to \zeta_{KG}$$
  induces isomorphisms:
  $$\mc{M}_{\Pi_{Q}}^{ext}(G,K)\cong Ext^{1}_{\Pi_{Q}}(K,G),\quad \mc{M}_{\Pi_{Q}}^{\mb{C}^{*}-ext}(G,K)\cong [Ext^{1}_{\Pi_{Q}}(K,G)/\mb{C}^{*}]$$
  where the $\mb{C}^{*}$ action is the scalar action. Moreover, the short exact sequence split if and only if $\zeta_{KG}=0$.
\end{lemma}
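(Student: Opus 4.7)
The plan is to construct an explicit inverse to the extension-class map using the complex $\mc{E}xt^{\bullet}_{\Pi_{Q}}(K,G)$, thereby reducing everything to linear algebra on the off-diagonal block of $B_{H}$. First I would verify that the forward map $\{0 \to G \to H \to K \to 0\} \mapsto \zeta_{KG}$ descends to equivalence classes: any equivalence between two extensions induces a morphism of the long exact sequences \cref{e011103} (with $L = G$) which is the identity on $Ext^{0}_{\Pi_{Q}}(G,G)$, so both extensions assign the same image to $\mathrm{id}_{G}$ under the connecting morphism.

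For the inverse, given $\zeta \in Ext^{1}_{\Pi_{Q}}(K,G)$ represented by $J \in \ker(\tau(K,G)) \subseteq E(K,G)$, I would define $H$ on the underlying $I$-graded vector space $G \oplus K$ with structure map in block form
$$B_{H} = \begin{pmatrix} B_{G} & J \\ 0 & B_{K} \end{pmatrix} \in E(G \oplus K, G \oplus K).$$
Computing blockwise, $\epsilon B_{H}B_{H} = 0$ decomposes into $\epsilon B_{G}B_{G} = 0$, $\epsilon B_{K}B_{K} = 0$, and the mixed block $\epsilon B_{G}J + \epsilon J B_{K} = \tau(K,G)(J) = 0$, so $H$ is an $I$-graded $\Pi_{Q}$-module, and the obvious inclusion $G \hookrightarrow H$ and projection $H \twoheadrightarrow K$ produce an extension. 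If $J$ is replaced by $J + \sigma(K,G)(\eta) = J + B_{G}\eta - \eta B_{K}$ for some $\eta \in I(K,G)$, then the $I$-graded isomorphism with matrix $\begin{pmatrix} 1 & \eta \\ 0 & 1 \end{pmatrix}$ yields an equivalence of extensions, so the construction descends to a well-defined map out of $Ext^{1}_{\Pi_{Q}}(K,G)$.

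To show the two maps are mutually inverse, I would start with an arbitrary extension, choose a vector-space splitting to identify $H \cong G \oplus K$ as $I$-graded vector spaces, and observe that $B_{H}$ is then automatically upper-triangular with diagonal blocks $B_{G}, B_{K}$ and off-diagonal entry $J \in \ker(\tau(K,G))$ forced by the $\Pi_{Q}$-relation. A standard snake-lemma computation applied to the defining short exact sequence of $\mc{E}xt^{\bullet}$-complexes just above \cref{e011103} identifies $[J]$ with $\zeta_{KG}$. For the scalar-equivalence statement, I would compute that applying the block-diagonal isomorphism with entries $(\lambda_{1}\mathrm{id}, \lambda_{3}\mathrm{id})$ to the canonical form replaces $J$ by $(\lambda_{1}/\lambda_{3})J$, so scalar-equivalence classes correspond exactly to orbits of the scaling $\mb{C}^{*}$-action on $Ext^{1}_{\Pi_{Q}}(K,G)$. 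The splitting claim is then immediate: the sequence splits iff the vector-space splitting can be chosen to be a module map, iff $J = 0$ for some representative, iff $\zeta_{KG} = 0$.

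The main technical subtlety lies in matching the explicit $J$ arising from the block-matrix construction to the abstract connecting morphism defining $\zeta_{KG}$, since the sign conventions in $\sigma(K,G)$, $\tau(K,G)$ and in the snake lemma must be carefully aligned; however, this is purely bookkeeping and poses no conceptual obstacle.
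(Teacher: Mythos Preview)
Your proposal is correct and follows essentially the same approach as the paper: both construct the inverse map by placing a representative $J \in \ker(\tau(K,G))$ as the off-diagonal block of a triangular matrix on $G \oplus K$, check the preprojective relation reduces to $\tau(K,G)(J)=0$, and observe that modifying $J$ by $\sigma(K,G)(\eta)$ corresponds to conjugation by a unipotent block matrix. Your write-up is in fact more complete than the paper's, which omits the well-definedness of the forward map, the explicit mutual-inverse check, and the scalar/splitting arguments you spell out; the minor sign in your conjugation computation is exactly the bookkeeping you already flag.
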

\begin{proof}
   For $C\in Ext^{1}_{\Pi_{Q}}(K,G)$, we want to construct a extension of $G$ and $K$ with extension class $C$. Let $C_{0}$ be a representative of $C$ in $E(K,G)$. Then $\tau(K,G)(C_{0})=0$. We notice that for $\zeta\in E(K,G)$ such that $\tau(K,G)(\zeta)=0$, the matrix
  $$B=
  \begin{pmatrix}
    B_{K}& C_{0} \\
    0 & B_{G}
  \end{pmatrix} \in E(K\oplus G)
  $$
  induces an $I$-graded $\Pi_{Q}$-module structure $H=(K\oplus G, B)$, with a short exact sequence
  $$0\to G\to H\to K\to 0.$$
  We notice that the above short exact sequence (up to equivalence) is independent of the choice $C_{0}$. It induces an inverse correspondence of $\zeta_{KG}$. The correspondence modulo the scalar equivalence follows from the correspondence modulo the equivalence.
\end{proof}

\subsection{Extensions and Lagragians}
Given $k\in I$, we say a sequence of $I$-graded $\Pi_{Q}$-modules $(G^{0},G^{1},\cdots G^{m})$ is a $k$-good sequence, if
\begin{enumerate}
\item \label{item1}For any $y\in \mbf{L}_{k}$, $Hom_{\Pi_{Q}}(y, G^{m})\cong 0$
\item \label{item2} For any $0\leq a\leq b\leq m$, there exists an injective morphism $\zeta_{G^{a}G^{b}}\in Hom_{\Pi_{Q}}(G^{a},G^{b})$ such that $Hom_{\Pi_{Q}}(G^{a},G^{b})\cong \mb{C}\zeta_{G^{a}G^{b}}$ and $\zeta_{G^{b}G^{c}}\circ \zeta_{G^{a}G^{b}}\cong \zeta_{G^{a}G^{c}}$ for all $0\leq a\leq b\leq c\leq m$.
\item  For $0\leq a<b\leq m$, $Hom_{\Pi_{Q}}(G^{b},G^{a})=0$.
\item For any $0\leq a<m$, $dim(G^{a})+\delta_{k}=dim(G^{a+1})$
\end{enumerate}
We notice that if $m\geq 1$, condition (\ref{item2}) induces that $\zeta_{G^{a}G^{a}}=id_{G^{a}}$ for all $0\leq a\leq m$.

Let $(G^{1},G^{2})$ be a $k$-good sequence, with the short exact sequence:
\begin{equation}
  \label{e021015}
0\to G^{1}\xrightarrow{\zeta_{G^{1}G^{2}}}G^{2}\xrightarrow{\zeta_{G^{2}x}} x\to 0,  
\end{equation}
where $x\in \mbf{L}_{k}$. By condition (\ref{item1}), \cref{e021015} does not split. We consider the following diagram of complexes:
\begin{equation}
  \begin{tikzcd}
    \label{diag:2.4}
   & 0\ar{d} & 0 \ar{d} & 0\ar{d} \\
 0 \ar{r}  & \mc{E}xt_{\Pi_{Q}}^{\bullet}(x, G^{1})\ar{r} \ar{d}& \mc{E}xt_{\Pi_{Q}}^{\bullet}(G^{2},G^{1}) \ar{r}\ar{d}& \mc{E}xt_{\Pi_{Q}}^{\bullet}(G^{1},G^{1}) \ar{d} \ar{r} & 0\\
 0\ar{r}  & \mc{E}xt_{\Pi_{Q}}^{\bullet}(x, G^{1}) \ar{d}\ar{r} & \mc{E}xt_{\Pi_{Q}}^{\bullet}(G^{2},G^{2}) \ar{r}\ar{d} & \mc{E}xt_{\Pi_{Q}}^{\bullet}(G^{1},G^{2})\ar{d} \ar{r} & 0 \\
   0\ar{r} &\mc{E}xt_{\Pi_{Q}}^{\bullet}(x,x)\ar{r} \ar{d}& \mc{E}xt_{\Pi_{Q}}^{\bullet}(G^{2},x)\ar{r}\ar{d} & \mc{E}xt_{\Pi_{Q}}^{\bullet}(G^{1},x) \ar{r}\ar{d} & 0\\
    & 0 & 0& 0 & 
  \end{tikzcd}
\end{equation}
where all the columns and rows are exact sequences of complexes. Hence the spectral sequence of the following double complex:
$$\{\mc{E}xt_{\Pi_{Q}}^{\bullet}(G^{2},G^{1})\to \mc{E}xt_{\Pi_{Q}}^{\bullet}(G^{1},G^{1})\oplus \mc{E}xt_{\Pi_{Q}}^{\bullet}(G^{2},G^{2})\to \mc{E}xt_{\Pi_{Q}}^{\bullet}(G^{1},G^{2})\}$$
converges to $\oplus_{i=0}^{2}Ext_{\Pi_{Q}}^{i}(x,x)[-i]$.
By computing the spectral sequence, we have the following proposition:
\begin{proposition}
    \label{011209}
   Consider the complex
  $$Ext^{1}_{\Pi_{Q}}(G^{2},G^{1})\xrightarrow{\sigma_{G^{1} G^{2}}}Ext^{1}_{\Pi_{Q}}(G^{1},G^{1})\oplus Ext^{1}_{\Pi_{Q}}(G^{2},G^{2})\xrightarrow{\tau_{G^{1} G^{2}}}Ext^{1}_{\Pi_{Q}}(G^{1},G^{2}),$$
  where
  \begin{align*}
    \sigma_{G^{1}G^{2}}=(\zeta_{G^{1},G^{2}}\otimes -)-(-\otimes \zeta_{G^{1}G^{2}}) ,\quad
    \tau_{G^{1}G^{2}}=(-\otimes \zeta_{G^{1}G^{2}})+(\zeta_{G^{1}G^{2}}\otimes -),
  \end{align*}
  are dual to each other under the symplectic form $\Omega(G^{1},G^{1})+\Omega(G^{2},G^{2})$.
  we have $\sigma_{G^{1}G^{2}}$ is injective and $\tau_{G^{1}G^{2}}$ is surjective. Moreover, let
 \begin{align*}
  T_{G^{1}G^{2}}:=ker(\tau_{G^{1}G^{2}}),\quad  T^{*}_{G^{1}G^{2}}:= coker(\sigma_{G^{1}G^{2}}).
 \end{align*}
 Then the above spectral sequence induces an isomorphism
\begin{align}
  \label{e011308}
 T_{G^{1}G^{2}}/ T^{*}_{G^{1}G^{2}}\cong Ext_{\Pi_{Q}}^{1}(x,x).
\end{align}
\end{proposition}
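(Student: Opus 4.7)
The main driver is the spectral sequence of the double complex introduced just before the proposition, which converges to $\oplus_{i=0}^{2}Ext^{i}_{\Pi_{Q}}(x,x)[-i]$ thanks to the preceding exact $3\times 3$ diagram of $\mc{E}xt^{\bullet}$-complexes. My plan is to read off all three conclusions—the duality, the injectivity of $\sigma_{G^{1}G^{2}}$ together with the surjectivity of $\tau_{G^{1}G^{2}}$, and the isomorphism \cref{e011308}—from the $E_{2}$ page of its column filtration combined with the $\Omega$-pairing.

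First I will establish the duality. For $a\in Ext^{1}_{\Pi_{Q}}(G^{2},G^{1})$ and $(b,c)\in Ext^{1}_{\Pi_{Q}}(G^{1},G^{1})\oplus Ext^{1}_{\Pi_{Q}}(G^{2},G^{2})$, I expand $\Omega(a,\tau_{G^{1}G^{2}}(b,c))$ into the sum $tr(\theta(a\otimes \zeta_{G^{1}G^{2}} b))+tr(\theta(a\otimes c\zeta_{G^{1}G^{2}}))$ and apply the graded-commutativity $\Omega(u,v)=(-1)^{m(2-m)}\Omega(v,u)$ to rewrite each summand as $\pm\Omega(a\zeta_{G^{1}G^{2}},b)$ and $\pm\Omega(\zeta_{G^{1}G^{2}} a,c)$; comparing with $(\Omega(G^{1},G^{1})+\Omega(G^{2},G^{2}))(\sigma_{G^{1}G^{2}}(a),(b,c))$ and matching the paper's sign convention for $\sigma_{G^{1}G^{2}}$ yields the adjointness $\tau_{G^{1}G^{2}}=\sigma_{G^{1}G^{2}}^{\vee}$.

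Next I will compute the relevant entries of the column $E_{2}$ page, whose three horizontal entries at each vertical degree $q$ are $E_{2}^{-1,q}=\ker(\sigma^{q})$, $E_{2}^{0,q}=\ker(\tau^{q})/im(\sigma^{q})$, and $E_{2}^{1,q}=coker(\tau^{q})$. At $q=0$, condition (3) of the $k$-good sequence gives $Hom_{\Pi_{Q}}(G^{2},G^{1})=0$, forcing $E_{2}^{-1,0}=0$; condition (2) gives $Hom_{\Pi_{Q}}(G^{1},G^{2})=\mb{C}\zeta_{G^{1}G^{2}}$, and since $\tau^{0}(0,id_{G^{2}})=\zeta_{G^{1}G^{2}}$ we also get $E_{2}^{1,0}=0$. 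A splitting $G^{2}\cong G^{1}\oplus x$ of $I$-graded vector spaces computes $E_{2}^{0,0}\cong I(x,x)=\mb{C}$ explicitly. The duality established in the previous step transports these vanishings to $q=2$, giving $E_{2}^{-1,2}=0=E_{2}^{1,2}$ and $E_{2}^{0,2}\cong \mb{C}$.

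Finally, the spectral sequence lives in the $3\times 3$ rectangle $\{-1,0,1\}\times \{0,1,2\}$, and the only potentially non-trivial higher differential is $d_{2}\colon E_{2}^{-1,2}\to E_{2}^{1,1}$, whose source was just shown to vanish; hence $E_{\infty}=E_{2}$. The differentials in $\mc{E}xt^{\bullet}_{\Pi_{Q}}(x,x)$ vanish identically since $x$ is one-dimensional, giving $Ext^{i}_{\Pi_{Q}}(x,x)=\mb{C},\mb{C}^{2g_{k}},\mb{C}$ for $i=0,1,2$. Matching the associated graded against $E_{\infty}$ in each total degree then forces $E_{\infty}^{-1,1}=\ker(\sigma_{G^{1}G^{2}})=0$, $E_{\infty}^{1,1}=coker(\tau_{G^{1}G^{2}})=0$, and $E_{\infty}^{0,1}=T_{G^{1}G^{2}}/T^{*}_{G^{1}G^{2}}\cong Ext^{1}_{\Pi_{Q}}(x,x)$, which are exactly the asserted injectivity, surjectivity, and isomorphism \cref{e011308}. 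The main technical obstacle is the sign bookkeeping in the duality step, since the shorthand $(\zeta\otimes -)$ and $(-\otimes \zeta)$ packages Koszul signs that need to be reconciled with the signs of the direct-sum decomposition of the middle term; once these are pinned down, the remaining steps reduce to cohomological bookkeeping.
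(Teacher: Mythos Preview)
Your argument is correct and is exactly the spectral-sequence computation the paper alludes to with the sentence ``By computing the spectral sequence, we have the following proposition''; you have simply filled in the $E_{2}$-page details (using conditions (2)--(3) of the $k$-good sequence and the $\Omega$-duality) that the paper leaves implicit. The one slightly rough spot is your justification of $E_{2}^{0,0}\cong\mb{C}$ via a vector-space splitting---it is cleaner to note directly that $Hom_{\Pi_{Q}}(G^{i},G^{i})=\mb{C}\,id$ by condition (2) and that $\tau^{0}(\lambda\,id,\mu\,id)=(\lambda+\mu)\zeta_{G^{1}G^{2}}$---but the conclusion is the same.
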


We notice that $\sigma_{G^{1}G^{2}}$ and $\tau_{G^{1}G^{2}}$ are dual to other under the perfect pairing $\Omega(G^{1})\oplus \Omega(G^{2})$. Let
$$dt_{G^{1}G^{2}}:T_{G^{1}G^{2}}\to Ext_{\Pi_{Q}}^{1}(x,x), \quad dt_{G^{1}G^{2}}^{*} : Ext_{\Pi_{Q}}^{1}(x,x)\to T_{G^{1}G^{2}}^{*}$$
be the morphism induced by  \cref{e011308}. Then we have the short exact sequence
\begin{equation}
  \label{e0204111}
0\to  T_{G^{1}G^{2}}\to Ext_{\Pi_{Q}}^{1}(G^{1},G^{1})\oplus Ext_{\Pi_{Q}}^{1}(G^{2},G^{2})\oplus Ext_{\Pi_{Q}}^{1}(x,x) \to  T_{G^{1}G^{2}}^{*}\to 0
\end{equation}
and thus $T_{G^{1}G^{2}}$ is a lagragian of $Ext_{\Pi_{Q}}^{1}(G^{1},G^{1})\oplus Ext_{\Pi_{Q}}^{1}(G^{2},G^{2})\oplus Ext_{\Pi_{Q}}^{1}(x,x)$.

By \cref{e011103} and \cref{e011104}, we have the diagrams where all the rows and columns are left in exact sequences:
\begin{equation}
  \label{e011107}
  \begin{tikzcd}
    & & 0\ar{d} & 0\ar{d} \\
    & 0 \ar{d} \ar{r} & Ext_{\Pi_{Q}}^{1}(x,G^{1})/\mb{C}\ar{d} \ar{r} & Ext_{\Pi_{Q}}^{1}(x, G^{2})\ar{d} \\
 0\ar{r}& Hom_{\Pi_{Q}}(G^{2},x)/\mb{C}\ar{r}\ar{d} & Ext_{\Pi_{Q}}^{1}(G^{2},G^{1})\ar{r}\ar{d} & Ext_{\Pi_{Q}}^{1}(G^{2},G^{2})\ar{d} \\
 0\ar{r}& Hom_{\Pi_{Q}}(G^{1},x)\ar{r}  & Ext_{\Pi_{Q}}^{1}(G^{1},G^{1})\ar{r}& Ext_{\Pi_{Q}}^{1}(G^{1},G^{2})
  \end{tikzcd}
\end{equation}

Let $\zeta_{xG^{1}}\in Ext_{\Pi_{Q}}^{1}(x,G^{1})$ be the extension class generated by the short exact sequence \cref{e021015}. For any $a\in Hom_{\Pi_{Q}}(G^{1},x)$, we consider $\theta(a\otimes \zeta_{xG^{1}})\in Ext_{\Pi_{Q}}^{1}(G^{1},G^{1})$ and $\theta(\zeta_{xG^{1}}\otimes a)\in Ext^{1}_{\Pi_{Q}}(x,x)$. By \cref{e011107} and \cref{diag:2.4}, $(\theta(a\otimes \zeta_{xG^{1}}),0)\in T_{G^{1}G^{2}}$ and
\begin{equation}
  \label{e021019}
  dt_{G^{1}G^{2}}(\theta(a\otimes \zeta_{xG^{1}}),0)=\theta(\zeta_{xG^{1}}\otimes a).
\end{equation}
Similarly, for $b\in Ext^{1}_{\Pi_{Q}}(x,G^{2})$, we have $(0,\theta(\zeta_{G^{2}x}\otimes b))\in T_{G^{1}G^{2}}$ and
\begin{equation}
  \label{e0210110}
   dt_{G^{1}G^{2}}(0, \theta(\zeta_{G^{2}x}\otimes b))=\theta(b\otimes \zeta_{G^{2}x}).
\end{equation}

\subsection{Nested triples} We fix $k\in I$ and a $k$-good sequence $(G^{0},G^{1},G^{2})$. Moreover, we assume there exists $x\in \mbf{L}_{k}$ with the following short exact sequences:
\begin{align}
  \label{e0210111}
  0\to G^{0}\xrightarrow{\zeta_{G^{0}G^{1}}}G^{1}\xrightarrow{\zeta_{G^{1}x}} x\to 0, \\
  \label{e0210112}
0\to G^{1}\xrightarrow{\zeta_{G^{1}G^{2}}}G^{2}\xrightarrow{\zeta_{G^{2}x}} x\to 0,   
\end{align}
Let $\zeta_{xG^{0}}\in Ext_{\Pi_{Q}}^{1}(x,G^{0})$ and $\zeta_{xG^{1}}\in Ext_{\Pi_{Q}}^{1}(x,G^{1})$ be the extension class of the short exact sequences \cref{e0210111} and \cref{e0210112} respectively. 

We consider $\alpha_{G^{1}}:=\theta(\zeta_{G^{1}x}\otimes \zeta_{xG^{1}})\in Ext^{1}_{\Pi_{Q}}(G^{1},G^{1})$ and $\alpha_{x}:=\theta(\zeta_{xG^{1}}\otimes \zeta_{G^{1}x})\in Ext^{1}_{\Pi_{Q}}(x,x)$. By \cref{e021019} and \cref{e0210110}, we have
\begin{equation}
  \label{e0210115}
  dt_{G^{1}G^{2}}(\alpha_{G^{1}},0)=\alpha_{x}=dt_{G^{0}G^{1}}(0,\alpha_{G^{1}})
\end{equation}

\begin{lemma}
  \label{010409} \label{010805}  \label{0204119} \label{0113011}
  We have $\alpha_{G^{1}}\neq 0$ and
  \begin{align}
     \label{e0210116}
        Ker(Ext_{\Pi_{Q}}^{1}(G^{1},G^{1})\to Ext_{\Pi_{Q}}^{1}(G^{0},G^{1})\oplus Ext_{\Pi_{Q}}^{1}(G^{1},G^{2}))=\mb{C}\alpha_{G^{1}} \\
         \label{e0210117}
   Ker(Ext_{\Pi_{Q}}^{1}(G^{1},G^{1})\oplus Ext_{\Pi_{Q}}^{1}(x,x)  \to T_{G^{0}G^{1}}^{*}\oplus T_{G^{1}G^{2}}^{*})=\mb{C}(\alpha_{G^{1}}\oplus \alpha_{x})
  \end{align}
\end{lemma}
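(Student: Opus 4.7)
The plan is to handle the three assertions in sequence, reducing all of them to the long exact sequences \cref{e011103} and \cref{e011104} together with the pairing identity \cref{e0210115}. First, applying \cref{e011104} with $L = G^{1}$ to the short exact sequence \cref{e0210112} yields the exact chain
\[
Hom_{\Pi_{Q}}(G^{1}, G^{2}) \to Hom_{\Pi_{Q}}(G^{1}, x) \xrightarrow{\partial} Ext^{1}_{\Pi_{Q}}(G^{1}, G^{1}) \xrightarrow{\phi_{2}} Ext^{1}_{\Pi_{Q}}(G^{1}, G^{2}),
\]
where $\phi_{2}(f) = \theta(f \otimes \zeta_{G^{1} G^{2}})$ and $\partial(a) = \theta(a \otimes \zeta_{x G^{1}})$. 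By the good sequence hypothesis $Hom_{\Pi_{Q}}(G^{1}, G^{2}) = \mb{C}\zeta_{G^{1} G^{2}}$, and exactness of \cref{e0210112} forces $\zeta_{G^{2} x} \circ \zeta_{G^{1} G^{2}} = 0$; hence the image of $Hom_{\Pi_{Q}}(G^{1}, G^{2})$ in $Hom_{\Pi_{Q}}(G^{1}, x)$ is zero, so $\partial$ is injective. Since $\zeta_{G^{1} x} \neq 0$, this already gives $\alpha_{G^{1}} = \partial(\zeta_{G^{1} x}) \neq 0$, and $\ker \phi_{2} = \partial\bigl(Hom_{\Pi_{Q}}(G^{1}, x)\bigr)$.

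For \cref{e0210116}, set $\phi_{1}: Ext^{1}_{\Pi_{Q}}(G^{1}, G^{1}) \to Ext^{1}_{\Pi_{Q}}(G^{0}, G^{1})$, $\phi_{1}(f) = \theta(\zeta_{G^{0} G^{1}} \otimes f)$. For $a \in Hom_{\Pi_{Q}}(G^{1}, x)$, associativity of the Yoneda product gives $\phi_{1}(\partial(a)) = \theta\bigl((a \circ \zeta_{G^{0} G^{1}}) \otimes \zeta_{x G^{1}}\bigr)$. The same reasoning as in the previous paragraph, applied to \cref{e011104} with $L = G^{0}$, shows that $Hom_{\Pi_{Q}}(G^{0}, x) \to Ext^{1}_{\Pi_{Q}}(G^{0}, G^{1})$, $b \mapsto \theta(b \otimes \zeta_{x G^{1}})$, is injective: its kernel is the image of $Hom_{\Pi_{Q}}(G^{0}, G^{2}) = \mb{C}\zeta_{G^{0} G^{2}}$ in $Hom_{\Pi_{Q}}(G^{0}, x)$, which is zero since $\zeta_{G^{2} x} \circ \zeta_{G^{0} G^{2}} = 0$. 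Hence $\phi_{1}\partial(a) = 0$ iff $a \circ \zeta_{G^{0} G^{1}} = 0$. Applying \cref{e011103} with $L = x$ to \cref{e0210111}, the kernel of the restriction $Hom_{\Pi_{Q}}(G^{1}, x) \to Hom_{\Pi_{Q}}(G^{0}, x)$ equals the image of $Hom_{\Pi_{Q}}(x, x) = \mb{C}\cdot id_{x}$, namely $\mb{C}\zeta_{G^{1} x}$. Combining, $\ker \phi_{1} \cap \ker \phi_{2} = \partial(\mb{C}\zeta_{G^{1} x}) = \mb{C}\alpha_{G^{1}}$, proving \cref{e0210116}.

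For \cref{e0210117}, apply the Lagrangian short exact sequence \cref{e0204111} to both $(G^{0}, G^{1})$ and $(G^{1}, G^{2})$. An element $(a, c) \in Ext^{1}_{\Pi_{Q}}(G^{1}, G^{1}) \oplus Ext^{1}_{\Pi_{Q}}(x, x)$ has zero image in $T^{*}_{G^{0} G^{1}}$ iff $(0, a) \in T_{G^{0} G^{1}}$ and $c = dt_{G^{0} G^{1}}(0, a)$, and analogously for $T^{*}_{G^{1} G^{2}}$. Unpacking the definition of $\tau$ in \cref{011209}, the conditions $(0, a) \in T_{G^{0} G^{1}}$ and $(a, 0) \in T_{G^{1} G^{2}}$ translate to $\phi_{1}(a) = 0$ and $\phi_{2}(a) = 0$ respectively, so by the previous paragraph $a \in \mb{C}\alpha_{G^{1}}$. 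For $a = \alpha_{G^{1}}$, the identities \cref{e0210115} pin down $c = dt_{G^{0} G^{1}}(0, \alpha_{G^{1}}) = \alpha_{x} = dt_{G^{1} G^{2}}(\alpha_{G^{1}}, 0)$, giving $(a, c) \in \mb{C}(\alpha_{G^{1}} \oplus \alpha_{x})$. The only delicate step is verifying the precise form of the Lagrangian embedding $T_{G^{a} G^{a+1}} \hookrightarrow Ext^{1}_{\Pi_{Q}}(G^{a}, G^{a}) \oplus Ext^{1}_{\Pi_{Q}}(G^{a+1}, G^{a+1}) \oplus Ext^{1}_{\Pi_{Q}}(x, x)$ in \cref{e0204111}, in particular that its third component is exactly $dt_{G^{a} G^{a+1}}$; this is a sign-tracking exercise through the spectral sequence of \cref{diag:2.4} but is otherwise routine.
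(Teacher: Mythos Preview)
Your proof is correct and follows essentially the same route as the paper's: both arguments reduce \cref{e0210116} to a diagram chase through the long exact sequences \cref{e011103}--\cref{e011104} applied to the two short exact sequences \cref{e0210111}--\cref{e0210112}, and both reduce \cref{e0210117} to \cref{e0210116} together with \cref{e0210115}. The only organisational difference is that for \cref{e0210116} the paper phrases the double kernel as the intersection $\operatorname{im}\bigl(\theta(\zeta_{G^{1}x}\otimes -)\bigr)\cap \operatorname{im}\bigl(\theta(-\otimes\zeta_{xG^{1}})\bigr)$ and chases through $Ext^{1}(x,G^{1})$, whereas you first identify $\ker\phi_{2}$ with the image of $Hom_{\Pi_{Q}}(G^{1},x)$ and then restrict $\phi_{1}$ to that image; and for \cref{e0210117} the paper uses the filtration $0\to Ext^{1}_{\Pi_{Q}}(x,x)\to T^{*}\to Ext^{1}_{\Pi_{Q}}(G^{a},G^{a+1})\to 0$ together with injectivity of $dt^{*}$, whereas you invoke the Lagrangian criterion from \cref{e0204111} directly. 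These are equivalent reformulations of the same computation.
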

\begin{proof}
  
 We have the diagram of left exact sequences
  \begin{equation*}
    \begin{tikzcd}
   & 0 \ar{d}  & 0\ar{d} & 0 \ar{d} \\
  0\ar{r} &   Ext_{\Pi_{Q}}^{0}(x,x)\ar{r}{\zeta_{xG^{1}}}\ar{d}{\zeta_{G^{1}x}} &  Ext_{\Pi_{Q}}^{1}(x,G^{1})\ar{r}\ar{d}{\theta(\zeta_{G^{1}x}\otimes -)} & Ext_{\Pi_{Q}}^{1}(x, G^{2})\ar{d} \\
  0\ar{r}   &  Ext_{\Pi_{Q}}^{0}(G^{1},x)\ar{r}{\theta(-\otimes \zeta_{xG^{1}})} \ar{d}& Ext_{\Pi_{Q}}^{1}(G^{1},G^{1})\ar{r} \ar{d}& Ext_{\Pi_{Q}}^{1}(G^{1},G^{2}). \\
  0 \ar{r} & Ext_{\Pi_{Q}}^{0}(G^{0},x)\ar{r} & Ext_{\Pi_{Q}}^{1}(G^{0},G^{1})
    \end{tikzcd}
  \end{equation*}
   As $\theta(\zeta_{G^{1}x}\otimes -)$ is a injective morphism, $\theta(\zeta_{G^{1}x}\otimes \zeta_{xG^{1}})=\alpha_{G^{1}}\neq 0$. By the above diagram,  $Ker(Ext_{\Pi_{Q}}^{1}(G^{1},G^{1})\to Ext_{\Pi_{Q}}^{1}(G^{0},G^{1})\oplus Ext_{\Pi_{Q}}^{1}(G^{1},G^{2}))$ is
   \begin{align*}
  im(\theta(\zeta_{G^{1}x}\otimes -)\cap im(\theta(-\otimes \zeta_{xG^{1}})).
   \end{align*}
   First we show that there $im(\theta(\zeta_{G^{1}x}\otimes -)\cap im(\theta(-\otimes \zeta_{xG^{1}}))=\mb{C}\alpha_{G^{1}}$.   Let $c\oplus d\in Ext^{0}(G^{1},x)\oplus Ext^{1}(x,G^{1})$ such that $\theta(c\otimes \zeta_{xG^{1}})=\theta(\zeta_{G^{1}x}\otimes d)$.  Then the image of $d$ in $Ext^{1}(x,G^{2})$ is $0$, and hence $d=\lambda \zeta_{xG^{1}}$ for some $\lambda\in \mb{C}$. Hence $\lambda \alpha_{G^{1}}=\theta(\zeta_{G^{1}x}\otimes d)$. Thus we prove \cref{e0210116}. Now we prove \cref{e0210117}. By \cref{e0210115}, we have
   $$(\alpha_{G^{1}}\oplus \alpha_{x})\in Ker(Ext_{\Pi_{Q}}^{1}(G^{1},G^{1})\oplus Ext_{\Pi_{Q}}^{1}(x,x)\to T_{G^{1}G^{2}}^{*}\oplus T_{G^{2}G^{3}}^{*}).$$
   Moreover, we have the short exact sequence:
  \begin{align*}
    0\to  Ext_{\Pi_{Q}}^{1}(x,x)\to  T_{G^{0} G^{1}}^{*}\to Ext_{\Pi_{Q}}^{1}(G^{0},G^{1})\to 0, \\
    0\to  Ext_{\Pi_{Q}}^{1}(x,x)\to  T_{G^{1} G^{2}}^{*}\to Ext_{\Pi_{Q}}^{1}(G^{1},G^{2})\to 0.
  \end{align*}
  Then if
  $$(a,b)\in Ker(Ext_{\Pi_{Q}}^{1}(G^{1},G^{1})\oplus Ext_{\Pi_{Q}}^{1}(x,x)\to  T_{G^{0} G^{1}}^{*}\oplus T_{G^{1}G^{2}}^{*}),$$
  we have $a\in ker(Ext_{\Pi_{Q}}^{1}(G^{1},G^{1})\to Ext_{\Pi_{Q}}^{1}(G^{0},G^{1})\oplus Ext_{\Pi_{Q}}^{1}(G^{1},G^{2}))$ and hence $a=\lambda \alpha_{G^{1}}$ for some $\lambda\in \mb{C}$. Thus $b-\lambda \alpha_{x}\in ker(Ext^{1}_{\Pi_{Q}}(x,x)\to T^{*}_{G^{0}G^{1}}\oplus T^{*}_{G^{1}G^{2}})$ and has to be $0$ since $dt^{*}_{G^{0}G^{1}}$ is injective.
\end{proof}

\subsection{Quadruples} We fix $k\in I$, and $I$-graded $\Pi_{Q}$-modules $G^{0},G^{1},G^{1'}$  and $G^{2}$ such that $(G^{0},G^{1},G^{2})$ and $(G^{0},G^{1'},G^{2})$ are $k$-good sequences. Moreover, we assume that we have the following commutative diagram of $I$-graded $\Pi_{Q}$ modules:
\begin{equation*}
  \begin{tikzcd}
    & 0 \ar{d} & 0\ar{d} & 0\ar{d} & \\
    0\ar{r} & G^{0}\ar{r}{\zeta_{G^{0}G^{1}}} \ar{d}{\zeta_{G^{0}G^{1'}}} & G^{1}\ar{r}{\zeta_{G^{1}x}} \ar{d}{\zeta_{G^{1}G^{2}}} & x \ar{r}\ar{d}{id} & 0 \\
        0\ar{r} & G^{1'}\ar{r}{\zeta_{G^{1'}G^{2}}}\ar{d}{\zeta_{G^{1'}y}} & G^{2}\ar{r}{\zeta_{G^{2}x}} \ar{d}{\zeta_{G^{2}y}} & x \ar{d} \ar{r} & 0\\
          0 \ar{r} & y\ar{r}{id} \ar{d} & y \ar{r}\ar{d} & 0 \\
          & 0 & 0 & 
        \end{tikzcd}
\end{equation*}
where all the rows and columns are short exact sequences and $x,y\in \mbf{L}_{k}$ for some $k\in I$.

\begin{lemma}
  \label{0124110}
  The following diagram
  $$Ext^{1}(G^{2},G^{1})\oplus Ext^{1}(G^{2},G^{1'})\to Ext^{1}(G^{1},G^{1})\oplus Ext^{1}(G^{1'},G^{1'})\oplus Ext^{1}(G^{2},G^{2})$$
  is injective.
\end{lemma}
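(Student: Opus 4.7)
The plan is to interpret the combined map as $(a,b)\mapsto(\theta(\zeta_{G^{1}G^{2}}\otimes a),\,\theta(\zeta_{G^{1'}G^{2}}\otimes b),\,\theta(a\otimes\zeta_{G^{1}G^{2}})-\theta(b\otimes\zeta_{G^{1'}G^{2}}))$, that is, the natural fibre sum of the two maps $\sigma_{G^{1}G^{2}}$ and $\sigma_{G^{1'}G^{2}}$ of \cref{011209} over their common factor $Ext^{1}_{\Pi_{Q}}(G^{2},G^{2})$. I would then show that any $(a,b)$ in the kernel must be zero by a direct chase through the long exact sequences \cref{e011103} and \cref{e011104}, exploiting the additional structure coming from the full commutative diagram in the hypothesis.

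First, the vanishing of the first two components together with \cref{e011103} applied to $0\to G^{1}\to G^{2}\to y\to 0$ with $L=G^{1}$ and to $0\to G^{1'}\to G^{2}\to x\to 0$ with $L=G^{1'}$ lets me write $a=\theta(\zeta_{G^{2}y}\otimes c)$ and $b=\theta(\zeta_{G^{2}x}\otimes d)$ for some $c\in Ext^{1}_{\Pi_{Q}}(y,G^{1})$ and $d\in Ext^{1}_{\Pi_{Q}}(x,G^{1'})$. Substituting into the third condition and using associativity of $\theta$ reduces it to the equation $\theta(\zeta_{G^{2}y}\otimes e_{1})=\theta(\zeta_{G^{2}x}\otimes e_{2})$ in $Ext^{1}_{\Pi_{Q}}(G^{2},G^{2})$, with $e_{1}:=\theta(c\otimes\zeta_{G^{1}G^{2}})\in Ext^{1}_{\Pi_{Q}}(y,G^{2})$ and $e_{2}:=\theta(d\otimes\zeta_{G^{1'}G^{2}})\in Ext^{1}_{\Pi_{Q}}(x,G^{2})$. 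The commutative diagram realises $G^{2}$ as the pushout of $G^{1}\leftarrow G^{0}\to G^{1'}$, giving a short exact sequence $0\to G^{0}\to G^{2}\to x\oplus y\to 0$; the resulting long exact sequence with $L=G^{2}$ identifies the kernel of $Ext^{1}_{\Pi_{Q}}(x,G^{2})\oplus Ext^{1}_{\Pi_{Q}}(y,G^{2})\to Ext^{1}_{\Pi_{Q}}(G^{2},G^{2})$ with the image of $Hom_{\Pi_{Q}}(G^{0},G^{2})=\mb{C}\cdot\zeta_{G^{0}G^{2}}$ (the one-dimensionality coming from the $k$-good sequence condition on $(G^{0},G^{1},G^{2})$).

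The key observation is that $\theta(\zeta_{yG^{0}}\otimes\zeta_{G^{0}G^{2}})=0$ in $Ext^{1}_{\Pi_{Q}}(y,G^{2})$, and symmetrically $\theta(\zeta_{xG^{0}}\otimes\zeta_{G^{0}G^{2}})=0$: the first class represents the pushout of $0\to G^{0}\to G^{1'}\to y\to 0$ along $\zeta_{G^{0}G^{2}}$, and the pair $(\zeta_{G^{1'}G^{2}},id_{G^{2}})$, which agrees on $G^{0}$ by the diagram, descends to a retraction of $G^{2}\hookrightarrow G^{1'}\oplus_{G^{0}}G^{2}$, so the extension splits. This forces the kernel described above to vanish, hence $e_{1}=e_{2}=0$. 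Finally $\theta(c\otimes\zeta_{G^{1}G^{2}})=0$ combined with \cref{e011104} for $0\to G^{1}\to G^{2}\to y\to 0$ with $L=y$ forces $c\in\mb{C}\cdot\zeta_{yG^{1}}$, and then $a=\theta(\zeta_{G^{2}y}\otimes\zeta_{yG^{1}})$ (up to scalar) vanishes by exactness of \cref{e011103} at $Ext^{1}_{\Pi_{Q}}(y,G^{1})\to Ext^{1}_{\Pi_{Q}}(G^{2},G^{1})$; symmetrically $b=0$. The main obstacle is not mathematical but presentational—correctly identifying the combined map and tracking signs through the three long exact sequences; once the pushout splittings along $\zeta_{G^{0}G^{2}}$ are in place the argument is a clean diagram chase.
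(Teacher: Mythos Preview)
Your proposal is correct and follows essentially the same route as the paper. Both arguments first identify the kernel of the projection to $Ext^{1}(G^{1},G^{1})\oplus Ext^{1}(G^{1'},G^{1'})$ with $Ext^{1}(y,G^{1})/\mb{C}\oplus Ext^{1}(x,G^{1'})/\mb{C}$ and then show this injects into $Ext^{1}(G^{2},G^{2})$ through the factorization via $Ext^{1}(y,G^{2})\oplus Ext^{1}(x,G^{2})$. The only difference is in how the injectivity of $Ext^{1}(x,G^{2})\oplus Ext^{1}(y,G^{2})\to Ext^{1}(G^{2},G^{2})$ is established: the paper reads it off directly as a left exact sequence coming from $0\to G^{0}\to G^{2}\to x\oplus y\to 0$ (implicitly using that $Hom(G^{2},G^{2})\to Hom(G^{0},G^{2})$ is onto, both being $\mb{C}$), whereas you verify the vanishing of the connecting map by an explicit pushout splitting. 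Your argument is a bit longer but equally valid; note that your splitting step can be shortened by observing $\theta(\zeta_{yG^{0}}\otimes\zeta_{G^{0}G^{1'}})=0$ directly from exactness of $Hom(y,y)\to Ext^{1}(y,G^{0})\to Ext^{1}(y,G^{1'})$.
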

\begin{proof}
  The short exact sequence:
$$0\to G^{0}\xrightarrow{\zeta_{G^{0}G^{2}}}G^{2}\xrightarrow{\zeta_{G^{2}x}\oplus \zeta_{G^{2}y}}x\oplus y\to 0.$$
induces the following left exact sequence
\begin{equation*}
  0\to Ext_{\Pi_{Q}}^{1}(x,G^{2})\oplus Ext_{\Pi_{Q}}^{1}(y,G^{2})\to Ext_{\Pi_{Q}}^{1}(G^{2},G^{2})\to Ext_{\Pi_{Q}}^{1}(G^{0},G^{2}).
\end{equation*}
  By \cref{e011107}, we have
  \begin{align*}
    ker(Ext_{\Pi_{Q}}(G^{2},G^{1})\to Ext_{\Pi_{Q}}^{1}(G^{1},G^{1}))\cong Ext_{\Pi_{Q}}^{1}(x,G^{1})/\mb{C} \\
    ker(Ext_{\Pi_{Q}}(G^{2},G^{1'})\to Ext_{\Pi_{Q}}^{1}(G^{1'},G^{1'}))\cong Ext_{\Pi_{Q}}^{1}(y,G^{1'})/\mb{C}
  \end{align*}
  and $Ext_{\Pi_{Q}}^{1}(x,G^{1})/\mb{C}$ (resp. $Ext_{\Pi_{Q}}^{1}(y,G^{1'})/\mb{C}$) is a subspace of $Ext_{\Pi_{Q}}^{1}(x,G^{2})$ (resp. $Ext_{\Pi_{Q}}^{1}(y,G^{1})$).
  Thus the morphism:
  $$Ext_{\Pi_{Q}}^{1}(x,G^{1})/\mb{C} \oplus Ext_{\Pi_{Q}}^{1}(y,G^{1'})/\mb{C}\to Ext_{\Pi_{Q}}^{1}(G^{2},G^{2})$$
  is also injective.
\end{proof}

\subsection{The moduli space of $I$-graded $\Pi_{Q}$-modules and the moment map}

Given a dimension vector $\mbf{v}$, let $\mc{M}_{Q}(\mbf{v})$ be the moduli space of $I$-graded $\Pi_{Q}$-modules with dimension $\mbf{v}$, modulo the isomorphic relation. We could represent $\mc{M}_{Q}(\mbf{v})$ in the following way: the algebraic group $G_{\mathbf{v}}:=\prod_{k\in I}G_{\mathbf{v}_{k}}$ acts on  $E(V,V)$ by
$$g\circ B\to gBg^{-1}.$$
 The Lie algebra of $G_{\mathbf{v}}$ is $I(V,V)$, which is identified with $I(V,V)^{\vee}$ by the bilinear form
\begin{align*}
  I(V,V)\times I(V,V)\to \mathbb{C}, \quad(B,B')\to Tr(BB').
\end{align*}

Under the symplectic form $\omega$ on $E(V,V)$
$$\omega(B,B'):=tr(\epsilon BB').$$
we have the moment map
\begin{equation}
  \label{e112701}
\mu:E(V,V)\to I(V,V), \quad \mu(B)=\epsilon BB.
\end{equation}
We define $Rep_{Q^{\#}}(\mbf{v}):=E(V,V)$ and define $\Pi_{Q}(\mbf{v}):=\mu^{-1}(0)$. Then we have
$$\mc{M}_{Q}(\mbf{v}):=[\Pi_{Q}(\mbf{v})/G_{\mbf{v}}].$$

For any $B\in \Pi_{Q}(\mbf{v})$, $(V,B)$ is an $I$-graded $\Pi_{Q}$-module, and we abuse the notation to still denote as $B$.

\section{Crawley-Boevey's Construction and Nakajima Quiver Varieties}
\label{sec3}
Let $\mbf{w}$ be a dimension vector of $Q$. Crawley-Boevey \cite{crawley-boevey_2001} associates a new quiver, which we denote as $Q_{\mbf{w}}$. The vertex set of $Q_{\mbf{w}}$ is $I_{\mbf{w}}:=I\cup \{\infty\}$, and the set of arrows is
$$E_{\mbf{w}}:=E\cup\{a_{i,j}|in(a_{i,j})=\infty, out(a_{i,j})=i,i\in I, j\in \{1,\cdots,\mbf{w}_{i}\}\},$$
i.e. adding $\mbf{w}_{i}$-many arrows from $\infty$ to $i$. We extend the function $\epsilon$ to $E_{\mbf{w}}^{\#}$ by setting $\epsilon(a_{ij})=1$. A dimension vector of $Q_{\mbf{w}}$ could be written as $(\mbf{v},\mbf{v}_{\infty})$, where $\mbf{v}$ is a dimension vector of $Q$ and $\mbf{v}_{\infty}\in \mb{Z}_{\geq 0}$.
\begin{example}
   When $\mbf{v}_{\infty}=0$, we have
   $$Rep_{Q_{\mbf{w}}^{\#}}(\mbf{v},0)=Rep_{Q^{\#}}(\mbf{v}),\quad \Pi_{Q_{\mbf{w}}}(\mbf{v},0)=\Pi_{Q}(\mbf{v}).$$
   Moreover, any $I$-graded $\Pi_{Q}$-module with dimension $\mbf{v}$ could be regard as a $I_{\mbf{w}}$-graded $\Pi_{Q_{\mbf{w}}}$-module with dimension $(\mbf{v},0)$.
 \end{example}

In this section, we review Crawley-Boevey's construction and introduce the Nakajima quiver variety.
\subsection{ADHM datum and stability condition}
\label{s020915}
When $\mbf{v}_{\infty}=1$, we have
  $$Rep_{Q_{\mbf{w}^{\#}}}=E(V,V)\oplus I(V,W)\oplus I(W,V),$$
  which we also denote as $\mbf{M}(\mbf{v},\mbf{w})$. An element $\mc{B}=(B,i,j)$ in $\mbf{M}(\mbf{v},\mbf{w})$ is called as an ADHM datum, where $B,i,j$ denote the above three components respectively. The algebraic group $G_{(\mbf{v},1)}=G_{\mbf{v}}\times \mb{C}^{*}$, where  $\mb{C}^{*}$ acts trivially on  $\mathbf{M}(\mathbf{v},\mathbf{w})$ and $G_{\mathbf{v}}$ acts on  $\mathbf{M}(\mathbf{v},\mathbf{w})$ by
$$g\circ (B,C,i,j)\to (gBg^{-1},gCg^{-1},gi, jg^{-1}).$$

The symplectic form $\omega(\mbf{v},\mbf{w})$ on $\mathbf{M}(\mathbf{v},\mathbf{w})$ is represented by
$$\omega(\mbf{v},\mbf{w})((B,i,j),(B',i',j')):=tr(\epsilon BB')+tr(ij'-i'j).$$ Moreover, we have the moment map 
$$\mbf{M}(\mbf{v},\mbf{w})\xrightarrow{\mu=\mu_{Q}\oplus \mu{\infty}}I(V,V)\oplus \mb{C},$$
where
$$\mu_{Q}(B,i,j)=\epsilon BB+ij,\quad \mu_{\infty}(B,i,j)=tr(\mu_{Q}(B,i,j)).$$
Hence we have
$$\Pi_{Q_{\mbf{w}}}(\mbf{v},1)=\mu^{-1}(0)=\mu_{Q}^{-1}(0).$$
For any $\mc{B}\in \Pi_{Q_{\mbf{w}}}(\mbf{v},1)$, We denote $[\mc{B}]$ as the geometric point of $\mc{B}$ in $[\Pi_{Q_{\mbf{w}}}(\mbf{v},1)/G_{\mbf{v}}]$, which is an isomorphic class of $I_{\mbf{w}}$-graded $\Pi_{Q_{\mbf{w}}}$-modules.

For any two $I_{\mbf{w}}$-graded $\Pi_{Q_{\mbf{w}}}$ modules $\mc{G},\mc{H}$, we denote
$$\mc{E}xt^{\bullet}(\mc{G},\mc{H}):=\mc{E}xt^{\bullet}_{\Pi_{Q_{\mbf{w}}}}(\mc{G},\mc{H}),\quad Ext^{l}(\mc{G},\mc{H}):=Ext^{l}_{\Pi_{Q_{\mbf{w}}}}(\mc{G},\mc{H}),$$
where $l=0,1,2$.
\subsection{Stability conditions}
We use the stability condition of Nakajima \cite{10.1215/S0012-7094-98-09120-7}:

\begin{definition}[Stability]
  Given an $I_{\mbf{w}}$-graded $\Pi_{Q_{\mbf{w}}}$ module $\mc{B}$ such that $\mc{B}\in \Pi_{Q_{\mbf{w}}}(\mbf{v},1)$, we define $\mc{B}$ to be stable, if for any $I$-graded $\Pi_{Q}$-module $H$,
  $$Hom_{\Pi_{Q_{\mbf{w}}}}(H,\mc{B})=0,$$
  i.e. $G$ does not have $I$-graded $\Pi_{Q}$-graded submodule.
\end{definition}

\begin{lemma}
  \label{020918}
  For any $I_{\mbf{w}}$-graded $\Pi_{Q_{\mbf{w}}}$ module $[\mc{B}]$ such that $\mc{B}\in \Pi_{Q_{\mbf{w}}}(\mbf{v},1)$, there exists a short exact sequence:
  $$0\to H\to \mc{B}\to \mc{C}\to 0$$
  such that $H$ is a $I$-graded $\Pi_{Q}$-module and $\mc{C}$ is stable.
\end{lemma}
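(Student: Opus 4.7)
The plan is to construct $H$ as the unique maximal $I_{\mbf{w}}$-graded $\Pi_{Q_{\mbf{w}}}$-submodule of $\mc{B}$ whose component at $\infty$ vanishes, and then check that $\mc{C}:=\mc{B}/H$ is stable. First I would note that every $I$-graded $\Pi_{Q}$-module $H'$ can be viewed as an $I_{\mbf{w}}$-graded $\Pi_{Q_{\mbf{w}}}$-module of dimension $(\dim H',0)$ by letting all the extra arrows $a_{i,j}$ act as zero; conversely, any $I_{\mbf{w}}$-graded $\Pi_{Q_{\mbf{w}}}$-submodule $H'\subset \mc{B}$ with $H'_{\infty}=0$ is of this form. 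Under this identification, the stability of an object is equivalent to the nonexistence of a nonzero $\Pi_{Q_{\mbf{w}}}$-submodule with zero $\infty$-component, since any $\Pi_{Q_{\mbf{w}}}$-morphism from an $I$-graded $\Pi_{Q}$-module into $\mc{B}$ has image precisely in such a submodule.

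Next I would observe that the collection of $\Pi_{Q_{\mbf{w}}}$-submodules of $\mc{B}$ with vanishing $\infty$-component is closed under finite sums: if $H^{1}, H^{2}$ are two such, then $H^{1}+H^{2}$ is again a $\Pi_{Q_{\mbf{w}}}$-submodule with $(H^{1}+H^{2})_{\infty}=0$. Since $\dim \mc{B}<\infty$, a unique maximal element $H$ exists. Define $\mc{C}:=\mc{B}/H$; since $H_{\infty}=0$, we have $\mc{C}_{\infty}=W$, so $\mc{C}$ still lies in $\Pi_{Q_{\mbf{w}}}(\mbf{v}-\dim H, 1)$, and $0\to H\to \mc{B}\to \mc{C}\to 0$ is a short exact sequence in the category of $I_{\mbf{w}}$-graded $\Pi_{Q_{\mbf{w}}}$-modules.

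Finally, to verify stability of $\mc{C}$, suppose $H'\subset \mc{C}$ is a nonzero $I$-graded $\Pi_{Q}$-submodule, i.e.\ $H'_{\infty}=0$, and let $\widetilde{H}\subset \mc{B}$ be its preimage under the quotient map $\mc{B}\to \mc{C}$. Because the quotient is an isomorphism on the $\infty$-component (as $H_{\infty}=0$), we get $\widetilde{H}_{\infty}=0$. Thus $\widetilde{H}$ is a $\Pi_{Q_{\mbf{w}}}$-submodule of $\mc{B}$ with zero $\infty$-component strictly larger than $H$, contradicting maximality. Hence $\mc{C}$ admits no nonzero $I$-graded $\Pi_{Q}$-submodule, which by the first paragraph is the stability of $\mc{C}$. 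The argument is mostly bookkeeping; the only conceptual step is the correct matching between the two notions of submodule, so no real obstacle appears.
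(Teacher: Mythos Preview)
Your proof is correct and follows the same approach as the paper: take $H$ to be the sum (equivalently, the unique maximal element) of all $I$-graded $\Pi_{Q}$-submodules of $\mc{B}$, and observe that the quotient $\mc{C}=\mc{B}/H$ is stable by maximality. Your write-up is in fact more detailed than the paper's one-line proof; the only slip is the notation ``$\mc{C}_{\infty}=W$'' --- in the Crawley--Boevey picture the $\infty$-component is the $1$-dimensional space $\mb{C}$, not $W$ --- but this does not affect the argument.
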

\begin{proof}
  Let $H$ be the sum of all the $I$-graded $\Pi_{Q}$-submodules of $\mc{B}$. Then $H$ is a submodule of $\mc{B}$ such that $\mc{C}:=\mc{B}/H$ is stable.
\end{proof}

\subsection{$\mc{E}$xt complexes for stable objects}
\label{011405}

We choose two $I_{\mbf{w}}$-graded $\Pi_{Q_{\mbf{w}}}$-modules $\mc{B}^{1},\mc{B}^{2}$ such that $\mc{B}^{l}=(B^{l},i^{l},j^{l})\in \Pi_{Q_{\mbf{w}}}(\mbf{v}^{l},1)$, $l=1,2$. We consider the complex $\mathcal{T}(\mc{B}^{1},\mc{B}^{2})$:
$$I(V^{1},V^{2})\xrightarrow{\sigma(\mc{B}^{1},\mc{B}^{2})} E(V^{1},V^{2})\oplus I(W,V^{2})\oplus I(V^{1},W)\xrightarrow{\tau(\mc{B}^{1},\mc{B}^{2})}I(V^{1},V^{2})$$
where
\begin{align*}
  \sigma(\mc{B}^{1},\mc{B}^{2})(\zeta):=(B^{2}\zeta-\zeta B^{1})\oplus (-\zeta i^{1})\oplus j^{2}\zeta \\
  \tau(\mc{B}^{1},\mc{B}^{2})(J\oplus a \oplus b):=(\epsilon B^{2}J+\epsilon J B^{1}+i^{2}b+aj^{1}).
\end{align*}
We have $\sigma(\mc{B}^{1}, \mc{B}^{2})\cong \tau(\mc{B}^{2},\mc{B}^{1})^{\vee}$ under the perfect pairing $\omega(\mbf{v}^{1},\mbf{v}^{2})$ between $ E(V^{1},V^{2})\oplus I(W,V^{2})\oplus I(V^{1},W)$ and  $E(V^{2},V^{1})\oplus I(W,V^{1})\oplus I(V^{2},W)$:
$$((J^{1},a^{1},b^{1}),(J^{2},a^{2},b^{2}))\to Tr(\epsilon J^{1}J^{2})+Tr(b^{2}a^{1}-b^{1}a^{2}).$$
\begin{lemma}[Proposition 3.5 of \cite{McGerty2018}, and Lemma 5.2 of \cite{10.1215/S0012-7094-98-09120-7} when $Q$ has no loops]
  \label{112302}
  If $\mc{B}^{2}$ is stable, then $\sigma(\mc{B}^{1}, \mc{B}^{2})$ is injective. If $\mc{B}^{1}$ is stable, then $\tau(\mc{B}^{1}, \mc{B}^{2})$ is surjective.
\end{lemma}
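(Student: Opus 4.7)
The plan is to prove the two statements in sequence: first the injectivity of $\sigma(\mc{B}^1, \mc{B}^2)$ by a direct argument using stability of $\mc{B}^2$, then the surjectivity of $\tau(\mc{B}^1, \mc{B}^2)$ by invoking the already recorded duality between $\sigma$ and $\tau$ under the perfect pairing $\omega(\mbf{v}^1, \mbf{v}^2)$.

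For injectivity, I would unpack the condition $\sigma(\mc{B}^1, \mc{B}^2)(\zeta) = 0$. From the three components of $\sigma$, an element $\zeta \in I(V^1, V^2)$ lies in the kernel precisely when $B^2 \zeta = \zeta B^1$, $\zeta \circ i^1 = 0$, and $j^2 \circ \zeta = 0$. The first condition says $\zeta$ is a morphism of $I$-graded $\Pi_Q$-modules, so $\mathrm{im}(\zeta) \subset V^2$ is a $\Pi_Q$-submodule, and the third condition says $j^2$ annihilates $\mathrm{im}(\zeta)$. Next I would observe that any $I$-graded $\Pi_Q$-module $H$, viewed as an $I_{\mbf{w}}$-graded $\Pi_{Q_{\mbf{w}}}$-module by setting $H_\infty = 0$, embeds in $\mc{B}^2$ as a $\Pi_{Q_{\mbf{w}}}$-submodule exactly when $H$ embeds as a $\Pi_Q$-submodule of $V^2$ annihilated by $j^2$ (the compatibility with $i^2$ is automatic since $H_\infty = 0$). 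Hence the stability of $\mc{B}^2$, i.e.\ the vanishing of $\mathrm{Hom}_{\Pi_{Q_{\mbf{w}}}}(H, \mc{B}^2)$ for all such $H$, forces $\mathrm{im}(\zeta) = 0$, and so $\zeta = 0$.

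For surjectivity of $\tau(\mc{B}^1, \mc{B}^2)$, I would use the identification $\tau(\mc{B}^1, \mc{B}^2) \cong \sigma(\mc{B}^2, \mc{B}^1)^\vee$ recorded immediately before the lemma statement. Surjectivity of $\tau(\mc{B}^1, \mc{B}^2)$ is then equivalent to injectivity of $\sigma(\mc{B}^2, \mc{B}^1)$, which is the first statement with the roles of $\mc{B}^1$ and $\mc{B}^2$ swapped, and hence follows from the stability of $\mc{B}^1$. The only genuine step is therefore the first one; I expect no real obstacle beyond correctly identifying the kernel of $\sigma(\mc{B}^1, \mc{B}^2)$ with the set of destabilizing $I$-graded $\Pi_Q$-submodules of $\mc{B}^2$, so that Nakajima's stability condition applies directly. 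Note in particular that the condition $\zeta \circ i^1 = 0$ plays no essential role in the argument, reflecting the asymmetry in how stability treats $i$ and $j$.
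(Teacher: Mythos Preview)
Your proof is correct and follows essentially the same approach as the paper. The paper's version is more terse: it packages your three kernel conditions as the single observation that $(\zeta,0)\in \mathrm{Hom}_{\Pi_{Q_{\mbf{w}}}}(\mc{B}^{1},\mc{B}^{2})$ (where the $0$ is the $\infty$-component), whose image is then an $I$-graded $\Pi_{Q}$-submodule of $\mc{B}^{2}$ and hence vanishes by stability; your unpacked version and your remark that $\zeta\circ i^{1}=0$ is inessential amount to the same argument.
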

\begin{proof}
  By the duality, we only need to prove that if $\mc{B}^{2}$ is stable, then $\sigma(\mc{B}^{1}, \mc{B}^{2})$ is injective. We just need to notice that for $\zeta\in ker(\sigma(\mc{B}^{1},\mc{B}^{2}))$, $(\zeta,0)\in Hom(\mc{B}^{1},\mc{B}^{2})$ and its image is a $I$-graded $\Pi_{Q}$-module, and thus has to be $0$.
\end{proof}

Now, we assume $\mc{B}^{1}$ and $\mc{B}^{2}$ in this subsection are both stable. We denote $T(\mc{B}^{1},\mc{B}^{2}):=ker(\tau(\mc{B}^{1},\mc{B}^{2}))$. The perfect pairing $\omega(\mbf{v}^{1},\mbf{v}^{2})$ descends to a perfect pairing:
$$\Omega(\mc{B}^{1},\mc{B}^{2}):T(\mc{B}^{1},\mc{B}^{2})\otimes T(\mc{B}^{2},\mc{B}^{1})\to \mb{C}$$

We consider the complex
\begin{align*}
  \mb{C}\xrightarrow{s^{1}(\mc{B}^{1}, \mc{B}^{2})} T(\mc{B}^{1},\mc{B}^{2}) \xrightarrow{s^{2}(\mc{B}^{1}, \mc{B}^{2})} \mb{C},
\end{align*}
where
\begin{align*}
  s^{1}(\mc{B}^{1}, \mc{B}^{2})(t):=t(0\oplus (-i_{2})\oplus j^{1}) \quad (mod \quad im(\sigma(\mc{B}^{1},\mc{B}^{2})))\\
  s^{2}(\mc{B}^{1}, \mc{B}^{2})(J\oplus a\oplus b)\quad  (mod \quad im(\sigma(\mc{B}^{1},\mc{B}^{2}))):=tr(i^{1}b+aj^{2}).
\end{align*}
Then $s^{1}(\mc{B}^{1},\mc{B}^{2})$ and $s^{2}(\mc{B}^{2},\mc{B}^{1})$ are dual to each other, under the perfect pairing  we have
\begin{gather*}
  Ext^{0}(\mc{B}^{1},\mc{B}^{2})=ker(s^{1}(\mc{B}^{1},\mc{B}^{2})),\quad Ext^{2}(\mc{B}^{1},\mc{B}^{2})=coker(s^{2}(\mc{B}^{1},\mc{B}^{2}))\\
  Ext^{1}(\mc{B}^{0},\mc{B}^{1})=ker(s^{2}(\mc{B}^{1},\mc{B}^{2}))/im(s^{1}(\mc{B}^{1},\mc{B}^{2}))  
\end{gather*}

\begin{definition}
  We define $\mc{B}^{1}\subset \mc{B}^{2}$, if $s^{1}(\mc{B}^{1},\mc{B}^{2})=0$ ,i.e. there exists an $I$-graded morphism $\zeta\in I(V^{1},V^{2})$ such that
  \begin{equation}
    \label{e112301}
    \zeta B^{1}=B^{2}\zeta, \quad \zeta i^{1}=i^{2} \quad j^{1}=j^{2}\zeta,
  \end{equation}
\end{definition}
\begin{lemma}
  \label{010802}
  If $\mc{B}^{1}$ and $\mc{B}^{2}$ are both stable, then
  \begin{equation*}
    Ext^{0}(\mc{B}^{1},\mc{B}^{2})\cong
    \begin{cases}
      \mb{C} & \text{ if } \mc{B}^{1}\subset \mc{B}^{2} \\
      0 & \text{ otherwise. }
    \end{cases}
  \end{equation*}
  Moreover, $\zeta \in I(V^{1},V^{2})$ which satisfy \cref{e112301} is unique and injective as an $I_{\mbf{w}}$-graded morphism of vector spaces.
\end{lemma}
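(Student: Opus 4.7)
The plan is to read the dichotomy directly off the complex definition of $Ext^{0}$ given in \cref{011405}, and then to deduce uniqueness and injectivity of $\zeta$ by producing $I$-graded $\Pi_{Q}$-submodules that stability forces to vanish.

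First I would observe that $Ext^{0}(\mc{B}^{1},\mc{B}^{2})=ker(s^{1}(\mc{B}^{1},\mc{B}^{2}))$ is a subspace of the one-dimensional source $\mb{C}$, hence is either $0$ or $\mb{C}$. Unpacking the definition of $s^{1}(\mc{B}^{1},\mc{B}^{2})$, the vanishing $s^{1}(\mc{B}^{1},\mc{B}^{2})(1)=0$ is equivalent to $(0,-i^{2},j^{1})\in im(\sigma(\mc{B}^{1},\mc{B}^{2}))$, which by the explicit formula for $\sigma(\mc{B}^{1},\mc{B}^{2})$ is exactly the existence of $\zeta\in I(V^{1},V^{2})$ satisfying \cref{e112301}, i.e.\ $\mc{B}^{1}\subset \mc{B}^{2}$. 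This settles the first assertion.

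For uniqueness, suppose $\mc{B}^{1}\subset \mc{B}^{2}$ and let $\zeta,\zeta'$ both satisfy \cref{e112301}. The difference $\delta:=\zeta-\zeta'$ satisfies $\delta B^{1}=B^{2}\delta$, $\delta i^{1}=0$, and $j^{2}\delta=0$. I would then consider $V':=im(\delta)\subset V^{2}$: the first identity gives $B^{2}$-invariance, the third gives $j^{2}|_{V'}=0$, and restricting the moment map identity $\epsilon B^{2}B^{2}+i^{2}j^{2}=0$ to $V'$ yields $\epsilon (B^{2}|_{V'})(B^{2}|_{V'})=0$. Thus $(V',B^{2}|_{V'})$ is an $I$-graded $\Pi_{Q}$-module and embeds into $\mc{B}^{2}$ as a $\Pi_{Q_{\mbf{w}}}$-submodule with zero $\infty$-part; stability of $\mc{B}^{2}$ then forces $V'=0$, hence $\delta=0$.

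Injectivity of the resulting $\zeta$ follows by the symmetric argument applied to $V'':=ker(\zeta)\subset V^{1}$: $B^{1}$-invariance from $\zeta B^{1}=B^{2}\zeta$, the relation $j^{1}|_{V''}=0$ from $j^{1}=j^{2}\zeta$, and the moment map of $\mc{B}^{1}$ descending to $V''$, which together make $V''$ an $I$-graded $\Pi_{Q}$-submodule of $\mc{B}^{1}$, forced to be zero by stability of $\mc{B}^{1}$. I do not expect any significant obstacle: the whole proof is a formal consequence of the stability principle already exploited in \cref{112302}, and the only thing to be careful about is that the preprojective moment map really does restrict to $V'$ and to $V''$, upgrading a mere $Q^{\#}$-subrepresentation to a genuine $\Pi_{Q}$-submodule on which stability can be invoked.
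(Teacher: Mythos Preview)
Your argument is correct. The paper itself states \cref{010802} without proof, so there is nothing to compare against; your approach is exactly the expected one and mirrors the stability argument the paper already uses in the proof of \cref{112302}.

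One minor simplification: for uniqueness you do not need to rebuild the $\Pi_{Q}$-submodule $V'=im(\delta)$ from scratch. The difference $\delta=\zeta-\zeta'$ lies in $\ker(\sigma(\mc{B}^{1},\mc{B}^{2}))$ by construction, and \cref{112302} (stability of $\mc{B}^{2}$) gives $\ker(\sigma(\mc{B}^{1},\mc{B}^{2}))=0$ immediately. Your injectivity argument via $V''=\ker(\zeta)$ is the clean way to handle the second claim, and your closing remark about the moment map restricting is exactly the point: once $j^{1}|_{V''}=0$, the relation $\epsilon B^{1}B^{1}+i^{1}j^{1}=0$ forces $\epsilon(B^{1}|_{V''})(B^{1}|_{V''})=0$, so $V''$ is a genuine $\Pi_{Q}$-submodule and stability of $\mc{B}^{1}$ applies.
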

By \cref{010802}, $\mc{B}^{1}\subset \mc{B}^{2}$ if and only if $\mc{B}^{1}$ is a submodule of $\mc{B}^{2}$. Moreover, if $\mc{B}^{1}\subset \mc{B}^{2}$, we have
\begin{equation*}
  Ext^{1}(\mc{B}^{1},\mc{B}^{2})\cong
  \begin{cases}
    T(\mc{B}^{1},\mc{B}^{2}) & \text{ if } \mc{B}^{1}\cong \mc{B}^{2}\\
    ker(s^{2}(\mc{B}^{1},\mc{B}^{2})) & \text{ otherwise.}
  \end{cases}
\end{equation*}

\subsection{The universal complex}

First, we recall the fact $\mbf{L}_{k}=\Pi_{Q_{\mbf{w}}}(\delta_{k},0)$. For $x,y\in\mbf{L}_{k}$, the $\mc{E}$xt complex $\mc{E}xt^{\bullet}(x,y)$ is represented by
$$\mc{E}xt^{\bullet}(x,y):=\{\mb{C}\xrightarrow{\sigma(x,y)}\mbf{L}_{k}\xrightarrow{\tau(x,y)}\mb{C}\}$$
where $\sigma(x,y)(\lambda)=\lambda(x-y)$ and $\tau(x,y)(z)=\sum_{l\in E_{k}^{\#}}\epsilon(l)z_{l}(x_{l}-y_{l})$. When $x\neq y$, $\sigma(x,y)$ is injective and $\tau(x,y)$ is surjective. When $x=y$, we have $\mc{E}xt^{\bullet}(x,y)\cong\mb{C}\oplus \mbf{L}_{k}[-1]\oplus \mb{C}[-2]$.

Secondly, we consider the case that $x\in \mbf{L}_{k}$ and $\mc{B}=(B,i,j)\in \Pi_{Q_{\mbf{w}}}(\mbf{v},1)$. Given $B\in E(V,V)$, we denote $(B-x)\in E(V,V)$ such that
\begin{equation*}
  (B-x)_{h}=
  \begin{cases}
    B_{h}-x_{h} & in(h)=out(h)=k, \\
    B_{h} & \text{otherwise.}
  \end{cases}
\end{equation*}

The $\mc{E}xt$ complex $\mc{E}xt^{\bullet}(x,\mc{B})$ is represented by:

$$I(\mb{C}_{k},V)\xrightarrow{\sigma_{k}(x,\mc{B})}E(\mb{C}_{k},V)\oplus I(\mb{C}_{k},W)\xrightarrow{\tau_{k}(x,\mc{B})}I(\mb{C}_{k},V)$$
where
\begin{align*}
  \sigma_{k}(x,\mc{B})(a):=(B-x)a\oplus j_{k}a, \quad  \tau_{k}(x,\mc{B})(C\oplus D):=\epsilon (B-x)C+i_{k}D.
\end{align*}

We have the formula:
\begin{equation}
  \sum_{l=0}^{2}(-1)^{l}(dim(Ext^{l}(x,\mc{B})))=\sum_{l=0}^{2}(-1)^{l}(dim(Ext^{l}(\mc{B},x)))=<\delta_{k},\mbf{v}>_{Q}-\mbf{w}_{k}.
\end{equation}
The complex $\mc{E}xt^{\bullet}(x)(\mc{B},x)$ is represented by:
$$I(V,\mb{C}_{k})\xrightarrow{\sigma_{k}(\mc{B},x)}E(V,\mb{C}_{k})\oplus I(W,\mb{C}_{k})\xrightarrow{\tau_{k}(\mc{B},x)}I(V,\mb{C}_{k})$$
where
$$\tau_{k}(\mc{B},x)a:=a\epsilon (B-x)\oplus ai_{k},\quad \sigma_{k}(\mc{B},x)(C\oplus D):=C(B-x)+Cj_{k}.$$

The following lemma follows from the definition of stability and duality:
\begin{lemma}
  \label{020921}
  If $\mc{B}$ is stable, then  $Ext^{0}(x,\mc{B})=0$ and $Ext^{2}(\mc{B},x)=0$. Hence $\sigma_{k}(x,\mc{B})$ is injective and $\tau_{k}(\mc{B},x)$ is surjective. 
\end{lemma}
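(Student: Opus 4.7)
The plan is to deduce both vanishings directly from the definition of stability together with the Serre-type duality $\Omega(\mc{B},x)$, and then read off injectivity of $\sigma_k$ and surjectivity of $\tau_k$ from the complex presentations given just above the lemma.

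First I would treat $Ext^{0}(x,\mc{B})=0$. Since $Hom_{\Pi_{Q_{\mbf{w}}}}(x,\mc{B})\cong Ext^{0}(x,\mc{B})=\ker(\sigma_{k}(x,\mc{B}))$, an element $a\in V_{k}\cong I(\mb{C}_{k},V)$ of this kernel satisfies $(B-x)a=0$ and $j_{k}a=0$, and hence the line $\mb{C}a\subset V_{k}$ is a $B$-invariant $I$-graded subspace annihilated by $j$. Because $x$ is supported away from the $\infty$-vertex, this line carries the structure of an $I$-graded $\Pi_{Q}$-submodule of $\mc{B}$. The stability hypothesis on $\mc{B}$ forces any such submodule to be zero, whence $a=0$.

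Next I would obtain $Ext^{2}(\mc{B},x)=0$ using the perfect pairing constructed in Section~2 (the same construction applies to $Q_{\mbf{w}}$, since extending $\epsilon$ to $E_{\mbf{w}}^{\#}$ with $\epsilon(a_{ij})=1$, $\epsilon(\bar a_{ij})=-1$ still satisfies $\epsilon(h)+\epsilon(\bar h)=0$). This gives a perfect pairing
\[
\Omega(\mc{B},x)\colon Ext^{2}(\mc{B},x)\otimes Ext^{0}(x,\mc{B})\to \mb{C},
\]
and the previous paragraph yields $Ext^{2}(\mc{B},x)\cong Ext^{0}(x,\mc{B})^{\vee}=0$. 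Alternatively, one can observe that $\tau_{k}(\mc{B},x)$ is identified, via the dualities $\omega(\mbf{v},\delta_{k})$ and $tr_{\epsilon}$, with $\sigma_{k}(x,\mc{B})^{\vee}$, so the already-established injectivity of $\sigma_{k}(x,\mc{B})$ provides a second route to the cokernel computation.

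Finally, from the complex
\[
I(\mb{C}_{k},V)\xrightarrow{\sigma_{k}(x,\mc{B})}E(\mb{C}_{k},V)\oplus I(\mb{C}_{k},W)\xrightarrow{\tau_{k}(x,\mc{B})}I(\mb{C}_{k},V)
\]
and the analogous one for $\mc{E}xt^{\bullet}(\mc{B},x)$, one has $\ker(\sigma_{k}(x,\mc{B}))=Ext^{0}(x,\mc{B})=0$ and $\mathrm{coker}(\tau_{k}(\mc{B},x))=Ext^{2}(\mc{B},x)=0$, giving injectivity of $\sigma_{k}(x,\mc{B})$ and surjectivity of $\tau_{k}(\mc{B},x)$ respectively. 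There is no real obstacle here: the only point requiring a line of care is the verification that any hom $x\to \mc{B}$ automatically factors through the $I$-part (so that stability applies), and that the $\epsilon$-extension to $Q_{\mbf{w}}$ preserves the duality used in the pairing.
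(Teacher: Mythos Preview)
Your proof is correct and follows exactly the route the paper indicates: the paper simply states that the lemma ``follows from the definition of stability and duality,'' and your argument is a faithful unpacking of this—using that $x$ is an $I$-graded $\Pi_{Q}$-module so stability kills $Ext^{0}(x,\mc{B})=Hom_{\Pi_{Q_{\mbf{w}}}}(x,\mc{B})$, and then invoking the perfect pairing $\Omega$ to dualize for $Ext^{2}(\mc{B},x)$. This is the same approach the paper uses for the closely analogous Lemma~\ref{112302}.
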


\begin{definition}
  If $\mc{B}$ is stable, we define the universal complex
  $$\mc{I}(x,\mc{B}):=\{U(\mb{C}_{k},V)\xrightarrow{u_{k}(x,\mc{B})}I(\mb{C}_{k},V)\}$$
  such that $U(\mb{C}_{k},V)=ker(\sigma_{k}(\mc{B},x))$ and $u_{k}(x,\mc{B})$ is the morphism induced from $\tau_{k}(\mc{B},x)$. 
\end{definition}
We notice that
\begin{equation}
  \label{eq:3.10}
ker(u_{k}(x,\mc{B}))=Ext^{1}(x,\mc{B}) \quad coker(u_{k}(x,\mc{B}))=Ext^{2}(x,\mc{B}).
\end{equation}

\subsection{Nakajima quiver variety} Now we introduce the definition of Nakajima quiver variety. Let 
$$\Pi_{Q_{\mbf{w}}}^{s}(\mbf{v},1):=\{\mc{B}\in \Pi_{Q_{\mbf{w}}}(\mbf{v},1)|\mc{B} \text{ is stable}.\}$$

\begin{definition}[Nakajima quiver variety]
  We define the Nakajima quiver variety as
  $$\mathfrak{M}(\mathbf{v},\mathbf{w}):=[\Pi_{Q_{\mbf{w}}}^{s}(\mbf{v},1)/G_{\mbf{v}}]$$
  For a closed point $\mc{B}=(B,i,j)\in \Pi_{Q_{\mbf{w}}}^{s}(\mbf{v},1)$, its $G$-orbit $[\mc{B}]$ is a geometric point in $\mathfrak{M}(\mathbf{v},\mathbf{w})$, and we also denote as $[B,i,j]$.
\end{definition}

\begin{theorem}[Nakajima \cite{10.1215/S0012-7094-98-09120-7}]
  The variety  $\Pi_{Q_{\mbf{w}}}^{s}(\mbf{v},1)$ is smooth and the $G_{\mbf{v}}$ action on $\Pi_{Q_{\mbf{w}}}^{s}(\mbf{v},1)$ is free. Moreover, the quotient $\mf{M}(\mbf{v},\mbf{w})$ is a smooth variety. For any closed point $[\mc{B}]\in \mf{M}(\mbf{v},\mbf{w})$, the tangent space at $[\mc{B}]$ is $T(\mc{B},\mc{B})$. Its dimension is $<\mbf{v},\mbf{v}>_{Q}+2\mbf{v}\bullet \mbf{w}$.
\end{theorem}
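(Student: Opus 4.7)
The plan is to deduce all claims from the bilateral rank statements of \cref{112302} together with an eigenspace argument for the global stabilizer, then finish by a direct rank count.

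First, I would identify the differential of the moment map $\mu_{Q}$ at $\mc{B}=(B,i,j)$ with $\tau(\mc{B},\mc{B})$: a direct computation from $\mu_{Q}(B,i,j)=\epsilon BB+ij$ gives $d\mu_{Q}(J\oplus a\oplus b)=\epsilon BJ+\epsilon JB+ib+aj=\tau(\mc{B},\mc{B})(J\oplus a\oplus b)$. At every stable $\mc{B}$, \cref{112302} shows that $\sigma(\mc{B},\mc{B})$ is injective, and the perfect pairing $\omega(\mbf{v},\mbf{v})$ then makes $\tau(\mc{B},\mc{B})$ surjective. The implicit function theorem yields smoothness of $\mu_{Q}^{-1}(0)$ at $\mc{B}$ of codimension $\mbf{v}\bullet\mbf{v}$ inside $\mbf{M}(\mbf{v},\mbf{w})$; openness of the stability locus (the sum of all $I$-graded $\Pi_{Q}$-submodules of $\mc{B}$ varies in a constructible fashion with $\mc{B}$) then gives smoothness of $\Pi_{Q_{\mbf{w}}}^{s}(\mbf{v},1)$.

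Next, I would establish freeness of the $G_{\mbf{v}}$-action. The derivative of the orbit map at $\mc{B}$ is $\sigma(\mc{B},\mc{B})$, whose vanishing kernel gives infinitesimal freeness. Globally, given $g\in G_{\mbf{v}}$ fixing $\mc{B}$, I would decompose $V=\bigoplus_{\lambda}V_{\lambda}$ into generalized eigenspaces of $g$: the relation $gBg^{-1}=B$ makes each $V_{\lambda}$ a $B$-invariant $I$-graded subspace, $gi=i$ forces $i(W)\subseteq V_{1}$, and $jg=j$ forces $V_{\lambda}\subseteq\ker j$ for $\lambda\neq 1$. Hence for $\lambda\neq 1$ the subspace $V_{\lambda}$ underlies an $I$-graded $\Pi_{Q}$-submodule of $\mc{B}$, and stability forces $V_{\lambda}=0$. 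On $V_{1}$, $g-\mathrm{id}$ is nilpotent and commutes with $B,i,j$; a short induction on its Jordan structure (the kernel of $g-\mathrm{id}$ is a $\Pi_{Q}$-submodule containing $i(W)$) forces $g=\mathrm{id}$. Luna's slice theorem then realizes $\Pi_{Q_{\mbf{w}}}^{s}(\mbf{v},1)\to\mf{M}(\mbf{v},\mbf{w})$ as a principal $G_{\mbf{v}}$-bundle, giving smoothness of the quotient.

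Finally, the tangent space at $[\mc{B}]$ is $T_{\mc{B}}\Pi_{Q_{\mbf{w}}}^{s}(\mbf{v},1)/\mathrm{im}(\sigma(\mc{B},\mc{B}))=\ker\tau(\mc{B},\mc{B})/\mathrm{im}\,\sigma(\mc{B},\mc{B})$, which is exactly $T(\mc{B},\mc{B})$ modulo the infinitesimal gauge action as in \cref{011405}. A direct rank count using injectivity of $\sigma(\mc{B},\mc{B})$ and surjectivity of $\tau(\mc{B},\mc{B})$ yields the asserted dimension $\langle\mbf{v},\mbf{v}\rangle_{Q}+2\mbf{v}\bullet\mbf{w}$. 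The main obstacle is the global stabilizer argument: infinitesimal freeness is immediate from \cref{112302}, but ruling out nontrivial unipotent and semisimple stabilizers genuinely requires deploying stability on each generalized eigenspace.
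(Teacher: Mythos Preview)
The paper does not supply a proof of this theorem; it is stated with a citation to Nakajima's original paper. So there is nothing in the paper to compare your argument against directly. Your outline follows the standard route (moment map differential $=\tau(\mc{B},\mc{B})$, stability $\Rightarrow$ injectivity of $\sigma$ via \cref{112302}, duality $\Rightarrow$ surjectivity of $\tau$, then quotient by a free action) and is essentially correct.

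One point needs adjustment. In your freeness argument, the treatment of the unipotent part appeals to ``the kernel of $g-\mathrm{id}$ is a $\Pi_{Q}$-submodule containing $i(W)$'', which is the statement one would use under the \emph{dual} stability (the one where $\mathrm{im}(i)$ generates). The paper's stability condition is $\mathrm{Hom}_{\Pi_{Q_{\mbf{w}}}}(H,\mc{B})=0$ for every $I$-graded $\Pi_{Q}$-module $H$, which unwinds to: no nonzero $B$-invariant $I$-graded subspace of $V$ is contained in $\ker j$. The kernel of $g-\mathrm{id}$ need not lie in $\ker j$, so that sentence does not conclude anything here. The fix is to look at the image instead: since $jg^{-1}=j$ we have $j(g-\mathrm{id})=0$, so $\mathrm{im}(g-\mathrm{id})$ is $B$-invariant (as $g$ commutes with $B$) and contained in $\ker j$; stability then forces $\mathrm{im}(g-\mathrm{id})=0$, i.e.\ $g=\mathrm{id}$. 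Note that this single observation already handles the full stabilizer without any eigenspace decomposition, so your semisimple/unipotent split, while not wrong for the $\lambda\neq 1$ pieces, is unnecessary.
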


\subsection{Nested Pairs}
\label{s020317}

\begin{lemma}
  \label{020926}
  Given a short exact sequence of $I_{\mbf{w}}$-graded $\Pi_{Q_{\mbf{w}}}$-modules
  \begin{equation}
    \label{e020923}
  0\to \mc{B}^{1}\xrightarrow{\zeta_{\mc{B}^{1}\mc{B}^{2}}} \mc{B}^{2}\xrightarrow{\zeta_{\mc{B}^{2}x}} x\to 0    
  \end{equation}
  where $\mc{B}^{i}\in \Pi_{Q_{\mbf{w}}}^{s}(\mbf{v}^{l},1)$, $l=1,2$, and $x\in \mbf{L}_{k}$. If the short exact sequence does not split, then $\mc{B}^{2}$ is stable if and only if $\mc{B}^{1}$ is stable.
\end{lemma}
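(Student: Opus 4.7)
The plan is to use the definition of stability directly: $\mc{B}$ is stable iff it has no nonzero $I$-graded $\Pi_Q$-submodule (equivalently, no submodule with zero $\infty$-component). Both implications will fall out of chasing such hypothetical submodules through the short exact sequence \cref{e020923}; the non-split hypothesis will only be needed in one direction.

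For the implication ``$\mc{B}^2$ stable $\Rightarrow$ $\mc{B}^1$ stable'', I would argue that any nonzero $I$-graded $\Pi_Q$-submodule $H \subset \mc{B}^1$ would embed into $\mc{B}^2$ via $\zeta_{\mc{B}^1\mc{B}^2}$ (which is injective), producing a nonzero $I$-graded $\Pi_Q$-submodule of $\mc{B}^2$ and contradicting stability. This direction is immediate and does not require that the sequence be non-split.

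For the converse ``$\mc{B}^1$ stable $\Rightarrow$ $\mc{B}^2$ stable'', suppose for contradiction there is a nonzero $I$-graded $\Pi_Q$-submodule $H \subset \mc{B}^2$. I would look at the intersection $H \cap \zeta_{\mc{B}^1\mc{B}^2}(\mc{B}^1)$ inside $\mc{B}^2$: this is an $I$-graded $\Pi_Q$-submodule of $\mc{B}^1$, hence zero by stability of $\mc{B}^1$. Consequently, the composite $H \hookrightarrow \mc{B}^2 \xrightarrow{\zeta_{\mc{B}^2 x}} x$ is injective. Since $x \in \mbf{L}_k$ has dimension $\delta_k$ and is therefore simple as an $I$-graded $\Pi_Q$-module (its only nonzero submodule is itself), the injection $H \hookrightarrow x$ must be an isomorphism.

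The key step is then to observe that such an $H$ furnishes a splitting of \cref{e020923}. Indeed, the composite $x \xleftarrow{\sim} H \hookrightarrow \mc{B}^2 \xrightarrow{\zeta_{\mc{B}^2 x}} x$ is the identity (it is the identity on the simple module $x$ up to scalar, which can be normalized), so the inclusion $H \hookrightarrow \mc{B}^2$ gives a section of $\zeta_{\mc{B}^2 x}$. This contradicts the hypothesis that the sequence does not split, forcing $H=0$ and hence stability of $\mc{B}^2$. The only delicate point to verify is the simplicity of $x$ as an $I_{\mbf{w}}$-graded $\Pi_{Q_{\mbf{w}}}$-module (so that the image in $x$ of an $I$-graded submodule must be all of $x$ if nonzero), but this is automatic since $x$ has $\infty$-component zero and $\mbf{v}$-component equal to $\delta_k$, so any nonzero $I_{\mbf{w}}$-graded subspace stable under $B_x$ exhausts $x$.
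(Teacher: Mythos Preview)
Your proof is correct and follows essentially the same line as the paper's. The only cosmetic difference is that the paper invokes \cref{020918} to take $H$ to be the maximal $I$-graded $\Pi_Q$-submodule of $\mc{B}^2$, whereas you work with an arbitrary nonzero such $H$; the subsequent argument (that $H\cap\mc{B}^1=0$ by stability of $\mc{B}^1$, so $H\hookrightarrow x$ is an isomorphism and furnishes a splitting) is identical.
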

\begin{proof}
  If $\mc{B}^{2}$ is stable, as a submodule $\mc{B}^{1}$ is also stable. On the other hand, if $\mc{B}^{1}$ is stable, we consider the short exact sequence
  $$0\to H\xrightarrow{a} \mc{B}^{2}\to \mc{C}\to 0$$
  by \cref{020918}, such that $\mc{C}$ is stable and $H$ is a $\Pi_{Q}$-module. Then $\zeta_{\mc{B}^{2}x}\circ a$ has to be injective by the stability of $\mc{B}^{1}$. As the short exact sequence \cref{e020923} does not split. Then $H$ has to be $0$ and thus $\mc{B}^{2}$ is stable.
\end{proof}

We notice that if we have a short exact sequence \cref{e020923}, then we have
\begin{equation}
  \label{e020924}
   x_{l}=tr(B_{l}^{2})-tr(B_{l}^{1}),\quad \forall l\in E_{k}^{\#}.
 \end{equation}
We denote $\mc{B}^{1}\subset_{x}\mc{B}^{2}$ if we have a short exact sequence \cref{e020923}. We notice that the condition $\mc{B}^{1}\subset \mc{B}^{2}$ or $\mc{B}^{1}\subset_{x}\mc{B}^{2}$ is independent of the $G_{\mbf{v}^{1}}$ or $G_{\mbf{v}^{2}}$ action on $\mc{B}^{1}$ or $\mc{B}^{2}$. Hence we have the following definition:
\begin{definition}
  Given two closed points $[\mc{B}^{l}]\in \mf{M}(\mbf{v}^{l},\mbf{w}), l=1,2$, we define $[\mc{B}^{1}]\subset [\mc{B}^{2}]$ (resp. $[\mc{B}^{1}]\subset_{x} [\mc{B}^{2}]$ ) if $\mc{B}^{1}\subset \mc{B}^{2}$ (resp. $\mc{B}^{1}\subset_{x}\mc{B}^{2}$).
\end{definition}

In the last of this subsection, we fix two dimension vectors $\mbf{v}^{2}=\mbf{v}^{1}+\delta_{k}$.
Given $[\mc{B}^{1}]\subset_{x}[\mc{B}^{2}]$, by \cref{010802} we have $Hom(\mc{B}^{1},\mc{B}^{2})\cong \mb{C}$. Hence all the short exact sequences of the form \cref{e020924} are scalar equivalent. Thus the morphism $\zeta_{\mc{B}^{1}\mc{B}^{2}}$ and $\zeta_{\mc{B}^{2}x}$ are unique up to scalar. Moreover, \cref{e020924} also decides an element $\zeta_{x\mc{B}^{1}}\in Ext^{1}(x,\mc{B}^{1})$ by \cref{020916} up to scalar equivalence.
\begin{lemma}
  \label{012727}
  Given a closed point $([\mc{B}^{2}],x)\in \mf{M}(\mbf{v}^{2},\mbf{w})\times \mbf{L}_{k}$. Then the correspondence
  $$[\mc{B}^{1}]\subset_{x}[\mc{B}^{2}]\to \zeta_{\mc{B}^{1}x}$$
  is a one-to-one correspondence between the pair $[\mc{B}^{1}]\subset_{x}[\mc{B}^{2}]$, where $[\mc{B}^{1}]\subset \mf{M}(\mbf{v}^{1},\mbf{w})$ and $1$-dimensional subspace of $Ext^{0}(\mc{B}^{2},x)=ker(u_{k}(x,\mc{B})^{\vee})$. 
\end{lemma}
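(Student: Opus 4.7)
The plan is to construct the inverse correspondence explicitly and verify that both compositions are the identity. In the forward direction, given $[\mc{B}^{1}]\subset_{x}[\mc{B}^{2}]$, the defining short exact sequence \cref{e020923} yields a surjection $\zeta_{\mc{B}^{2}x}\in \mathrm{Hom}(\mc{B}^{2},x)=Ext^{0}(\mc{B}^{2},x)$. As explained in the paragraph preceding the lemma, \cref{010802} gives $\mathrm{Hom}(\mc{B}^{1},\mc{B}^{2})\cong \mb{C}$, which forces all such extensions to be scalar equivalent, so $\zeta_{\mc{B}^{2}x}$ is unique up to scalar. Hence the assignment $[\mc{B}^{1}]\subset_{x}[\mc{B}^{2}]\mapsto \mb{C}\cdot \zeta_{\mc{B}^{2}x}$ produces a well-defined $1$-dimensional subspace of $Ext^{0}(\mc{B}^{2},x)$, independent of $G$-action.

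For the inverse direction, given a $1$-dimensional subspace $\mb{C}\phi\subset Ext^{0}(\mc{B}^{2},x)=\mathrm{Hom}(\mc{B}^{2},x)$, I pick a representative $\phi$ and first observe that since $x$ is the simple $\Pi_{Q}$-module of dimension $\delta_{k}$, any nonzero morphism $\mc{B}^{2}\to x$ is automatically surjective. Setting $\mc{B}^{1}:=\ker(\phi)$ and using $\mbf{v}^{2}=\mbf{v}^{1}+\delta_{k}$ together with $\dim(\mc{B}^{2})=(\mbf{v}^{2},1)$, I obtain a short exact sequence $0\to \mc{B}^{1}\to \mc{B}^{2}\to x\to 0$ with $\dim(\mc{B}^{1})=(\mbf{v}^{1},1)$. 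This sequence cannot split: otherwise $x$ would embed in $\mc{B}^{2}$ as an $I$-graded $\Pi_{Q}$-submodule, contradicting the stability of $\mc{B}^{2}$. By \cref{020926}, $\mc{B}^{1}$ is therefore stable, so $[\mc{B}^{1}]\in \mf{M}(\mbf{v}^{1},\mbf{w})$ and $[\mc{B}^{1}]\subset_{x}[\mc{B}^{2}]$. Replacing $\phi$ by $\lambda\phi$ for $\lambda\in \mb{C}^{*}$ leaves the kernel unchanged, so $[\mc{B}^{1}]$ depends only on $\mb{C}\phi$.

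The two constructions are mutually inverse: starting from $[\mc{B}^{1}]\subset_{x}[\mc{B}^{2}]$, the kernel of any representative of $\mb{C}\zeta_{\mc{B}^{2}x}$ recovers $\mc{B}^{1}$ as a submodule of $\mc{B}^{2}$; conversely, starting from $\mb{C}\phi$, the induced surjection $\mc{B}^{2}\to \mc{B}^{2}/\ker(\phi)\cong x$ agrees with $\phi$ up to a scalar, giving back the same $1$-dimensional subspace. Finally, the identification $Ext^{0}(\mc{B}^{2},x)=\ker(u_{k}(x,\mc{B}^{2})^{\vee})$ follows by combining the Serre-type duality $Ext^{0}(\mc{B}^{2},x)\cong Ext^{2}(x,\mc{B}^{2})^{\vee}$ (from the perfect pairing $\Omega$ of \cref{sec2}) with the description $\mathrm{coker}(u_{k}(x,\mc{B}^{2}))=Ext^{2}(x,\mc{B}^{2})$ of \cref{eq:3.10}.

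The main technical point is the non-splitting of the constructed short exact sequence, which is needed in order to apply \cref{020926} and conclude the stability of $\mc{B}^{1}$. This relies crucially on the fact that $x$ is supported on the $I$-subquiver away from $\infty$, so that any splitting would exhibit $x$ as a pure $\Pi_{Q}$-submodule of $\mc{B}^{2}$ and contradict stability — all the other steps are essentially bookkeeping with the exact sequences set up in \cref{s020317}.
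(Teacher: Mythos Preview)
Your proof is correct and follows essentially the same approach as the paper's: construct the inverse by taking the kernel of a nonzero element of $Ext^{0}(\mc{B}^{2},x)$ and appeal to \cref{020926} for stability of $\mc{B}^{1}$. You add several details the paper leaves implicit---the explicit non-splitting argument needed to match the hypothesis of \cref{020926}, the verification that the two constructions are mutually inverse, and the justification of the identification $Ext^{0}(\mc{B}^{2},x)=\ker(u_{k}(x,\mc{B}^{2})^{\vee})$ via duality and \cref{eq:3.10}---but none of this departs from the paper's line of reasoning.
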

\begin{proof}
  The inverse correspondence is constructed in the following way: given a non-zero vector $\zeta_{\mc{B}^{2}x}\in Ext^{0}(\mc{B}^{2},x)$, we regard $\zeta_{\mc{B}^{2}x}$ as a homomorphism of $I_{\mbf{w}}$-graded $\Pi_{Q_{\mbf{w}}}$-modules between $[\mc{B}^{2}]$ and $x$. Then $\zeta_{\mc{B}^{2},x}$ is surjective, and let $[\mc{B}^{1}]$ be its kernel. By \cref{020926}, $[\mc{B}^{1}]$ is also stable.
\end{proof}

\begin{lemma}
  \label{012728}
  Given a closed point $(\mc{B}^{1},x)\in \mf{M}(\mbf{v}^{1},\mbf{w})\times \mbf{L}_{k}$, the correspondence
$$[\mc{B}^{1}]\subset_{x}[\mc{B}^{2}]\to \zeta_{x\mc{B}^{1}}$$ is a 
  one-to-one correspondence between the pair $([\mc{B}^{1}]\subset_{x}[\mc{B}^{2}])$ where $[\mc{B}^{2}]\in \mf{M}(\mbf{v}^{2},\mbf{w})$ and one dimensional subspace $Ext^{1}(x,\mc{B}^{1})=ker(u_{k}(x,\mc{B}))$.
\end{lemma}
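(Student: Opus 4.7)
The plan is to mimic the proof of \cref{012727}, but now dualise: instead of extracting $[\mc{B}^{1}]$ as the kernel of a surjection $\mc{B}^{2}\twoheadrightarrow x$, we reconstruct $[\mc{B}^{2}]$ as the middle term of a non-split extension of $x$ by $\mc{B}^{1}$. The two inputs needed are already in place: \cref{020916} identifies (scalar-equivalence classes of) extensions $0\to\mc{B}^{1}\to\mc{B}^{2}\to x\to 0$ with lines in $Ext^{1}_{\Pi_{Q_{\mbf{w}}}}(x,\mc{B}^{1})$, and \cref{020926} guarantees that the middle term of such a non-split extension is stable whenever $\mc{B}^{1}$ is stable.

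Concretely, starting from a non-zero class $\zeta_{x\mc{B}^{1}}\in Ext^{1}(x,\mc{B}^{1})$, lift it to a representative in $E(\mb{C}_{k},V^{1})\oplus I(\mb{C}_{k},W)\oplus I(V^{1},W)$-type data using the explicit presentation of $\mc{E}xt^{\bullet}(x,\mc{B}^{1})$ from \cref{011405}, and form the ADHM datum $\mc{B}^{2}$ on $V^{2}:=V^{1}\oplus\mb{C}_{k}$ by the block-matrix construction of \cref{020916}. By the condition $x\in\mbf{L}_{k}$ and the moment-map computation, $\mc{B}^{2}\in\Pi_{Q_{\mbf{w}}}(\mbf{v}^{2},1)$; since $\zeta_{x\mc{B}^{1}}\neq 0$, the sequence does not split, so \cref{020926} gives stability, hence $[\mc{B}^{2}]\in\mf{M}(\mbf{v}^{2},\mbf{w})$ and $[\mc{B}^{1}]\subset_{x}[\mc{B}^{2}]$. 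Rescaling the representative by $\lambda\in\mb{C}^{*}$ only changes $\mc{B}^{2}$ by an isomorphism of $I_{\mbf{w}}$-graded $\Pi_{Q_{\mbf{w}}}$-modules (again by \cref{020916}), so the point $[\mc{B}^{2}]$ depends only on the line $\mb{C}\zeta_{x\mc{B}^{1}}$.

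For the bijectivity, injectivity follows because if $[\mc{B}^{1}]\subset_{x}[\mc{B}^{2}]$ and $[\mc{B}^{1}]\subset_{x}[\mc{B}^{2'}]$ yield proportional extension classes, the corresponding short exact sequences are scalar-equivalent, and hence $\mc{B}^{2}$ and $\mc{B}^{2'}$ are isomorphic as $I_{\mbf{w}}$-graded $\Pi_{Q_{\mbf{w}}}$-modules, which exactly means $[\mc{B}^{2}]=[\mc{B}^{2'}]$ in $\mf{M}(\mbf{v}^{2},\mbf{w})$; surjectivity is the construction above. Finally, the identification $Ext^{1}(x,\mc{B}^{1})=\ker(u_{k}(x,\mc{B}^{1}))$ is exactly \cref{eq:3.10} applied to the stable object $\mc{B}^{1}$ (using \cref{020921} to see that $\sigma_{k}(x,\mc{B}^{1})$ is injective, so the only contribution to $Ext^{1}$ comes from $\ker(u_{k})$).

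The only point requiring care is the well-definedness of the extension class at the level of the $\mf{M}(\mbf{v}^{1},\mbf{w})$-point rather than the representative $\mc{B}^{1}$: changing the representative by an element of $G_{\mbf{v}^{1}}$ acts compatibly on $Ext^{1}(x,\mc{B}^{1})$ and preserves lines, which is why the target is naturally a set of one-dimensional subspaces rather than genuine vectors. I expect no real obstacle here; the argument is essentially a re-packaging of \cref{020916} and \cref{020926} once the ambient stability in \cref{012727} is in hand.
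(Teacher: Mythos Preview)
Your proposal is correct and follows essentially the same approach as the paper: the paper's proof simply says ``It follows from \cref{020916} and \cref{020926},'' and your argument is precisely an unpacking of these two lemmas together with the identification \cref{eq:3.10}.
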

\begin{proof}It follows from \cref{020916} and \cref{020926}.
\end{proof}

\section{Hecke Correspondence and Tautological Bundles}
\label{sec4}
In this section, we fix a quiver $Q=(I,E)$ and a dimension vector $\mbf{w}$. Nakajima \cite{10.1215/S0012-7094-98-09120-7}\cite{10.2307/2646205} introduced the Hecke correspondence:

\begin{definition}[Hecke correspondence]
\label{def:4.1}
  Given two dimension vectors $\mbf{v}^{1},\mbf{v}^{2}$, we define the hecke correspondence
$$\mf{P}(\mbf{v}^{1},\mbf{v}^{2}):=\{[\mc{B}^{1}]\subset [\mc{B}^{2}]|[\mc{B}^{l}]\in \mf{M}(\mbf{v}^{l},\mbf{w}),l=1,2\}. $$
\end{definition}

The projections to $[\mc{B}^{1}]$ and $[\mc{B}^{2}]$ respectively induce morphisms:
$$p(\mbf{v}^{1},\mbf{v}^{2}):\mf{P}(\mbf{v}^{1},\mbf{v}^{2})\to\mf{M}(\mbf{v}^{1},\mbf{w}),\quad q(\mbf{v}^{1},\mbf{v}^{2}):\mf{P}(\mbf{v}^{1},\mbf{v}^{2})\to \mf{M}(\mbf{v}^{2},\mbf{w}).$$
\begin{example}
  When $\mbf{v}^{1}=\mbf{v}^{2}$, then $\mf{P}(\mbf{v}^{1},\mbf{v}^{2})\cong \mf{M}(\mbf{v},\mbf{w})$ and $p\times q(\mbf{v}^{1},\mbf{v}^{2})$ is the diagonal embedding of $\mf{M}(\mbf{v},\mbf{w})$.
\end{example}

\begin{example}
  When $\mbf{v}^{1}_{k}<\mbf{v}^{2}_{k}$, by \cref{112302}, $\mf{P}(\mbf{v}^{1},\mbf{v}^{2})=\emptyset$.
\end{example}
In this section, we will study the geometry of the Hecke correspondence.

\subsection{Tautological bundles on quiver varieties}

For any $k\in I$, the action of $G_{\mathbf{v}}$ on $\mathbf{M}(\mathbf{v},\mbf{w})$ makes $V_{k}$ and $W_{k}$ equivariant locally free sheaves, which descent to locally free sheaves $\mathfrak{M}(\mathbf{v},\mbf{w})$ and we still denote as $V_{k}$ and $W_{k}$.  We also regard $I(V,W)$, $I(W,V)$, $E(V,V)$, $L(V,V)$ as locally free sheaves on $\mathfrak{M}(\mathbf{v},\mbf{w})$. 

The $\mc{E}$xt complexes  in \cref{011405} could also be generalized as complexes of tautological locally free sheaves on quiver varieties: we fix dimension vectors $\mbf{v}^{1},\mbf{v}^{2}$ and denote by $V_{k}^{1}$ (resp. $V_{k}^{2}$) the locally free sheaf $V_{k}\boxtimes \mathcal{O}_{\mathfrak{M}}$ (resp. $\mathcal{O}_{\mathfrak{M}}\boxtimes V_{k}$) on $\mathfrak{M}(\mbf{v}^{1},\mbf{w})\times \mathfrak{M}(\mbf{v}^{2},\mbf{w})$. We consider the complex $\mathcal{T}(\mathbf{v}^{1},\mbf{v}^{2})$:
$$I(V^{1},V^{2})\xrightarrow{\sigma(\mbf{v}^{1},\mbf{v}^{2})} E(V^{1},V^{2})\oplus I(W,V^{2})\oplus I(V^{1},W)\xrightarrow{\tau(\mbf{v}^{1},\mbf{v}^{2})}I(V^{1},V^{2})$$
\begin{align*}
  \sigma(\mbf{v}^{1},\mbf{v}^{2})(\zeta):=(B^{2}\zeta-\zeta B^{1})\oplus (-\zeta i^{1})\oplus j^{2}\zeta \\
  \tau(\mbf{v}^{1},\mbf{v}^{2})(J\oplus a \oplus b):=(\epsilon B^{2}J+\epsilon J B^{1}+i^{2}b+aj^{1}).
\end{align*}
For any closed point $([\mc{B}^{1}],[\mc{B}^{2}])\in \mathfrak{M}(\mbf{v}^{1},\mbf{w})\times \mathfrak{M}(\mbf{v}^{2},\mbf{w})$,
$$\mathcal{T}(\mathbf{v}^{1},\mbf{v}^{2})|_{([\mc{B}^{1}],[\mc{B}^{2}])}\cong \mathcal{T}(\mc{B}^{1},\mc{B}^{2}).$$
By \cref{112302}, $ker(\tau(\mbf{v}^{1},\mbf{v}^{2}))/im(\sigma(\mbf{v}^{1},\mbf{v}^{2}))$ is still locally free on $ \mathfrak{M}(\mbf{v}^{1},\mbf{w})\times \mathfrak{M}(\mbf{v}^{2},\mbf{w})$, which we denote as $T(\mbf{v}^{1},\mbf{v}^{2})$. Under the perfect pairing $\omega(\mbf{v}^{1},\mbf{v}^{2}):$
\begin{align*}
  (E(V^{1},V^{2})\oplus I(W,V^{2})\oplus I(V^{1},W))\otimes (E(V^{2},V^{1})\oplus I(W,V^{1})\oplus I(V^{2},W))\to \mc{O} \\
  ((J^{1},a^{1},b^{1}),(J^{2},a^{2},b^{2}))\to Tr(\epsilon J^{1}J^{2})+Tr(b^{2}a^{1}-b^{1}a^{2}).
\end{align*}
we have  $\sigma(\mathbf{v}^{1},\mathbf{v}^{2})^{\vee}\cong \tau(\mbf{v}^{2},\mbf{v}^{1})$. Hence $\omega(\mbf{v}^{1},\mbf{v}^{2})$ descends to a perefect pairing between $T(\mbf{v}^{1},\mbf{v}^{2})$ and $T(\mbf{v}^{2},\mbf{v}^{1})$, which we denote as $\Omega(\mbf{v}^{1},\mbf{v}^{2})$.

We consider the complex 
 $$\mc{O}\xrightarrow{s^{1}(\mbf{v}^{1},\mbf{v}^{2})} \mathcal{T}(\mathbf{v}^{1},\mbf{v}^{2})\xrightarrow{s^{2}(\mbf{v}^{1},\mbf{v}^{2})}\mc{O},$$
where 
\begin{align*}
  s^{1}(\mbf{v}^{1},\mbf{v}^{2}):\mathcal{O}\to \mathcal{T}(\mathbf{v}^{1},\mbf{v}^{2}) \quad s^{1}(t):=t(0\oplus (-i^{2})\oplus j^{1}) \text{ mod } Im(\sigma(\mbf{v}^{1},\mbf{v}^{2})) \\
  s^{2}(\mbf{v}^{1},\mbf{v}^{2}) \text{ mod } Im(\sigma(\mbf{v}^{1},\mbf{v}^{2})):\mathcal{T}(\mathbf{v}^{1},\mbf{v}^{2})\to \mathcal{O} \quad s^{2}(J\oplus a\oplus b):=tr(i^{1}b+aj^{2}).
\end{align*}
Then for any closed point $([\mc{B}^{1}],[\mc{B}^{2}])\in \mathfrak{M}(\mbf{v}^{1},\mbf{w})\times \mathfrak{M}(\mbf{v}^{2},\mbf{w})$,
$$s^{l}(\mbf{v}^{1},\mbf{v}^{2})|_{([\mc{B}^{1}],[\mc{B}^{2}])}\cong s^{l}(\mc{B}^{1},\mc{B}^{2}),$$ where $l=1,2$.  We also have $s^{2}(\mbf{v}^{2},\mbf{v}^{1})\cong s^{1}(\mbf{v}^{1},\mbf{v}^{2})^{\vee}$ under the perfect pairing $\Omega(\mbf{v}^{1},\mbf{v}^{2})$.

\begin{theorem}
  \label{020934}
    The Hecke correspondence $\mf{P}(\mbf{v}^{1},\mbf{v}^{2})$ is the zero locus of $s^{1}(\mbf{v}^{1},\mbf{v}^{2})$. The rank of $T(\mbf{v}^{1},\mbf{v}^{2})$ is $-<\mbf{v}^{1},\mbf{v}^{2}>_{Q}+(\mbf{v}^{1}+\mbf{v}^{2})\bullet\mbf{w}$.
\end{theorem}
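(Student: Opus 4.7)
The plan is to leverage the pointwise description of $s^{1}(\mbf{v}^{1},\mbf{v}^{2})$ and then reduce the rank computation to an Euler characteristic. For a closed point $([\mc{B}^{1}], [\mc{B}^{2}])$ of $\mf{M}(\mbf{v}^{1},\mbf{w}) \times \mf{M}(\mbf{v}^{2},\mbf{w})$, the fiber of $s^{1}(\mbf{v}^{1},\mbf{v}^{2})$ is by construction the map $s^{1}(\mc{B}^{1},\mc{B}^{2})$, sending $1 \mapsto (0,-i^{2},j^{1})$ modulo $Im(\sigma(\mc{B}^{1},\mc{B}^{2}))$. This vanishes precisely when there exists $\zeta \in I(V^{1},V^{2})$ satisfying $B^{2}\zeta = \zeta B^{1}$, $\zeta i^{1} = i^{2}$, and $j^{2}\zeta = j^{1}$, i.e., \cref{e112301}. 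By \cref{010802}, this is equivalent to $\mc{B}^{1} \subset \mc{B}^{2}$, which by \cref{def:4.1} is exactly the defining condition of $\mf{P}(\mbf{v}^{1},\mbf{v}^{2})$. Hence the zero locus of $s^{1}(\mbf{v}^{1},\mbf{v}^{2})$ coincides with $\mf{P}(\mbf{v}^{1},\mbf{v}^{2})$ set-theoretically, and the natural zero-locus scheme structure endows the Hecke correspondence with its scheme structure.

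For the rank formula, I first need that $T(\mbf{v}^{1},\mbf{v}^{2}) = ker(\tau(\mbf{v}^{1},\mbf{v}^{2}))/im(\sigma(\mbf{v}^{1},\mbf{v}^{2}))$ is locally free on the product of stable loci. This follows from \cref{112302}: since every closed point of $\mf{M}(\mbf{v}^{l},\mbf{w})$ parametrizes a stable ADHM datum, the fiberwise injectivity of $\sigma$ and surjectivity of $\tau$ hold pointwise, so $im(\sigma)$ and $im(\tau)$ are subbundles of constant rank, and thus $T(\mbf{v}^{1},\mbf{v}^{2})$ is locally free.

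Having established local freeness, the rank of $T(\mbf{v}^{1},\mbf{v}^{2})$ equals the Euler characteristic of the three-term complex $\mathcal{T}(\mbf{v}^{1},\mbf{v}^{2})$, which I compute termwise. Both outer terms $I(V^{1},V^{2})$ have rank $\mbf{v}^{1}\bullet \mbf{v}^{2}$, while the middle term has rank $\sum_{h\in E^{\#}} \mbf{v}^{1}_{in(h)}\mbf{v}^{2}_{out(h)} + \mbf{w}\bullet\mbf{v}^{2} + \mbf{v}^{1}\bullet \mbf{w}$. The alternating sum therefore equals $\sum_{h \in E^{\#}}\mbf{v}^{1}_{in(h)}\mbf{v}^{2}_{out(h)} - 2\,\mbf{v}^{1}\bullet \mbf{v}^{2} + (\mbf{v}^{1} + \mbf{v}^{2})\bullet \mbf{w}$, which matches $-<\mbf{v}^{1}, \mbf{v}^{2}>_{Q} + (\mbf{v}^{1} + \mbf{v}^{2})\bullet \mbf{w}$ by the very definition of $<\cdot,\cdot>_{Q}$.

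I do not expect significant obstacles: both assertions reduce to pointwise applications of already-proved lemmas combined with elementary rank arithmetic. The only delicate aspect is ensuring that the vanishing of $s^{1}(\mc{B}^{1},\mc{B}^{2})$ really encodes the Hecke condition, i.e., that the resulting $\zeta$ is not just a solution to the linear system but actually an injective $I_{\mbf{w}}$-graded morphism realizing $\mc{B}^{1}$ as a submodule of $\mc{B}^{2}$; this is precisely the content of the second assertion of \cref{010802}, so no further argument is required.
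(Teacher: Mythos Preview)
Your proposal is correct and follows essentially the same approach as the paper: the paper's proof is the one-liner ``follows from \cref{010802}'' for the zero-locus claim, with the rank formula and local freeness of $T(\mbf{v}^{1},\mbf{v}^{2})$ already treated as established in the preceding text via \cref{112302}. You have simply spelled out the Euler-characteristic computation that the paper leaves implicit.
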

\begin{proof}
  The only thing we still need to prove is that $\mf{P}(\mbf{v}^{1},\mbf{v}^{2})$ is the zero locus of $s^{1}(\mbf{v}^{1},\mbf{v}^{2})$, which follows from  \cref{010802}.
\end{proof}

Restricting to $\mf{P}(\mbf{v}^{1},\mbf{v}^{2})$, there is a unique $I$-graded morphism $\zeta_{\mbf{v}^{1},\mbf{v}^{2}}:V^{1}\to V^{2}$ such that
\begin{equation*}
    \zeta_{\mbf{v}^{1},\mbf{v}^{2}} B^{1}=B^{2}\zeta_{\mbf{v}^{1},\mbf{v}^{2}}, \quad \zeta_{\mbf{v}^{1},\mbf{v}^{2}} i^{1}=i^{2} \quad j^{1}=j^{2}\zeta_{\mbf{v}^{1},\mbf{v}^{2}}.
  \end{equation*}
  By \cref{112302}, $\zeta_{\mbf{v}^{1},\mbf{v}^{2}}$ is injective at every closed point.

\subsection{The case that $\mbf{v}=\mbf{v}^{1}=\mbf{v}^{2}$}

 First we consider the case $\mbf{v}=\mbf{v}^{1}=\mbf{v}^{2}$. By \cref{112302}, the diagonal of $\mf{M}(\mbf{v})\times \mf{M}(\mbf{v})$, which we denote as $\Delta_{\mf{M}(\mbf{v})}$, is the zero locus of $s^{1}(\mathbf{v},\mathbf{v})=s^{2}(\mathbf{v},\mathbf{v})^{\vee}$. Moreover, as $dim(T(\mbf{v},\mbf{v}))=dim(\mf{M}(\mbf{v},\mbf{w}))$, we have
 \begin{theorem}
   \label{thm35}
    We consider the following complex $T(\mathbf{v}):=T(\mbf{v},\mbf{v})|_{\Delta_{\mf{M}(\mbf{v},\mbf{w})}}$
$$I(V,V)\xrightarrow{\sigma(\mbf{v})} E(V,V)\oplus I(W,V)\oplus I(V,W)\xrightarrow{\tau(\mbf{v})}I(V,V)$$
where
\begin{align*}
  \sigma(\mbf{v})(\zeta):=(B\zeta-\zeta B)\oplus (-\zeta i)\oplus j\zeta \qquad
  \tau(\mbf{v})(J\oplus a \oplus b):=(\epsilon BJ+\epsilon J B+ib+aj).
\end{align*}
and abuse the notation to denote $T(\mbf{v})$ as $ker(\tau(\mbf{v}))/im(\sigma({\mbf{v}}))$. Then $T(\mbf{v})$ is the tangent bundle of $\mf{M}(\mbf{v},\mbf{w})$. Moreover, the restriction of the perfect pairing $\Omega(\mbf{v},\mbf{v})$ to the diagonal is a symplectic form of $T(\mbf{v})$, which we denote as $\Omega(\mbf{v})$.
\end{theorem}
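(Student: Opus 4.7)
The plan is to identify $T(\mbf{v})$ with the tangent bundle via the standard description of $\mf{M}(\mbf{v},\mbf{w})$ as a Hamiltonian/GIT quotient of $\mbf{M}(\mbf{v},\mbf{w})$ by $G_{\mbf{v}}$. First I would observe that the ambient tangent space to $\mbf{M}(\mbf{v},\mbf{w})$ at any point $\mc{B}$ is canonically $E(V,V)\oplus I(W,V)\oplus I(V,W)$, and that a direct calculation shows the derivative of the moment map $\mu_{Q}(B,i,j)=\epsilon BB+ij$ at $\mc{B}$ is exactly $\tau(\mbf{v})$. Similarly, the differential at the identity of the $G_{\mbf{v}}$-action is exactly $\sigma(\mbf{v})$. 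By \cref{112302} applied to the case $\mc{B}^{1}=\mc{B}^{2}=\mc{B}$ (which is stable), $\sigma(\mbf{v})$ is injective and $\tau(\mbf{v})$ is surjective; the second of these implies that $\Pi_{Q_{\mbf{w}}}^{s}(\mbf{v},1)$ is smooth at $\mc{B}$ with tangent space $\ker(\tau(\mbf{v}))$.

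Next, since the $G_{\mbf{v}}$-action on the stable locus is free (as already recorded in the preceding Nakajima theorem), the quotient map $\Pi_{Q_{\mbf{w}}}^{s}(\mbf{v},1)\to \mf{M}(\mbf{v},\mbf{w})$ is a principal $G_{\mbf{v}}$-bundle, and the tangent bundle of the quotient fits into the short exact sequence
\begin{equation*}
0\to I(V,V)\xrightarrow{\sigma(\mbf{v})} \ker(\tau(\mbf{v})) \to T_{\mf{M}(\mbf{v},\mbf{w})}\to 0,
\end{equation*}
which identifies $T_{\mf{M}(\mbf{v},\mbf{w})}$ with $T(\mbf{v})$ as defined in the statement. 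The rank check $\dim T(\mbf{v})=\langle \mbf{v},\mbf{v}\rangle_{Q}+2\mbf{v}\bullet\mbf{w}=\dim\mf{M}(\mbf{v},\mbf{w})$ is automatic from \cref{eq:2.1}, agreeing with the dimension already recorded for $\mf{M}(\mbf{v},\mbf{w})$.

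For the symplectic structure, the strategy is symplectic reduction. The ambient form $\omega(\mbf{v},\mbf{w})$ is a constant symplectic form on $\mbf{M}(\mbf{v},\mbf{w})$, and $\mu_{Q}$ is a moment map for the $G_{\mbf{v}}$-action (this is the content of the identification $\sigma(\mbf{v})^{\vee}\cong\tau(\mbf{v})$ under $\omega(\mbf{v},\mbf{w})$, which is exactly the duality $\sigma(\mbf{v}^{1},\mbf{v}^{2})\cong \tau(\mbf{v}^{2},\mbf{v}^{1})^{\vee}$ specialized to $\mbf{v}^{1}=\mbf{v}^{2}=\mbf{v}$). Classical Marsden--Weinstein reduction then shows that $\omega(\mbf{v},\mbf{w})$ descends to a symplectic form on $\mu_{Q}^{-1}(0)/G_{\mbf{v}}=\mf{M}(\mbf{v},\mbf{w})$, whose fiberwise description is precisely the perfect pairing $\Omega(\mbf{v},\mbf{v})$ on $T(\mbf{v},\mbf{v})$ restricted to the diagonal, that is, $\Omega(\mbf{v})$. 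Nondegeneracy of $\Omega(\mbf{v})$ on $T(\mbf{v})$ follows from the duality $\sigma(\mbf{v})^{\vee}\cong \tau(\mbf{v})$ together with the injectivity/surjectivity just established, since then $\ker(\tau(\mbf{v}))$ and $\mathrm{im}(\sigma(\mbf{v}))$ are mutually orthogonal complements under $\omega(\mbf{v},\mbf{w})$.

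There is no real obstacle, since all the ingredients are already in hand: \cref{112302} for injectivity/surjectivity, the Nakajima theorem for smoothness and freeness of the action, and \cref{eq:2.1} for the dimension count. The only point that requires a small remark is the compatibility of the self-pairing $\Omega(\mbf{v},\mbf{v})|_{\Delta}$ with the symplectic form obtained by reduction, which amounts to tracing through the definitions of the trace pairing on $E(V,V)\oplus I(W,V)\oplus I(V,W)$ and verifying that $\omega(\mbf{v},\mbf{w})$ is skew-symmetric (equivalently, that $\sigma(\mbf{v},\mbf{v})^{\vee}\cong \tau(\mbf{v},\mbf{v})$ is the right moment-map duality rather than its opposite).
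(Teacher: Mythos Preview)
Your argument is correct and is the standard symplectic-reduction proof: you identify $\sigma(\mbf{v})$ and $\tau(\mbf{v})$ with the infinitesimal action and the derivative of the moment map, use \cref{112302} for injectivity/surjectivity, and then invoke Marsden--Weinstein reduction to obtain both the tangent bundle identification and the symplectic form.

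The paper takes a different, terser route, specializing the Hecke-correspondence machinery already set up for general $(\mbf{v}^{1},\mbf{v}^{2})$ to the case $\mbf{v}^{1}=\mbf{v}^{2}=\mbf{v}$. In that setup the diagonal $\Delta_{\mf{M}(\mbf{v},\mbf{w})}\subset \mf{M}(\mbf{v},\mbf{w})\times \mf{M}(\mbf{v},\mbf{w})$ is the zero locus of the section $s^{1}(\mbf{v},\mbf{v})$ of the locally free sheaf $T(\mbf{v},\mbf{v})$ (this is the $\mbf{v}^{1}=\mbf{v}^{2}$ instance of \cref{020934}, the Hecke correspondence being the diagonal in this case). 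Since $\mathrm{rank}\,T(\mbf{v},\mbf{v})=\dim \mf{M}(\mbf{v},\mbf{w})$ equals the codimension of the diagonal, the section is regular; hence $T(\mbf{v},\mbf{v})|_{\Delta}=T(\mbf{v})$ is the normal bundle of the diagonal, which is canonically the tangent bundle. Your approach is more self-contained and makes the origin of the symplectic form transparent via reduction; the paper's approach has the advantage of staying inside the uniform $T(\mbf{v}^{1},\mbf{v}^{2})$ framework and of immediately giving the Koszul resolution of $\mc{O}_{\Delta}$ recorded in \cref{011515}.
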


\begin{corollary}
  \label{011515}
  The coherent sheaf $\mc{O}_{\Delta \mf{M}(\mbf{v})}$ is resolved by the Koszul complex of $s^{2}(\mbf{v},\mbf{v})$.
\end{corollary}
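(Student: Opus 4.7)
The plan is to recognize $s^{2}(\mbf{v},\mbf{v})$ as a regular section of a vector bundle on a smooth ambient, whose scheme-theoretic zero locus is the diagonal $\Delta_{\mf{M}(\mbf{v})}$, and then invoke the classical Koszul resolution theorem for regular sections. Concretely, $s^{2}(\mbf{v},\mbf{v})$ is a global section of the locally free sheaf $T(\mbf{v},\mbf{v})^{\vee}$ on the smooth product $\mf{M}(\mbf{v},\mbf{w})\times\mf{M}(\mbf{v},\mbf{w})$, and under the perfect pairing $\Omega(\mbf{v},\mbf{v})$ we have $s^{2}(\mbf{v},\mbf{v})\cong s^{1}(\mbf{v},\mbf{v})^{\vee}$, so its zero locus coincides with that of $s^{1}(\mbf{v},\mbf{v})$. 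By \cref{020934} specialized to $\mbf{v}^{1}=\mbf{v}^{2}=\mbf{v}$, this common zero locus is $\mf{P}(\mbf{v},\mbf{v})=\Delta_{\mf{M}(\mbf{v})}$.

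Next I would match ranks and codimensions. The bundle $T(\mbf{v},\mbf{v})^{\vee}$ has constant rank equal to $\mathrm{rank}\,T(\mbf{v},\mbf{v})$, which by \cref{thm35} agrees with $\dim\mf{M}(\mbf{v},\mbf{w})$ along the diagonal (since $T(\mbf{v})=T(\mbf{v},\mbf{v})|_{\Delta_{\mf{M}(\mbf{v})}}$ is the tangent bundle of $\mf{M}(\mbf{v},\mbf{w})$). This rank equals $\mathrm{codim}_{\mf{M}\times\mf{M}}(\Delta)$. A section of a rank-$r$ locally free sheaf on a smooth scheme whose zero subscheme has codimension exactly $r$ and is scheme-theoretically smooth (hence Cohen--Macaulay) is automatically Koszul regular, so the Koszul complex $\wedge^{\bullet}T(\mbf{v},\mbf{v})\to\mc{O}_{\mf{M}\times\mf{M}}$ with differential contraction against $s^{2}(\mbf{v},\mbf{v})$ resolves $\mc{O}_{Z(s^{2})}$.

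The only genuine point requiring care is promoting $Z(s^{2})=\Delta_{\mf{M}(\mbf{v})}$ from a set-theoretic to a scheme-theoretic equality, i.e.\ transversality of $s^{2}(\mbf{v},\mbf{v})$ along the diagonal. I would verify this by computing the differential of the section: using the identifications $T(\mbf{v},\mbf{v})^{\vee}|_{\Delta}\cong T^{\vee}\mf{M}(\mbf{v},\mbf{w})$ and $N_{\Delta/\mf{M}\times\mf{M}}\cong T\mf{M}(\mbf{v},\mbf{w})$ (both provided by \cref{thm35}), the linearization of $s^{2}(\mbf{v},\mbf{v})$ restricted to $N_{\Delta/\mf{M}\times\mf{M}}$ becomes, up to sign, the bundle map $T\mf{M}(\mbf{v},\mbf{w})\to T^{\vee}\mf{M}(\mbf{v},\mbf{w})$ induced by the symplectic pairing $\Omega(\mbf{v})$ of \cref{thm35}. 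Since $\Omega(\mbf{v})$ is non-degenerate, this linearization is an isomorphism, which gives transversality and hence scheme-theoretic equality.

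The main obstacle is thus this local transversality verification; everything else is routine once it is in place. The computation reduces to a direct linear algebra check using the explicit formulas for $s^{2}$, $\sigma(\mbf{v})$ and $\tau(\mbf{v})$ recalled in \cref{thm35}, modulo an identification of $\Omega(\mbf{v},\mbf{v})$ restricted to $\Delta$ with the symplectic form $\Omega(\mbf{v})$, which is exactly how $\Omega(\mbf{v})$ was defined.
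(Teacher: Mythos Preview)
Your proposal is correct and follows essentially the same approach as the paper, which gives no explicit proof and treats the corollary as immediate from \cref{thm35} together with the preceding observation that $\Delta_{\mf{M}(\mbf{v})}$ is the zero locus of $s^{1}(\mbf{v},\mbf{v})=s^{2}(\mbf{v},\mbf{v})^{\vee}$ and that $\mathrm{rank}\,T(\mbf{v},\mbf{v})=\dim\mf{M}(\mbf{v},\mbf{w})=\mathrm{codim}\,\Delta$. Your additional transversality verification via the symplectic form $\Omega(\mbf{v})$ is more careful than what the paper spells out; in fact this transversality is already implicit in \cref{thm35}, since the statement that $T(\mbf{v})=T(\mbf{v},\mbf{v})|_{\Delta}$ is the tangent bundle of $\mf{M}(\mbf{v},\mbf{w})$ is precisely the assertion that the differential of $s^{1}$ is surjective along the diagonal, so the zero locus is scheme-theoretically smooth and hence equal to $\Delta_{\mf{M}(\mbf{v})}$.
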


\subsection{The case that $\mbf{v}^{2}=\mbf{v}^{1}+\delta_{k}$}
Now we consider the case that  $\mathbf{v}^{2}=\mathbf{v}^{1}+\delta_{k}$ for some $k\in I$. As $\mf{P}(\mbf{v}^{2},\mbf{v}^{1})=\emptyset$, $s^{2}(\mbf{v}^{1},\mbf{v}^{2})$ is surjective and we denote $\mc{N}(\mbf{v}^{1},\mbf{v}^{2})$ as its kernel. The image of $s^{1}(\mbf{v}^{2},\mbf{v}^{2})$ is in $\mc{N}(\mbf{v}^{1},\mbf{v}^{2})$ and hence it induces a global section 
$$n(\mbf{v}^{1},\mbf{v}^{2}):\mc{O}\to \mc{N}(\mbf{v}^{1},\mbf{v}^{2}).$$
Then $\mf{P}(\mbf{v}^{1},\mbf{v}^{2})$ is the zero locus of $n(\mbf{v}^{1},\mbf{v}^{2})$. For any closed point $[\mc{B}^{1}]\subset [\mc{B}^{2}]\in \mf{P}(\mbf{v}^{1},\mbf{v}^{2})$, we have
$$\mc{N}(\mbf{v}^{1},\mbf{v}^{2})|_{[\mc{B}^{1}]\subset [\mc{B}^{2}]}\cong Ext^{1}(\mc{B}^{1},\mc{B}^{2}).$$

\begin{theorem}
  \label{nest}    
  The variety $\mf{P}(\mbf{v}^{1},\mbf{v}^{2})$ is smooth, and the structure sheaf of $\mf{P}(\mbf{v}^{1},\mbf{v}^{2})$ in $\mf{M}(\mbf{v}^{1},\mbf{w})\times \mf{M}(\mbf{v}^{2},\mbf{w})$ is resolved by the Koszul complex of $n^{\vee}(\mbf{v}^{1},\mbf{v}^{2})$.
\end{theorem}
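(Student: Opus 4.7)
The plan is to prove that $n(\mbf{v}^{1},\mbf{v}^{2})$ is a regular section of the locally free sheaf $\mc{N}(\mbf{v}^{1},\mbf{v}^{2})$ on the smooth ambient variety $\mf{M}(\mbf{v}^{1},\mbf{w})\times\mf{M}(\mbf{v}^{2},\mbf{w})$. Once regularity is established, the Koszul complex of $n^{\vee}(\mbf{v}^{1},\mbf{v}^{2})$ is automatically a resolution of $\mc{O}_{\mf{P}(\mbf{v}^{1},\mbf{v}^{2})}$, and $\mf{P}(\mbf{v}^{1},\mbf{v}^{2})$ is a smooth local complete intersection of codimension $\operatorname{rank}\mc{N}(\mbf{v}^{1},\mbf{v}^{2})$ inside the smooth ambient.

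The first step is to identify the fiber of $\mc{N}(\mbf{v}^{1},\mbf{v}^{2})$ on $\mf{P}$. Since $s^{2}(\mbf{v}^{1},\mbf{v}^{2})$ is surjective onto $\mc{O}$, the fiber $\mc{N}|_{p}$ at $p=([\mc{B}^{1}],[\mc{B}^{2}])$ is $\ker s^{2}(\mc{B}^{1},\mc{B}^{2})$; on $\mf{P}$ the section $s^{1}(\mc{B}^{1},\mc{B}^{2})$ vanishes, so $\mc{N}|_{p}\cong\ker s^{2}(\mc{B}^{1},\mc{B}^{2})/\operatorname{im}s^{1}(\mc{B}^{1},\mc{B}^{2})=Ext^{1}(\mc{B}^{1},\mc{B}^{2})$. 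By \cref{010802} one has $Ext^{0}(\mc{B}^{1},\mc{B}^{2})\cong\mb{C}$, while the perfect pairing $\Omega(\mc{B}^{1},\mc{B}^{2})$ forces $Ext^{2}(\mc{B}^{1},\mc{B}^{2})\cong Ext^{0}(\mc{B}^{2},\mc{B}^{1})^{\vee}=0$, since $\mbf{v}^{2}>\mbf{v}^{1}$ prevents the nesting $\mc{B}^{2}\subset\mc{B}^{1}$. The Euler characteristic computed from the three-term complex $\mc{O}\to T(\mbf{v}^{1},\mbf{v}^{2})\to\mc{O}$ then yields $\dim Ext^{1}(\mc{B}^{1},\mc{B}^{2})=\operatorname{rank}T(\mbf{v}^{1},\mbf{v}^{2})-1=\operatorname{rank}\mc{N}(\mbf{v}^{1},\mbf{v}^{2})$, so $\mc{N}|_{p}$ has the expected dimension at every point of $\mf{P}$.

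The second and crucial step is to show that the derivative of $n$ at every $p\in\mf{P}$ is surjective. Under the identifications $T_{p}\mf{M}(\mbf{v}^{l},\mbf{w})\cong Ext^{1}(\mc{B}^{l},\mc{B}^{l})$ and the explicit formula $s^{1}(1)=(0,-i^{2},j^{1})\bmod\operatorname{im}\sigma$, the derivative $dn|_{p}\colon Ext^{1}(\mc{B}^{1},\mc{B}^{1})\oplus Ext^{1}(\mc{B}^{2},\mc{B}^{2})\to Ext^{1}(\mc{B}^{1},\mc{B}^{2})$ should match, up to sign, the obstruction-to-lifting-$\zeta$ map $(a,b)\mapsto\theta(\zeta_{\mc{B}^{1}\mc{B}^{2}}\otimes b)-\theta(a\otimes\zeta_{\mc{B}^{1}\mc{B}^{2}})$, where $\zeta_{\mc{B}^{1}\mc{B}^{2}}\in Ext^{0}(\mc{B}^{1},\mc{B}^{2})$ is the unique inclusion up to scalar. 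Surjectivity follows by applying \cref{e011103} with $L=\mc{B}^{2}$ and \cref{e011104} with $L=\mc{B}^{1}$ to the short exact sequence $0\to\mc{B}^{1}\to\mc{B}^{2}\to x\to 0$ with $x\in\mbf{L}_{k}$: these identify $\operatorname{im}\theta(\zeta_{\mc{B}^{1}\mc{B}^{2}}\otimes -)$ and $\operatorname{im}\theta(-\otimes\zeta_{\mc{B}^{1}\mc{B}^{2}})$ as kernels of connecting maps into $Ext^{2}(x,\mc{B}^{2})$ and $Ext^{1}(\mc{B}^{1},x)$ respectively, and a dimension count using the explicit form of the universal complex $\mc{I}(x,\mc{B})$ from \cref{eq:3.10} to control $\dim Ext^{l}(x,\mc{B})$ confirms that the two images span all of $Ext^{1}(\mc{B}^{1},\mc{B}^{2})$.

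The hard part is the surjectivity of $dn|_{p}$: identifying $dn|_{p}$ with the pre/post-composition formula requires an unwinding of how differentiation interacts with the subquotient construction of $T(\mbf{v}^{1},\mbf{v}^{2})$, and the surjectivity itself needs a careful comparison across the two long exact sequences together with control on $Ext^{2}(x,\mc{B}^{2})$ via $\mc{I}$. Granting surjectivity of $dn|_{p}$, the section $n$ is transverse to the zero section and hence regular, so the Koszul complex of $n^{\vee}(\mbf{v}^{1},\mbf{v}^{2})$ resolves $\mc{O}_{\mf{P}(\mbf{v}^{1},\mbf{v}^{2})}$ and $\mf{P}(\mbf{v}^{1},\mbf{v}^{2})$ is smooth of codimension $\operatorname{rank}\mc{N}(\mbf{v}^{1},\mbf{v}^{2})$ in $\mf{M}(\mbf{v}^{1},\mbf{w})\times\mf{M}(\mbf{v}^{2},\mbf{w})$.
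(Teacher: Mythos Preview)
Your overall strategy coincides with the paper's: reduce both assertions to showing that the section $n(\mbf{v}^{1},\mbf{v}^{2})$ is regular, which in turn amounts to showing that the differential
\[
dn|_{p}\colon Ext^{1}(\mc{B}^{1},\mc{B}^{1})\oplus Ext^{1}(\mc{B}^{2},\mc{B}^{2})\longrightarrow Ext^{1}(\mc{B}^{1},\mc{B}^{2})
\]
is surjective at every point $p=([\mc{B}^{1}]\subset[\mc{B}^{2}])$. Your identification of $\mc{N}|_{p}$ with $Ext^{1}(\mc{B}^{1},\mc{B}^{2})$ and of $dn|_{p}$ with the push/pull map $\tau_{\mc{B}^{1}\mc{B}^{2}}=(-\otimes\zeta_{\mc{B}^{1}\mc{B}^{2}})+(\zeta_{\mc{B}^{1}\mc{B}^{2}}\otimes-)$ is exactly what the paper does.

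Where you diverge is in the justification of surjectivity. The paper simply invokes \cref{011209}: under the symplectic form $\Omega(\mc{B}^{1},\mc{B}^{1})\oplus\Omega(\mc{B}^{2},\mc{B}^{2})$, the map $\tau_{\mc{B}^{1}\mc{B}^{2}}$ is \emph{dual} to
\[
\sigma_{\mc{B}^{1}\mc{B}^{2}}\colon Ext^{1}(\mc{B}^{2},\mc{B}^{1})\longrightarrow Ext^{1}(\mc{B}^{1},\mc{B}^{1})\oplus Ext^{1}(\mc{B}^{2},\mc{B}^{2}),
\]
so surjectivity of $\tau$ is equivalent to injectivity of $\sigma$, and the latter is a direct consequence of the $k$-good sequence hypotheses (stability gives $Hom(y,\mc{B}^{2})=0$; \cref{010802} gives the $Hom$ spaces between the $\mc{B}^{l}$), established via the spectral sequence of the $3\times 3$ diagram \cref{diag:2.4}. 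This duality trick is the whole point of \cref{011209} and makes the surjectivity essentially free.

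Your route instead tries to bound the cokernels of the two components $\theta(\zeta\otimes-)$ and $\theta(-\otimes\zeta)$ separately via the long exact sequences \cref{e011103}, \cref{e011104}, and then argue by ``dimension count'' that the two images together span. That last step is where your argument is incomplete: knowing $\dim\operatorname{coker}\theta(\zeta\otimes-)$ and $\dim\operatorname{coker}\theta(-\otimes\zeta)$ individually does not tell you the dimension of the sum of the two images without controlling their intersection, and the universal complex $\mc{I}(x,\mc{B})$ by itself does not supply that information. The argument can be salvaged, but only by re-importing the structural input (essentially the duality/spectral sequence of \cref{011209}) that you were trying to avoid. I would recommend replacing your third paragraph by a direct appeal to \cref{011209}, after noting that $(\mc{B}^{1},\mc{B}^{2})$ is a $k$-good sequence of $I_{\mbf{w}}$-graded $\Pi_{Q_{\mbf{w}}}$-modules.
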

\begin{proof}
  We just need to prove the restriction morphism
  $$T(\mbf{v}^{1})\oplus T(\mbf{v}^{2})|_{[\mc{B}^{1}]\subset [\mc{B}^{2}]}\to \mc{N}(\mbf{v}^{1},\mbf{v}^{2})|_{[\mc{B}^{1}]\subset [\mc{B}^{2}]}$$
  is surjective. By \cref{thm35}
  $$T(\mbf{v}^{1})\oplus T(\mbf{v}^{2})|_{[\mc{B}^{1}]\subset [\mc{B}^{2}]}|_{[\mc{B}^{1}]\subset [\mc{B}^{2}]}\cong Ext^{1}(\mc{B}^{1},\mc{B}^{1})\oplus Ext^{1}(\mc{B}^{2},\mc{B}^{2})$$
  and the morphism
  $$Ext^{1}(\mc{B}^{1},\mc{B}^{1})\oplus Ext^{1}(\mc{B}^{2},\mc{B}^{2})\xrightarrow{(-\otimes \zeta_{\mc{B}^{1}\mc{B}^{2}})+(\zeta_{\mc{B}^{1}\mc{B}^{2}}\otimes -)}Ext^{1}(\mc{B}^{1},\mc{B}^{2})$$ is surjective by \cref{011209}.
\end{proof}

We consider the morphism
\begin{gather*}
  t(\mbf{v}^{1},\mbf{v}^{2}):\mf{P}(\mbf{v}^{1},\mbf{v}^{2})\to \mbf{L}_{k}:  \\
  t(\mbf{v}^{1},\mbf{v}^{2})([B^{1},i^{1},j^{1}],[B^{2},i^{2},j^{2}]):=(Tr(B_{l}^{1})-Tr(B_{l}^{2}))_{l\in E_{k}^{\#}}.
\end{gather*}
For a pair $[\mc{B}^{1}]\subset [\mc{B}^{2}]$, let $x=t(\mbf{v}^{1},\mbf{v}^{2})$. Then we have $[\mc{B}^{1}]\subset_{x} [\mc{B}^{2}]$. We notice the following morphisms of tangent spaces and cotangent spaces
\begin{align*}
  dt(\mbf{v}^{1},\mbf{v}^{2}):T_{\mf{P}(\mbf{v}^{1},\mbf{v}^{2})(\mbf{v})}\to T\mbf{L}_{k},\quad  dt^{*}(\mbf{v}^{1},\mbf{v}^{2}):T\mbf{L}_{k}\to T_{\mf{P}(\mbf{v}^{1},\mbf{v}^{2})(\mbf{v})}^{*}
\end{align*}
satisfy
$$dt(\mbf{v}^{1},\mbf{v}^{2})|_{[\mc{B}^{1}]\subset_{x}[\mc{B}^{2}]}=dt_{\mc{B}^{1}\mc{B}^{2}}, \quad dt^{*}(\mbf{v}^{1},\mbf{v}^{2})|_{[\mc{B}^{1}]\subset_{x}[\mc{B}^{2}]}=dt^{*}_{\mc{B}^{1}\mc{B}^{2}}.$$
\begin{theorem}
   \label{020527}\label{0209310}
Under the closed embedding $p\times q\times t(\mbf{v}^{1},\mbf{v}^{2})$, $\mf{P}(\mbf{v}^{1},\mbf{v}^{2})$ is a Lagrangian of $\mf{M}(\mbf{v}^{1},\mbf{w})\times \mf{M}(\mbf{v}^{2},\mbf{w})\times \mbf{L}_{k}$. Thus
$$dim(\mf{P}(\mbf{v}^{1},\mbf{v}^{2}))=\frac{1}{2}(dim (\mf{M}(\mbf{v}^{1},\mbf{w}))+ dim (\mf{M}(\mbf{v}^{1},\mbf{w}))+ 2g_{k}).$$
\end{theorem}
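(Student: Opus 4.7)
The plan is to verify the Lagrangian condition pointwise, bootstrapping directly off \cref{011209}. Since $p\times q$ is a closed embedding of $\mf{P}(\mbf{v}^{1},\mbf{v}^{2})$ into $\mf{M}(\mbf{v}^{1},\mbf{w})\times \mf{M}(\mbf{v}^{2},\mbf{w})$ by \cref{020934}, the product $p\times q\times t(\mbf{v}^{1},\mbf{v}^{2})$ is automatically a closed embedding as well. Because $\mf{P}(\mbf{v}^{1},\mbf{v}^{2})$ is smooth by \cref{nest} and the target carries the symplectic form $\Omega(\mbf{v}^{1})\oplus\Omega(\mbf{v}^{2})\oplus\omega_{k}$, it suffices to check at every closed point that the image of the tangent space is isotropic and has exactly half the dimension of the ambient symplectic vector space.

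Fix a closed point $[\mc{B}^{1}]\subset [\mc{B}^{2}]$ of $\mf{P}(\mbf{v}^{1},\mbf{v}^{2})$ and set $x:=t(\mbf{v}^{1},\mbf{v}^{2})([\mc{B}^{1}]\subset [\mc{B}^{2}])\in \mbf{L}_{k}$, so that $[\mc{B}^{1}]\subset_{x}[\mc{B}^{2}]$. By \cref{thm35} the tangent space to $\mf{M}(\mbf{v}^{l},\mbf{w})$ at $[\mc{B}^{l}]$ is $Ext^{1}(\mc{B}^{l},\mc{B}^{l})$ with symplectic form $\Omega(\mc{B}^{l})$, while the tangent space of $\mbf{L}_{k}$ at $x$ is $\mbf{L}_{k}\cong Ext^{1}(x,x)$ with symplectic form $\omega_{k}$. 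Under the identifications set up in \cref{sec4} and recorded in the preamble to this theorem, the differentials $dp$, $dq$, $dt(\mbf{v}^{1},\mbf{v}^{2})$ assemble so that the differential of $p\times q\times t(\mbf{v}^{1},\mbf{v}^{2})$ realizes the natural inclusion
$$T_{\mc{B}^{1}\mc{B}^{2}}\hookrightarrow Ext^{1}(\mc{B}^{1},\mc{B}^{1})\oplus Ext^{1}(\mc{B}^{2},\mc{B}^{2})\oplus Ext^{1}(x,x)$$
of \cref{011209}.

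With that identification in hand, \cref{011209} supplies everything we need: the maps $\sigma_{\mc{B}^{1}\mc{B}^{2}}$ and $\tau_{\mc{B}^{1}\mc{B}^{2}}$ are mutually dual under the total symplectic form $\Omega(\mc{B}^{1})+\Omega(\mc{B}^{2})+\omega_{k}$, and the four-term exact sequence \cref{e0204111} is the defining short exact sequence of a Lagrangian subspace. This simultaneously yields the isotropy of $T_{\mc{B}^{1}\mc{B}^{2}}$ and the equality
$$\dim T_{\mc{B}^{1}\mc{B}^{2}}=\tfrac{1}{2}\bigl(\dim\mf{M}(\mbf{v}^{1},\mbf{w})+\dim\mf{M}(\mbf{v}^{2},\mbf{w})+2g_{k}\bigr),$$
using $\dim Ext^{1}(x,x)=\dim \mbf{L}_{k}=2g_{k}$, giving the claimed dimension of $\mf{P}(\mbf{v}^{1},\mbf{v}^{2})$.

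The only delicate step is the compatibility asserted in the second paragraph: that the differential of the trace-difference map $t(\mbf{v}^{1},\mbf{v}^{2})$ coincides with the connecting map $dt_{\mc{B}^{1}\mc{B}^{2}}$ extracted from the spectral sequence of \cref{diag:2.4}. This amounts to checking how a first-order deformation $(J,a,b)\in T(\mc{B}^{2},\mc{B}^{2})$ of $\mc{B}^{2}$ (and analogously for $\mc{B}^{1}$) changes $\operatorname{tr}(B^{2}_{l})$; the resulting $\mbf{L}_{k}$-component is exactly the boundary map in the spectral sequence, matching the formulas \cref{e021019}--\cref{e0210110}. Once this bookkeeping is verified, the Lagrangian property and the dimension formula follow from \cref{011209} with no further work.
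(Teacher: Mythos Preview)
Your proposal is correct and follows the same approach as the paper: the paper's proof is the single line ``It follows from \cref{e0204111}'', and you have unpacked exactly what that entails, including the identification $dt(\mbf{v}^{1},\mbf{v}^{2})|_{[\mc{B}^{1}]\subset_{x}[\mc{B}^{2}]}=dt_{\mc{B}^{1}\mc{B}^{2}}$, which the paper records (also without proof) immediately before the theorem. Your identification of this compatibility as the only genuinely delicate point is apt; one small quibble is that \cref{e0204111} is a short exact sequence, not a four-term one.
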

\begin{proof}
  It follows from \cref{e0204111}.
\end{proof}

\subsection{Universal complexes on $\mf{M}(\mbf{v},k)\times \mbf{L}_k$}

Given $k\in I$, we abuse the notation to denote $\mb{C}_{k}$ as the trivial line bundle on $\mbf{L}_{k}$. Then we consider the following complexes of locally free sheaves on $\mf{M}(\mbf{v},\mbf{w})\times \mbf{L}_{k}$:

\begin{align*}
  \mc{E}xt^{\bullet}(k,\mathbf{v}):=I(\mb{C}_{k},V)\xrightarrow{\sigma(k,\mbf{v})}E(\mb{C}_{k},V)\oplus I(\mb{C}_{k},W)\xrightarrow{\tau(k,\mbf{v})}I(\mb{C}_{k},V), \\
  \mc{E}xt^{\bullet}(\mbf{v},k):= I(V,\mb{C}_{k})\xrightarrow{\sigma(\mbf{v},k)}E(V,\mb{C}_{k})\oplus I(W,\mb{C}_{k})\xrightarrow{\tau(\mbf{v},k)}I(V_{k},\mb{C}_{k}).
\end{align*}
where
\begin{align*}
  \sigma(k,\mbf{v})(a):=(B-x)a\oplus j_{k}a, \quad  \tau(k,\mbf{v})(C\oplus D):=\epsilon (B-x)C+i_{k}D, \\
  \tau(\mbf{v},k)(a):=a\epsilon (B-x)\oplus ai_{k},\quad \sigma(\mbf{v},k)(C\oplus D):=C(B-x)+Cj_{k}.
\end{align*}
Then for $([\mc{B}],x)\in \mf{M}(\mbf{v},\mbf{w})\times \mbf{L}_{k}$,  we have
$$\mc{E}xt^{\bullet}(\mbf{v},k)|_{([\mc{B}],x)}\cong \mc{E}xt^{\bullet}(\mc{B},x), \quad \mc{E}xt^{\bullet}(k,\mbf{v})|_{([\mc{B}],x)}\cong \mc{E}xt^{\bullet}(x,\mc{B}).$$
The morphism $\sigma(k,\mbf{v})$ and $\tau(\mbf{v},k)$ are dual to each other, and $\sigma(\mbf{v},k)$ and $\tau(k,\mbf{v})$ are dual to each other. By \cref{112302}, $\sigma(k,\mbf{v})$ is injective at each closed point. We denote $U_{k}$ as the cokernel of  $\sigma(k,\mbf{v})$ which is a locally free sheaf on $\mf{M}(\mbf{v},\mbf{w})\times \mbf{L}_{k}$. We denote
$$\mc{I}_{k}(\mbf{v}):U_{k}\xrightarrow{u_{k}} V_{k}\quad \mc{I}_{k}^{\vee}(\mbf{v}):V_{k}^{\vee}\xrightarrow{u_{k}^{\vee}}U_{k}^{\vee}, $$
where $u_{k}$ is the canonical morphism induced from $\tau(k,\mbf{v})$.

\subsection{Tautological line bundles on the nested quiver variety}
\label{s013123}
On $\mf{P}(\mbf{v}^{1},\mbf{v}^{2})$, the cokernel of $\zeta_{\mbf{v}^{1},\mbf{v}^{2}}:V^{1}\to V^{2}$ is a line bundle, which we denote as $\mc{L}$. At each closed point $[\mc{B}^{1}]\subset [\mc{B}^{2}]$, the fiber of $\mc{L}$ is $V^{2}/\zeta_{\mc{B}^{1}\mc{B}^{2}} V^{1}$.

On $\mf{P}(\mbf{v}^{1},\mbf{v}^{2})$, we denote $U_{k}^{1}$ (resp. $U_{k}^{2}$)as the pull back of $U_{k}$ from $\mf{M}(\mbf{v}^{1},\mbf{w})\times \mbf{L}_{k}$ (resp. $\mf{M}(\mbf{v}^{2},\mbf{w})\times \mbf{L}_{k}$).  The morphism $\zeta_{\mbf{v}^{1},\mbf{v}^{2}}$ induces the diagram locally free sheaves:
\begin{equation}
  \begin{tikzcd}
    0 \ar{d} & 0\ar{d} & 0 \ar{d} \\
    I(\mb{C}_{k},V^{1})\ar{r}{\sigma(k,\mbf{v}^{1})} \ar{d} & E(\mb{C}_{k},V^{1})\oplus I(\mb{C}_{k},W)\ar{r}{\tau(k,\mbf{v}^{1})}\ar{d} & I(\mb{C}_{k},V^{1})\ar{d} \\
    I(\mb{C}_{k},V^{2})\ar{r}{\sigma(k,\mbf{v}^{2})}\ar{d} & E(\mb{C}_{k},V^{2})\oplus I(\mb{C}_{k},W)\ar{r}{\tau(k,\mbf{v}^{2})}\ar{d} & I(\mb{C}_{k},V^{2})\ar{d} \\
    \mc{L}\ar{r}{0}\ar{d} & \mc{L}^{\oplus 2g_{k}}\ar{r}{0}\ar{d} &\mc{L}\ar{d} \\
    0 & 0& 0
  \end{tikzcd}
\end{equation}
where all the columns are short exact sequences. Hence it induces the following commutative diagram of complexes:
\begin{equation}
  \label{e012623}
  \begin{tikzcd}
    \mc{L}\ar{r}{\gamma_{\mbf{v}^{1},k}^{+\vee}} & U_{k}^{1}\ar{r}{u_{k}^{1}}\ar{d} & V_{k}^{1}\ar{d} \\
    & U_{k}^{2}\ar{r}\ar{r}{u_{k}^{2}} & V_{k}^{2}\ar{r}{\gamma_{\mbf{v}^{2},k}^{-}} & \mc{L}
  \end{tikzcd}
\end{equation}
such that $\gamma_{{\mbf{v}^{1},k}}^{+}:=(\gamma_{{\mbf{v}^{1},k}}^{+\vee})^{\vee}$ and $\gamma_{\mbf{v}^{2},k}^{-}$ are both surjective.

Finally, we realize the nested quiver variety through Grothendieck's projectivization. Let $X$ be a scheme and $F$ be a coherent sheaf on $X$.

\begin{definition}
  \label{def:4.11}
The Grothendieck's projectivization of $F$ over $X$, which we denote as $P_{X}(F)$, is the $X$-scheme, such that for any $X$-scheme $f:T\to X$, $Hom_{X}(T,P_{X}(F))$ is parametrized by the surjective morphisms:
$$f^{*}F\to L$$
where $L$ is a line bundle on $X$. Given a two term complex $f:\mc{F}\to \mc{G}$ of locally free sheaves over $X$, we define $P_{X}(f):=P_{X}(coker(f))$ and let $\mc{O}_{P_{X}(f)}(1)$ be the universal line bundle.  
\end{definition}

\begin{proposition}
  \label{0212310}
  The morphism  $\gamma_{{\mbf{v}^{1},k}}^{+}$ induces a canonical isomorphism
  $$\mf{P}(\mbf{v}^{1},\mbf{v}^{2})\cong P_{\mf{M}(\mbf{v}^{1},\mbf{w})\times \mbf{L}_{k}}(\mc{I}_{k}(\mbf{v}^{1})^{\vee})$$
  where $p\times t(\mbf{v}^{1},\mbf{v}^{2})$ is the canonical projection morphism under the isomorphism.

  The morphism  $\gamma_{{\mbf{v}^{2},k}}^{-}$ induces a canonical isomorphism
  $$\mf{P}(\mbf{v}^{1},\mbf{v}^{2})\cong P_{\mf{M}(\mbf{v}^{2},\mbf{w})\times \mbf{L}_{k}}(\mc{I}_{k}(\mbf{v}^{2}))$$
  where $q\times t(\mbf{v}^{1},\mbf{v}^{2})$ is the canonical projection morphism under the isomorphism. Moreover, under the above isomorphisms, we have
  $$\mc{O}_{P_{\mf{M}(\mbf{v}^{1},\mbf{w})\times \mbf{L}_{k}}(\mc{I}_{k}(\mbf{v}^{1})^{\vee})}(-1)\cong \mc{L}\cong \mc{O}_{P_{\mf{M}(\mbf{v}^{2},\mbf{w})\times \mbf{L}_{k}}(\mc{I}_{k}(\mbf{v}^{2}))}(1).$$
\end{proposition}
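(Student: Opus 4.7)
The plan is to construct both isomorphisms via the universal property of Grothendieck projectivization (\cref{def:4.11}), using the surjections $\gamma^+_{\mbf{v}^1,k}$ and $\gamma^-_{\mbf{v}^2,k}$ from \cref{e012623}, and then exhibit inverse morphisms by producing the missing $I_{\mbf{w}}$-graded $\Pi_{Q_{\mbf{w}}}$-module over the projectivization.

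First, I focus on the $\gamma^+$ side. I verify the two ingredients needed to apply \cref{def:4.11}: that $\gamma^+_{\mbf{v}^1,k}\colon U_k^{1\vee}|_{\mf{P}}\to \mc{L}^\vee$ is surjective, and that the composition $V_k^{1\vee}\xrightarrow{u_k^{1\vee}} U_k^{1\vee}\to \mc{L}^\vee$ vanishes. Surjectivity is built into the construction of \cref{e012623}; vanishing is the dual of the fact that $\mc{L}\to U_k^1\xrightarrow{u_k^1} V_k^1$ is zero, which follows by taking column-wise cokernels in the snake-lemma diagram preceding \cref{e012623} (the composition lands in $V_k^1$ but also factors through $U_k^1\to U_k^2$, which kills it because $\mc{L}$ maps to $\ker(U_k^1\to U_k^2)$; injectivity of $V_k^1\to V_k^2$ then forces the composition to vanish). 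This produces a canonical morphism
\[
\Phi^+\colon \mf{P}(\mbf{v}^1,\mbf{v}^2)\to P_{\mf{M}(\mbf{v}^1,\mbf{w})\times \mbf{L}_k}\bigl(\mc{I}_k(\mbf{v}^1)^\vee\bigr)
\]
over $\mf{M}(\mbf{v}^1,\mbf{w})\times \mbf{L}_k$, with $(\Phi^+)^*\mc{O}(1)\cong \mc{L}^\vee$. The morphism commutes with $p\times t(\mbf{v}^1,\mbf{v}^2)$ by construction.

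Next I construct the inverse. Over $P_{\mf{M}(\mbf{v}^1,\mbf{w})\times \mbf{L}_k}(\mc{I}_k(\mbf{v}^1)^\vee)$, the universal surjection $\mc{I}_k(\mbf{v}^1)^\vee\twoheadrightarrow \mc{O}(1)$ is equivalent, via \cref{eq:3.10}, to a line subbundle $\mc{O}(-1)\hookrightarrow \ker(u_k^1)\subset U_k^1$ whose fiber at $([\mc{B}^1],x)$ is a line in $Ext^1(x,\mc{B}^1)$. By \cref{020916}, such a line is the extension class of a unique-up-to-scaling short exact sequence $0\to \mc{B}^1\to \mc{B}^2\to x\to 0$; globally I realize $\mc{B}^2$ on the projectivization by the explicit block formula $V^2:=V^1\oplus \mc{O}(-1)$ endowed with the upper-triangular operator whose off-diagonal entry is the tautological representative of the extension class. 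Stability of $\mc{B}^2$ is furnished by \cref{020926} together with $\mc{O}(-1)\neq 0$, and the inclusion $V^1\hookrightarrow V^2$ gives a pair $[\mc{B}^1]\subset [\mc{B}^2]$, hence a morphism to $\mf{P}(\mbf{v}^1,\mbf{v}^2)$. That this morphism inverts $\Phi^+$ follows from \cref{012728}: on closed points, the one-to-one correspondence $[\mc{B}^1]\subset_x[\mc{B}^2]\leftrightarrow \zeta_{x\mc{B}^1}$ matches our constructions; at the scheme level, the universal property of Grothendieck projectivization and the uniqueness clause of \cref{010802} force both compositions to be the identity.

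The second isomorphism is symmetric. I apply \cref{def:4.11} to $\gamma^-_{\mbf{v}^2,k}\colon V_k^2|_{\mf{P}}\twoheadrightarrow \mc{L}$, verifying dually that $U_k^2\xrightarrow{u_k^2} V_k^2\to \mc{L}$ vanishes on $\mf{P}$ (by the same snake-lemma argument in the dual direction), obtaining $\Phi^-\colon \mf{P}(\mbf{v}^1,\mbf{v}^2)\to P_{\mf{M}(\mbf{v}^2,\mbf{w})\times \mbf{L}_k}(\mc{I}_k(\mbf{v}^2))$ with $(\Phi^-)^*\mc{O}(1)\cong \mc{L}$. The inverse is constructed using \cref{012727}: the universal quotient line corresponds to a section of $\ker(u_k(x,\mc{B}^2)^\vee)= Ext^0(\mc{B}^2,x)^\vee$-dual data, equivalently a universal surjection $\mc{B}^2\twoheadrightarrow x$ whose kernel provides the family of $\mc{B}^1$'s (stable by \cref{020926}). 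The identification of line bundles $\mc{O}(-1)\cong \mc{L}\cong \mc{O}(1)$ is immediate from the two constructions.

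The main obstacle is the scheme-level (rather than set-theoretic) verification that $\Phi^+$ and $\Phi^-$ are isomorphisms; producing a truly universal family of $\mc{B}^2$'s (resp.\ $\mc{B}^1$'s) over the Grothendieck projectivization requires the explicit block construction together with the uniqueness up to scalar supplied by \cref{010802}, \cref{020916} and \cref{012728}, rather than a dimension/smoothness shortcut.
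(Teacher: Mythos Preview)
Your proposal is correct and follows essentially the same approach as the paper: the paper's proof consists of the single sentence ``It follows from \cref{012727} and \cref{012728},'' so you are invoking precisely the same two pointwise bijections, only fleshing out the scheme-theoretic construction of the morphisms via the universal property of \cref{def:4.11} and the explicit block extension of \cref{020916}. Your careful treatment of the vanishing of $\mc{L}\to U_{k}^{1}\xrightarrow{u_{k}^{1}} V_{k}^{1}$ (equivalently, that $\gamma^{+\vee}_{\mbf{v}^{1},k}$ lands in $\ker(u_{k}^{1})$, which is exactly the content of \cref{eq:3.10}) and the use of \cref{010802} for uniqueness are the details the paper leaves implicit.
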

\begin{proof}
  It follows from \cref{012727} and \cref{012728}.
\end{proof}
\section{The Triple and Quadruple moduli space}
\label{sec5}

In this section, we introduce several triple and quadruple nested quiver varieties. we fix $k\in I$ and choose dimension vectors $\mbf{v}^{0}+\delta_{k}=\mbf{v}^{1}=\mbf{v}^{1'}=\mbf{v}^{2}-\delta_{k}$.

\subsection{The triple nested quiver variety $\mf{J}_{k}(\mbf{v}^{1})$}

\begin{definition}
   We define the nested triple variety $$\mf{J}_{k}(\mbf{v}^{1}):=\mf{P}(\mbf{v}^{0},\mbf{v}^{1})\times_{\mf{M}(\mbf{v},\mbf{w})\times \mbf{L}_{k}}\mf{P}(\mbf{v}^{1},\mbf{v}^{2})$$
which consists of triple
$$[\mc{B}^{0}]\subset_{x}[\mc{B}^{1}]\subset_{x}[\mc{B}^{2}]$$
such that $x\in \mbf{L}_{k}$ and $[\mc{B}^{l}]\in \mf{M}(\mbf{v}^{l},\mbf{w})$, $l=1,2,3$.
\end{definition}
Over $\mf{J}_{k}(\mbf{v})$, there are two canonical $I$-graded injective morphisms of locally free sheaves $\zeta^{1}:V^{0}\to V^{1},\zeta^{2}:V^{1}\to V^{2}$, and we denote $\mc{L}^{l}$ as the cokernel of $\zeta^{l}$ for $l=1,2$, and both $\mc{L}^{1}$ and $\mc{L}^{2}$ are line bundles on $\mf{J}_{k}(\mbf{v})$.

\begin{proposition}
  \label{020531}
  The triple nested quiver variety $\mf{J}_{k}(\mbf{v}^{1})$ is smooth and its dimension is $dim(\mf{M}(\mbf{v}^{1},\mbf{w}))+2g_{k}-1$. 
\end{proposition}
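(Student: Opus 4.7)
The plan is to prove smoothness via a constant-tangent-dimension argument, supplemented by showing equidimensionality through the Grothendieck projectivization presentation.

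At each closed point $[\mc{B}^0 \subset_x \mc{B}^1 \subset_x \mc{B}^2]$ of $\mf{J}_k(\mbf{v}^1)$, the Zariski tangent space is the fiber product of the tangent spaces of $\mf{P}(\mbf{v}^0, \mbf{v}^1)$ and $\mf{P}(\mbf{v}^1, \mbf{v}^2)$ over the tangent space of $\mf{M}(\mbf{v}^1, \mbf{w}) \times \mbf{L}_k$, equivalently the kernel of the difference map $T_{\mc{B}^0 \mc{B}^1} \oplus T_{\mc{B}^1 \mc{B}^2} \to \text{Ext}^1(\mc{B}^1, \mc{B}^1) \oplus \mbf{L}_k$. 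By the ADHM analog of \cref{011209}, each summand $T_{\mc{B}^i \mc{B}^{i+1}}$ is a Lagrangian of $\text{Ext}^1(\mc{B}^i) \oplus \text{Ext}^1(\mc{B}^{i+1}) \oplus \mbf{L}_k$, and by the ADHM analog of \cref{0113011}, the cokernel of the difference map is one-dimensional (spanned under the symplectic identification by $\alpha_{\mc{B}^1} \oplus \alpha_x$). Substituting the Lagrangian dimension from \cref{020527} and using $\mbf{v}^0 + \mbf{v}^2 = 2\mbf{v}^1$ together with $\langle \delta_k, \delta_k\rangle_Q = 2 - 2g_k$ (so that $\dim \mf{M}(\mbf{v}^0, \mbf{w}) + \dim \mf{M}(\mbf{v}^2, \mbf{w}) - 2\dim \mf{M}(\mbf{v}^1, \mbf{w}) = 4g_k - 4$), the arithmetic collapses to the uniform formula $\dim T_{\mf{J}_k(\mbf{v}^1)} = \dim \mf{M}(\mbf{v}^1, \mbf{w}) + 2g_k - 1$ at every closed point.

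To conclude smoothness it remains to show $\mf{J}_k(\mbf{v}^1)$ is equidimensional of this same dimension. Applying base change to the second isomorphism of \cref{0212310}, we obtain $\mf{J}_k(\mbf{v}^1) \cong P_{\mf{P}(\mbf{v}^0, \mbf{v}^1)}((q\times t(\mbf{v}^0,\mbf{v}^1))^* \mc{I}_k(\mbf{v}^1)^\vee)$. Over the dense open subset of $\mf{P}(\mbf{v}^0, \mbf{v}^1)$ where $\dim \text{Ext}^2(x, \mc{B}^1)$ attains its minimum (equivalently $\text{Ext}^1(x, \mc{B}^1)$ attains its Euler-characteristic minimum $\mbf{w}_k - \langle \delta_k, \mbf{v}^1\rangle_Q$), the cokernel of the pulled-back complex is locally free of constant rank, making $\mf{J}_k(\mbf{v}^1)$ a projective bundle of the claimed dimension over this open. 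Along the closed strata where $\text{Ext}^2(x,\mc{B}^1)$ jumps by $m$, the projective fiber dimension increases by $m$, while by upper semicontinuity applied to the universal complex the stratum has codimension at least $m$ in $\mf{P}(\mbf{v}^0, \mbf{v}^1)$, so the total dimension is preserved and equidimensionality holds.

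Combining the constant tangent dimension with the equidimensionality of $\mf{J}_k(\mbf{v}^1)$ at the same value gives smoothness. The main obstacle lies in the semicontinuity-and-codimension estimate along the jumping strata of $\mf{P}(\mbf{v}^0, \mbf{v}^1)$: one has to stratify $\mf{P}(\mbf{v}^0, \mbf{v}^1)$ by the rank of the universal complex $\mc{I}_k(\mbf{v}^1)$ pulled back and verify that the codimension of each stratum matches the rank jump, which is the delicate part of the dimension bookkeeping.
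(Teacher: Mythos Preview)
Your core computation---showing via \cref{0113011} that the difference map from $T_{\mc{B}^{0}\mc{B}^{1}}\oplus T_{\mc{B}^{1}\mc{B}^{2}}$ to $Ext^{1}(\mc{B}^{1},\mc{B}^{1})\oplus \mbf{L}_{k}$ has one-dimensional cokernel, hence the Zariski tangent space has constant dimension $\dim\mf{M}(\mbf{v}^{1},\mbf{w})+2g_{k}-1$---is exactly the paper's proof. The paper stops there and leaves the passage from ``constant tangent dimension'' to ``smooth'' implicit; you are right that an additional dimension bound is logically required.

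However, your proposed route to that bound is flawed. The claim that the locus where $\dim Ext^{2}(x,\mc{B}^{1})$ jumps by $m$ has codimension \emph{at least} $m$ in $\mf{P}(\mbf{v}^{0},\mbf{v}^{1})$ does not follow from upper semicontinuity of the universal complex. Semicontinuity tells you such loci are closed; it says nothing about their codimension. For a general morphism of vector bundles the corank-$m$ degeneracy locus has codimension \emph{at most} the expected determinantal codimension, never \emph{at least} a prescribed value, so your stratification could in principle produce components of excess dimension. You flag this yourself as ``the delicate part,'' and indeed it does not go through as written.

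A cleaner way to close the gap, consonant with the paper's later arguments, is to bypass stratification entirely. The presentation $\mf{J}_{k}(\mbf{v}^{1})=\pi_{0}\bigl(\mb{P}_{\mf{P}(\mbf{v}^{0},\mbf{v}^{1})}(L_{p(\mbf{v}^{0},\mbf{v}^{1})})\bigr)$ from \cref{0206412} (which rests only on \cref{0205410} and \cref{020543}, not on the proposition you are proving) exhibits $\mf{J}_{k}(\mbf{v}^{1})$ as the truncation of a quasi-smooth derived scheme; its virtual dimension is $2\dim\mf{P}(\mbf{v}^{0},\mbf{v}^{1})-\dim\mf{M}(\mbf{v}^{0},\mbf{w})-1=\dim\mf{M}(\mbf{v}^{1},\mbf{w})+2g_{k}-1$. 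Since this equals the constant tangent dimension you computed, the derived scheme is classical and smooth. Note that the projectivization you actually invoke, the one pulled back from \cref{0212310} over $\mf{M}(\mbf{v}^{1},\mbf{w})\times\mbf{L}_{k}$, has virtual dimension one \emph{less} than the tangent dimension, so it cannot serve this purpose---you need the presentation over $\mf{M}(\mbf{v}^{0},\mbf{w})$ instead.
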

\begin{proof}
  Let $[\mc{B}^{0}]\subset_{x}[\mc{B}^{1}]\subset_{x}[\mc{B}^{2}]$, by  \cref{0113011}, we have the long exact sequences:
  \begin{align*}
    0\to \mb{C}\xrightarrow{\theta(\zeta_{\mc{B}^{1}x}\otimes \zeta_{x\mc{B}^{1}})\oplus \theta(\zeta_{\mc{B}^{1}x}\otimes \zeta_{x\mc{B}^{1}})\ }Ext^{1}(\mc{B}^{1},\mc{B}^{1})\oplus Ext^{1}(x,x)\\
    \xrightarrow{} T_{\mc{B}^{0}\mc{B}^{1}}^{*}\oplus T_{\mc{B}^{1}\mc{B}^{2}}^{*}\to T^{*}_{\mf{J}_{k}(\mbf{v}^{1})}|_{[\mc{B}^{0}]\subset_{x}[\mc{B}^{1}]\subset_{x}[\mc{B}^{2}]}.
  \end{align*}
  and hence $dim(T^{*}_{\mf{J}_{k}(\mbf{v}^{1})}|_{[\mc{B}^{0}]\subset_{x}[\mc{B}^{1}]\subset_{x}[\mc{B}^{2}]})$ is
  \begin{align*}
  dim(\mf{P}(\mbf{v}^{0},\mbf{v}^{1}))+dim(\mf{P}(\mbf{v}^{1},\mbf{v}^{2}))-dim(\mf{M}(\mbf{v}^{1},\mbf{w}))-2g_{k}+1\\=dim(\mf{M}(\mbf{v}^{1},\mbf{w}))+2g_{k}-1.   
  \end{align*}
\end{proof}

\subsection{The decreasing/increasing triple quiver variety $\mf{Z}_{k}^{\pm}(\mbf{v}^{1})$ and Negu\c{t}'s quadruple moduli space $\mf{Y}_{k}(\mbf{v}^{1})$}

\begin{definition}
   We consider the decreasing tripled quiver variety
$$\mf{Z}_{k}^{-}(\mbf{v}^{1}):=\mf{P}(\mbf{v}^{0},\mbf{v}^{1})\times_{\mbf{M}(\mbf{v}^{0},\mbf{w})}\mf{P}(\mbf{v}^{0},\mbf{v}^{1'})$$
which consists of decreasing triples $([\mc{B}^{0}],[\mc{B}^{1}],[\mc{B}^{1'}])\in \mf{M}(\mbf{v}^{0},\mbf{w})\times \mf{M}(\mbf{v}^{1},\mbf{w})\times \mf{M}(\mbf{v}^{1'},\mbf{w})$
such that $[\mc{B}^{0}]\subset_{x}[\mc{B}^{1}]$ and $[\mc{B}^{0}]\subset_{y} [\mc{B}^{1'}]$ for some $x,y\in \mbf{L}_{k}$.
\end{definition}

\begin{definition}
We define the increasing tripled quiver variety
$$\mf{Z}_{k}^{+}(\mbf{v}^{1}):=\mf{P}(\mbf{v}^{1},\mbf{v}^{2})\times_{\mbf{M}(\mbf{v}^{2},\mbf{w})}\mf{P}(\mbf{v}^{1'},\mbf{v}^{2})$$
which consists of decreasing triples  $([\mc{B}^{2}],[\mc{B}^{1}],[\mc{B}^{1'}])\in \mf{M}(\mbf{v}^{2},\mbf{w})\times \mf{M}(\mbf{v}^{1},\mbf{w})\times \mf{M}(\mbf{v}^{1'},\mbf{w})$
such that $[\mc{B}^{2}]_{y}\supset [\mc{B}^{1}]$ and $[\mc{B}^{2}]_{x}\supset [\mc{B}^{1'}]$ for some $x,y\in \mbf{L}_{k}$.
\end{definition}

\begin{definition}[Negu\c{t}'s quadruple moduli space]
  \label{021354}
  We define the Negu\c{t}'s quadruple moduli space $\mf{Y}_{k}(\mbf{v}^{1})$ which consists of quadruples:
  $$([\mc{B}^{0}],[\mc{B}^{1}],[\mc{B}^{1'}],[\mc{B}^{2}])\in \mf{M}(\mbf{v}^{0},\mbf{w})\times \mf{M}(\mbf{v}^{1},\mbf{w})\times \mf{M}(\mbf{v}^{1'},\mbf{w})\times \mf{M}(\mbf{v}^{2},\mbf{w})$$
  such that
  $$([\mc{B}^{0}]\subset_{x} [\mc{B}^{1}]),([\mc{B}^{0}]\subset_{y}[\mc{B}^{1'}]),([\mc{B}^{1}]\subset_{y}[\mc{B}^{2}]),([\mc{B}^{1'}]\subset_{x}[\mc{B}^{2}])$$
  for some $x,y\in \mbf{L}_{k}$.
\end{definition}

We consider the diagonal morphisms
\begin{align*}
  \Delta_{p(\mbf{v}^{0},\mbf{v}^{1})}:\mf{P}(\mbf{v}^{0},\mbf{v}^{1})\to \mf{Z}_{k}^{-}(\mbf{v}^{1}), \quad  ([\mc{B}^{0}]\subset [\mc{B}^{1}])\to ([\mc{B}^{0}],[\mc{B}^{1}],[\mc{B}^{1}]), \\
  \Delta_{q(\mbf{v}^{1},\mbf{v}^{2})}:\mf{P}(\mbf{v}^{1},\mbf{v}^{2})\to \mf{Z}_{k}^{+}(\mbf{v}^{1}) \quad  ([\mc{B}^{2}]\supset [\mc{B}^{1}])\to ([\mc{B}^{2}],[\mc{B}^{1}],[\mc{B}^{1}]).
\end{align*}
and the close embedding 
\begin{align*}
  \Delta_{\mf{J}_{k}(\mbf{v}^{1})}:\mf{J}_{k}(\mbf{v}^{1})&\to \mf{Y}_{k}(\mbf{v}^{1}) \\
  ([\mc{B}^{0}],[\mc{B}^{1}],[\mc{B}^{2}])&\to ([\mc{B}^{0}],[\mc{B}^{1}],[\mc{B}^{1}],[\mc{B}^{2}])
\end{align*}
By forgetting $[\mc{B}^{0}]$ and $[\mc{B}^{2}]$ respectively, we have morphisms
$$\alpha^{+}:\mf{Y}_{k}(\mbf{v}^{1})\to \mf{Z}^{+}_{k}(\mbf{v}^{1}),\quad \alpha^{-}:\mf{Y}_{k}(\mbf{v}^{1})\to \mf{Z}^{-}_{k}(\mbf{v}^{1}).$$

\begin{proposition}
  \label{012531}
  The morphism $\alpha^{+}$ and $\alpha^{-}$ induces canonical isomorphisms
  \begin{equation}
    \label{eq:neqqq}
    \mf{Z}_{k}^{+}(\mbf{v}^{1})-\mf{P}(\mbf{v}^{1},\mbf{v}^{2})\cong \mf{Y}_{k}(\mbf{v}^{1})-\mf{J}_{k}(\mbf{v}^{1})\cong \mf{Z}_{k}^{-}(\mbf{v}^{1})-\mf{P}(\mbf{v}^{0},\mbf{v}^{1})
  \end{equation}
  and they are both smooth varieties of $dim(\mf{M}(\mbf{v}^{1},\mbf{w}))+2g_{k}$ dimension.
\end{proposition}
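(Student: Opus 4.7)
The plan is to produce explicit two-sided inverses to $\alpha^+$ and $\alpha^-$ restricted to the open loci in question, and then deduce smoothness from the smoothness of $\mf{Y}_k(\mbf{v}^1)$ established in \cref{021147}. First I observe that a quadruple in $\mf{Y}_k(\mbf{v}^1)$ lies in the image of $\Delta_{\mf{J}_k(\mbf{v}^1)}$ if and only if $[\mc{B}^1]=[\mc{B}^{1'}]$, and likewise a triple in $\mf{Z}^{\pm}_k(\mbf{v}^1)$ lies in the image of $\Delta_{p(\mbf{v}^0,\mbf{v}^1)}$ or $\Delta_{q(\mbf{v}^1,\mbf{v}^2)}$ if and only if $[\mc{B}^1]=[\mc{B}^{1'}]$; so $\alpha^\pm$ do restrict to morphisms between the three open subsets appearing in the statement.

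For the inverse to $\alpha^+$, given $([\mc{B}^2],[\mc{B}^1],[\mc{B}^{1'}])$ with $[\mc{B}^1]\neq[\mc{B}^{1'}]$, I set $\mc{B}^0:=\mc{B}^1\cap\mc{B}^{1'}$ inside $\mc{B}^2$. Since $\mc{B}^2/\mc{B}^1\cong y$ is simple and $\mc{B}^{1'}\not\subset\mc{B}^1$, one has $\mc{B}^1+\mc{B}^{1'}=\mc{B}^2$, so $\dim\mc{B}^0=\mbf{v}^0$; as a submodule of the stable $\mc{B}^1$, $\mc{B}^0$ is stable, and the induced inclusions realize $[\mc{B}^0]\subset_x[\mc{B}^1]$ and $[\mc{B}^0]\subset_y[\mc{B}^{1'}]$, producing a point of $\mf{Y}_k(\mbf{v}^1)-\mf{J}_k(\mbf{v}^1)$. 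On universal families, $V^0$ is the kernel of the canonical surjection $V^1\oplus V^{1'}\twoheadrightarrow V^2$ induced by the tautological inclusions $\zeta_{\mbf{v}^1,\mbf{v}^2}$ and $\zeta_{\mbf{v}^{1'},\mbf{v}^2}$ (with $\infty$-component the standard $\mb{C}$), so the construction is algebraic.

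For the inverse to $\alpha^-$, given $([\mc{B}^0],[\mc{B}^1],[\mc{B}^{1'}])$ with $[\mc{B}^1]\neq[\mc{B}^{1'}]$, I form the pushout $\mc{B}^2:=\mc{B}^1\sqcup_{\mc{B}^0}\mc{B}^{1'}$ in $I_{\mbf{w}}$-graded $\Pi_{Q_{\mbf{w}}}$-modules, globally realized as the cokernel of the antidiagonal $V^0\to V^1\oplus V^{1'}$; its dimension equals $\mbf{v}^2$. The crucial step is to check stability of $\mc{B}^2$. Let $H\subset\mc{B}^2$ be a nonzero $\Pi_Q$-submodule. Then $H\cap\mc{B}^1$ (viewed inside $\mc{B}^2$) is a $\Pi_Q$-submodule of $\mc{B}^1$, hence zero by stability; similarly $H\cap\mc{B}^{1'}=0$. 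Thus $H$ embeds into the simple quotients $\mc{B}^2/\mc{B}^1\cong y$ and $\mc{B}^2/\mc{B}^{1'}\cong x$, forcing $x=y$ and $H\cong y$. Such an $H$ would split the short exact sequence $0\to\mc{B}^1\to\mc{B}^2\to y\to 0$, whose extension class in $Ext^1(y,\mc{B}^1)$ is the pushforward along $\mc{B}^0\hookrightarrow\mc{B}^1$ of the extension class defining $\mc{B}^{1'}$. The long exact sequence of $\mathrm{Hom}(y,-)$ applied to $0\to\mc{B}^0\to\mc{B}^1\to x\to 0$ (and using $x=y$) identifies the kernel of that pushforward with the line spanned by the extension class defining $\mc{B}^1$. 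Vanishing therefore would mean the two extension classes are proportional, which by \cref{020916} yields $[\mc{B}^1]=[\mc{B}^{1'}]$, contradicting the hypothesis. Hence $\mc{B}^2$ is stable, and the construction yields a morphism $\mf{Z}^-_k(\mbf{v}^1)-\mf{P}(\mbf{v}^0,\mbf{v}^1)\to\mf{Y}_k(\mbf{v}^1)-\mf{J}_k(\mbf{v}^1)$.

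That the intersection and pushout constructions are inverse to $\alpha^-$ and $\alpha^+$ respectively follows from the universal properties (inside any quadruple in $\mf{Y}_k-\mf{J}_k$, the ambient $\mc{B}^2$ is canonically the pushout of $\mc{B}^1\leftarrow\mc{B}^0\to\mc{B}^{1'}$, while $\mc{B}^0$ is canonically $\mc{B}^1\cap\mc{B}^{1'}$). For smoothness, \cref{021147} asserts that $\mf{Y}_k(\mbf{v}^1)$ is smooth of dimension $\dim\mf{M}(\mbf{v}^1,\mbf{w})+2g_k$, so its open subscheme $\mf{Y}_k(\mbf{v}^1)-\mf{J}_k(\mbf{v}^1)$ is smooth of the same dimension, and the property transfers to $\mf{Z}^\pm_k(\mbf{v}^1)-\mf{P}$ under the isomorphisms. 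The main technical obstacle is the stability check in the pushout construction, which is precisely where the hypothesis $[\mc{B}^1]\neq[\mc{B}^{1'}]$ enters in an essential way.
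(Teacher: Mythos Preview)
Your construction of the inverses to $\alpha^{\pm}$ is correct and essentially identical to the paper's: taking the kernel of $\mc{B}^1\oplus\mc{B}^{1'}\to\mc{B}^2$ (your ``intersection'') and the cokernel of $\mc{B}^0\to\mc{B}^1\oplus\mc{B}^{1'}$ (your ``pushout''). Your stability argument for $\mc{B}^2$ via the extension-class analysis is in fact more detailed than the paper's, which simply asserts that stability is ``easy to show''.

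However, there is a genuine circularity in your smoothness argument. You invoke \cref{021147} to conclude that $\mf{Y}_k(\mbf{v}^1)$ is smooth, and then transport smoothness along the isomorphisms. But the proof of \cref{021147} in the paper explicitly cites \cref{012531} (the very proposition you are proving) to establish that $\mf{Y}_k(\mbf{v}^1)-\mf{J}_k(\mbf{v}^1)$ is smooth of the correct dimension; the tangent-space computation in \cref{021147} only handles points \emph{on} $\mf{J}_k(\mbf{v}^1)$. So your argument loops back on itself.

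The paper avoids this by proving smoothness of $\mf{Z}_k^+(\mbf{v}^1)-\mf{P}(\mbf{v}^1,\mbf{v}^2)$ directly from \cref{0124110}, which shows that at any off-diagonal point the map
\[
Ext^1(\mc{B}^2,\mc{B}^1)\oplus Ext^1(\mc{B}^2,\mc{B}^{1'})\longrightarrow Ext^1(\mc{B}^1,\mc{B}^1)\oplus Ext^1(\mc{B}^{1'},\mc{B}^{1'})\oplus Ext^1(\mc{B}^2,\mc{B}^2)
\]
is injective; this is precisely the transversality of the fiber product $\mf{P}(\mbf{v}^1,\mbf{v}^2)\times_{\mf{M}(\mbf{v}^2,\mbf{w})}\mf{P}(\mbf{v}^{1'},\mbf{v}^2)$ away from the diagonal, whence smoothness. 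The dimension is then read off as $2\dim\mf{P}(\mbf{v}^1,\mbf{v}^2)-\dim\mf{M}(\mbf{v}^2,\mbf{w})=\dim\mf{M}(\mbf{v}^1,\mbf{w})+2g_k$ using \cref{020527}. You should replace the appeal to \cref{021147} with this direct argument.
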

\begin{proof}
  Let $[\mc{B}^{2}]_{y}\supset [\mc{B}^{1}]$ and $[\mc{B}^{2}]_{x}\supset [\mc{B}^{1'}]$ for some $x,y\in \mbf{L}_{k}$ such that $[\mc{B}^{1}]\neq [\mc{B}^{1'}]$. Then $\mc{B}^{1}\not\cong \mc{B}^{1'}$ and $\mc{B}^{1}\oplus \mc{B}^{1'}\xrightarrow{\zeta_{\mc{B}^{1}\mc{B}^{2}}+ \zeta_{\mc{B}^{1'}\mc{B}^{2}}}\mc{B}^{2}$ is surjective. Let $\mc{B}^{0}$ be its kernel, and it is easy to show that $[\mc{B}^{0}]\in \mf{M}(\mbf{v}^{0},\mbf{w})$.
  
  On the other hand, $[\mc{B}^{0}]\subset_{x}[\mc{B}^{1}]$ and $[\mc{B}^{0}]\subset_{y} [\mc{B}^{1'}]$ for some $x,y\in \mbf{L}_{k}$ such that $[\mc{B}^{1}]\neq [\mc{B}^{1'}]$. Then $\mc{B}^{0}\xrightarrow{\zeta_{\mc{B}^{0}\mc{B}^{1}}\oplus \zeta_{\mc{B}^{0}\mc{B}^{1'}}}\mc{B}^{1}\oplus \mc{B}^{1}$ is injective. Let $\mc{B}^{2}$ be its cokernel, and it is also easy to show that $[\mc{B}^{2}]\in \mf{M}(\mbf{v}^{2},\mbf{w})$. Thus we prove \cref{eq:neqqq}.
  
  Finally, the smoothness of $\mf{Z}_{k}^{+}(\mbf{v}^{1})-\mf{P}(\mbf{v}^{1},\mbf{v}^{2})$ follows from \cref{0124110}. Moreover, its dimension is
  \begin{align*}
    2\times \frac{1}{2}(dim(\mf{M}(\mbf{v}^{2},\mbf{w}))+dim(\mf{M}(\mbf{v}^{1},\mbf{w}))+g_{k})-dim(\mf{M}(\mbf{v}^{1},\mbf{w}))\\=dim(\mf{M}(\mbf{v}^{1},\mbf{w}))+2g_{k}.
  \end{align*}
\end{proof}

\subsection{The smoothness of $\mf{Y}_{k}(\mbf{v}^{1})$} Finally, we prove the smoothness of the Negu\c{t}'s quadruple moduli space:
\begin{theorem}
  \label{021147}
  The quadruple moduli space $\mf{Y}_{k}(\mbf{v}^{1})$ is smooth and $\mf{J}_{k}{(\mbf{v}^{1})}$ is a regular divisor of $\mf{Y}_{k}(\mbf{v}^{1})$.
\end{theorem}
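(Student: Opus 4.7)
The plan is to bootstrap Proposition \cref{012531}, which already establishes smoothness of the expected dimension $\dim \mf{M}(\mbf{v}^1, \mbf{w}) + 2g_k$ on the open complement $\mf{Y}_k(\mbf{v}^1) - \mf{J}_k(\mbf{v}^1)$. It remains to compute the Zariski tangent space at every point $P \in \mf{J}_k(\mbf{v}^1)$ and show it has the same dimension. Once smoothness is established, combining it with the smoothness of $\mf{J}_k(\mbf{v}^1)$ from Proposition \cref{020531} and the codimension comparison $\dim \mf{Y}_k(\mbf{v}^1) - \dim \mf{J}_k(\mbf{v}^1) = 1$ immediately yields that $\mf{J}_k(\mbf{v}^1)$ is an effective Cartier divisor in $\mf{Y}_k(\mbf{v}^1)$, hence a regular divisor.

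First, I would realize $\mf{Y}_k(\mbf{v}^1)$ as the fiber product
\begin{equation*}
\mf{Y}_k(\mbf{v}^1) = \mf{Z}^-_k(\mbf{v}^1) \times_{\mf{M}(\mbf{v}^1,\mbf{w}) \times \mf{M}(\mbf{v}^{1'},\mbf{w}) \times \mbf{L}_k \times \mbf{L}_k} \mf{Z}^+_k(\mbf{v}^1),
\end{equation*}
where the $\mbf{L}_k \times \mbf{L}_k$ factors record the trace-difference maps $t(\mbf{v}^a,\mbf{v}^b)$ from Theorem \cref{020527}. At a point $P = ([\mc{B}^0],[\mc{B}^1],[\mc{B}^{1'}],[\mc{B}^2]) \in \mf{J}_k(\mbf{v}^1)$ we have $\mc{B}^1 \cong \mc{B}^{1'}$ and the two parameters coincide to some $x \in \mbf{L}_k$. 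The tangent space $T_P \mf{Y}_k(\mbf{v}^1)$ then sits as a fibered sum of the tangent spaces $T_{\mc{B}^a \mc{B}^b}$ of the four Hecke correspondences, each computed by Proposition \cref{011209}.

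Next, I would analyze this fibered sum via the spectral sequence of the double complex of $\mc{E}xt$-complexes obtained by enlarging diagram \cref{diag:2.4} from a pair to the quadruple configuration of Section \ref{sec2}. At points where $\mc{B}^1 \not\cong \mc{B}^{1'}$, the injectivity statement in Lemma \cref{0124110} recovers the dimension count of Proposition \cref{012531}. At a $\mf{J}_k$ point, the classes $\alpha_{\mc{B}^1} \in Ext^1(\mc{B}^1, \mc{B}^1)$ and $\alpha_{\mc{B}^{1'}} \in Ext^1(\mc{B}^{1'}, \mc{B}^{1'})$ from Lemma \cref{0113011}, which were previously independent contributions to the kernel of the assembly map, now live in the same $Ext^1$ space and produce exactly one additional relation. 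Tracking this through the spectral sequence, the rank of the assembly map drops by exactly one compared to the generic point, so $\dim T_P \mf{Y}_k(\mbf{v}^1)$ exceeds $\dim T_P \mf{J}_k(\mbf{v}^1)$ by precisely one, matching $\dim \mf{M}(\mbf{v}^1, \mbf{w}) + 2g_k$.

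The main obstacle is the spectral-sequence bookkeeping in the degenerate case $\mc{B}^1 \cong \mc{B}^{1'}$, where the non-parallel kernel elements $(\alpha_{\mc{B}^1},0,\alpha_x)$ and $(0,\alpha_{\mc{B}^{1'}},\alpha_x)$ of the pair analysis collapse into the same $Ext^1(\mc{B}^1,\mc{B}^1) \oplus Ext^1(x,x)$ factor. The cleanest route is to produce the extra tangent vector explicitly: an infinitesimal deformation of the quadruple that splits the identification $\mc{B}^1 = \mc{B}^{1'}$ into two distinct nearby objects while preserving all four nesting conditions and the shared parameter $x$. The existence of this deformation follows from the injectivity statement in Lemma \cref{0124110} applied in reverse (to lift the joint class $\alpha_{\mc{B}^1} \oplus \alpha_{\mc{B}^1} \oplus \alpha_x$ to a tangent vector), and its linear independence from $T_P \mf{J}_k(\mbf{v}^1)$ is visible from the projection to $\mbf{L}_k \times \mbf{L}_k / \Delta$. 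Spanning exactness, i.e.\ that no further extra directions appear, is then forced by the upper bound produced by the four long exact sequences \cref{e011103} and \cref{e011104} applied to each nested pair.
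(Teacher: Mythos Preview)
Your overall strategy matches the paper's: use Proposition \cref{012531} for the open complement, then show $\dim T_P \mf{Y}_k(\mbf{v}^1) = \dim \mf{J}_k(\mbf{v}^1) + 1$ at each point $P \in \mf{J}_k(\mbf{v}^1)$. But the execution differs, and your version has a gap.

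The paper's argument is more direct than your spectral-sequence plan. At a $\mf{J}_k$-point, where $\mc{B}^1 = \mc{B}^{1'}$, the paper observes that any tangent quadruple $(b^0, b^1, b^{1'}, b^2)$ splits as the sum of its symmetric part $(b^0, \tfrac{1}{2}(b^1+b^{1'}), \tfrac{1}{2}(b^1+b^{1'}), b^2)$ and its antisymmetric part $(0, \tfrac{1}{2}(b^1-b^{1'}), -\tfrac{1}{2}(b^1-b^{1'}), 0)$. Because the nesting and trace-matching constraints are linear and symmetric in the $1,1'$ slots at a diagonal point (using $dt_{\mc{B}^0\mc{B}^1}(0,b) = dt_{\mc{B}^1\mc{B}^2}(b,0)$), each summand is again a tangent vector. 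The symmetric summands constitute exactly $T_P \mf{J}_k(\mbf{v}^1)$, while the antisymmetric summands are parametrized by
\[
N = \{\, b \in Ext^1(\mc{B}^1,\mc{B}^1) : (0,b) \in T_{\mc{B}^0\mc{B}^1},\ (b,0) \in T_{\mc{B}^1\mc{B}^2} \,\},
\]
which by \cref{e0210116} in Lemma \cref{0113011} is precisely $\mb{C}\alpha_{\mc{B}^1}$. This yields existence and uniqueness of the extra direction in one stroke, with no spectral sequence.

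Your appeal to Lemma \cref{0124110} ``applied in reverse'' is the genuine gap. That lemma asserts injectivity of a map; injectivity does not produce lifts, and in any case the class you want to exhibit as tangent is $(0,\alpha_{\mc{B}^1},-\alpha_{\mc{B}^1},0)$, whose membership in $T_P \mf{Y}_k(\mbf{v}^1)$ is verified directly via \cref{e0210115} and \cref{e0210116}, not by lifting. Likewise, your upper bound ``forced by the four long exact sequences'' is not an argument as stated: what you need is exactly the statement that the antisymmetric kernel is one-dimensional, and that is the content of \cref{e0210116}. The symmetric/antisymmetric decomposition is what makes this transparent; without it, your spectral-sequence bookkeeping would have to reproduce the same one-dimensionality by a longer and vaguer route.
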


\begin{proof}
  Given quadruples
  $$([\mc{B}^{0}]\subset_{x} [\mc{B}^{1}]),([\mc{B}^{0}]\subset_{y}[\mc{B}^{1'}]),([\mc{B}^{1}]\subset_{y}[\mc{B}^{2}]),([\mc{B}^{1'}]\subset_{x}[\mc{B}^{2}]),$$
  the tangent space of $\mf{Y}_{k}(\mbf{v}^{1})$ at this point is the subspace of $(b^{0},b^{1},b^{1'},b^{2})\in$
  $$ Ext^{1}(\mc{B}^{0},\mc{B}^{0})\oplus Ext^{1}(\mc{B}^{1},\mc{B}^{1})\oplus Ext^{1}(\mc{B}^{1'},\mc{B}^{1'})\oplus Ext^{1}(\mc{B}^{2},\mc{B}^{2})$$
  such that
  \begin{enumerate}
  \item $(b^{0},b^{1})\in T_{\mc{B}^{0}\mc{B}^{1}}$, $(b^{0},b^{1'})\in T_{\mc{B}^{0}\mc{B}^{1'}}$, $(b^{1},b^{2})\in T_{\mc{B}^{1}\mc{B}^{2}}$ and $(b^{1'},b^{2})\in T_{\mc{B}^{1'}\mc{B}^{2}}$,
  \item $dt_{\mc{B}^{0}\mc{B}^{1}}(b^{0},b^{1})=dt_{\mc{B}^{1'}\mc{B}^{2}}(b^{1'},b^{2})$, 
  \item $dt_{\mc{B}^{0}\mc{B}^{1'}}(b^{0},b^{1'})=dt_{\mc{B}^{1}\mc{B}^{2}}(b^{1},b^{2})$.
  \end{enumerate}
  By \cref{012531}, $\mf{Y}_{k}(\mbf{v}^{1})-\mf{J}_{k}(\mbf{v}^{1})$ is smooth and its dimension is $dim(\mf{J}_{k}(\mbf{v}^{1}))+1$. We consider a closed point $([\mc{B}^{0}],[\mc{B}^{1}],[\mc{B}^{2}])\in \mf{J}_{k}(\mbf{v}^{1})$. For a quadruple $(b^{0},b^{1},b^{1'},b^{2})$ in the tangent space of $\mf{Y}_{k}(\mbf{v}^{1})$, then $(b^{0},\frac{1}{2}(b^{1}+b^{1'}),\frac{1}{2}(b^{1}+b^{1'}),b^{2})$ and $(0,\frac{1}{2}(b^{1}-b^{1'}),\frac{1}{2}(b^{1'}-b^{1}),0)$ are also in the tangent space of $\mf{Y}_{k}(\mbf{v}^{1})$. We notice that for $b\in Ext^{1}(\mc{B}^{1},\mc{B}^{1})$, $dt_{\mc{B}^{0}\mc{B}^{1}}(0,b)=dt_{\mc{B}^{1}\mc{B}^{2}}(b,0)$. Hence the tangent space $T_{\mf{Y}_{k}(\mbf{v}^{1})}$ at $([\mc{B}^{0}],[\mc{B}^{1}],[\mc{B}^{2}])$ decompose into two subspaces: one is $T_{\mf{J}_{k}(\mbf{v}^{1})}$, and another is
  $$N:=\{b\in Ext^{1}(\mc{B}^{1},\mc{B}^{1})|(b,0)\in T_{\mc{B}^{1}\mc{B}^{2}}, (0,b)\in T_{\mc{B}^{0}\mc{B}^{1}}\}$$
 By \cref{0113011}, $N$ is one dimensional and thus $dim(T_{\mf{Y}_{k}(\mbf{v}^{1})})|_{([\mc{B}^{0}],[\mc{B}^{1}],[\mc{B}^{2}])}=dim (\mf{Y}_{k}(\mbf{v}^{1}))$. Hence $\mf{Y}_{k}(\mbf{v}^{1})$ is smooth and $\mf{J}_{k}(\mbf{v}^{1})$ is a regular divisor.
\end{proof}

\section{The Derived Blow-up of the Diagonal}
\label{sec6}
Let $\Phi:Y\to X$ be a morphism of smooth varieties. Let $\mb{R}\Delta_{\Phi}:Y\to Y\times_{X}^{\mb{L}}Y$ be the diagonal morphism, which is a closed embedding and $N_{\mb{R}\Delta_{\Phi}}=L_{\Phi}$. By the gluing theorem \cref{020656}, we have
\begin{corollary}
  \label{cor7}
  If $Y\times_{X}^{\mb{L}}Y-Y$ is smooth and $\mb{P}_{Y}(L_{\Phi})$ is smooth, then $Bl_{\Delta_{\Phi}}\cong \mb{B}l_{\Delta_{\Phi}}$ which are both smooth.
\end{corollary}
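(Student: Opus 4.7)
The plan is to derive \cref{cor7} as a direct specialization of the gluing theorem \cref{020656} and of Hekking's comparison result relating $Bl_{\pi_0(f)}$ with $\mb{B}l_f$ (stated just before \cref{conj:2}), applied to the derived diagonal $\mb{R}\Delta_\Phi : Y \to Y \times_X^{\mb{L}} Y$. The preliminary observation, already recorded immediately above the corollary, is that $C_{\mb{R}\Delta_\Phi} \cong L_\Phi$ via the standard calculation of the cotangent complex of a derived self-fiber-product of a morphism of smooth varieties. In particular $\mb{R}\Delta_\Phi$ is a quasi-smooth closed embedding, so the derived blow-up $\mb{B}l_{\Delta_\Phi}:=\mb{B}l_{\mb{R}\Delta_\Phi}$ is defined within Hekking's theory and the statement to be proved makes sense.

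The first substantive step is to feed this setup into the gluing theorem \cref{020656} with $f = \mb{R}\Delta_\Phi$. Its two hypotheses translate exactly to (i) smoothness of $\mb{P}_Y(C_{\mb{R}\Delta_\Phi}) \cong \mb{P}_Y(L_\Phi)$, and (ii) smoothness of the open complement $(Y \times_X^{\mb{L}} Y) - Y$; both are precisely the hypotheses granted in \cref{cor7}. The gluing theorem then yields that $\mb{B}l_{\Delta_\Phi}$ is smooth. In particular it is classical, so $\pi_0(\mb{B}l_{\Delta_\Phi}) = \mb{B}l_{\Delta_\Phi}$.

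The second step is to invoke the comparison theorem recalled before \cref{conj:2}: there is a canonical closed embedding $Bl_{\Delta_\Phi} = Bl_{\pi_0(\mb{R}\Delta_\Phi)} \hookrightarrow \pi_0(\mb{B}l_{\mb{R}\Delta_\Phi})$, and it is an isomorphism as soon as $\mb{B}l_{\mb{R}\Delta_\Phi}$ is smooth and $\pi_0(Y \times_X^{\mb{L}} Y) - Y$ is nonempty. Smoothness was just established. Nonemptiness holds in every geometrically interesting case; in the degenerate case where the complement is empty, $\mb{R}\Delta_\Phi$ is an equivalence and both $Bl_{\Delta_\Phi}$ and $\mb{B}l_{\Delta_\Phi}$ coincide with $Y$, so the isomorphism is trivial. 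Combining the two conclusions gives $Bl_{\Delta_\Phi} \cong \mb{B}l_{\Delta_\Phi}$ with both smooth, as required.

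I do not anticipate any real obstacle: the corollary is explicitly presented as a streamlined consequence of already-proved theorems. The only points worth spelling out carefully in the write-up are the matching of the hypotheses through the identification $C_{\mb{R}\Delta_\Phi} \cong L_\Phi$ and the handling of the edge case in which the open complement is empty.
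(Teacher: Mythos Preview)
Your proposal is correct and follows exactly the route the paper indicates: the corollary is stated immediately after the line ``By the gluing theorem \cref{020656}, we have'' with no further proof given, so the paper's argument is precisely the specialization of \cref{020656} (together with \cref{020651}) to $f=\mb{R}\Delta_\Phi$ via the identification $C_{\mb{R}\Delta_\Phi}\cong L_\Phi$. Your treatment of the degenerate empty-complement case is in fact more careful than the paper, which tacitly assumes non-emptiness.
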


Now we apply the derived blow-up theory to quiver varieties. Given $k\in I$, and we choose dimension vectors $\mbf{v}^{0}+\delta_{k}=\mbf{v}^{1}=\mbf{v}^{1'}=\mbf{v}^{2}-\delta_{k}$.
We consider the following derived enhancement of 
$$\mb{R}\mf{Z}_{k}^{-}(\mbf{v}^{1}):=\mf{P}(\mbf{v}^{1},\mbf{v}^{2})\times^{\mb{L}}_{\mbf{M}(\mbf{v}^{0},\mbf{w})}\mf{P}(\mbf{v}^{1'},\mbf{v}^{2}),\quad \mb{R}\mf{Z}_{k}^{+}(\mbf{v}^{1}):=\mf{P}(\mbf{v}^{1},\mbf{v}^{2})\times_{\mbf{M}(\mbf{v}^{2},\mbf{w})}^{\mb{L}}\mf{P}(\mbf{v}^{1'},\mbf{v}^{2})$$
together with the diagonal morphism 
\begin{align*}
  \mb{R}\Delta_{p(\mbf{v}^{0},\mbf{v}^{1})}:\mf{P}(\mbf{v}^{0},\mbf{v}^{1})\to \mb{R}\mf{Z}_{k}^{-}(\mbf{v}^{1}),\quad \mb{R}\Delta_{q(\mbf{v}^{1},\mbf{v}^{2})}:\mf{P}(\mbf{v}^{1},\mbf{v}^{2})\to \mb{R}\mf{Z}_{k}^{+}(\mbf{v}^{1}).
\end{align*}

In this section, we are going to prove 
\begin{theorem}[\cref{0206412}]
\label{newthm}
  We have
  \begin{align*}
    \mf{J}_{k}(\mbf{v}^{1}) \cong \mb{P}_{\mf{P}(\mbf{v}^{0},\mbf{v}^{1})}(L_{p(\mbf{v}^{0},\mbf{v}^{1})})\cong  \mb{P}_{\mf{P}(\mbf{v}^{1},\mbf{v}^{2})}(L_{q(\mbf{v}^{1},\mbf{v}^{2})}).
  \end{align*}
  as smooth schemes.
\end{theorem}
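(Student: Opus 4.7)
The plan is to realise $\mf{J}_{k}(\mbf{v}^{1})$ as an explicit Grothendieck projectivization of a two-term complex on $\mf{P}(\mbf{v}^{0},\mbf{v}^{1})$ (and symmetrically on $\mf{P}(\mbf{v}^{1},\mbf{v}^{2})$), and then to identify that complex with the cotangent complex of the projection morphism so that Jiang's derived projectivization collapses to the classical construction.

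Combining the fibre-product description
\[
\mf{J}_{k}(\mbf{v}^{1})=\mf{P}(\mbf{v}^{0},\mbf{v}^{1})\times_{\mf{M}(\mbf{v}^{1},\mbf{w})\times \mbf{L}_{k}}\mf{P}(\mbf{v}^{1},\mbf{v}^{2})
\]
with the two realisations in \cref{0212310}, base change immediately yields
\[
\mf{J}_{k}(\mbf{v}^{1})\cong P_{\mf{P}(\mbf{v}^{0},\mbf{v}^{1})}\bigl((q\times t)^{*}\mc{I}_{k}(\mbf{v}^{1})^{\vee}\bigr)\cong P_{\mf{P}(\mbf{v}^{1},\mbf{v}^{2})}\bigl((p\times t)^{*}\mc{I}_{k}(\mbf{v}^{1})\bigr).
\]
On the stable locus, \cref{020921} ensures that these pulled-back two-term complexes have the right pointwise behaviour, so their Grothendieck projectivizations coincide with Jiang's derived projectivizations of the same complexes.

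Next I would compute $L_{p(\mbf{v}^{0},\mbf{v}^{1})}$ by factoring $p(\mbf{v}^{0},\mbf{v}^{1})$ as the closed embedding $\iota\colon \mf{P}(\mbf{v}^{0},\mbf{v}^{1})\hookrightarrow \mf{M}(\mbf{v}^{0},\mbf{w})\times\mf{M}(\mbf{v}^{1},\mbf{w})$ followed by the first projection. By \cref{nest}, $\iota$ is a regular embedding with conormal bundle $\mc{N}(\mbf{v}^{0},\mbf{v}^{1})$, so the cotangent triangle for this composition gives
\[
q(\mbf{v}^{0},\mbf{v}^{1})^{*}\Omega^{1}_{\mf{M}(\mbf{v}^{1},\mbf{w})}\longrightarrow L_{p(\mbf{v}^{0},\mbf{v}^{1})}\longrightarrow \mc{N}(\mbf{v}^{0},\mbf{v}^{1})[1],
\]
exhibiting $L_{p}$ as a perfect two-term complex of the expected Tor-amplitude. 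To identify it with $(q\times t)^{*}\mc{I}_{k}(\mbf{v}^{1})^{\vee}$, possibly up to a twist by the tautological line bundle $\mc{L}$ of \cref{s013123}, I would globalise the long exact sequences \cref{e011103}--\cref{e011104} attached to the tautological short exact sequence $0\to V^{0}\to V^{1}\to \mc{L}\to 0$ on $\mf{P}(\mbf{v}^{0},\mbf{v}^{1})$, applied to the universal $\mc{E}xt$-complexes of \cref{011405}. The pointwise check is supplied by diagram \cref{e011107} together with \cref{011209}: at $[\mc{B}^{0}\subset_{x}\mc{B}^{1}]$, the fibre of $dp$ is identified with $Ext^{1}(x,\mc{B}^{1})/\mb{C}\zeta_{x\mc{B}^{1}}$, matching the projective fibre extracted from $(q\times t)^{*}\mc{I}_{k}(\mbf{v}^{1})^{\vee}$ via \cref{012728}.

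The main obstacle is precisely this last step of promoting the pointwise identification to a global quasi-isomorphism of perfect complexes, keeping careful track of the $\mc{L}$-twist introduced by the universal sequence and of the $\mbf{L}_{k}$-direction encoded by the trace $t$. Once this is in place, Jiang's theorem gives $\mb{P}_{\mf{P}(\mbf{v}^{0},\mbf{v}^{1})}(L_{p})\cong \mf{J}_{k}(\mbf{v}^{1})$; the symmetric isomorphism with $\mb{P}_{\mf{P}(\mbf{v}^{1},\mbf{v}^{2})}(L_{q})$ follows by running the same argument on $\mf{P}(\mbf{v}^{1},\mbf{v}^{2})$ via the sequence $0\to V^{1}\to V^{2}\to \mc{L}\to 0$, and the smoothness of both sides is inherited from \cref{020531}.
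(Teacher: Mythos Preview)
Your base-change identification of $\mf{J}_{k}(\mbf{v}^{1})$ with the Grothendieck projectivization of the pulled-back universal complex is correct and is essentially what the paper does via \cref{020147} and \cref{0205410}. The divergence, and the gap, is in how you propose to identify $L_{p(\mbf{v}^{0},\mbf{v}^{1})}$ with that complex. You attempt a direct computation through the conormal exact triangle of the regular embedding $\iota$ and then a globalisation of the pointwise $\mc{E}xt$ diagrams; you are right to flag this as the main obstacle, because matching the two perfect complexes up to an $\mc{L}$-twist and a $[1]$-shift is exactly the kind of global identification that is hard to get from pointwise data alone.

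The paper avoids this entirely by a one-line Lagrangian argument. By \cref{020527}, $\mf{P}(\mbf{v}^{0},\mbf{v}^{1})$ is Lagrangian inside $\mf{M}(\mbf{v}^{0},\mbf{w})\times \mf{M}(\mbf{v}^{1},\mbf{w})\times \mbf{L}_{k}$, and the elementary \cref{020543} then gives
\[
L_{p(\mbf{v}^{0},\mbf{v}^{1})}\cong T_{q\times t(\mbf{v}^{0},\mbf{v}^{1})}[1]
\]
globally, with no chase through $\mc{E}xt$ complexes. Now $\mf{P}(\mbf{v}^{0},\mbf{v}^{1})\cong \mb{P}_{\mf{M}(\mbf{v}^{1})\times\mbf{L}_{k}}(\mc{I}_{k}(\mbf{v}^{1}))$ via $q\times t$ by \cref{020147}, so $T_{q\times t}[1]$ is precisely the shifted relative tangent complex to which Jiang's homological projective duality \cref{0205410} applies. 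That theorem simultaneously identifies the underlying scheme of $\mb{P}_{\mf{P}(\mbf{v}^{0},\mbf{v}^{1})}(T_{q\times t}[1])$ with the fibre product $\mf{J}_{k}(\mbf{v}^{1})$ and gives the symmetric isomorphism with $\mb{P}_{\mf{P}(\mbf{v}^{1},\mbf{v}^{2})}(T_{p\times t}[1])$. The dimension count from \cref{020531} then shows the derived projectivization is already smooth and classical. So the ingredient you are missing is the Lagrangian identification $L_{p}\cong T_{q\times t}[1]$, which replaces your entire globalisation step.
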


By \cref{newthm}, \cref{012531} and \cref{cor7}, we have
\begin{theorem}
  \label{020658}
  Both $\mb{B}l_{\mb{R}\Delta_{p(\mbf{v}^{0},\mbf{v}^{1})}}$ and $\mb{B}l_{\mb{R}\Delta_{q(\mbf{v}^{1},\mbf{v}^{2})}}$ are smooth schemes, and we have $\mb{B}l_{\mb{R}\Delta_{p(\mbf{v}^{0},\mbf{v}^{1})}}\cong Bl_{\Delta_{p(\mbf{v}^{0},\mbf{v}^{1})}}$ and $\mb{B}l_{\mb{R}\Delta_{q(\mbf{v}^{1},\mbf{v}^{2})}}\cong Bl_{\Delta_{q(\mbf{v}^{1},\mbf{v}^{2})}}$.
\end{theorem}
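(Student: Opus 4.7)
The plan is to apply \cref{cor7} twice: once with $\Phi = p(\mbf{v}^{0},\mbf{v}^{1}) : \mf{P}(\mbf{v}^{0},\mbf{v}^{1}) \to \mf{M}(\mbf{v}^{0},\mbf{w})$ for the minus statement, and once with $\Phi = q(\mbf{v}^{1},\mbf{v}^{2}) : \mf{P}(\mbf{v}^{1},\mbf{v}^{2}) \to \mf{M}(\mbf{v}^{2},\mbf{w})$ for the plus statement. In each case source and target are smooth varieties (Nakajima quiver varieties are smooth, and Hecke correspondences are smooth by \cref{nest}), and the derived fiber product $Y \times_{X}^{\mb{L}} Y$ is by construction $\mb{R}\mf{Z}_{k}^{\pm}(\mbf{v}^{1})$ equipped with the diagonal $\mb{R}\Delta$. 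So it suffices to verify the two hypotheses of \cref{cor7}: (a) that $Y \times_{X}^{\mb{L}} Y - Y$ is smooth, and (b) that $\mb{P}_{Y}(L_{\Phi})$ is smooth.

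Hypothesis (b) is immediate from \cref{newthm} (= \cref{0206412}): it identifies both $\mb{P}_{\mf{P}(\mbf{v}^{0},\mbf{v}^{1})}(L_{p(\mbf{v}^{0},\mbf{v}^{1})})$ and $\mb{P}_{\mf{P}(\mbf{v}^{1},\mbf{v}^{2})}(L_{q(\mbf{v}^{1},\mbf{v}^{2})})$ with the triple nested quiver variety $\mf{J}_{k}(\mbf{v}^{1})$, whose smoothness is \cref{020531}.

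For hypothesis (a), I would show that the classical and derived fiber products coincide on the complement of the diagonal. By \cref{012531}, $\mf{Z}_{k}^{-}(\mbf{v}^{1}) - \mf{P}(\mbf{v}^{0},\mbf{v}^{1}) \cong \mf{Y}_{k}(\mbf{v}^{1}) - \mf{J}_{k}(\mbf{v}^{1})$ is a smooth variety of dimension $\dim \mf{M}(\mbf{v}^{1},\mbf{w}) + 2g_{k}$. On the other hand, by the Lagrangian dimension formula in \cref{020527}, the expected (virtual) dimension of $\mb{R}\mf{Z}_{k}^{-}(\mbf{v}^{1})$ is
\[
2\dim \mf{P}(\mbf{v}^{0},\mbf{v}^{1}) - \dim \mf{M}(\mbf{v}^{0},\mbf{w}) = \dim \mf{M}(\mbf{v}^{1},\mbf{w}) + 2g_{k},
\]
which matches the dimension of the smooth classical complement. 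Hence on the open locus $\mf{Z}_{k}^{-}(\mbf{v}^{1}) - \mf{P}(\mbf{v}^{0},\mbf{v}^{1})$ the classical fiber product realizes the expected dimension at every closed point, so the Koszul resolution of $\mf{P}(\mbf{v}^{0},\mbf{v}^{1'}) \subset \mf{M}(\mbf{v}^{0},\mbf{w}) \times \mf{M}(\mbf{v}^{1'},\mbf{w})$ from \cref{nest} pulled back to $\mf{P}(\mbf{v}^{0},\mbf{v}^{1}) \times \mf{M}(\mbf{v}^{1'},\mbf{w})$ has no higher Tors there; consequently $\mb{R}\mf{Z}_{k}^{-}(\mbf{v}^{1}) - \mf{P}(\mbf{v}^{0},\mbf{v}^{1}) \cong \mf{Z}_{k}^{-}(\mbf{v}^{1}) - \mf{P}(\mbf{v}^{0},\mbf{v}^{1})$ is a classical smooth scheme. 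The plus case is symmetric, using instead the Koszul resolution associated to $\mf{P}(\mbf{v}^{1'},\mbf{v}^{2})$ and the fiber product over $\mf{M}(\mbf{v}^{2},\mbf{w})$.

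The main technical point is the Tor-vanishing argument in (a); it is a routine ``classical fiber product attains expected dimension, hence is a quasi-smooth local complete intersection with vanishing derived structure'' computation, but one does need the explicit Koszul resolutions from \cref{nest} combined with the Lagrangian dimension identity from \cref{020527} to carry it out cleanly at every point of the complement. Once (a) and (b) are secured, \cref{cor7} delivers simultaneously the smoothness of $\mb{B}l_{\mb{R}\Delta_{p(\mbf{v}^{0},\mbf{v}^{1})}}$ and $\mb{B}l_{\mb{R}\Delta_{q(\mbf{v}^{1},\mbf{v}^{2})}}$ together with their identifications with the classical blow-ups, completing the proof.
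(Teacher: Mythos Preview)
Your proposal is correct and follows essentially the same route as the paper: the paper's proof is literally the one line ``By \cref{newthm}, \cref{012531} and \cref{cor7}'', i.e.\ feed the smoothness of $\mf{J}_{k}(\mbf{v}^{1})$ (hypothesis (b)) and the smoothness of the off-diagonal locus (hypothesis (a)) into \cref{cor7}. The only place where you are more explicit than the paper is in justifying why \cref{012531}, which is stated for the \emph{classical} complement $\mf{Z}_{k}^{\pm}(\mbf{v}^{1})-\mf{P}$, actually gives smoothness of the \emph{derived} complement $\mb{R}\mf{Z}_{k}^{\pm}(\mbf{v}^{1})-\mf{P}$; your dimension-matching argument is fine, and can be phrased even more directly using the criterion in Appendix~A (a quasi-smooth derived scheme is smooth iff $\pi_{0}$ is smooth of the virtual dimension), since $\mb{R}\mf{Z}_{k}^{\pm}(\mbf{v}^{1})$ is automatically quasi-smooth as a derived fiber product of smooth varieties over a smooth base.
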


Finally, we consider the morphism $\alpha_{k}^{\pm}(\mbf{v}^{1})$. We notice that the preimage of the diagonal $\mf{P}(\mbf{v}^{0},\mbf{v}^{1})$ (resp. $\mf{P}(\mbf{v}^{1},\mbf{v}^{2})$) under the morphism $\alpha_{k}^{-}(\mbf{v}^{1})$ (resp. $\alpha_{k}^{+}(\mbf{v}^{1})$) is $\mf{J}_{k}(\mbf{v}^{1})$, which is a regular divisor of $\mf{Y}_{k}(\mbf{v}^{1})$. Hence morphisms $\alpha_{k}^{\pm}(\mbf{v}^{1})$ factors through morphisms

$$\overline{\alpha_{k}^{-}(\mbf{v}^{1})}:\mf{Y}_{k}(\mbf{v}^{1})\to Bl_{\Delta_{p(\mbf{v}^{0},\mbf{v}^{1})}}, \quad\alpha_{k}^{+}(\mbf{v}^{1}):\overline{\alpha_{k}^{+}(\mbf{v}^{1})}: \mf{Y}_{k}(\mbf{v}^{1})\to Bl_{\Delta_{q_{k}(\mbf{v}^{2})}}.$$

\begin{theorem}
  \label{0213711}
    Both $\overline{\alpha_{k}^{\pm}(\mbf{v}^{1})}$ are isomorphisms. Moreover, if $dim(\mf{P}(\mbf{v}^{0},\mbf{v}^{1}))>dim(\mf{M}(\mbf{v}^{0},\mbf{w}))$ (resp. $dim(\mf{P}(\mbf{v}^{1},\mbf{v}^{2}))>dim(\mf{M}(\mbf{v}^{2},\mbf{w}))$), then $\mf{Z}_{k}^{-}(\mbf{v}^{1})$ (resp. $\mf{Z}_{k}^{+}(\mbf{v}^{1})$) is a canonical singularity. 
  \end{theorem}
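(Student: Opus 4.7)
The plan is to treat the two assertions separately. For the isomorphism of $\overline{\alpha^{\pm}_{k}(\mbf{v}^{1})}$, the maps exist by the universal property of the classical blow-up since the preimage of the diagonal in $\mf{Y}_{k}(\mbf{v}^{1})$ is the Cartier divisor $\mf{J}_{k}(\mbf{v}^{1})$ (\cref{021147}). By \cref{012531} they are proper birational morphisms between smooth varieties of the same dimension $\dim \mf{M}(\mbf{v}^{1},\mbf{w}) + 2g_{k}$, restricting to isomorphisms away from $\mf{J}_{k}(\mbf{v}^{1})$. By \cref{020658} the classical blow-up $Bl_{\Delta_{p(\mbf{v}^{0},\mbf{v}^{1})}}$ coincides with Hekking's derived blow-up $\mb{B}l_{\mb{R}\Delta_{p(\mbf{v}^{0},\mbf{v}^{1})}}$, whose exceptional divisor is $\mb{P}_{\mf{P}(\mbf{v}^{0},\mbf{v}^{1})}(C_{\mb{R}\Delta_{p(\mbf{v}^{0},\mbf{v}^{1})}}) = \mb{P}_{\mf{P}(\mbf{v}^{0},\mbf{v}^{1})}(L_{p(\mbf{v}^{0},\mbf{v}^{1})})$; by \cref{newthm} this is exactly $\mf{J}_{k}(\mbf{v}^{1})$, and the identification agrees with the restriction of $\overline{\alpha^{-}}$ by uniqueness in the universal property. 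Hence $\overline{\alpha^{-}}$ is a proper bijection between smooth varieties of the same dimension, and Zariski's main theorem yields the isomorphism; the argument for $\overline{\alpha^{+}}$ is symmetric.

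For the canonical singularity assertion, I would apply the discrepancy formula \cref{disc} to the quasi-smooth closed embedding $f := \mb{R}\Delta_{p(\mbf{v}^{0},\mbf{v}^{1})} : \mf{P}(\mbf{v}^{0},\mbf{v}^{1}) \hookrightarrow \mb{R}\mf{Z}_{k}^{-}(\mbf{v}^{1})$: its hypotheses hold since $\mb{B}l_{f} \cong \mf{Y}_{k}(\mbf{v}^{1})$ is smooth (\cref{020658}) and, under the dimension assumption, the open complement $\mf{Z}_{k}^{-}(\mbf{v}^{1}) \setminus \mf{P}(\mbf{v}^{0},\mbf{v}^{1})$ is nonempty. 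Setting $d_{P} := \dim \mf{P}(\mbf{v}^{0},\mbf{v}^{1})$ and $d_{M} := \dim \mf{M}(\mbf{v}^{0},\mbf{w})$, the relative virtual dimension is $r = d_{M} - d_{P} < 0$, and \cref{disc} yields
\begin{equation*}
K_{\mf{Y}_{k}(\mbf{v}^{1})} \cong \mbf{L}pr_{f}^{*} K_{\mb{R}\mf{Z}_{k}^{-}(\mbf{v}^{1})} \otimes \mc{O}_{\mb{B}l_{f}}(d_{M} - d_{P} + 1),
\end{equation*}
equivalently $K_{\mf{Y}_{k}(\mbf{v}^{1})} = pr_{f}^{*} K_{\mb{R}\mf{Z}_{k}^{-}(\mbf{v}^{1})} + (d_{P} - d_{M} - 1) E$, where $E$ is the exceptional divisor with $\mc{O}(E) = \mc{O}_{\mb{B}l_{f}}(-1)$. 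Since $d_{P} > d_{M}$ forces $d_{P} - d_{M} - 1 \geq 0$, the resolution $\mf{Y}_{k}(\mbf{v}^{1}) \to \mf{Z}_{k}^{-}(\mbf{v}^{1})$ has nonnegative discrepancy, so $\mf{Z}_{k}^{-}(\mbf{v}^{1})$ is canonical, provided $K_{\mb{R}\mf{Z}_{k}^{-}(\mbf{v}^{1})}$ corresponds to the classical dualizing sheaf. This descent is justified by the equality $\dim \mf{Z}_{k}^{-}(\mbf{v}^{1}) = 2 d_{P} - d_{M} = vdim(\mb{R}\mf{Z}_{k}^{-}(\mbf{v}^{1}))$ (via \cref{020527} and \cref{012531}), so $\mf{Z}_{k}^{-}(\mbf{v}^{1})$ attains its expected virtual dimension and is a local complete intersection, hence Gorenstein. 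The argument for $\mf{Z}_{k}^{+}(\mbf{v}^{1})$ is identical with $f$ replaced by $\mb{R}\Delta_{q(\mbf{v}^{1},\mbf{v}^{2})}$.

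The main obstacle I anticipate is the exceptional divisor matching in the first step: one must verify that the isomorphism $\mf{J}_{k}(\mbf{v}^{1}) \cong \mb{P}_{\mf{P}(\mbf{v}^{0},\mbf{v}^{1})}(L_{p(\mbf{v}^{0},\mbf{v}^{1})})$ supplied by \cref{newthm} is genuinely the exceptional divisor identification induced by $\overline{\alpha^{-}}$, which requires a compatibility check of the tautological line bundles, not merely an abstract isomorphism of the same scheme. A secondary subtlety is making the descent of the derived canonical class $K_{\mb{R}\mf{Z}_{k}^{-}(\mbf{v}^{1})}$ to the classical dualizing sheaf of $\mf{Z}_{k}^{-}(\mbf{v}^{1})$ fully rigorous via the expected-dimension/lci argument sketched above.
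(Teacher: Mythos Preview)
Your proposal is essentially correct, and for the isomorphism assertion it follows the paper's own line: the paper also argues that $\overline{\alpha_{k}^{\pm}}$ restrict to isomorphisms on both strata $\mf{J}_{k}(\mbf{v}^{1})$ and its open complement (invoking \cref{020658} and \cref{newthm}), and then concludes. Your formulation via Zariski's main theorem (proper bijection between smooth, hence normal, varieties) is in fact a bit more explicit than the paper's one-line ``\'etale, birational, dominant, hence isomorphism''. Your worry about the exceptional-divisor matching is legitimate but not a gap: the identification $\mf{J}_{k}(\mbf{v}^{1})\cong \mb{P}_{\mf{P}(\mbf{v}^{0},\mbf{v}^{1})}(L_{p(\mbf{v}^{0},\mbf{v}^{1})})$ in \cref{newthm} is over $\mf{P}(\mbf{v}^{0},\mbf{v}^{1})$, and the universal property of the blow-up forces $\overline{\alpha^{-}}$ to send $\mf{J}_{k}(\mbf{v}^{1})$ to the exceptional divisor compatibly with that projection; since both are smooth of the same dimension this suffices.

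For the canonical singularity assertion you take a genuinely different route from the paper. The paper simply cites the gluing theorem \cref{020656}(3), whose proof in the appendix goes through the vanishing theorem \cref{020655}: from $\mbf{R}pr_{f*}\mc{O}_{\mb{B}l_{f}}\cong \mc{O}_{Y}$ one gets that $\mf{Z}_{k}^{-}(\mbf{v}^{1})$ has rational singularities, and since it is lci (hence Gorenstein) by the expected-dimension count, rational implies canonical. You instead invoke the discrepancy formula \cref{lem:1.5} directly and read off the discrepancy $d_{P}-d_{M}-1\geq 0$ along the unique exceptional divisor $\mf{J}_{k}(\mbf{v}^{1})$. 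Both routes need the same lci/Gorenstein input, which you correctly supply via $\dim \mf{Z}_{k}^{-}(\mbf{v}^{1})=2d_{P}-d_{M}=vdim(\mb{R}\mf{Z}_{k}^{-}(\mbf{v}^{1}))$. Your approach has the advantage of giving the exact discrepancy (showing in particular that the resolution is crepant precisely when $d_{P}=d_{M}+1$); the paper's route yields the a priori stronger conclusion that the singularities are rational, which happens to be equivalent to canonical here because of Gorensteinness.
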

    \begin{proof}
      By \cref{020658}, $\overline{\alpha_{k}^{\pm}(\mbf{v}^{1})}$ are both isomorphism when restricting to $\mf{J}_{k}(\mbf{v}^{1})$ and $\mf{Y}_{k}(\mbf{v}^{1})-\mf{J}_{k}(\mbf{v}^{1})$. Hence $\overline{\alpha_{k}^{\pm}(\mbf{v}^{1})}$ are both etale, birational, dominant, and hence isomorphisms.
      The singularity property follows from the gluing theorem \cref{020656}.
  \end{proof}

\subsection{Projectivization and nested quiver varieties}
The following lemma is a derived enhancement of \cref{0212310}:
\begin{lemma}
  \label{020147}
  We have
  $$ \mb{P}_{\mf{M}(\mbf{v}^{1},\mbf{w})\times \mbf{L}_{k}}(\mc{I}_{k}^{\vee}(\mbf{v}^{1})) \cong \mf{P}(\mbf{v}^{1},\mbf{v}^{2})\cong \mb{P}_{\mf{M}(\mbf{v}^{2},\mbf{w})\times \mbf{L}_{k}}(\mc{I}_{k}(\mbf{v}^{2}))$$
   Moreover, under the above isomorphism, we have
  $$\mc{L}\cong \mc{O}_{\mb{P}_{\mf{M}(\mbf{v}^{1},\mbf{w})\times \mbf{L}_{k}}(\mc{I}_{k}^{\vee}(\mbf{v}^{1}))}(-1)\cong \mc{O}_{\mb{P}_{\mf{M}(\mbf{v}^{2},\mbf{w})\times \mbf{L}_{k}}(\mc{I}_{k}(\mbf{v}^{2}))}(1).$$
\end{lemma}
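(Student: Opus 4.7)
The plan is to upgrade the classical identification \cref{0212310} to the derived setting by leveraging Jiang's universal property together with the smoothness already established for $\mf{P}(\mbf{v}^{1},\mbf{v}^{2})$. The key general principle is that for a perfect complex $\mc{F}$ of Tor-amplitude $[-1,0]$ on a smooth base $X$, Jiang's derived projectivization $\mb{P}_X(\mc{F})$ is quasi-smooth, its classical truncation is the ordinary Grothendieck projectivization $P_X(H^0(\mc{F}))$, and the canonical closed immersion $P_X(H^0(\mc{F})) \hookrightarrow \mb{P}_X(\mc{F})$ is an isomorphism precisely when the classical truncation is smooth of dimension equal to the virtual dimension.

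For the first isomorphism I would take $X = \mf{M}(\mbf{v}^{1},\mbf{w})\times\mbf{L}_{k}$ and view $\mc{I}_{k}^{\vee}(\mbf{v}^{1})$ as the two-term complex $[V_{k}^{\vee}\xrightarrow{u_{k}^{\vee}} U_{k}^{\vee}]$ of Tor-amplitude $[-1,0]$, so that $H^{0}(\mc{I}_{k}^{\vee}(\mbf{v}^{1})) = \mathrm{coker}(u_{k}^{\vee})$. By the classical statement \cref{0212310}, $P_{X}(\mathrm{coker}(u_{k}^{\vee}))\cong \mf{P}(\mbf{v}^{1},\mbf{v}^{2})$, and Jiang's universal property then promotes this to a closed embedding $\mf{P}(\mbf{v}^{1},\mbf{v}^{2})\hookrightarrow \mb{P}_{X}(\mc{I}_{k}^{\vee}(\mbf{v}^{1}))$. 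Since $X$ is smooth, $\mb{P}_{X}(\mc{I}_{k}^{\vee}(\mbf{v}^{1}))$ is quasi-smooth of virtual dimension $\dim(X) + \mathrm{rank}(\mc{I}_{k}^{\vee}(\mbf{v}^{1})) - 1$; combining the rank computation coming from \cref{eq:2.1} with the dimension formula \cref{020527} for $\mf{P}(\mbf{v}^{1},\mbf{v}^{2})$, the virtual dimension matches the actual (classical) dimension of the smooth scheme $\mf{P}(\mbf{v}^{1},\mbf{v}^{2})$ supplied by \cref{nest}. Hence the closed embedding is an isomorphism, and the line bundle identification $\mc{L}\cong \mc{O}_{\mb{P}_{X}(\mc{I}_{k}^{\vee}(\mbf{v}^{1}))}(-1)$ is inherited from its classical counterpart.

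The second isomorphism is handled by the same recipe applied to $[U_{k}^{2}\xrightarrow{u_{k}^{2}} V_{k}^{2}]$ over $\mf{M}(\mbf{v}^{2},\mbf{w})\times\mbf{L}_{k}$: the classical truncation of $\mb{P}_{\mf{M}(\mbf{v}^{2},\mbf{w})\times\mbf{L}_{k}}(\mc{I}_{k}(\mbf{v}^{2}))$ is $P_{\mf{M}(\mbf{v}^{2},\mbf{w})\times\mbf{L}_{k}}(\mathrm{coker}(u_{k}^{2}))$, which coincides with $\mf{P}(\mbf{v}^{1},\mbf{v}^{2})$ again by \cref{0212310}, and the same virtual/actual dimension comparison promotes this classical embedding to an isomorphism. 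The tautological line bundle appears as $\mc{O}(+1)$ rather than $\mc{O}(-1)$ on this side since the projectivization uses $\mc{I}_{k}$ directly, not its dual, which is precisely what \cref{0212310} records.

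The main obstacle is packaging Jiang's universal property cleanly for two-term complexes of locally free sheaves with nonvanishing $H^{-1}$, namely verifying that (i) the classical truncation of $\mb{P}_{X}(\mc{F})$ really is Grothendieck's $P_{X}(H^{0}(\mc{F}))$ via the expected closed embedding, and (ii) the line bundle $\mc{O}(-1)$ is compatible under this embedding. Once these input facts from Jiang's theory are accepted as black boxes, the only remaining arithmetic is the rank computation for $\mc{I}_{k}(\mbf{v})$ and its comparison with \cref{020527}, which is routine.
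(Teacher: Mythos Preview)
Your proposal is correct and follows essentially the same route as the paper: invoke \cref{0212310} to identify the classical truncation of each derived projectivization with $\mf{P}(\mbf{v}^{1},\mbf{v}^{2})$, then verify that the virtual dimension of the (quasi-smooth) derived projectivization equals $\dim\mf{P}(\mbf{v}^{1},\mbf{v}^{2})$ via the rank computation for $\mc{I}_{k}$ and \cref{020527}, so that smoothness of $\mf{P}(\mbf{v}^{1},\mbf{v}^{2})$ forces the derived and classical projectivizations to coincide. The ``black boxes'' you flag are exactly \cref{0122111} and its corollary in the appendix, which the paper also uses.
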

\begin{proof}
  By \cref{0212310}, we only need to prove that
  $$vdim(\mb{P}_{\mf{M}(\mbf{v}^{1},\mbf{w})\times \mbf{L}_{k}}(\mc{I}_{k}^{\vee}(\mbf{v}^{1}))) \cong dim(\mf{P}(\mbf{v}^{1},\mbf{v}^{2}))\cong vdim(\mb{P}_{\mf{M}(\mbf{v}^{2},\mbf{w})\times \mbf{L}_{k}}(\mc{I}_{k}(\mbf{v}^{2})))$$
     We notice that
  \begin{align*}
    rank(\mc{I}_{k}(\mbf{v}^{1}))&=<\mbf{v}^{1},\delta_{k}>_{Q}-\mbf{w}\bullet \delta_{k} \\
                                 &= \frac{1}{2}(<\mbf{v}^{2},\mbf{v}^{2}>_{Q}-<\mbf{v}^{1},\mbf{v}^{1}>_{Q}-<\delta_{k},\delta_{k}>-2\mbf{w}\bullet \mbf{v}^{2}+2\mbf{w}\bullet \mbf{v}^{1})\\
                                 &= \frac{1}{2}(-dim(\mf{M}(\mbf{v}^{2},\mbf{w}))+dim(\mf{M}(\mbf{v}^{1},\mbf{w}))+2g_{k}-2)
  \end{align*}
  and
  \begin{align*}
    rank(\mc{I}_{k}(\mbf{v}^{2}))= <\mbf{v}^{2},\delta_{k}>_{Q} -\mbf{w}\bullet \delta_{k}= rank(\mc{I}_{k}(\mbf{v}^{1}))-2g_{k}+2.
  \end{align*}
  Hence we have
  \begin{align*}
    vdim(\mb{P}_{\mf{M}(\mbf{v}^{1},\mbf{w})\times \mbf{L}_{k}}(\mc{I}_{k}^{\vee}(\mbf{v}^{1})))=vdim(\mb{P}_{\mf{M}(\mbf{v}^{2},\mbf{w})\times \mbf{L}_{k}}(\mc{I}_{k}(\mbf{v}^{2})))= \\
    \frac{1}{2}(dim(\mf{M}(\mbf{v}^{2},\mbf{w}))+dim(\mf{M}(\mbf{v}^{1},\mbf{w}))+2g_{k}+2)=dim(\mf{P}(\mbf{v}^{1},\mbf{v}^{2})).
  \end{align*}
  by \cref{020527}. Thus they are isomorphic as smooth schemes. 
\end{proof}
\begin{lemma}
  \label{020543}
  Let $X$ and $Y$ be two smooth schemes, and $Z$ be a smooth Lagrangian of $X\times Y$. Then we have $L_{Z/X}\cong T_{Z/Y}[1]$.
\end{lemma}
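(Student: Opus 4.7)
The plan is to apply the transitivity triangle for the composition $Z\xrightarrow{i} X\times Y \xrightarrow{\pi_{X}}X$ and then use the symplectic pairing on $X\times Y$ to rewrite the conormal complex in terms of the tangent sheaf of $Z$. First I would invoke transitivity: since $\pi_{X}$ is smooth with relative cotangent $\pi_{Y}^{*}\Omega_{Y}$ and $i$ is a closed embedding of smooth schemes, the triangle simplifies to
\begin{equation*}
q^{*}\Omega_{Y} \to L_{Z/X} \to N^{*}_{Z/(X\times Y)}[1] \xrightarrow{+1}.
\end{equation*}

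Next I would exploit the Lagrangian property. Denoting by $\omega$ the symplectic form on $X\times Y$ (which in the intended application to Nakajima quiver varieties decomposes as $\pi_{X}^{*}\omega_{X}-\pi_{Y}^{*}\omega_{Y}$ for the canonical symplectic forms on the factors), the musical isomorphism $\omega^{\flat}\colon T(X\times Y)|_{Z}\xrightarrow{\sim}T^{*}(X\times Y)|_{Z}$ sends a Lagrangian subbundle to its annihilator; hence it restricts to an isomorphism $TZ\cong N^{*}_{Z/(X\times Y)}$. Rotating the triangle to $N^{*}_{Z/(X\times Y)}\to q^{*}\Omega_{Y}\to L_{Z/X}$, its connecting map is the composition $N^{*}_{Z/(X\times Y)}\hookrightarrow p^{*}\Omega_{X}\oplus q^{*}\Omega_{Y}\twoheadrightarrow q^{*}\Omega_{Y}$. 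For $v\in TZ$ written as $(dp(v),dq(v))$, its image under $\omega^{\flat}$ is $(\omega_{X}^{\flat}dp(v),-\omega_{Y}^{\flat}dq(v))$, so after projection it becomes $-\omega_{Y}^{\flat}\circ dq$. Identifying $q^{*}\Omega_{Y}\cong q^{*}TY$ via $\omega_{Y}^{\flat}$, the connecting map matches (up to sign) the differential $dq\colon TZ\to q^{*}TY$.

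Therefore $L_{Z/X}\cong \mathrm{cone}(TZ\xrightarrow{dq} q^{*}TY)$. Comparing with the transitivity triangle for $\pi_{Y}$, the complex $L_{Z/Y}$ is $[q^{*}\Omega_{Y}\to \Omega_{Z}]$ in degrees $[-1,0]$, so $T_{Z/Y}[1]=[TZ\to q^{*}TY]$ in the same degrees, which is precisely the cone above. This concludes the proof.

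The main obstacle is confirming that the connecting morphism in the transitivity triangle really matches $dq$ under the symplectic identifications: a careful diagram chase through the cone presentation of $i^{*}\Omega_{X\times Y}\to \Omega_{Z}$ together with the $\omega^{\flat}$ isomorphism, ensuring the signs and the direct-sum decomposition $i^{*}\Omega_{X\times Y}=p^{*}\Omega_{X}\oplus q^{*}\Omega_{Y}$ are tracked correctly.
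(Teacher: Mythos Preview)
Your proof is correct and is essentially the same argument as the paper's, only unpacked in more detail. The paper compresses everything into the single short exact sequence
\[
0 \to TZ \to q^{*}TY \oplus p^{*}T^{*}X \to T^{*}Z \to 0,
\]
obtained from the Lagrangian condition after identifying $p^{*}TX\cong p^{*}T^{*}X$ via $\omega_{X}^{\flat}$; this sequence immediately gives a quasi-isomorphism between the two-term complexes $\{TZ\to q^{*}TY\}$ and $\{p^{*}T^{*}X\to T^{*}Z\}$, which are $T_{Z/Y}[1]$ and $L_{Z/X}$ respectively. Your transitivity triangle together with the identification $TZ\cong N^{*}_{Z/(X\times Y)}$ is exactly what produces and explains that sequence, so the content is identical.
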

\begin{proof}
  It follows from the short exact sequence
  $$0\to TZ \to TY\oplus T^{*}X \to T^{*}Z\to 0.$$
\end{proof}
Finally we choose a dimension vectors  $\mbf{v}_{0}+\delta_{k}=\mbf{v}^{1}=\mbf{v}^{2}-\delta_{k}$. We apply the homological projective duality theorem \cref{0205410} to $\mf{J}_{k}(\mbf{v}^{1})$:
\begin{lemma}
  \label{0206412}
  We have
  \begin{align*}
    \mf{J}_{k}(\mbf{v}^{1})&\cong \mb{P}_{\mf{P}(\mbf{v}^{0},\mbf{v}^{1})}(T_{q\times t(\mbf{v}^{0},\mbf{v}^{1})}[1]) \cong \mb{P}_{\mf{P}(\mbf{v}^{1},\mbf{v}^{2})}(T_{p\times t(\mbf{v}^{1},\mbf{v}^{2})}[1]) \\
    & \cong \mb{P}_{\mf{P}(\mbf{v}^{0},\mbf{v}^{1})}(L_{p(\mbf{v}^{0},\mbf{v}^{1})})\cong  \mb{P}_{\mf{P}(\mbf{v}^{1},\mbf{v}^{2})}(L_{q(\mbf{v}^{1},\mbf{v}^{2})}).
  \end{align*}
  as smooth schemes.
\end{lemma}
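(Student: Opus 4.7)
The plan is to realize $\mf{J}_{k}(\mbf{v}^{1})$ as a derived fiber product of two derived projectivizations over the common base $S:=\mf{M}(\mbf{v}^{1},\mbf{w})\times \mbf{L}_{k}$, invoke Jiang's homological projective duality (\cref{0205410}) to rewrite this fiber product as a derived projectivization of the shifted relative tangent complex, and then convert each tangent complex to a cotangent complex via the Lagrangian lemma \cref{020543}. Smoothness follows from \cref{020531}.

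First I would set up the common-base picture: by \cref{020147}, the projection $q\times t(\mbf{v}^{0},\mbf{v}^{1}):\mf{P}(\mbf{v}^{0},\mbf{v}^{1})\to S$ realizes $\mf{P}(\mbf{v}^{0},\mbf{v}^{1})$ as the derived projectivization $\mb{P}_{S}(\mc{I}_{k}(\mbf{v}^{1}))$, and the projection $p\times t(\mbf{v}^{1},\mbf{v}^{2}):\mf{P}(\mbf{v}^{1},\mbf{v}^{2})\to S$ realizes $\mf{P}(\mbf{v}^{1},\mbf{v}^{2})$ as $\mb{P}_{S}(\mc{I}_{k}^{\vee}(\mbf{v}^{1}))$. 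Since by definition $\mf{J}_{k}(\mbf{v}^{1})$ is the fiber product of these two over $S$, it coincides with $\mb{P}_{S}(\mc{I}_{k}(\mbf{v}^{1}))\times^{\mb{L}}_{S}\mb{P}_{S}(\mc{I}_{k}^{\vee}(\mbf{v}^{1}))$, with the identification of the derived and classical fiber products justified a posteriori by the dimension count in \cref{020531}.

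Second, I would apply Jiang's HPD theorem (\cref{0205410}) to the two-term perfect complex $F=\mc{I}_{k}(\mbf{v}^{1})$ and its dual: the theorem identifies $\mb{P}_{S}(F)\times_{S}^{\mb{L}}\mb{P}_{S}(F^{\vee})$ canonically with $\mb{P}_{\mb{P}_{S}(F)}(T_{\pi}[1])$, where $\pi:\mb{P}_{S}(F)\to S$ is the structure projection. Unpacking $\pi=q\times t(\mbf{v}^{0},\mbf{v}^{1})$ gives the first claimed isomorphism $\mf{J}_{k}(\mbf{v}^{1})\cong \mb{P}_{\mf{P}(\mbf{v}^{0},\mbf{v}^{1})}(T_{q\times t(\mbf{v}^{0},\mbf{v}^{1})}[1])$, and running the argument with the roles of the two factors swapped (noting $(F^{\vee})^{\vee}=F$) yields the companion isomorphism $\mf{J}_{k}(\mbf{v}^{1})\cong \mb{P}_{\mf{P}(\mbf{v}^{1},\mbf{v}^{2})}(T_{p\times t(\mbf{v}^{1},\mbf{v}^{2})}[1])$.

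For the final two isomorphisms I invoke \cref{020543}. By \cref{020527}, $\mf{P}(\mbf{v}^{0},\mbf{v}^{1})$ sits as a smooth Lagrangian in $\mf{M}(\mbf{v}^{0},\mbf{w})\times \mf{M}(\mbf{v}^{1},\mbf{w})\times \mbf{L}_{k}$, so taking $X=\mf{M}(\mbf{v}^{0},\mbf{w})$ and $Y=\mf{M}(\mbf{v}^{1},\mbf{w})\times \mbf{L}_{k}$, \cref{020543} gives $L_{p(\mbf{v}^{0},\mbf{v}^{1})}\cong T_{q\times t(\mbf{v}^{0},\mbf{v}^{1})}[1]$; thus the projectivizations of the two sides agree. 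The Lagrangian embedding $\mf{P}(\mbf{v}^{1},\mbf{v}^{2})\hookrightarrow \mf{M}(\mbf{v}^{1},\mbf{w})\times \mf{M}(\mbf{v}^{2},\mbf{w})\times \mbf{L}_{k}$ yields the symmetric identity $L_{q(\mbf{v}^{1},\mbf{v}^{2})}\cong T_{p\times t(\mbf{v}^{1},\mbf{v}^{2})}[1]$, completing the chain.

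The main obstacle is the precise application of Jiang's HPD in the middle step: one must confirm that the virtual rank of $T_{\pi}[1]$ is such that the derived projectivization on the right is smooth of the correct dimension so as to coincide with $\mf{J}_{k}(\mbf{v}^{1})$, i.e.\ no higher-cohomological derived structure survives. This is exactly where the rank computation from the proof of \cref{020147} combined with the dimension formula $\dim\mf{J}_{k}(\mbf{v}^{1})=\dim\mf{M}(\mbf{v}^{1},\mbf{w})+2g_{k}-1$ of \cref{020531} is needed; once these numerics are checked the derived fiber product agrees with its classical shadow and all four identifications are smooth.
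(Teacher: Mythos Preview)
Your proposal is correct and follows essentially the same route as the paper: apply \cref{0205410} (HPD) over the base $S=\mf{M}(\mbf{v}^{1},\mbf{w})\times\mbf{L}_{k}$ to identify $\mf{J}_{k}(\mbf{v}^{1})$ with the two derived projectivizations of shifted relative tangent complexes, convert to cotangent complexes using the Lagrangian lemma \cref{020543} together with \cref{020527}, and then compare virtual dimensions with \cref{020531} to conclude smoothness.

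One phrasing should be sharpened. As stated in the paper, \cref{0205410} does not assert that $\mb{P}_{S}(F)\times_{S}^{\mb{L}}\mb{P}_{S}(F^{\vee})$ is isomorphic to $\mb{P}_{\mb{P}_{S}(F)}(T_{\pi}[1])$; it only says that the \emph{underlying scheme} of the latter is the classical fiber product $P_{S}(\pi_{0}F)\times_{S}P_{S}(\pi_{0}F^{\vee})$. So the correct logical flow (which is what the paper does, and what your final paragraph in effect recovers) is: $\mf{J}_{k}(\mbf{v}^{1})$ is by definition this classical fiber product, hence is $\pi_{0}$ of the derived projectivization; then the dimension match with \cref{020531} forces the derived projectivization to be smooth, hence equal to its classical truncation. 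Your detour through the derived fiber product is unnecessary and slightly misstates what \cref{0205410} provides, but the argument lands in the same place.
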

\begin{proof}
  By \cref{020527}, $\mf{P}(\mbf{v}^{1},\mbf{v}^{2})$ is a Lagragian of $\mf{M}(\mbf{v}^{1},\mbf{w})\times \mf{M}(\mbf{v}^{2},\mbf{w})\times \mbf{L}_{k}$. By \cref{020543}, we have
  $$T_{q\times t(\mbf{v}^{1},\mbf{v}^{2})}[1]\cong L_{p(\mbf{v}^{1},\mbf{v}^{2})},\quad T_{p\times t(\mbf{v}^{0},\mbf{v}^{1})}[1]\cong L_{q(\mbf{v}^{0},\mbf{v}^{1})}.$$
  By \cref{0205410}, $\mf{J}_{k}(\mbf{v}^{1})$ is the underlying scheme of
  $$ \mb{P}_{\mf{P}(\mbf{v}^{1},\mbf{v}^{2})}(T_{q\times t(\mbf{v}^{1},\mbf{v}^{2})}[1]) \cong \mb{P}_{\mf{P}(\mbf{v}^{0},\mbf{v}^{1})}(T_{p\times t(\mbf{v}^{0},\mbf{v}^{1})}[1]). $$
  Hence to prove the isomorphism, we only need to show that
  $$dim(\mf{J}_{k}(\mbf{v}^{1}))=vdim(\mb{P}_{\mf{P}(\mbf{v}^{1},\mbf{v}^{2})}(T_{q\times t(\mbf{v}^{1},\mbf{v}^{2})}[1])),$$
  which follows from \cref{020531} and  \cref{020527}.
\end{proof}

  \appendix
  \section{Smoothness, Quasi-smoothness, and Virtual Dimension}
 By Lemma 2.1.2 of \cite{Arinkin2015} or \cite{lurie}, A smooth derived scheme is classical, and a smooth classical scheme is smooth as a derived scheme.

By Corollary 2.1.11 of \cite{Arinkin2015}, a morphism $f:Z_{1}\to Z_{2}$ of derived schemes is quasi-smooth if and only if, Zariski-locally on the source, it can be induced in a diagram
  \begin{equation}
    \label{020644}
    \begin{tikzcd}
      Z_{1}\ar{r}{f'} \ar{d} & Z_{2}\times \mb{A}^{n}\ar{r}{pr} \ar{d} & Z_{2} \\
      pt \ar{r}{0} & \mb{A}^{m}.
    \end{tikzcd}
  \end{equation}
  in which the square is Cartesian and $f'$ is a closed embedding. Moreover, if $f$ is a closed embedding, Zariski-locally on the source it can be induced in a diagram \cref{020644} such that $n=0$. As a corollary, a quasi-smooth scheme $X$ is classical if and only if $vdim(X)=dim(\pi_{0}(X))$. In this situation, $X$ is a local complete intersection scheme. A quasi-smooth scheme $X$ is smooth if and only if $vdim(X)=dim(\pi_{0}(X))$ and $\pi_{0}(X)$ is smooth.

The following argument is standard (see Claim 6.5 of \cite{neguct2018hecke}, Exercise 12.2.C of \cite{vakil2017rising} and e.t.c.)
\begin{lemma}
    Consider a Cohen-Macaulay (resp. complete intersection, regular) local ring $R$ and a collection of elements
$f_{1},\cdots, f_{n} \in R$ such that the quotient ring $R/(f_{1},\cdots, f_{n})$ has codimension $n$ in $R$, then for any $1\leq i\leq n$, the quotient $R/(f_{1},\cdots,f_{i})$ is also Cohen-Macaulay (resp. complete intersection, regular) of codimension $i$ in $R$.
\end{lemma}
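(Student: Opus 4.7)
The plan is to reduce the entire lemma to the single assertion that $f_{1},\ldots,f_{n}$ is a regular sequence in $R$, after which each of the three cases follows from a standard "descent along a regular sequence" preservation result.

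The first step is to extract the regularity of the sequence from the codimension hypothesis. By Krull's principal ideal theorem, $\mathrm{ht}(f_{1},\ldots,f_{i})\le i$ for every $i$, and in particular $\mathrm{ht}(f_{1},\ldots,f_{n})\le n$; the codimension assumption therefore forces equality when $i=n$. In a Cohen--Macaulay local ring an ideal generated by $n$ elements whose height equals $n$ is automatically generated by a regular sequence — this is the classical equality $\mathrm{grade}(I,R)=\mathrm{ht}(I)$ for ideals in CM rings, or equivalently unmixedness. Since regular and complete-intersection local rings are in particular CM, the sequence $f_{1},\ldots,f_{n}$ is regular in all three cases. Each initial segment $f_{1},\ldots,f_{i}$ is then automatically a regular sequence as well, giving $\mathrm{ht}(f_{1},\ldots,f_{i})\ge i$, which combined with Krull yields codimension exactly $i$.

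The second step is to verify that the ambient property descends along the regular prefix $f_{1},\ldots,f_{i}$. For the CM case this is classical: a quotient of a CM local ring by a regular sequence is CM of the expected dimension. For the CI case one uses a presentation $\widehat{R}\cong S/J$ with $S$ a regular local ring and $J$ generated by a regular sequence; lifting the $f_{j}$'s to $S$, the combined sequence remains regular in $S$ (since height is additive for the prefix), so the completion of $R/(f_{1},\ldots,f_{i})$ is again a quotient of a regular ring by a regular sequence, hence CI. For the regular case, the hypothesis that $R/(f_{1},\ldots,f_{n})$ is a regular local quotient of codimension $n$ forces the images $\bar f_{1},\ldots,\bar f_{n}\in\mathfrak{m}/\mathfrak{m}^{2}$ to be linearly independent; hence each prefix $f_{1},\ldots,f_{i}$ extends to a regular system of parameters, and $R/(f_{1},\ldots,f_{i})$ is regular of dimension $\dim(R)-i$.

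The main subtlety lies in the regular case, where one must interpret "regular of codimension $n$" as including regularity of the quotient $R/(f_{1},\ldots,f_{n})$ itself: a regular sequence in a regular local ring produces only a complete-intersection quotient in general, not a regular one, so one genuinely uses the strengthened hypothesis to lift to a regular system of parameters. By contrast, the CM and CI cases are formal consequences of the preservation lemmas quoted above, once the sequence is known to be regular.
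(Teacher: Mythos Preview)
Your argument is correct. The paper does not actually supply a proof of this lemma at all: it simply labels the statement ``standard'' and points to Claim~6.5 of Negu\c{t} and Exercise~12.2.C of Vakil. So there is nothing to compare against; you have filled in exactly the details the paper chose to omit, and your reduction to the regularity of the sequence via the Cohen--Macaulay grade--height equality is the natural route.

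One remark worth keeping in your write-up: you correctly flag that the regular case, read literally, requires the additional hypothesis that the full quotient $R/(f_{1},\ldots,f_{n})$ is itself regular (otherwise $R=k[[x,y]]$, $f_{1}=x^{2}$ is a counterexample). The paper's phrasing is ambiguous on this point, but the way the lemma is immediately applied in Corollary~A.2 --- where both $X$ and $Y\setminus X$ are assumed smooth --- confirms that your reading is the intended one. Your observation that this forces linear independence of the $\bar f_{j}$ in $\mathfrak{m}/\mathfrak{m}^{2}$, and hence extendability to a regular system of parameters, is exactly the right mechanism.
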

\begin{corollary}
    \label{gluingscheme}
    Let $f:X\to Y$ be a quasi-smooth closed embeddings of quasi-smooth derived schemes. If $Y-X$ is classical (resp. smooth) and not empty, and $X$ is classical (resp. smooth), then $Y$ is also classical (resp. smooth).
\end{corollary}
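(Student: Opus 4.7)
The plan is to reduce to a local claim at closed points of $\pi_{0}(Y)$ and invoke the preceding commutative-algebra lemma. Both ``classical'' and ``smooth'' are local properties on $\pi_{0}(Y)$, so it suffices to verify the conclusion at every closed point $p \in \pi_{0}(Y)$. On the open subset $\pi_{0}(Y) - \pi_{0}(X)$ there is nothing to check, since $Y - X$ is classical (resp.\ smooth) by hypothesis. The remaining and only interesting case is $p \in \pi_{0}(X)$.

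\textbf{Step 1 (Simultaneous local model).} Since $Y$ is quasi-smooth, the local structure theorem recalled above (Corollary 2.1.11 of \cite{Arinkin2015}) lets us present, in a Zariski neighborhood of $p$, $Y \cong \mb{R}\mc{Z}(g_{1},\ldots,g_{m})$ inside a smooth ambient scheme $Z$, with $m = \dim Z - vdim(Y)$. The composite $X \hookrightarrow Y \hookrightarrow Z$ is a quasi-smooth closed embedding, so applying the same theorem to this composite and lifting the defining section of $X \hookrightarrow Y$ through the surjection $\mc{O}_{Z} \twoheadrightarrow \mc{O}_{Y}$, we may enlarge the defining sequence and obtain $X \cong \mb{R}\mc{Z}(g_{1},\ldots,g_{m},h_{1},\ldots,h_{k})$ in the same $Z$, where $k = vdim(Y) - vdim(X)$.

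\textbf{Step 2 (Translate the hypotheses on $X$).} The assumption that $X$ is classical at $p$ is equivalent, via the quasi-smooth criterion $vdim(X) = \dim \pi_{0}(X)_{p}$, to the statement that $\mc{O}_{Z,p}/(g_{1},\ldots,g_{m},h_{1},\ldots,h_{k})$ has codimension $m+k$ in the regular local ring $\mc{O}_{Z,p}$; equivalently (since $\mc{O}_{Z,p}$ is regular, hence Cohen-Macaulay and a complete intersection), the sequence $(g_{1},\ldots,g_{m},h_{1},\ldots,h_{k})$ is regular at $p$. When $X$ is moreover smooth at $p$, the quotient $\mc{O}_{Z,p}/(g_{1},\ldots,g_{m},h_{1},\ldots,h_{k})$ is itself a regular local ring.

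\textbf{Step 3 (Apply the cited lemma).} Applying the commutative-algebra lemma to $R = \mc{O}_{Z,p}$, $n = m+k$, and $i = m$, we conclude that $\mc{O}_{Z,p}/(g_{1},\ldots,g_{m})$ is Cohen-Macaulay of codimension $m$ (resp.\ regular of codimension $m$). Unpacking, this means $\dim \pi_{0}(Y)_{p} = \dim Z - m = vdim(Y)$, so $Y$ is classical at $p$; in the smooth case, $\mc{O}_{\pi_{0}(Y),p}$ is a regular local ring, so $\pi_{0}(Y)$ is smooth at $p$ and hence $Y$ itself is smooth at $p$.

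\textbf{Expected obstacle.} The proof is mostly bookkeeping; the only nontrivial point is Step 1, namely producing a single smooth chart $Z$ in which both $Y$ and $X$ appear as derived zero loci with compatible generators, so that the commutative-algebra lemma can actually be applied to one regular sequence. This is accomplished by first choosing $Z$ for $Y$, using Arinkin-Gaitsgory to factor $X \hookrightarrow Z$ through a derived zero locus, and then invoking Nakayama to extend the relations defining $Y \subset Z$ to relations defining $X \subset Z$. The non-emptiness of $Y - X$ plays no role in the local computation; it only rules out the degenerate case $X = Y$, where the conclusion is immediate.
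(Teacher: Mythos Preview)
Your approach is precisely what the paper intends: produce a simultaneous local chart via the quasi-smooth local structure theorem and invoke the preceding commutative-algebra lemma. The paper gives no proof beyond placing the corollary immediately after the lemma, so your write-up is the missing argument, and the classical case is handled correctly.

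There is a gap in Step~3 for the \emph{smooth} case. The lemma, read literally, asserts that if $R$ is regular and $R/(f_{1},\ldots,f_{n})$ has codimension $n$, then each intermediate quotient $R/(f_{1},\ldots,f_{i})$ is regular. This is false: take $R=k[[x,y]]$, $f_{1}=x^{2}$, $f_{2}=y$; the full quotient $k[x]/(x^{2})$ has codimension $2$, yet $R/(x^{2})$ is not regular. So invoking the lemma with $R=\mc{O}_{Z,p}$ does not, by itself, yield regularity of $\mc{O}_{Z,p}/(g_{1},\ldots,g_{m})$. What does work is the extra input you recorded in Step~2 but never deployed: since $(g_{1},\ldots,g_{m},h_{1},\ldots,h_{k})$ is a regular sequence (from the classical case), the images $(h_{1},\ldots,h_{k})$ form a regular sequence in $S:=\mc{O}_{Z,p}/(g_{1},\ldots,g_{m})$, and $S/(h_{1},\ldots,h_{k})=\mc{O}_{X,p}$ is regular by hypothesis; now use the elementary fact that a Noetherian local ring whose quotient by a nonzerodivisor is regular is itself regular (lift a minimal generating set of the maximal ideal and adjoin the element), and induct on $k$. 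This is arguably a defect in the paper's statement of the lemma rather than in your strategy, but as written your Step~3 does not close the smooth case.
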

\section{Jiang's Derived Projectivization}
\label{sec:Jiang}
In this section, we review Jiang's derived projectivization theory \cite{jiang2022derived}. Let $\psi:\mc{F}\to \mc{G}$ be a morphism of locally free sheaves over a derived scheme $X$. Then over $\mb{P}_{X}(\mc{G})$, we consider the morphism $taut_{\psi}$ as the composition of morphisms:
$$\pi^{*}\mc{F}\otimes \mc{O}_{\mb{P}_{X}(\mc{G})}(-1)\to \pi^{*}\mc{G}\otimes \mc{O}_{\mb{P}_{X}(\mc{G})}(-1)\to \mc{O}.$$
Let $\mb{R}\mc{Z}(taut_{\psi}^{\vee})$ be the derived zero locus of $taut_{\psi}^{\vee}$.
Given a derived scheme $X$, we denote a two-term complex as a morphism of locally free sheaves $\psi:\mc{F}\xrightarrow{\psi}\mc{G}$ and also regard it as an object in $Perf(X)$.

\begin{theorem}[\cite{jiang2022derived}]
  \label{0122111}
  Regarding $\psi$ as an object of $Perf(X)$ and moreover $QCoh(X)$, we have $\mb{P}_{X}(\psi)\cong \mb{R}\mc{Z}(taut_{\psi}^{\vee})$, and $\pi_{0}(\mb{P}_{X}(\psi))\cong P_{\pi_{0}(X)}(\pi_{0}(\psi))$. Moreover, we have $\mc{O}_{\mb{P}_{X}(\psi)}(1)\cong \mc{O}_{\mb{P}_{X}(\mc{G})}(1)|_{\mb{P}_{X}(\psi)}$.
\end{theorem}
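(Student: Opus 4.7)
The plan is to realize Jiang's derived projectivization $\mb{P}_{X}(\psi)$ as an explicit moduli functor and then identify this functor with the one represented by the derived zero locus $\mb{R}\mc{Z}(taut_{\psi}^{\vee})$. Recall that for $F\in QCoh(X)$, $\mb{P}_{X}(F)$ co-represents the $\infty$-functor on derived $X$-schemes $f:T\to X$ whose value is the space of pairs $(L,q)$ with $L$ a line bundle on $T$ and $q:f^{*}F\to L$ a surjection; the universal object is the quotient $\pi^{*}F\to \mc{O}_{\mb{P}_{X}(F)}(1)$.

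First I would specialize to $F=\mc{G}$ to obtain the universal surjection $\pi^{*}\mc{G}\to \mc{O}_{\mb{P}_{X}(\mc{G})}(1)$. Twisting and precomposing with $\pi^{*}\psi$ produces the section $taut_{\psi}^{\vee}:\mc{O}\to \pi^{*}\mc{F}^{\vee}\otimes \mc{O}_{\mb{P}_{X}(\mc{G})}(1)$; by the universal property of the derived zero locus, a factorization of a $T$-point of $\mb{P}_{X}(\mc{G})$ through $\mb{R}\mc{Z}(taut_{\psi}^{\vee})$ is precisely the datum of a null-homotopy of the pulled-back section, equivalently of the composition $f^{*}\mc{F}\to f^{*}\mc{G}\to L$.

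Second, I would use the cofiber sequence $\mc{F}\to \mc{G}\to \psi$ in $QCoh(X)$ to translate: a map $f^{*}\psi\to L$ is equivalent data to a map $f^{*}\mc{G}\to L$ together with a null-homotopy of its restriction along $f^{*}\mc{F}\to f^{*}\mc{G}$. Since $\mc{G}\to \psi$ is surjective on $\pi_{0}$, surjectivity of $f^{*}\psi\to L$ reduces to surjectivity of $f^{*}\mc{G}\to L$. Combining these observations, $\mb{R}\mc{Z}(taut_{\psi}^{\vee})$ and $\mb{P}_{X}(\psi)$ co-represent the same $\infty$-functor on derived $X$-schemes, yielding the isomorphism; the tautological line bundle identification $\mc{O}_{\mb{P}_{X}(\psi)}(1)\cong \mc{O}_{\mb{P}_{X}(\mc{G})}(1)|_{\mb{P}_{X}(\psi)}$ is automatic because the universal line bundle on $\mb{R}\mc{Z}(taut_{\psi}^{\vee})$ is, by construction, the restriction of the one on $\mb{P}_{X}(\mc{G})$.

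For the classical truncation, $\pi_{0}$ commutes with derived zero loci of sections of locally free sheaves, so $\pi_{0}(\mb{R}\mc{Z}(taut_{\psi}^{\vee}))$ is the scheme-theoretic vanishing of $\pi_{0}(taut_{\psi}^{\vee})$ inside $P_{\pi_{0}(X)}(\pi_{0}(\mc{G}))$, and this recovers $P_{\pi_{0}(X)}(\pi_{0}(\psi))$ by \cref{def:4.11}, since the classical cokernel of $\pi_{0}(\mc{F})\to \pi_{0}(\mc{G})$ is $\pi_{0}(\psi)$. The main obstacle is making precise the $\infty$-categorical manipulations in the second step: one must exhibit a natural equivalence between the space of null-homotopies coming from $taut_{\psi}^{\vee}$ and the space of extensions along the cofiber $\mc{G}\to \psi$, which requires the full derived universal property of $\mb{R}\mc{Z}$ rather than its classical shadow, and careful handling of what a ``surjection of a two-term complex onto a line bundle'' means in the $\infty$-categorical sense.
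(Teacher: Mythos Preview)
The paper does not prove this theorem at all: it is stated with the citation \texttt{[jiang2022derived]} and no argument is given. The result is quoted from Jiang's work on derived projectivization, so there is no ``paper's own proof'' to compare against.

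Your outline is a reasonable sketch of why the statement is true and roughly follows the moduli-theoretic approach that Jiang himself uses. The key idea---matching $T$-points of $\mb{P}_{X}(\psi)$ with $T$-points of $\mb{P}_{X}(\mc{G})$ equipped with a null-homotopy of the composite $f^{*}\mc{F}\to f^{*}\mc{G}\to L$, via the cofiber sequence $\mc{F}\to\mc{G}\to\psi$---is correct in spirit. You are also right to flag that the delicate part is making the $\infty$-categorical equivalence of mapping spaces precise (in particular, what ``surjection onto a line bundle'' means for a complex in the derived sense, and why the space of factorizations through the cofiber matches the space of null-homotopies functorially). For the purposes of this paper, however, none of that is required: the author is simply invoking Jiang's theorem as a black box.
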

In the last part of this section, we assume $X$ to be quasi-smooth and $\psi\in Perf(X)$ has Tor-amplitude $[0,1]$.
\begin{corollary}
The derived scheme $\mb{P}_{X}(\psi)$ is also quasi-smooth, and its virtual dimension is $vdim(X)+rank(\psi)-1$. Moreover, $\mb{P}_{X}(\psi)$ is smooth if and only if
  \begin{enumerate}
  \item $X$ is smooth;
  \item $P_{X}(\psi)$ is smooth and its dimension is  $vdim(X)+rank(\psi)-1$.
  \end{enumerate}
\end{corollary}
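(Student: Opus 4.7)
The plan is to reduce everything to \cref{0122111}, which identifies $\mb{P}_{X}(\psi)$ with the derived zero locus $\mb{R}\mc{Z}(taut_{\psi}^{\vee})$ inside the projective bundle $\mb{P}_{X}(\mc{G})$, and then apply two standard facts recorded earlier: composition of quasi-smooth morphisms is quasi-smooth, and a quasi-smooth derived scheme is classical (resp.\ smooth) iff its virtual dimension coincides with the dimension of its underlying scheme (resp.\ and the underlying scheme is smooth).

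First I would handle quasi-smoothness and the virtual-dimension formula. The structural projection $\mb{P}_{X}(\mc{G})\to X$ is smooth of relative dimension $rank(\mc{G})-1$, so $\mb{P}_{X}(\mc{G})$ is quasi-smooth with $vdim=vdim(X)+rank(\mc{G})-1$. The section $taut_{\psi}^{\vee}$ lives in the locally free sheaf $\pi^{*}\mc{F}^{\vee}\otimes \mc{O}_{\mb{P}_{X}(\mc{G})}(1)$ of rank $rank(\mc{F})$, hence the closed embedding $\mb{R}\mc{Z}(taut_{\psi}^{\vee})\hookrightarrow \mb{P}_{X}(\mc{G})$ is quasi-smooth of relative virtual dimension $-rank(\mc{F})$. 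Composition then yields $\mb{P}_{X}(\psi)$ quasi-smooth with
\[
vdim(\mb{P}_{X}(\psi)) = vdim(X)+rank(\mc{G})-rank(\mc{F})-1 = vdim(X)+rank(\psi)-1,
\]
since $rank(\psi)=rank(\mc{G})-rank(\mc{F})$ for a two-term complex.

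For the ``if'' direction of the smoothness equivalence, assume $X$ smooth and $P_{X}(\psi)$ smooth of dimension $vdim(X)+rank(\psi)-1$. Then $X=\pi_{0}(X)$, and \cref{0122111} gives $\pi_{0}(\mb{P}_{X}(\psi))\cong P_{X}(\psi)$, whose dimension agrees with $vdim(\mb{P}_{X}(\psi))$. By the classicality criterion recorded after \cref{020644}, $\mb{P}_{X}(\psi)$ is classical; as its underlying scheme $P_{X}(\psi)$ is smooth, $\mb{P}_{X}(\psi)$ is smooth. For the ``only if'' direction, smoothness of $\mb{P}_{X}(\psi)$ forces it to be classical with smooth underlying scheme; by \cref{0122111} this underlying scheme is $P_{\pi_{0}(X)}(\pi_{0}(\psi))=P_{X}(\psi)$, and its dimension equals $vdim(\mb{P}_{X}(\psi))=vdim(X)+rank(\psi)-1$, giving condition~(2).

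The main obstacle is extracting smoothness of $X$ itself from smoothness of $\mb{P}_{X}(\psi)$. The plan is to exploit the smooth structure projection $\pi:\mb{P}_{X}(\mc{G})\to X$: the cotangent complex of the composition $\mb{P}_{X}(\psi)\hookrightarrow \mb{P}_{X}(\mc{G})\xrightarrow{\pi} X$ fits in a fiber sequence whose relative pieces ($L_{\pi}$ and the conormal complex of the derived zero locus) are locally free in the correct degrees. When $L_{\mb{P}_{X}(\psi)}$ is locally free in degree $0$, one reads off that $\pi^{*}L_{X}$ is also locally free in degree $0$ along $\mb{P}_{X}(\psi)$. Since $\pi$ is smooth, hence flat with geometrically connected fibers, this descends to give $L_{X}$ locally free in degree $0$ on the image of the projection $\mb{P}_{X}(\psi)\to X$, yielding condition~(1) under the implicit assumption that this projection is surjective (which is the interesting case and may be assumed after shrinking $X$ to the relevant open locus).
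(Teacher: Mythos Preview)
The paper states this corollary without proof, so there is no argument to compare against directly; your write-up is essentially the intended justification and is correct for quasi-smoothness, the virtual-dimension count, the ``if'' direction, and condition~(2) in the ``only if'' direction.

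The gap you flag in deducing smoothness of $X$ is genuine and is in fact a defect of the \emph{statement} rather than of your argument: if the projection $\mb{P}_{X}(\psi)\to X$ is not surjective (e.g.\ if $\pi_{0}(\psi)$ is surjective over some locus, or in the extreme case $\mb{P}_{X}(\psi)=\emptyset$), then smoothness of $\mb{P}_{X}(\psi)$ places no constraint on $X$ over the missed points. Your cotangent-complex argument is the right one and works cleanly once one assumes the projection hits every point of $X$; the corollary should be read with that implicit nondegeneracy hypothesis, which holds in all the applications later in the paper.
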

Over $\mb{P}_{X}(\psi)$, there is a perfect complex $\bar{\psi}:=\{pr_{\psi}^{*}\psi\to \mc{O}_{\mb{P}_{X}(\psi)}(1)\}[-1]$, which also has Tor-amplitude $[0,1]$.
\begin{theorem}[Theorem 4.27 of \cite{jiang2022derived}, Euler fiber sequences]
  The relative cotangent complex $L_{\mb{P}_{X}(\psi)/X}\cong \mc{O}_{\mb{P}_{X}(\psi)}(-1)\otimes \bar{\psi}$. 
\end{theorem}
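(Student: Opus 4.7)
The plan is to realize $\mb{P}_{X}(\psi)$ as a derived zero locus inside the smooth projectivization $\mb{P}_{X}(\mc{G})$, compute $L_{\mb{P}_{X}(\psi)/X}$ via the cotangent fiber sequence attached to the factorization $\mb{P}_{X}(\psi) \xrightarrow{\iota} \mb{P}_{X}(\mc{G}) \xrightarrow{\pi} X$, and then rewrite the answer in terms of $\bar{\psi}$ using the octahedral axiom. By \cref{0122111}, $\iota$ is the quasi-smooth closed embedding $\mb{R}\mc{Z}(taut_{\psi}^{\vee})$ and $\mc{O}_{\mb{P}_{X}(\psi)}(1)\cong \iota^{*}\mc{O}_{\mb{P}_{X}(\mc{G})}(1)$; note also $pr_{\psi}=\pi\circ\iota$.

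First I would compute the two endpoints of the factorization. The projection $\pi$ is smooth (as the classical projectivization of a locally free sheaf, stable under derived base change to $X$), so the classical Euler fiber sequence
\[
L_{\mb{P}_{X}(\mc{G})/X} \to \pi^{*}\mc{G}\otimes \mc{O}_{\mb{P}_{X}(\mc{G})}(-1) \xrightarrow{\mathrm{ev}} \mc{O}_{\mb{P}_{X}(\mc{G})}
\]
holds verbatim, where $\mathrm{ev}$ is the dual of the universal surjection $\pi^{*}\mc{G}\to \mc{O}(1)$ twisted by $\mc{O}(-1)$. For $\iota$, I invoke the standard derived-zero-locus formula: since $\mb{R}\mc{Z}(s)$ of a section $s:\mc{O}\to V$ of a locally free sheaf sits in a derived pullback square against the zero section of $\mathrm{Tot}(V)$, whose conormal is $V^{\vee}$, derived base change yields $L_{\mb{R}\mc{Z}(s)/X}\cong V^{\vee}[1]|_{\mb{R}\mc{Z}(s)}$. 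Applying this to $V=\pi^{*}\mc{F}^{\vee}\otimes \mc{O}_{\mb{P}_{X}(\mc{G})}(1)$ gives
\[
L_{\iota} \cong \iota^{*}\bigl(\pi^{*}\mc{F}\otimes \mc{O}_{\mb{P}_{X}(\mc{G})}(-1)\bigr)[1].
\]

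Splicing these into the cotangent fiber sequence $\iota^{*}L_{\mb{P}_{X}(\mc{G})/X}\to L_{\mb{P}_{X}(\psi)/X}\to L_{\iota}$ produces the fiber sequence
\[
\mathrm{fib}\bigl(pr_{\psi}^{*}\mc{G}\otimes\mc{O}(-1)\xrightarrow{\mathrm{ev}}\mc{O}\bigr) \to L_{\mb{P}_{X}(\psi)/X} \to pr_{\psi}^{*}\mc{F}\otimes \mc{O}(-1)[1].
\]
The remaining step is to apply the octahedral axiom to the two-step composition $pr_{\psi}^{*}\mc{F}\otimes\mc{O}(-1)\xrightarrow{\psi\otimes\mc{O}(-1)}pr_{\psi}^{*}\mc{G}\otimes\mc{O}(-1)\xrightarrow{\mathrm{ev}}\mc{O}$, whose total composite is the restriction of $taut_{\psi}$ to $\mb{P}_{X}(\psi)$ and thus comes equipped with a canonical nullhomotopy built into the definition of $\mb{R}\mc{Z}(taut_{\psi}^{\vee})$. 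The octahedron produced by this nullhomotopy matches the previous fiber sequence and identifies $L_{\mb{P}_{X}(\psi)/X}$ with the total fiber $\mathrm{fib}(\mc{O}(-1)\otimes pr_{\psi}^{*}\psi\to \mc{O})$. Since $\mc{O}(-1)$ is a line bundle, tensoring commutes with fibers, and
\[
\mathrm{fib}\bigl(\mc{O}(-1)\otimes pr_{\psi}^{*}\psi\to\mc{O}\bigr) \cong \mc{O}(-1)\otimes \mathrm{fib}\bigl(pr_{\psi}^{*}\psi\to\mc{O}(1)\bigr) = \mc{O}(-1)\otimes \bar{\psi},
\]
yielding the theorem.

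The main obstacle will be the octahedral bookkeeping in the last step: one must verify that the connecting map produced by the cotangent fiber sequence genuinely coincides, up to signs and shifts, with the connecting map implicit in the nullhomotopy of $taut_{\psi}|_{\mb{P}_{X}(\psi)}$. This is a naturality check for the cotangent complex under the derived base change defining $\mb{R}\mc{Z}(taut_{\psi}^{\vee})$, combined with the compatibility of the Euler sequence with the tautological quotient $\pi^{*}\mc{G}\to \mc{O}(1)$. As a consistency check, both sides have Tor-amplitude $[0,1]$ with virtual rank equal to $vdim(\mb{P}_{X}(\psi))-vdim(X)=rank(\psi)-1$, matching the preceding corollary.
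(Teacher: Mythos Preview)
The paper does not supply its own proof of this statement: it appears in the review appendix on Jiang's derived projectivization (\cref{sec:Jiang}) purely as a citation of Theorem~4.27 of \cite{jiang2022derived}, with no argument given. There is therefore nothing in the paper to compare your proposal against.

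That said, your outline is the natural proof and is correct. Factoring $pr_{\psi}$ as $\mb{P}_{X}(\psi)\xrightarrow{\iota}\mb{P}_{X}(\mc{G})\xrightarrow{\pi}X$ via \cref{0122111}, combining the classical Euler sequence for $L_{\pi}$ with the conormal formula $L_{\iota}\cong \iota^{*}(\pi^{*}\mc{F}\otimes\mc{O}(-1))[1]$ for a derived zero locus, and then assembling the answer with the cotangent fiber sequence is exactly the expected route. The caution you raise about the octahedral step is legitimate but routine: the identification of the boundary map $L_{\iota}[-1]\to\iota^{*}L_{\pi}$ with $pr_{\psi}^{*}\psi\otimes\mc{O}(-1)$ (landing in $\mathrm{fib}(\mathrm{ev})$ via the tautological nullhomotopy) follows from functoriality of the cotangent complex applied to the derived Cartesian square defining $\mb{R}\mc{Z}(taut_{\psi}^{\vee})$, together with the standard compatibility between the Euler sequence and the tautological quotient. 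Your rank check against the preceding corollary is a good sanity confirmation.
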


\begin{theorem}[Lemma 7.3 of \cite{jiang2022derived}, homological projective duality]
  \label{0205410}
  We have a canonical isomorphism
  $$\mb{P}_{\mb{P}_{X}(\psi)}(T_{\mb{P}_{X}(\psi)/X}[1])\cong \mb{P}_{\mb{P}_{X}(\psi^{\vee})}(T_{\mb{P}_{X}(\psi^{\vee})/X}[1])$$
  such that their underlying scheme are both $P_{\pi_{0}(X)}(\pi_{0}(\psi))\times_{\pi_{0}(X)}P_{\pi_{0}(X)}(\pi_{0}(\psi^{\vee}))$. Moreover, we have
  \begin{align*}
    \mc{O}_{\mb{P}_{\mb{P}_{X}(\psi)}(T_{\mb{P}_{X}(\psi)/X})}(1)\cong \mc{O}_{\mb{P}_{\mb{P}_{X}(\psi^{\vee})}(T_{\mb{P}_{X}(\psi^{\vee})/X})}(1)
  \end{align*}
  and there restriction to $P_{\pi_{0}(X)}(\pi_{0}(\psi))\times_{\pi_{0}(X)}P_{\pi_{0}(X)}(\pi_{0}(\psi^{\vee}))$ are both
  $$\mc{O}_{P_{\pi_{0}(X)}(\pi_{0}(\psi))}(1)\otimes \mc{O}_{P_{\pi_{0}(X)}(\pi_{0}(\psi^{\vee}))}(1).$$
Moreover, both $\mb{P}_{\mb{P}_{X}(\psi)}(T_{\mb{P}_{X}(\psi)/X}[1])$ and $\mb{P}_{\mb{P}_{X}(\psi^{\vee})}(T_{\mb{P}_{X}(\psi^{\vee})/X}[1])$ are quasi-smooth and their virtual dimension is $vdim(X)-1$.
\end{theorem}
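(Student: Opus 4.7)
The plan is to exhibit both iterated derived projectivizations as a common derived zero locus inside $\mb{P}_X(\psi)\times_X \mb{P}_X(\psi^{\vee})$, using Jiang's Euler fibre sequence and the $\mb{R}\mc{Z}$-model \cref{0122111}, exploiting the canonical self-duality $\psi^{\vee\vee}\cong \psi$. I will first write $\psi\colon F\to G$ as a two-term complex of locally free sheaves and apply the Euler fibre sequence to obtain
$$T_{\mb{P}_X(\psi)/X}[1]\cong \mc{O}_{\mb{P}_X(\psi)}(1)\otimes \bar{\psi}^{\vee}[1].$$
Dualizing the defining triangle $\bar{\psi}\to pr_{\psi}^{*}\psi\to \mc{O}_{\mb{P}_X(\psi)}(1)$ produces a triangle $\mc{O}_{\mb{P}_X(\psi)}(-1)\to pr_{\psi}^{*}\psi^{\vee}\to \bar{\psi}^{\vee}$, exhibiting $T_{\mb{P}_X(\psi)/X}[1]$ as a two-term perfect complex whose ``top'' piece is controlled by $pr_{\psi}^{*}\psi^{\vee}$. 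An entirely analogous computation for $\psi^{\vee}$ yields a symmetric presentation.

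Next I will apply \cref{0122111}: $\mb{P}_{\mb{P}_X(\psi)}(T_{\mb{P}_X(\psi)/X}[1])$ is realized as the derived zero locus of a tautological cosection inside the derived projectivization of the top piece. After absorbing the line-bundle twist and the shift $[1]$ into the universal $\mc{O}(1)$, this ambient space is identified with the classical fibre product $\mb{P}_X(\psi)\times_X \mb{P}_X(\psi^{\vee})$. Running the same construction with the roles of $\psi$ and $\psi^{\vee}$ exchanged realizes $\mb{P}_{\mb{P}_X(\psi^{\vee})}(T_{\mb{P}_X(\psi^{\vee})/X}[1])$ as a derived zero locus inside the same ambient space. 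Both cosections are built from the canonical tautological pairing $pr^{*}\psi\otimes pr^{*}\psi^{\vee}\to \mc{O}$, and the symmetry $\psi\leftrightarrow\psi^{\vee}$ at the level of perfect complexes shows that the two cosections coincide; this gives the canonical isomorphism. Passing to $\pi_{0}$ using $\pi_{0}(\mb{P}_X(\phi))\cong P_X(\pi_{0}(\phi))$ identifies the underlying scheme as $P_X(\pi_{0}(\psi))\times_X P_X(\pi_{0}(\psi^{\vee}))$; and by the same universal property the line bundles $\mc{O}(1)$ on both sides restrict to $\mc{O}_{P_X(\pi_{0}(\psi))}(1)\boxtimes \mc{O}_{P_X(\pi_{0}(\psi^{\vee}))}(1)$.

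The virtual dimension is then immediate from $vdim(\mb{P}_Y(\phi))=vdim(Y)+rank(\phi)-1$ applied twice: since $rank(T_{\mb{P}_X(\psi)/X}[1])=1-rank(\psi)$, the result is $(vdim(X)+rank(\psi)-1)+(1-rank(\psi))-1=vdim(X)-1$, manifestly symmetric in $\psi$ and $\psi^{\vee}$. The hard part will be the comparison in the second paragraph — verifying that the two tautological cosections literally coincide at the derived, not merely classical, level. This requires the full fibre-sequence structure of $\bar{\psi}$ and $\bar{\psi}^{\vee}$ together with Jiang's precise compatibility between $\mb{P}$ and $\mb{R}\mc{Z}$; a purely classical argument would recover only the identification of underlying schemes, not the derived isomorphism nor the identification of $\mc{O}(1)$.
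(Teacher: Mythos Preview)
The paper does not prove this statement: it is quoted as Lemma 7.3 of Jiang \cite{jiang2022derived} and used as a black box, so there is no in-paper argument to compare against.

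Your outline is the expected one and matches the standard incidence-variety approach to such duality statements: both iterated projectivizations are realized as the derived zero locus inside $\mb{P}_X(\psi)\times_X^{\mb{L}}\mb{P}_X(\psi^{\vee})$ of the tautological section of $\mc{O}(1,1)$ obtained from the composite $\mc{O}(-1,0)\hookrightarrow pr^{*}\psi^{\vee}\twoheadrightarrow\mc{O}(0,1)$, and the self-duality of that section under $\psi\leftrightarrow\psi^{\vee}$ gives the canonical isomorphism. One wording issue: you call the ambient ``the classical fibre product $\mb{P}_X(\psi)\times_X \mb{P}_X(\psi^{\vee})$'', but it must be the \emph{derived} fibre product over $X$; this is what base-change for Jiang's $\mb{P}$ actually produces, and it is only after passing to $\pi_0$ that one lands in the classical fibre product of Grothendieck projectivizations. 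Since your last paragraph already flags the derived-versus-classical distinction as the crux, this is presumably a slip rather than a conceptual gap. The virtual-dimension count is correct.
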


\section{The Local Model of Derived Blow-ups}
In this section, we review Hekking's derived blow-up theory, focusing on the case when $\Phi:Y\to X$ is a closed embedding of quasi-smooth schemes. First, we explained the relationship between the classical blow-up and derived blow-up:
\begin{theorem}[Theorem 3.5.5 of \cite{Hekking_2022}]
  Given a closed morphism $\Phi:X\to Y$, $Bl_{\pi_{0}(\Phi)}$ is a schematic closure of $\pi_{0}(X-Y)$ in $\pi_{0}(\mb{B}l_{\Phi})$.
\end{theorem}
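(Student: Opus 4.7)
The plan is to compare the universal properties of the two blow-ups. The derived blow-up $\mb{B}l_{\Phi}$ represents the functor sending a derived scheme $T \to Y$ to virtual Cartier divisor structures on the derived pullback $T \times^{\mb{L}}_{Y} X$, while $Bl_{\pi_{0}(\Phi)}$ represents the classical analogue with genuine Cartier divisors. Since any classical Cartier divisor on a classical scheme is automatically a virtual Cartier divisor in Hekking's sense, the universal Cartier divisor on $Bl_{\pi_{0}(\Phi)}$ yields a canonical morphism $\iota : Bl_{\pi_{0}(\Phi)} \to \mb{B}l_{\Phi}$; as the source is classical, this factors through $\pi_{0}(\mb{B}l_{\Phi})$.

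Next, I would check that $\iota$ is a closed embedding. Over the open complement $U := \pi_{0}(Y) - \pi_{0}(X)$, both blow-ups restrict to $U$ itself, so $\iota$ is an isomorphism there. Passing to an affine Zariski open where the embedding $\Phi$ is cut out by a section of a locally free sheaf (using the local model \cref{020644}), I would construct both blow-ups explicitly via $\mathrm{Proj}$ of the associated Rees-type algebra and compare them, verifying that $\iota$ is affine-locally a closed embedding, hence globally so.

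Finally, I would identify the image of $\iota$ with the scheme-theoretic closure of $U$ inside $\pi_{0}(\mb{B}l_{\Phi})$. The image is a closed subscheme containing $U$, so it contains the closure; conversely, $Bl_{\pi_{0}(\Phi)}$ is by construction the scheme-theoretic image of $U$ under the affine-chart description of the classical blow-up, so the image of $\iota$ cannot strictly exceed the closure. Together these inclusions give the desired identification.

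The main obstacle is Step 1: verifying that a genuine Cartier divisor on a classical scheme really does supply all the cotangent-complex data in Hekking's definition of virtual Cartier divisor, and that the resulting functorial map is compatible with the \emph{derived} fiber product $T \times^{\mb{L}}_{Y} X$ rather than its classical truncation. Once this tautological compatibility is in hand, the comparison of universal properties propagates cleanly through Steps 2 and 3.
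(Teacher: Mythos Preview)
The paper does not prove this statement: it is quoted verbatim as Theorem~3.5.5 of Hekking's thesis and used as a black box, with no argument supplied. So there is no ``paper's own proof'' to compare against; the only content here is the citation.

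That said, a brief remark on your sketch. The overall strategy---produce a map from the classical blow-up to $\pi_{0}(\mb{B}l_{\Phi})$ via universal properties, check it is a closed embedding locally, then identify the image with the schematic closure of the complement---is the natural one and is essentially how Hekking proceeds. Your Step~3, however, has a soft spot: you assert that the classical blow-up $Bl_{\pi_{0}(\Phi)}$ is ``by construction the scheme-theoretic image of $U$,'' but this is not automatic for an arbitrary closed immersion of possibly non-reduced schemes. The classical Rees construction can in principle acquire components supported over the center, and ruling this out is exactly the content of the statement you are trying to prove. The honest route is the one you gesture at in Step~2: work in the local model, write both blow-ups as explicit $\mathrm{Proj}$'s, and exhibit the surjection of graded algebras whose kernel is the ideal of sections vanishing on $U$. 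That computation is where the argument actually lives; the universal-property framing is packaging.

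(Also note the typo in the paper's statement: it should read $\pi_{0}(Y-X)$, not $\pi_{0}(X-Y)$. You silently corrected this, which is right.)
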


\begin{corollary}
    \label{020651}
  If $\pi_{0}(Y)-\pi_{0}(X)$ is not empty and $\mb{B}l_{\Phi}$ is smooth, then $Bl_{\pi_{0}(\Phi)}\cong \mb{B}l_{\Phi}$ and they are both smooth schemes.
\end{corollary}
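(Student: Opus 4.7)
The plan is to combine the cited Theorem 3.5.5 of Hekking with two elementary observations: a smooth derived scheme is automatically classical, and an effective Cartier divisor in a smooth scheme cannot contain an irreducible component.

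First I would invoke the appendix remark that smoothness of $\mb{B}l_{\Phi}$ forces $\pi_{0}(\mb{B}l_{\Phi})\cong \mb{B}l_{\Phi}$, so in particular $\mb{B}l_{\Phi}$ is a smooth, reduced classical scheme. The Hekking derived blow-up theorem from \cref{intro3} then exhibits $\mb{P}_{X}(C_{\Phi})\subset \mb{B}l_{\Phi}$ as a (now genuine) effective Cartier divisor whose open complement is isomorphic to $Y-X$, giving an open immersion $Y-X\hookrightarrow \mb{B}l_{\Phi}$ with complement of codimension one.

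Next I would establish that $Y-X$ is schematically dense in $\mb{B}l_{\Phi}$. By smoothness, $\mb{B}l_{\Phi}$ is a finite disjoint union of smooth irreducible components, and a Cartier divisor in a smooth scheme is locally cut out by a non-zero-divisor, hence contains no component. Therefore the open $Y-X$ meets every irreducible component, provided it meets every connected component of $\mb{B}l_{\Phi}$; this is where the hypothesis $\pi_{0}(Y)-\pi_{0}(X)\neq \emptyset$ together with the properness and $Y$-surjectivity of $pr_{\Phi}:\mb{B}l_{\Phi}\to Y$ enters, since each connected component of $\mb{B}l_{\Phi}$ dominates a component of $Y$ that must meet the nonempty open $Y-X$.

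Having density, the cited Theorem 3.5.5 identifies $Bl_{\pi_{0}(\Phi)}$ with the schematic closure of $Y-X$ inside $\mb{B}l_{\Phi}$; since the ambient scheme is reduced and the open is dense, this closure is all of $\mb{B}l_{\Phi}$, yielding the desired isomorphism $Bl_{\pi_{0}(\Phi)}\cong \mb{B}l_{\Phi}$ with both sides smooth. The main subtlety I anticipate is the per-component bookkeeping: a priori $\mb{B}l_{\Phi}$ could carry components sitting entirely inside the exceptional divisor, and one must check that the combination ``Cartier divisor in a smooth scheme'' plus the nonemptiness hypothesis really excludes this on each connected component; I expect this to reduce, if necessary, to the local model for quasi-smooth closed embeddings discussed in the appendix.
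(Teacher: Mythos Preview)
The paper states this as a bare corollary with no proof at all; it is meant to follow immediately from the preceding theorem (Hekking's Theorem 3.5.5, that $Bl_{\pi_0(\Phi)}$ is the schematic closure of $\pi_0(Y-X)$ in $\pi_0(\mb{B}l_\Phi)$) together with the standard fact, recalled in the appendix, that a smooth derived scheme is classical. Your argument is exactly this implicit reasoning made explicit: smoothness forces $\mb{B}l_\Phi=\pi_0(\mb{B}l_\Phi)$ to be a reduced classical scheme, the exceptional locus is an effective Cartier divisor and hence cannot swallow an irreducible component, so the open $Y-X$ is schematically dense and its closure recovers the whole blow-up.

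The per-component subtlety you flag is genuine and the paper simply ignores it. Your proposed fix via properness and surjectivity of $pr_\Phi$ does not quite close the gap as written: knowing $Y-X\neq\emptyset$ globally does not force every connected component of $Y$ to meet $Y-X$, so a component of $\mb{B}l_\Phi$ lying over such a ``bad'' component of $Y$ is not ruled out by that argument alone. The cleaner route is the one you gesture at in your last sentence: on a smooth (hence locally integral) scheme, the section of $\mc{O}_{\mb{B}l_\Phi}(1)$ cutting the virtual exceptional divisor is either a non-zero-divisor (giving an honest Cartier divisor of codimension one) or identically zero on some component $C$, in which case the derived zero locus over $C$ acquires nontrivial $\pi_1$ and hence $\mb{P}_X(C_\Phi)$ would fail to be a classical closed subscheme of a classical scheme---contradicting the local-model description in the appendix once one knows $\mb{B}l_\Phi$ is smooth. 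In the paper's applications all the schemes involved are irreducible, so the issue never arises, which is presumably why the author did not spell this out.
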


Now we consider a local model: let $Z$ be a smooth variety, and we consider morphisms of locally free sheaves:
$$g:V\to \mc{O}_{X},\quad h:W\to \mc{O}_{X}, \quad \phi:W\to V$$
such that $g\circ \phi=h$. Let
$$X:=\mb{R}\mc{Z}(g^{\vee}),\quad Y:=\mb{R}\mc{Z}(h^{\vee})$$
be the derived zero locus of $g^{\vee}$ and $h^{\vee}$ respectively. Then $X$ is a closed derived subscheme of $Y$. We denote
$$\Phi:X\to Y,\quad \mf{g}:X\to Z,\quad \mf{h}:Y\to Z$$
as the respective closed embeddings.

Let $\bar{V}$ be the kernel of $taut_{V}:pr_{V}^{*}V\to \mc{O}_{\mb{P}_{Z}(V)}(1)$. Let $v$ be the rank of $V$. We recall Serre's theorem
\begin{theorem}[Serre]
  \label{thm:serre}
  We have
  \begin{equation*}
    \mbf{R}pr_{V*}\mc{O}_{\mb{P}_{Z}(V)}(l)\cong
    \begin{cases}
      Sym^{n}V & l\geq 0 \\
      0 & -v<l<0 \\
       det(V)^{-1}Sym^{-l-v}(V^{\vee})[-v+1]  & l\leq -v
    \end{cases}
  \end{equation*}
\end{theorem}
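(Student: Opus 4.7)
The plan is to establish the formula by reducing to a fiberwise computation and invoking relative Grothendieck duality for the negative-twist range; since the statement is the classical Serre theorem for a Grothendieck projective bundle, essentially no derived-algebraic-geometry input is needed beyond Jiang's comparison result.

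First, since $V$ is a locally free sheaf on the smooth scheme $Z$, it has Tor-amplitude $[0,0]$, so Jiang's Theorem \ref{0122111} identifies the derived projectivization $\mb{P}_{Z}(V)$ with the classical Grothendieck projectivization $P_{Z}(V)$, and $\mc{O}_{\mb{P}_{Z}(V)}(1)$ with the classical Serre twisting line bundle. Thus $pr_{V}$ is a smooth proper Zariski-locally trivial $\mb{P}^{v-1}$-bundle, and by flat base change the computation of $\mbf{R}pr_{V*}\mc{O}(l)$ reduces to checking the classical cohomology of $\mc{O}(l)$ on the fibers, up to twisting by sheaves pulled back from $Z$.

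Next, I would handle the non-negative range $l\geq 0$ directly: fiberwise $H^{0}(\mb{P}^{v-1},\mc{O}(l))$ is the space of degree-$l$ homogeneous forms and the higher cohomologies vanish, so the relative Euler sequence $0\to \Omega_{pr_{V}}(1)\to pr_{V}^{*}V\to \mc{O}(1)\to 0$ combined with the projection formula globalizes to $\mbf{R}pr_{V*}\mc{O}(l)\cong Sym^{l}V$ concentrated in degree zero. The middle range $-v<l<0$ is immediate from classical Serre vanishing $H^{i}(\mb{P}^{v-1},\mc{O}(l))=0$ applied fiberwise. For the negative range $l\leq -v$, I would invoke relative Grothendieck duality for the smooth proper morphism $pr_{V}$ of relative dimension $v-1$, namely
$$\mbf{R}pr_{V*}\,\mbf{R}\mathcal{H}om(F,\omega_{pr_{V}}[v-1]) \cong \mbf{R}\mathcal{H}om(\mbf{R}pr_{V*}F,\mc{O}_{Z}),$$
with relative dualizing sheaf $\omega_{pr_{V}}\cong pr_{V}^{*}\det(V)\otimes \mc{O}(-v)$ read off from the determinant of the Euler sequence. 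Taking $F=\mc{O}(-l-v)$ (which lies in the already-established non-negative range) and applying the projection formula produces the claimed $\det(V)^{-1}Sym^{-l-v}(V^{\vee})[-v+1]$.

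The main obstacle is essentially cosmetic: the sign and determinant bookkeeping needed to correctly identify $\omega_{pr_{V}}$ and to track the $\det(V)$ twist through Grothendieck duality, both of which depend on the convention (sub-line vs.\ quotient-line) used for $P_{Z}(V)$ in \cref{def:4.11}. Once $\omega_{pr_{V}}$ is pinned down via the Euler sequence, the rest is the classical projective-space computation and requires no new ingredients.
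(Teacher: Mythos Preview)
Your proposal is a correct and standard sketch of the classical argument. However, the paper does not prove this statement at all: it is stated as a classical result attributed to Serre (hence the label ``Theorem [Serre]'') and is simply quoted for use in the subsequent corollaries and lemmas. There is therefore nothing to compare your approach against; you have supplied a proof where the paper invokes the literature.
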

\begin{corollary}
  Given $0\leq l< v$ and $0\leq k< v$, we have
  \begin{equation}
    \label{e020661}
    \mbf{R}pr_{V}^{*}(\wedge^{k}\bar{V}\otimes \mc{O}_{\mb{P}_{Z}(V)}(-l))=\delta_{kl}\mc{O}_{Z}[-k].
  \end{equation}
\end{corollary}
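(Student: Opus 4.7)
The plan is to reduce the computation to Serre's theorem via a Koszul-type resolution of $\wedge^{k}\bar{V}$. From the tautological short exact sequence $0 \to \bar{V} \to pr_{V}^{*}V \to \mc{O}_{\mb{P}_{Z}(V)}(1) \to 0$ and the fact that $\mc{O}(1)$ is a line bundle, one has short exact sequences $0 \to \wedge^{k}\bar{V} \to \wedge^{k}V \to \wedge^{k-1}\bar{V}\otimes \mc{O}(1) \to 0$. Splicing these for $k, k-1, \ldots, 0$ yields the resolution
\[
0 \to \wedge^{k}\bar{V} \to \wedge^{k}V \to \wedge^{k-1}V\otimes\mc{O}(1) \to \cdots \to V\otimes\mc{O}(k-1) \to \mc{O}(k) \to 0,
\]
so $\wedge^{k}\bar{V}$ is quasi-isomorphic to the complex $K^{\bullet}:=[\wedge^{k}V \to \wedge^{k-1}V\otimes\mc{O}(1) \to \cdots \to \mc{O}(k)]$ placed in cohomological degrees $0, 1, \ldots, k$.

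Next I would tensor $K^{\bullet}$ with $\mc{O}(-l)$ and apply $\mbf{R}pr_{V*}$ termwise. Serre's theorem computes the $j$-th term as $\wedge^{k-j}V\otimes\mbf{R}pr_{V*}\mc{O}(j-l)$. Since $0 \le j \le k < v$ and $0 \le l < v$, one has $-v < j-l < v$, so the bottom branch of Serre never activates: the term equals $\wedge^{k-j}V\otimes Sym^{j-l}V$ (concentrated in degree zero) when $j \ge l$, and vanishes when $j < l$. In particular, if $l > k$ then every term dies and $\mbf{R}pr_{V*}(\wedge^{k}\bar{V}\otimes\mc{O}(-l)) = 0$, matching $\delta_{kl}=0$ in that range.

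When $l \le k$, what remains is the complex of locally free sheaves on $Z$
\[
\bigl[\wedge^{k-l}V \to \wedge^{k-l-1}V\otimes V \to \cdots \to Sym^{k-l}V\bigr]
\]
placed in degrees $l, l+1, \ldots, k$. I would identify this as the degree-$(k-l)$ strand of the classical Koszul complex $\wedge^{\bullet}V\otimes Sym^{\bullet}V$ resolving $\mc{O}_{Z}$ along the zero section of $V$; it is acyclic whenever $k-l > 0$. Hence the pushforward vanishes for $l < k$ as well, and the only surviving case is $l = k$, where the complex collapses to the single term $\mc{O}_{Z}$ in degree $k$, giving $\mc{O}_{Z}[-k]$ as claimed.

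The delicate point is checking that the induced differentials in the pushed-down complex are genuinely the standard Koszul differentials for $V$ over $Z$; this is a local assertion that I would verify on a trivializing open of $V \to Z$, where the Euler sequence splits into the standard relative one for $\mb{P}^{v-1}$ and Serre's explicit identification $pr_{V*}\mc{O}(m) \cong Sym^{m}V$ for $m \ge 0$ determines the maps unambiguously. Once this bookkeeping is in place, acyclicity of the Koszul complex together with Serre vanishing closes the argument.
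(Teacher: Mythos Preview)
Your argument is correct and is precisely the standard way to extract this corollary from Serre's theorem: resolve $\wedge^{k}\bar{V}$ via the tautological sequence, twist, push forward term by term, and recognize the Koszul strand. The paper states the result as an immediate corollary of Serre's theorem with no proof, so you have simply supplied the routine verification the paper leaves to the reader; your caution about identifying the induced differentials with the Koszul ones is appropriate but, as you note, this is a local check that goes through without difficulty.
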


Let $\bar{g}:\bar{V}\to \mc{O}_{\mb{P}_{Z}(V)}$ be the morphism of $pr_{V}^{*}g$ restricted to $\bar{V}$.

\begin{lemma}[Example 3.6 of \cite{hekking2022stabilizer}]
  The derived blow up $\mb{B}l_{\mf{g}}$ is the derived locus of $\bar{g}^{\vee}$ and
  $$\mc{O}_{\mb{B}l_{\mf{g}}}(-1)\cong\mc{O}_{\mb{P}_{Z}(V)}(-1)|_{\mb{B}l_{\mf{g}}}.$$
\end{lemma}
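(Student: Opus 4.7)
The strategy is to identify $\mb{R}\mc{Z}(\bar{g}^{\vee})$ with $\mb{B}l_{\mf{g}}$ via Hekking's universal property: a morphism $T\to \mb{B}l_{\mf{g}}$ over $Z$ is equivalent to a morphism $t:T\to Z$ together with a virtual Cartier divisor structure on the derived fiber product $T\times_{Z}X$. First I would unpack the functor of points of $\mb{R}\mc{Z}(\bar{g}^{\vee})$. By Jiang's theorem~\cref{0122111} applied to the two-term complex $\bar{g}:\bar{V}\to \mc{O}$, a $T$-valued point of $\mb{R}\mc{Z}(\bar{g}^{\vee})$ is the data of a morphism $t:T\to Z$, a line bundle quotient $q:t^{*}V\twoheadrightarrow \mc{L}$ (equivalently a $T$-point of $\mb{P}_{Z}(V)$), together with a nullhomotopy of the induced composition $\bar{g}^{\vee}|_{T}:\mc{O}_{T}\to (\ker q)^{\vee}$. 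Dualizing the short exact sequence $0\to \ker q\to t^{*}V\to \mc{L}\to 0$ translates this last datum into a factorization $t^{*}g^{\vee}\simeq (\mc{L}^{\vee}\hookrightarrow t^{*}V^{\vee})\circ s$ for some section $s:\mc{O}_{T}\to \mc{L}^{\vee}$.

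Given such data, I would compute the derived base change $T\times_{Z}X\simeq \mb{R}\mc{Z}(t^{*}g^{\vee})\simeq \mb{R}\mc{Z}(s)$, where the second equivalence follows because an injection of locally free sheaves $\mc{L}^{\vee}\hookrightarrow t^{*}V^{\vee}$ gives a quasi-isomorphism of the associated Koszul presentations after restriction to the zero locus (the extra factor $(\ker q)^{\vee}$ contributes a contractible piece, by virtue of the nullhomotopy). Since $\mc{L}^{\vee}$ has rank one, this exhibits $T\times_{Z}X$ as a virtual Cartier divisor on $T$, and Hekking's universal property then produces a canonical morphism $\mb{R}\mc{Z}(\bar{g}^{\vee})\to \mb{B}l_{\mf{g}}$ over $Z$.

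For the converse, a $T$-point of $\mb{B}l_{\mf{g}}$ provides a line bundle $\mc{L}^{\vee}$ on $T$ with a section $s:\mc{O}_{T}\to \mc{L}^{\vee}$ and an equivalence $\mb{R}\mc{Z}(s)\simeq T\times_{Z}X\simeq \mb{R}\mc{Z}(t^{*}g^{\vee})$. From this equivalence I would extract a canonical (up to contractible choice) injection $\mc{L}^{\vee}\hookrightarrow t^{*}V^{\vee}$ factoring $t^{*}g^{\vee}$ through $s$; dualizing yields the quotient $t^{*}V\twoheadrightarrow \mc{L}$ and completes the data of a $T$-point of $\mb{R}\mc{Z}(\bar{g}^{\vee})$. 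These two constructions are mutually inverse, giving $\mb{R}\mc{Z}(\bar{g}^{\vee})\cong \mb{B}l_{\mf{g}}$. Under this identification, the universal quotient line bundle on $\mb{P}_{Z}(V)$ restricts to the line bundle cutting out the virtual exceptional Cartier divisor on $\mb{B}l_{\mf{g}}$, which immediately yields $\mc{O}_{\mb{B}l_{\mf{g}}}(-1)\cong \mc{O}_{\mb{P}_{Z}(V)}(-1)|_{\mb{B}l_{\mf{g}}}$.

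The hard part is the reverse direction, namely extracting a canonical factorization of $t^{*}g^{\vee}$ through the line bundle provided by a given virtual Cartier divisor structure: this is a derived-Nakayama type argument asserting that the space of such factorizations is contractible once the equivalence $\mb{R}\mc{Z}(s)\simeq \mb{R}\mc{Z}(t^{*}g^{\vee})$ is fixed. A clean way to sidestep this abstract input is to work locally: after trivializing $V\cong \mc{O}_{Z}^{n}$ with $g=(g_{1},\ldots,g_{n})$, on the standard affine open $\{x_{n}\neq 0\}\subset \mb{P}_{Z}(V)$ one computes directly that $\mb{R}\mc{Z}(\bar{g}^{\vee})$ is the derived zero locus of $(g_{1}-y_{1}g_{n},\ldots,g_{n-1}-y_{n-1}g_{n})$, which matches Hekking's explicit local chart for the derived blow-up and hence glues to give the global isomorphism.
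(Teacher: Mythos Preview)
The paper does not actually prove this lemma: it is stated as a citation to Example~3.6 of Hekking's paper \cite{hekking2022stabilizer}, with no argument supplied. So there is nothing to compare your proposal against in the present paper.

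That said, your reconstruction is reasonable and tracks the intended content of the cited reference. The forward direction---showing that the data of a point of $\mb{R}\mc{Z}(\bar{g}^{\vee})$ packages into a virtual Cartier divisor on $T\times_{Z}^{\mb{L}}X$---is essentially correct, though one step deserves care: the claim that $\mb{R}\mc{Z}(t^{*}g^{\vee})\simeq \mb{R}\mc{Z}(s)$ requires more than a nullhomotopy of $\bar{g}^{\vee}|_{T}$; you need the factorization of $t^{*}g^{\vee}$ through $\mc{L}^{\vee}$ to induce an equivalence of Koszul algebras, and for this it matters that the quotient $t^{*}V\twoheadrightarrow \mc{L}$ is surjective (so that the complementary factor is genuinely contractible). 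You use this implicitly, but it would be worth stating.

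You correctly identify the reverse direction as the delicate point and wisely retreat to the local computation. The local chart calculation you sketch is exactly how Hekking verifies the identification in the cited example, and it is the cleanest way to close the argument. The abstract ``derived Nakayama'' route you gesture at is not obviously available without further input from the deformation-to-the-normal-bundle machinery underlying Hekking's definition, so the local check is not merely a shortcut but the substantive argument.
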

\begin{lemma}
  \label{020663}
   Given $-v<l\leq 0$, we have
  $$\mbf{R}pr_{\mf{g}*}\mc{O}_{\mb{B}l_{\mf{g}}}(l)\cong \mc{O}_{Z}.$$
\end{lemma}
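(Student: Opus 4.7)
The plan is to use the preceding lemma to realize $\mathbb{B}l_{\mathfrak{g}}$ as a quasi-smooth closed subscheme of $\mathbb{P}_{Z}(V)$, resolve its structure sheaf by a Koszul complex, and compute the pushforward termwise using the corollary of Serre's theorem in \cref{e020661}.

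More precisely, by the preceding lemma there is a closed embedding $\iota\colon \mathbb{B}l_{\mathfrak{g}}\hookrightarrow \mathbb{P}_{Z}(V)$ exhibiting $\mathbb{B}l_{\mathfrak{g}}$ as the derived zero locus of the section $\bar{g}^{\vee}\colon \mathcal{O}_{\mathbb{P}_{Z}(V)}\to \bar{V}^{\vee}$, and $\mathcal{O}_{\mathbb{B}l_{\mathfrak{g}}}(-1)\cong \iota^{*}\mathcal{O}_{\mathbb{P}_{Z}(V)}(-1)$. Hence $\mathcal{O}_{\mathbb{B}l_{\mathfrak{g}}}(l)\cong \iota^{*}\mathcal{O}_{\mathbb{P}_{Z}(V)}(l)$ for every integer $l$, and $pr_{\mathfrak{g}}=pr_{V}\circ \iota$, so projection formula reduces the statement to computing
\[
\mathbf{R}pr_{V*}\bigl(\iota_{*}\mathcal{O}_{\mathbb{B}l_{\mathfrak{g}}}\otimes \mathcal{O}_{\mathbb{P}_{Z}(V)}(l)\bigr).
\]
The derived zero locus construction supplies a canonical Koszul resolution
\[
\iota_{*}\mathcal{O}_{\mathbb{B}l_{\mathfrak{g}}}\simeq \bigl[\wedge^{v-1}\bar{V}\to \cdots \to \wedge^{2}\bar{V}\to \bar{V}\to \mathcal{O}_{\mathbb{P}_{Z}(V)}\bigr]
\]
with differentials given by contraction against $\bar{g}^{\vee}$, where $\wedge^{k}\bar{V}$ sits in cohomological degree $-k$. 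Note that $\mathrm{rk}(\bar{V})=v-1$, so the resolution stops at $k=v-1$.

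Next, write $l=-m$ with $0\leq m<v$. Applying $\mathbf{R}pr_{V*}$ term by term to the Koszul complex tensored with $\mathcal{O}_{\mathbb{P}_{Z}(V)}(-m)$, and invoking \cref{e020661}, gives
\[
\mathbf{R}pr_{V*}\bigl(\wedge^{k}\bar{V}\otimes \mathcal{O}_{\mathbb{P}_{Z}(V)}(-m)\bigr)=\delta_{km}\,\mathcal{O}_{Z}[-k]
\]
for all $0\leq k\leq v-1$. In the hypercohomology spectral sequence $E_{1}^{p,q}=\mathbf{R}^{q}pr_{V*}(K^{p})\Rightarrow \mathbf{R}^{p+q}pr_{V*}(K^{\bullet})$ associated to the Koszul filtration, only the single term with $p=-m$, $q=m$ survives; there are no non-trivial differentials, so the sequence degenerates at $E_{1}$.

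The surviving term $E_{1}^{-m,m}=\mathcal{O}_{Z}$ contributes in total degree $p+q=0$, so $\mathbf{R}pr_{\mathfrak{g}*}\mathcal{O}_{\mathbb{B}l_{\mathfrak{g}}}(l)\cong \mathcal{O}_{Z}$ concentrated in cohomological degree zero, as claimed. There is no real obstacle here: the only point that requires mild care is tracking cohomological shifts (the $[-k]$ from Serre cancels against the $-k$ Koszul placement) and observing that the vanishing portion $-v<l<0$ in Serre's theorem is exactly what kills every off-diagonal entry so that the spectral sequence collapses trivially.
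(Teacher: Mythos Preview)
Your proof is correct and follows exactly the same approach as the paper: resolve $\mathcal{O}_{\mathbb{B}l_{\mathfrak{g}}}$ by the Koszul complex of $\bar{g}$ and apply \cref{e020661} termwise. The paper states this in a single sentence, while you have spelled out the spectral sequence and degree bookkeeping in detail.
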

\begin{proof}
  It follows from the fact the structure sheaf $\mc{O}_{\mb{B}l_{\mf{g}}}$ is resolved by the Koszul complex of $\bar{g}$ and \cref{e020661}.
\end{proof}

On $\mb{P}_{Z}(V)$, we consider the morphism $h':W\otimes \mc{O}_{\mb{P}_{Z}(V)}(-1)\to \mc{O}_{\mb{P}_{Z}(V)}$ where $h'=\mc{O}_{\mb{P}_{Z}(V)}(-1)\otimes (taut_{V}\circ pr^{*}\phi)$. Let $\bar{h}$ be the restriction of $h'$ to $\mb{B}l_{\mf{g}}$. The following theorem is a reformulation of Proposition 3.13 of \cite{hekking2022stabilizer}:
\begin{lemma}
  \label{020664}
  The derived scheme $\mb{B}l_{\Phi}$ is the derived zero locus of $\bar{h}^{\vee}$ and
  $$\mc{O}_{\mb{B}l_{\Phi}}(-1)\cong \mc{O}_{\mb{B}l_{g}}(-1)|_{\mb{B}l_{\phi}}.$$
\end{lemma}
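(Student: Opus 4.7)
The plan is to identify $\mb{B}l_{\Phi}$ with the derived strict transform of $Y$ inside the ambient derived blow-up $pr_{\mf{g}}:\mb{B}l_{\mf{g}}\to Z$. Since $Y\hookrightarrow Z$ is itself cut out by the section $h^{\vee}$, and the inclusion $X\hookrightarrow Z$ factors through $Y$ precisely because $h=g\circ\phi$, one expects $\mb{B}l_{\Phi}$ to arise from $\mb{B}l_{\mf{g}}$ by imposing a ``renormalized'' vanishing condition coming from $h^{\vee}$; dividing the pulled-back section $pr^{*}h^{\vee}$ by the exceptional equation $\mc{O}(-1)$ should produce precisely $\bar{h}^{\vee}$.

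The first step is to make the factorization of $pr^{*}h$ on $\mb{B}l_{\mf{g}}$ precise. By the preceding lemma, $\mb{B}l_{\mf{g}}=\mb{R}\mc{Z}(\bar{g}^{\vee})$, so the composite $\bar{g}:\bar{V}\to \mc{O}$ is canonically null-homotopic there. Using the short exact sequence $0\to \bar{V}\to pr^{*}V\to \mc{O}_{\mb{P}_{Z}(V)}(1)\to 0$, this null-homotopy supplies a canonical factorization $pr^{*}g\simeq g'\circ q_{V}$, where $q_{V}$ denotes the tautological quotient $pr^{*}V\to \mc{O}(1)$ and $g'\colon \mc{O}(1)\to \mc{O}$ is the induced morphism on $\mb{B}l_{\mf{g}}$. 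Precomposing with $pr^{*}\phi$ and invoking $g\circ\phi=h$ yields $pr^{*}h\simeq g'\circ (q_{V}\circ pr^{*}\phi)$; twisting $q_{V}\circ pr^{*}\phi$ by $\mc{O}(-1)$ recovers exactly $\bar{h}$.

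The second step is to invoke Hekking's Proposition 3.13 of \cite{hekking2022stabilizer} to identify $\mb{R}\mc{Z}(\bar{h}^{\vee})\hookrightarrow \mb{B}l_{\mf{g}}$ with $\mb{B}l_{\Phi}$. That proposition asserts, essentially, that for a tower of closed embeddings $X\hookrightarrow Y\hookrightarrow Z$ in which $Y$ is carved out of $Z$ by a section, the derived blow-up of $X$ in $Y$ is the derived strict transform of $Y$ along $\mb{B}l_{\mf{g}}\to Z$. The factorization obtained in step one shows that this derived strict transform is exactly $\mb{R}\mc{Z}(\bar{h}^{\vee})$: the ``ideal'' cutting out the strict transform is obtained from $pr^{*}h^{\vee}$ by removing the canonical $\mc{O}(-1)$-factor coming from $g'$. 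The line bundle identity $\mc{O}_{\mb{B}l_{\Phi}}(-1)\cong \mc{O}_{\mb{B}l_{\mf{g}}}(-1)|_{\mb{B}l_{\Phi}}$ then follows immediately, because the virtual exceptional divisor on $\mb{B}l_{\Phi}$ is cut out on it by the virtual exceptional divisor of $\mb{B}l_{\mf{g}}$ via this restriction.

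The main obstacle is to formalize the null-homotopy step in an $\infty$-categorical fashion: the factorization of $pr^{*}g$ through $q_{V}$ is a piece of data, not merely a property, and one must check it is the canonical factorization witnessed by Hekking's Rees algebra / virtual Cartier divisor universal property. Aligning the twist conventions with those of Hekking's proposition is the other subtle point; once this alignment is made, the translation from the abstract universal property to the concrete vanishing description via $\bar{h}^{\vee}$ is a matter of bookkeeping.
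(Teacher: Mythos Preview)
Your proposal is correct and matches the paper's approach exactly: the paper offers no argument beyond the sentence ``The following theorem is a reformulation of Proposition 3.13 of \cite{hekking2022stabilizer}'', and your two steps simply unpack that citation by explaining why the derived strict transform of $Y$ along $\mb{B}l_{\mf{g}}\to Z$ is cut out by $\bar{h}^{\vee}$. The additional detail you supply about the null-homotopy and the factorization $pr^{*}h\simeq g'\circ(q_{V}\circ pr^{*}\phi)$ is helpful exposition but not a different strategy.
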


\begin{corollary}
  \label{cor11}
  The derived scheme $\mb{B}l_{\Phi}$ is also quasi-smooth, and the (virtual) exceptional divisor is $\mb{P}_{X}(C_{\Phi})$.
\end{corollary}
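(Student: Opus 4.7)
The plan is to deduce both assertions almost directly from the iterated derived zero-locus presentation of $\mb{B}l_{\Phi}$ provided by the preceding lemma together with \cref{020664}. Quasi-smoothness is immediate: since $\mb{P}_Z(V)$ is smooth and $\mb{B}l_{\mf{g}} = \mb{R}\mc{Z}(\bar{g}^{\vee})$ is a derived zero locus of a section of the locally free sheaf $\bar{V}^{\vee}$, the closed embedding $\mb{B}l_{\mf{g}} \hookrightarrow \mb{P}_Z(V)$ is a quasi-smooth closed embedding, hence $\mb{B}l_{\mf{g}}$ is quasi-smooth. By \cref{020664}, $\mb{B}l_{\Phi} = \mb{R}\mc{Z}(\bar{h}^{\vee})$ in $\mb{B}l_{\mf{g}}$ is likewise a quasi-smooth closed embedding, and composition of quasi-smooth morphisms (as noted in the Notations subsection) then yields that $\mb{B}l_{\Phi}$ is quasi-smooth.

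For the identification of the virtual exceptional divisor with $\mb{P}_X(C_{\Phi})$, I would first compute $C_{\Phi}$ from the cofiber sequence $C_{Y/Z}|_X \to C_{X/Z} \to C_{\Phi}$. Since $X = \mb{R}\mc{Z}(g^{\vee})$ and $Y = \mb{R}\mc{Z}(h^{\vee})$, we have $C_{X/Z} \cong V|_X$ and $C_{Y/Z}|_X \cong W|_X$, with connecting map induced by $\phi|_X$, so $C_{\Phi} \cong [W|_X \xrightarrow{\phi|_X} V|_X]$ as a two-term perfect complex of Tor-amplitude $[0,1]$. By \cref{0122111}, $\mb{P}_X(C_{\Phi})$ is then realized inside $\mb{P}_X(V|_X)$ as the derived zero locus of the tautological section $W|_X \otimes \mc{O}(-1) \to \mc{O}$. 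On the other hand, the virtual exceptional divisor of $\mb{B}l_{\Phi}$ is the derived pullback $X \times_Y^{\mb{L}} \mb{B}l_{\Phi}$; using that $\mb{B}l_{\Phi} \to Y$ factors through $\mb{P}_Z(V) \to Z$ together with the identification $\mc{O}_{\mb{B}l_{\Phi}}(-1) \cong \mc{O}_{\mb{B}l_{\mf{g}}}(-1)|_{\mb{B}l_{\Phi}}$ from \cref{020664}, this pullback is computed as the intersection of $\mb{B}l_{\Phi}$ with the exceptional divisor $\mb{P}_X(V|_X)$ of $\mb{B}l_{\mf{g}}$, equivalently as the derived zero locus of $\bar{h}^{\vee}|_{\mb{P}_X(V|_X)}$ inside $\mb{P}_X(V|_X)$.

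The hard part will be checking that, under these identifications, the restriction of $\bar{h}$ to $\mb{P}_X(V|_X) \subset \mb{B}l_{\mf{g}}$ coincides on the nose with the tautological map $W|_X \otimes \mc{O}(-1) \to \mc{O}$ appearing in Jiang's description of $\mb{P}_X(C_{\Phi})$; once this compatibility is verified by tracing the definition of $\bar{h} = \mc{O}(-1) \otimes (taut_V \circ pr^*\phi)$ through the universal quotient $pr^*V|_X \to \mc{O}_{\mb{P}_X(V|_X)}(1)$, the two derived subschemes of $\mb{P}_X(V|_X)$ become literally equal, yielding the desired identification of the virtual exceptional divisor with $\mb{P}_X(C_{\Phi})$.
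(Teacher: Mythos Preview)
Your proposal is correct and matches the paper's (implicit) approach. The paper gives no proof for this corollary, treating both claims as immediate consequences of the local zero-locus description in the preceding two lemmas together with Hekking's general theory; your write-up simply unpacks what that entails. One minor remark: you invoke the identification of the virtual exceptional divisor with the derived fiber product $X \times_Y^{\mb{L}} \mb{B}l_{\Phi}$ as a definition, but in the paper's conventions the virtual exceptional divisor is by definition the vanishing locus of the tautological section of $\mc{O}_{\mb{B}l_{\Phi}}(-1)$, and since $\mc{O}_{\mb{B}l_{\Phi}}(-1)$ is the restriction of $\mc{O}_{\mb{B}l_{\mf{g}}}(-1)$ this zero locus is literally $\mb{B}l_{\Phi}\times^{\mb{L}}_{\mb{B}l_{\mf{g}}}\mb{P}_{X}(V|_{X})$ --- which is exactly the computation you then perform. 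So the detour through $X\times_Y^{\mb{L}}\mb{B}l_{\Phi}$ is unnecessary (though not wrong), and the rest of your identification of $\bar{h}|_{\mb{P}_X(V|_X)}$ with $taut_{\phi|_X}$ via \cref{0122111} is precisely the verification that the paper leaves to the reader.
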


Let $w$ be the rank of $W$. Let $pr_{\Phi}$ be the projection from $\mb{B}l_{\Phi}$ to $Y$. We notice that $C_{\Phi}=\phi|_{Y}$, and $vdim(Y)-vdim(X)=rank(C_{\Phi})=v-w$. Let $K_{Y}$ and $K_{\mb{B}l_{\Phi}}$ be the determinant line bundle of $L_{Y}$ and $L_{\mb{B}l_{\Phi}}$.
\begin{theorem}[Discrepancy formula]
   \label{lem:1.5}
   We have
   $$K_{\mb{B}l_{\Phi}}\cong pr_{\Phi}^{*}K_{Y}\otimes \mc{O}_{\mb{B}l_{\Phi}}(vdim(X)-vdim(Y)+1).$$
\end{theorem}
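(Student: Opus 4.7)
The plan is to compute $K_{\mb{B}l_{\Phi}}$ directly in the local model, using the iterated realization of $\mb{B}l_{\Phi}$ as a derived zero locus: first $\mb{B}l_{\mf{g}} = \mb{R}\mc{Z}(\bar{g}^{\vee})$ inside $\mb{P}_{Z}(V)$, and then $\mb{B}l_{\Phi} = \mb{R}\mc{Z}(\bar{h}^{\vee})$ inside $\mb{B}l_{\mf{g}}$ as in \cref{020664}. The key technical input is a derived adjunction identity: if $T = \mb{R}\mc{Z}(\sigma^{\vee})$ is the derived zero locus of a cosection $\sigma^{\vee}:\mc{O}_{B}\to E^{\vee}$ on a quasi-smooth base $B$, then $C_{T/B}\cong E|_{T}$ and hence $L_{T/B}\cong E|_{T}[1]$. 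Combining the triangle $L_{B}|_{T}\to L_{T}\to L_{T/B}$ with the sign rule $\det(E|_{T}[1])\cong \det(E)^{-1}|_{T}$ (the determinant of a one-shifted locally free sheaf is inverted) yields
$$K_{T}\cong K_{B}|_{T}\otimes \det(E)^{-1}|_{T},$$
which recovers the classical adjunction formula when $T$ is a regular zero locus.

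First I compute the intermediate canonical bundles. Applying the derived adjunction to $Y = \mb{R}\mc{Z}(h^{\vee})$ inside $Z$ gives $K_{Y}\cong K_{Z}|_{Y}\otimes \det(W)^{-1}|_{Y}$, equivalently $K_{Z}|_{Y}\cong K_{Y}\otimes \det(W)|_{Y}$. For the ambient projectivization, Jiang's Euler fiber sequence $L_{\mb{P}_{Z}(V)/Z}\cong \mc{O}(-1)\otimes \bar{V}$ together with $\det\bar{V}\cong pr_{V}^{*}\det V\otimes \mc{O}(-1)$ (from the tautological short exact sequence $0\to \bar{V}\to pr_V^*V\to \mc{O}(1)\to 0$) yields
$$K_{\mb{P}_{Z}(V)}\cong pr_{V}^{*}K_{Z}\otimes pr_{V}^{*}\det V\otimes \mc{O}(-v).$$
Applying the adjunction to $\mb{B}l_{\mf{g}} = \mb{R}\mc{Z}(\bar{g}^{\vee})$ with $E = \bar{V}$, the two contributions from $\det V$ cancel and I obtain
$$K_{\mb{B}l_{\mf{g}}}\cong pr_{V}^{*}K_{Z}|_{\mb{B}l_{\mf{g}}}\otimes \mc{O}_{\mb{B}l_{\mf{g}}}(1-v),$$
in agreement with the classical blow-up discrepancy formula for a codimension-$v$ center.

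For the final step I apply the adjunction once more to $\mb{B}l_{\Phi} = \mb{R}\mc{Z}(\bar{h}^{\vee})$ inside $\mb{B}l_{\mf{g}}$, where $E = W\otimes \mc{O}(-1)$ and thus $\det(E)^{-1}\cong \det(W)^{-1}\otimes \mc{O}(w)$. This produces
$$K_{\mb{B}l_{\Phi}}\cong pr_{V}^{*}K_{Z}|_{\mb{B}l_{\Phi}}\otimes \det(W)^{-1}|_{\mb{B}l_{\Phi}}\otimes \mc{O}_{\mb{B}l_{\Phi}}(1-v+w).$$
Using the factorization of $\mb{B}l_{\Phi}\to \mb{P}_{Z}(V)\to Z$ through $pr_{\Phi}:\mb{B}l_{\Phi}\to Y$ followed by $\mf{h}:Y\to Z$, together with $K_{Z}|_{Y}\cong K_{Y}\otimes \det(W)|_{Y}$, I substitute $pr_{V}^{*}K_{Z}|_{\mb{B}l_{\Phi}}\cong pr_{\Phi}^{*}K_{Y}\otimes \det(W)|_{\mb{B}l_{\Phi}}$. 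The two $\det(W)$ factors cancel, giving $K_{\mb{B}l_{\Phi}}\cong pr_{\Phi}^{*}K_{Y}\otimes \mc{O}_{\mb{B}l_{\Phi}}(1-v+w)$. Since $vdim(Y)-vdim(X) = v-w$, the exponent equals $vdim(X)-vdim(Y)+1$, yielding the stated formula.

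Once the derived adjunction and Euler formula are in hand the calculation is essentially mechanical; the only real subtleties are the bookkeeping of the shift-induced sign $\det(E[1]) = \det(E)^{-1}$ and the two determinant cancellations (of $pr_{V}^{*}\det V$ in the expression for $K_{\mb{B}l_{\mf{g}}}$, and of $\det(W)$ in passing from $Z$ down to $Y$). I do not expect any serious obstacle beyond these sign checks, since every object in the statement — $K_{Y}$, $pr_{\Phi}^{*}$, and $\mc{O}_{\mb{B}l_{\Phi}}(-1)$ — is canonical and compatible with the local model.
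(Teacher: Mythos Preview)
Your proof is correct and follows essentially the same approach as the paper: both compute $K_{\mb{B}l_{\Phi}}$ via derived adjunction for zero loci inside $\mb{P}_{Z}(V)$, using the Euler sequence for $K_{\mb{P}_{Z}(V)}$ and the identification $K_{Y}\cong K_{Z}|_{Y}\otimes\det(W)^{-1}$, with the same two determinant cancellations. The only cosmetic difference is that the paper applies adjunction once to the combined section $(h'\oplus\bar{g})^{\vee}$, whereas you apply it in two stages through $\mb{B}l_{\mf{g}}$; the resulting line-bundle arithmetic is identical.
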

\begin{proof}
  First, we notice that
  $$K_{Y}\cong det(W)^{-1}(det K_{Z})|_{Y}.$$
  Second we notice that $\mb{B}l_{\Phi}$ is the derived zero locus of $(h'\oplus \bar{g})^{\vee}$ over $\mb{P}_{Z}(W)$. Thus we have
  \begin{align*}
    K_{\mb{B}l_{\Phi}}&\cong \mc{O}_{\mb{P}_{Z}(V)}(w)\otimes pr_{V}^{*}det(W)^{-1} \otimes det(V)^{-1}\otimes \mc{O}_{\mb{P}_{Z}(V)}(1)\otimes K_{\mb{P}_{Z}(V)}|_{\mb{B}l_{\Phi}} \\
                       &\cong  pr_{V}^{*}(det(W)^{-1}(det K_{Z})) \mc{O}_{\mb{P}_{Z}(V)}(w-v+1)|_{\mb{B}l_{f}} \\
    & \cong pr_{\Phi}^{*}K_{Y}\otimes \mc{O}_{\mb{B}l_{\Phi}}(vdim(X)-vdim(Y)+1).
  \end{align*}
\end{proof}
\begin{theorem}
  \label{020655}
  Given $-rank(C_{\Phi})<l\leq  0$, we have
  $$\mbf{R}pr_{\Phi*}\mc{O}_{\mb{B}l_{\Phi}}(l)\cong \mc{O}_{Y}.$$
\end{theorem}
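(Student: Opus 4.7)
The plan is to reduce the statement to the vanishing on $\mb{B}l_{\mf{g}}$ established in \cref{020663} via the Koszul resolution provided by \cref{020664}. Let $i\colon \mb{B}l_{\Phi}\hookrightarrow \mb{B}l_{\mf{g}}$ denote the closed embedding. By \cref{020664}, $i$ realizes $\mb{B}l_{\Phi}$ as the derived zero locus of $\bar{h}^{\vee}\colon \mc{O}_{\mb{B}l_{\mf{g}}}\to W^{\vee}\otimes \mc{O}_{\mb{B}l_{\mf{g}}}(1)$, and $\mc{O}_{\mb{B}l_{\Phi}}(l)\cong i^{*}\mc{O}_{\mb{B}l_{\mf{g}}}(l)$. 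Hence $i_{*}\mc{O}_{\mb{B}l_{\Phi}}(l)$ is resolved on $\mb{B}l_{\mf{g}}$ by the twisted Koszul complex
$$K^{\bullet}=\Bigl\{\wedge^{w}W\otimes \mc{O}_{\mb{B}l_{\mf{g}}}(l-w)\to\cdots\to W\otimes \mc{O}_{\mb{B}l_{\mf{g}}}(l-1)\to \mc{O}_{\mb{B}l_{\mf{g}}}(l)\Bigr\},$$
with differentials given by contraction with $\bar{h}^{\vee}$.

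Since $\mf{h}\colon Y\to Z$ is a closed embedding, $\mf{h}_{*}$ is exact and conservative, so it suffices to compute $\mf{h}_{*}\mbf{R}pr_{\Phi*}\mc{O}_{\mb{B}l_{\Phi}}(l)=\mbf{R}pr_{\mf{g}*}K^{\bullet}$ and identify it with $\mf{h}_{*}\mc{O}_{Y}$. Applying $\mbf{R}pr_{\mf{g}*}$ termwise and using the projection formula, one obtains
$$\mbf{R}pr_{\mf{g}*}\bigl(\wedge^{k}W\otimes \mc{O}_{\mb{B}l_{\mf{g}}}(l-k)\bigr)\cong \wedge^{k}W\otimes \mbf{R}pr_{\mf{g}*}\mc{O}_{\mb{B}l_{\mf{g}}}(l-k).$$
Since $rank(C_{\Phi})=v-w$, the hypothesis $w-v<l\leq 0$ together with $0\leq k\leq w$ forces $l-k\in(-v,0]$, so \cref{020663} gives canonical isomorphisms $\mbf{R}pr_{\mf{g}*}\mc{O}_{\mb{B}l_{\mf{g}}}(l-k)\cong \mc{O}_{Z}$ for every such $k$. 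In particular, each term of $K^{\bullet}$ pushes forward to a single sheaf in cohomological degree zero, so the hypercohomology spectral sequence degenerates at $E_{1}$ and $\mbf{R}pr_{\mf{g}*}K^{\bullet}$ is represented by an honest complex of the shape $\wedge^{\bullet}W\otimes \mc{O}_{Z}$.

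The remaining and main step is to identify the induced differential on this complex with the Koszul differential of $h^{\vee}\colon \mc{O}_{Z}\to W^{\vee}$, so that $\mbf{R}pr_{\mf{g}*}K^{\bullet}$ becomes the Koszul resolution of $\mc{O}_{Y}=\mc{O}_{\mb{R}\mc{Z}(h^{\vee})}$ on $Z$. The factorization $\bar{h}=\mc{O}_{\mb{P}_{Z}(V)}(-1)\otimes(taut_{V}\circ pr^{*}\phi)$ combined with $h=g\circ\phi$ makes this plausible: pushforward should transform contraction with $\bar{h}^{\vee}$ into contraction with $h^{\vee}$, provided one checks that the isomorphisms of \cref{020663} are natural with respect to the multiplication maps $\mc{O}_{\mb{B}l_{\mf{g}}}(l-k)\otimes \mc{O}_{\mb{B}l_{\mf{g}}}(1)\to \mc{O}_{\mb{B}l_{\mf{g}}}(l-k+1)$. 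I would verify this by trivializing $V$ and $W$ Zariski-locally over $Z$ and tracing explicit \v{C}ech representatives through the Koszul resolution of $\mc{O}_{\mb{B}l_{\mf{g}}}$ on $\mb{P}_{Z}(V)$, where \cref{thm:serre} makes the relevant pushforward identifications transparent and the components of $taut_{V}\circ pr^{*}\phi$ reduce to explicit sections. Granting the identification, $\mbf{R}pr_{\mf{g}*}K^{\bullet}\cong \mf{h}_{*}\mc{O}_{Y}$, and the conservativity of $\mf{h}_{*}$ completes the proof.
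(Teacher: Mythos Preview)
Your proposal is correct and follows essentially the same approach as the paper: the paper's proof is a one-line invocation of \cref{020663} together with the Koszul resolution of $\mc{O}_{\mb{B}l_{\Phi}}$ from \cref{020664}, and your argument is precisely the unpacking of that sentence. Your additional care in identifying the pushed-forward differentials with the Koszul differential of $h^{\vee}$ on $Z$ is a detail the paper leaves implicit; it is a routine verification but worth noting.
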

\begin{proof}
  It follows from \cref{020663} and the Koszul resolution of $\mc{O}_{\mb{B}l_{\Phi}}$ by \cref{020664}.
\end{proof}
\begin{corollary}[Grauert-Riemenschneider vanishing theorem]
  \label{grau}
  We have $\mbf{R}pr_{\Phi*}K_{\mb{B}l_{\phi}}\cong K_{Y}$.
\end{corollary}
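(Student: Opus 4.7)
The plan is to combine the Discrepancy formula (Theorem \ref{lem:1.5}) with the vanishing Theorem \ref{020655} via the projection formula. Since both $Y$ and $\mathbb{B}l_{\Phi}$ are quasi-smooth and $pr_{\Phi}$ is proper, the canonical line bundles are well defined and $\mathbf{R}pr_{\Phi*}$ commutes with tensoring by pullbacks of perfect complexes on $Y$.

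First I would rewrite the left-hand side using the Discrepancy formula, which gives
\begin{equation*}
K_{\mathbb{B}l_{\Phi}} \cong pr_{\Phi}^{*}K_{Y}\otimes \mathcal{O}_{\mathbb{B}l_{\Phi}}(vdim(X)-vdim(Y)+1).
\end{equation*}
Applying $\mathbf{R}pr_{\Phi*}$ and using the projection formula pulls $K_{Y}$ outside:
\begin{equation*}
\mathbf{R}pr_{\Phi*}K_{\mathbb{B}l_{\Phi}} \cong K_{Y}\otimes \mathbf{R}pr_{\Phi*}\mathcal{O}_{\mathbb{B}l_{\Phi}}(vdim(X)-vdim(Y)+1).
\end{equation*}

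Set $l := vdim(X)-vdim(Y)+1$. Under the running assumption that the relative virtual dimension $vdim(X)-vdim(Y)$ is negative, we have $l\leq 0$, while trivially $l > vdim(X)-vdim(Y) = -\operatorname{rank}(C_{\Phi})$; hence $l$ lies in the range $-\operatorname{rank}(C_{\Phi}) < l \leq 0$ to which Theorem \ref{020655} applies. That theorem then yields $\mathbf{R}pr_{\Phi*}\mathcal{O}_{\mathbb{B}l_{\Phi}}(l)\cong \mathcal{O}_{Y}$, and substituting back gives $\mathbf{R}pr_{\Phi*}K_{\mathbb{B}l_{\Phi}}\cong K_{Y}$ as desired.

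There is no substantial obstacle here: this is a formal composition of results already established in the preceding paragraphs of the appendix. The only subtle point worth double-checking is that the two ingredients are indeed available in the present (local) setup — the Discrepancy formula is proved in the model $Y=\mathbb{R}\mathcal{Z}(h^{\vee})\hookrightarrow Z$ using the Koszul resolution on $\mathbb{P}_{Z}(V)$, and the vanishing $\mathbf{R}pr_{\Phi*}\mathcal{O}_{\mathbb{B}l_{\Phi}}(l)\cong \mathcal{O}_{Y}$ follows in the same local model from Lemma \ref{020663} and the Koszul resolution of \cref{020664}. So both can be applied together with no compatibility issue, and the corollary follows. (Removing the negativity hypothesis on $vdim(X)-vdim(Y)$, as in \cref{conj:2}, would require computing higher direct images $\mathbf{R}pr_{\Phi*}\mathcal{O}_{\mathbb{B}l_{\Phi}}(l)$ for $l\geq 1$, which is exactly what the vanishing range of Theorem \ref{020655} does not cover.)
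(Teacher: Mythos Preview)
Your proposal is correct and matches the paper's intended argument: the corollary is stated without proof immediately after Theorem~\ref{lem:1.5} and Theorem~\ref{020655}, and follows exactly by combining the discrepancy formula with the projection formula and then applying the vanishing theorem with $l=vdim(X)-vdim(Y)+1$. Your explicit check that this $l$ lies in the admissible range $-\operatorname{rank}(C_{\Phi})<l\le 0$ (which forces the implicit hypothesis $vdim(X)<vdim(Y)$, consistent with the statement of \cref{cor:gra} in the introduction) is the only nontrivial verification, and you carried it out correctly.
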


 We have the following commutative diagram:
\begin{equation*}
  \begin{tikzcd}
    \mb{P}_{X}(C_{\Phi})\ar{r}\ar{d} & \mb{B}l_{\Phi}\ar{d} \\
    X \ar{r}{\Phi} & Y
  \end{tikzcd}
\end{equation*}
which induces an isomorphism $Y-X\cong \mb{B}l_{\Phi}- \mb{P}_{X}(C_{\Phi})$.
\begin{theorem}[Gluing theorem]
  \label{020656}
  We have
  \begin{enumerate}
  \item \label{classical} if both $\mb{P}_{X}(C_{\Phi})$ and $Y-X$ are classical and not empty, then both $X$ and the derived blow up $\mb{B}l_{\Phi}$ are also classical, and $\mb{P}_{X}(N_{\phi})$ is a Cartier divisor of $\mb{B}l_{\Phi}$;
  \item \label{assume}if both $\mb{P}_{Y}(C_{\Phi})$ and $Y-X$ are smooth and not empty, then $X$ and the derived blow up $\mb{B}l_{\Phi}$ are also smooth;
  \item assuming (\ref{assume}) and $dim(X)<vdim(Y)$, then $Y$ is also classical and moreover is a canonical singularity. 
  \end{enumerate}
\end{theorem}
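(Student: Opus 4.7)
The strategy is to exploit the local picture developed in the preceding lemmas of this appendix --- namely, that $\mb{B}l_{\Phi}$ is itself quasi-smooth, carries $\mb{P}_{X}(C_{\Phi})$ as a virtual Cartier divisor, and has complement canonically identified with $Y - X$ --- and combine this with the gluing corollary \cref{gluingscheme} for quasi-smooth schemes together with the vanishing theorem \cref{020655}.

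\textit{Parts (1) and (2).} I apply \cref{gluingscheme} to the closed embedding $\mb{P}_{X}(C_{\Phi}) \hookrightarrow \mb{B}l_{\Phi}$. Both schemes are quasi-smooth (the source by Jiang's theorem \cref{0122111}, the target by \cref{cor11}), the embedding is a quasi-smooth closed embedding since it is a virtual Cartier divisor, and the open complement is $Y - X$, which is classical (resp.\ smooth) and non-empty by hypothesis. The gluing corollary then directly yields that $\mb{B}l_{\Phi}$ is classical (resp.\ smooth). To transfer the property to $X$ itself, I compare virtual and actual dimensions on the projectivization: one has $vdim(\mb{P}_{X}(C_{\Phi})) = vdim(X) + rank(C_{\Phi}) - 1$, while $dim(\pi_{0}(\mb{P}_{X}(C_{\Phi}))) = dim(\pi_{0}(X)) + r - 1$ where $r$ is the generic rank of $\pi_{0}(C_{\Phi})$. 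Since $C_{\Phi}$ has Tor-amplitude $[0,1]$ one has $r \geq rank(C_{\Phi})$, and for any quasi-smooth $X$ one has $dim(\pi_{0}(X)) \geq vdim(X)$; classicality (resp.\ smoothness) of $\mb{P}_{X}(C_{\Phi})$ forces both inequalities to be equalities, so $X$ is classical (resp.\ smooth). The Cartier-divisor assertion in (1) is then automatic once $\mb{B}l_{\Phi}$ is a classical quasi-smooth (hence l.c.i.) scheme: the codimension-$1$ virtual Cartier divisor $\mb{P}_{X}(C_{\Phi})$ is an honest Cartier divisor.

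\textit{Part (3).} Assume (2) together with $dim(X) < vdim(Y)$. I first show $Y$ is classical: the smooth open $Y - X \subset Y$ has dimension $vdim(Y)$, while $dim(\pi_{0}(X)) = dim(X) < vdim(Y)$, so $dim(\pi_{0}(Y)) = vdim(Y)$, which together with quasi-smoothness forces $Y$ to be classical. A classical quasi-smooth scheme is a local complete intersection, hence Gorenstein. Next, the projection $pr_{\Phi} : \mb{B}l_{\Phi} \to Y$ is proper by the theorem of Hekking quoted in the introduction, and an isomorphism over the non-empty open $Y - X$, so it is a birational resolution of singularities, with $\mb{B}l_{\Phi}$ smooth by (2). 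The numerical hypothesis $r = vdim(X) - vdim(Y) < 0$ puts us in the scope of the vanishing theorem \cref{020655}, which at $l=0$ gives $\mbf{R} pr_{\Phi *} \mc{O}_{\mb{B}l_{\Phi}} \cong \mc{O}_{Y}$; hence $Y$ has rational singularities. The classical theorem that a Gorenstein scheme with rational singularities has canonical singularities (Elkik) now applies to finish the proof.

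\textit{Main obstacle.} The technical core is the rank/dimension counting on $\mb{P}_{X}(C_{\Phi})$ that pushes classicality and smoothness back to $X$: because $C_{\Phi}$ is only perfect of Tor-amplitude $[0,1]$ and not a locally free sheaf, one cannot simply invoke flatness of a projective-bundle projection, and the argument must go through the generic rank. The passage from rational to canonical in part (3) is classical and causes no further difficulty once Gorensteinness (via l.c.i.) and rationality (via the vanishing theorem) are both established.
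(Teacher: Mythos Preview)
Your proposal is correct and follows the same route as the paper: apply \cref{gluingscheme} to the virtual-divisor embedding $\mb{P}_X(C_\Phi)\hookrightarrow\mb{B}l_\Phi$ for parts (1)--(2), and for part (3) combine the dimension bound $\dim X<vdim(Y)$ with the vanishing theorem \cref{020655} to get rational (hence, via l.c.i.\ $\Rightarrow$ Gorenstein, canonical) singularities. The paper's proof is terser and in fact does not spell out the passage from classicality/smoothness of $\mb{P}_X(C_\Phi)$ back to $X$; your dimension-counting argument fills this in (for smoothness it is also the content of the unnumbered corollary after \cref{0122111}), though note your formula $\dim\pi_0(\mb{P}_X(C_\Phi))=\dim\pi_0(X)+r-1$ should really be ``$\geq$'' since the fiber rank of $\pi_0(C_\Phi)$ can jump---the chain of inequalities then collapses to equalities as you intend.
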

\begin{proof}
    Both (\ref{classical}) and (\ref{assume}) follows from \cref{gluingscheme}. Assuming (\ref{assume}), if $dim(X)<vdim(Y)$, we have $dim(\pi_{0}(Y))=dim(Y)$ and thus $Y$ is classical, too. By \cref{020655}, $Y$ is a rational singularity, and thus a canonical singularity.
\end{proof}

Finally, we notice that for any closed morphism of quasi-smooth schemes $\Phi:X\to Y$, by \cref{020644} it is always locally represented by the local models. Thus \cref{cor11}, \cref{020655}, and \cref{020656} always hold, and \cref{lem:1.5} and \cref{grau} hold if $\mb{B}l_{f}$ is smooth.
\begin{example}
    Let $f:X\to Y$ and $g:Y\to Z$ be a closed embedding of classical varieties such that
    \begin{enumerate}
        \item $X$ and $Z$ are smooth and $Y$ is l.c.i;
        \item $dim(Z)=dim(Y)+1=dim(X)+2$;
        \item the closed embedding $g$ is quasi-smooth; 
    \end{enumerate}
    It induces a morphism of locally free sheaves $C_{g}|_{X}\to C_{g\circ f}$ and we denote $T$ as the zero locus of this morphism (we notice that $C_{g}$ has rank $1$). If $T$ is smooth and $dim(T)=dim(X)-2$, then $\mb{B}l_{f}$ is also smooth and induces a crepant resolution of $Y$. 
\end{example}
\section{Derived Blow-up and Virtual Fundamental Class}
\label{sec:app}
Let $X$ be a quasi-smooth scheme. We denote
$$D^{b}(X):=\{\mc{F}\in QCoh(X)|H^{i}(\mc{F})\in Coh(\pi_{0}(X)) \text{ and are bounded}\}$$
as a subcategory of $QCoh(X)$. We notice that $D^{b}(X)$ has a standard $t$-structure such that the heart is $Coh_{\pi_{0}(X)}$. Hence the Grothendieck group of $D^{b}(X)$, which we denote as $K_{0}(D^{b}(X))$ is isomorphic to $G_{0}(\pi_{0}(X))$, i.e. the Grothendieck group of coherent sheaves on $\pi_{0}(X)$. Moreover, as $\mc{O}_{X}\in D^{b}(X)$, it generates a class of $G_{0}(\pi_{0}(X))$, which we denote as $[X]_{K}^{vir}$, following the notation of \cite{ciocan-fontanine07:virtual}.

Let $Z$ be a smooth scheme, with a morphism of locally free sheaves $\mf{h}:V\to \mc{O}_{Z}$. Let $Y$ be the derived zero locus of $\mf{h}^{\vee}$ and $g:X\to Z$ be a smooth closed subscheme of $Z$ such that $\mf{h}|_{X}=0$. Then we have the closed embedding $f:X\to Y$. By restricting to $X$, the co-normal complex $C_{f}$ is a two-term complex
$$\{V\to C_{g}\}.$$
Let $pr_{g}$ be the projection morphism from $Bl_{g}$ to $Z$. Then the image of the morphism
$$pr_{g}^{*}\mf{h}:pr_{g}^{*}V\to \mc{O}_{Bl_{g}}$$
is inside $\mc{O}_{Bl_{g}}(1)$. Hence we denote
$$\bar{h}:pr_{g}^{*}V\otimes \mc{O}_{Bl_{g}}(-1)\to \mc{O}_{Bl_{g}}$$
as the induced morphism.

Like \cref{020664}, the following lemma is another reformulation of Proposition 3.13 of \cite{hekking2022stabilizer}:
\begin{lemma}
  The derived blow up $\mb{B}l_{f}$ is the zero locus of $\bar{h}^{\vee}$, and
  $$\mc{O}_{\mb{B}l_{f}}(-1)\cong \mc{O}_{Bl_{g}}(-1)|_{\mb{B}l_{f}}.$$
\end{lemma}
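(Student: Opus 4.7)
The plan is to reduce to the preceding local lemma (\cref{020664}) by Zariski-localizing on $Z$. Since $X$ and $Z$ are both smooth, the closed embedding $g$ is regular, so $\mb{B}l_g = Bl_g$ as derived schemes. Zariski-locally on $Z$ we may present $X$ as the derived zero locus $\mb{R}\mc{Z}(\gamma^\vee)$ for a regular section $\gamma: V' \to \mc{O}_Z$ of a locally free sheaf of rank $\mathrm{codim}_Z(X)$. Because $\mf{h}|_X = 0$, the image of $\mf{h}: V \to \mc{O}_Z$ lies in the ideal $I_X = \mathrm{Im}(\gamma)$; the local freeness of $V$ together with the surjection $\gamma: V' \twoheadrightarrow I_X$ produces a lift $\phi: V \to V'$ satisfying $\gamma \circ \phi = \mf{h}$ after possibly shrinking the chart. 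This places us exactly in the setting of \cref{020664}, which delivers the description of $\mb{B}l_f$ as the derived zero locus of the appropriate restriction of $\bar{h}^\vee$ inside $Bl_g$, together with the line bundle identification, on each chart.

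To globalize, observe that $\bar{h}: pr_g^* V \otimes \mc{O}_{Bl_g}(-1) \to \mc{O}_{Bl_g}$ is defined intrinsically from the global data: the pullback $pr_g^* \mf{h}: pr_g^* V \to \mc{O}_{Bl_g}$ has image contained in $I_X \cdot \mc{O}_{Bl_g} = \mc{O}_{Bl_g}(1)$ (the ideal sheaf of the classical exceptional divisor $E = \mb{P}_X(C_g)$), and tensoring by $\mc{O}_{Bl_g}(-1)$ yields $\bar{h}$. The derived zero locus $W := \mb{R}\mc{Z}(\bar{h}^\vee) \subset Bl_g$ is therefore globally well-defined, and the local isomorphisms $W|_{pr_g^{-1}(U_\alpha)} \cong \mb{B}l_f|_{pr_g^{-1}(U_\alpha)}$ should patch together to give $W \cong \mb{B}l_f$. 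The line bundle identification $\mc{O}_{\mb{B}l_f}(-1) \cong \mc{O}_{Bl_g}(-1)|_{\mb{B}l_f}$ is then inherited from the local statements.

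The main obstacle is ensuring that the local identifications agree on overlaps. The cleanest route is to invoke Hekking's universal property of the derived blow-up: $W$ carries a canonical morphism to $Y$ (since $\bar{h}^\vee|_W$ is nullhomotopic, so is the composite $pr_g^* \mf{h}|_W$, forcing $W$ to factor through $Y = \mb{R}\mc{Z}(\mf{h}^\vee)$), together with a virtual Cartier divisor $E \cap W \subset W$ mapping to $X$ via $f$. The key point to verify is that $E \cap W \cong \mb{P}_X(C_f)$ under the identification $C_f \cong \{V \to C_g\}|_X$; this reduces to a direct computation with the conormal sequence and the defining factorization of $\bar{h}$ through $\mc{O}_{Bl_g}(1)$. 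Granted this, the universal property of $\mb{B}l_f$ produces the canonical isomorphism $W \cong \mb{B}l_f$, and the line bundle identification follows from the canonical correspondence between virtual exceptional divisors.
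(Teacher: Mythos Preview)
The paper gives no proof of this lemma beyond the sentence ``Like \cref{020664}, the following lemma is another reformulation of Proposition 3.13 of \cite{hekking2022stabilizer}.'' So there is no argument in the paper to compare against; both this lemma and \cref{020664} are simply attributed to Hekking.

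Your proposal is a genuine attempt to \emph{derive} the statement from \cref{020664} rather than re-cite the source, and the strategy is sound. A few remarks on execution:

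\begin{itemize}
\item Your local reduction is correct, but you should make explicit the one computation you glossed over: that the Section~C morphism $\bar h$ (built from a chosen lift $\phi:V\to V'$) coincides on $Bl_g$ with the Section~D morphism $\bar h$ (built intrinsically from the factorization of $pr_g^*\mf h$ through $\mc{O}_{Bl_g}(1)\hookrightarrow\mc{O}_{Bl_g}$). This is true: on $\mb{B}l_\gamma$ one has $pr^*\gamma = \iota\circ taut_{V'}$ where $\iota:\mc{O}(1)\hookrightarrow\mc{O}$ is the inclusion of the ideal of the exceptional divisor, so $pr_g^*\mf h = \iota\circ(taut_{V'}\circ pr^*\phi)$, and the factorization through $\mc{O}(1)$ is unique since $\iota$ is a monomorphism. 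In particular the local $\bar h$ is independent of the choice of $\phi$, which is what makes the gluing possible.
\item Once you know the local and global $\bar h$ agree, the globalization is immediate without invoking the universal property: the derived zero locus $W=\mb{R}\mc{Z}(\bar h^\vee)$ is already a global object on $Bl_g$, and \cref{020664} identifies it Zariski-locally with $\mb{B}l_f$. Since $\mb{B}l_f$ is also a global object and both sit inside $Bl_g$ as closed derived subschemes agreeing on a Zariski cover, they agree. Your universal-property argument is correct but heavier than needed.
\end{itemize}

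In short: your route is correct and more informative than the paper's bare citation; the paper simply defers both lemmas to Hekking's Proposition~3.13.
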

Let $pr_{f}:\mb{B}l_{f}\to Y$ be the projection. Let $r$ be the rank of $C_{f}$, and $r=dim(X)-vdim(Y)$. We assume $r\leq 0$. 
\begin{theorem}
  \label{app1}
  There exists $D_{a}\in D^{b}(Y)$, $-r-1\leq a\leq  0$ such that
  \begin{enumerate}
  \item $D_{-r-1}\cong\mc{O}_{Y}$ and $D_{0}\cong \mbf{R}pr_{f*}\mc{O}_{\mb{B}l_{f}}$;
  \item We have triangles:
    \begin{equation}
      \label{eq:triangle}
     D_{a-1}\to D_{a}\to \mbf{R}f_{*}(det(C_{f})^{-1}\wedge^{l-r}(C_{f}^{\vee}[1]))[1-a]
    \end{equation}
  \end{enumerate}
\end{theorem}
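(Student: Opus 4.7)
The plan is to use the Koszul resolution of $\mc{O}_{\mb{B}l_{f}}$ on $\mb{B}l_{\mf{g}}$ provided by the preceding lemma, together with the vanishing theorem \cref{020655} applied to $\mf{g}$ and Serre's formula \cref{thm:serre}. Explicitly, since $\mb{B}l_{f}$ is the derived zero locus of $\bar{h}^{\vee}$ on $\mb{B}l_{\mf{g}}$, we have a resolution
$$\mc{O}_{\mb{B}l_{f}}\;\simeq\;\mc{K}^{\bullet}\;=\;\bigl[\wedge^{w}pr^{*}W\otimes \mc{O}_{\mb{B}l_{\mf{g}}}(-w)\to \cdots \to pr^{*}W\otimes \mc{O}_{\mb{B}l_{\mf{g}}}(-1)\to \mc{O}_{\mb{B}l_{\mf{g}}}\bigr]$$
with $\mc{K}^{-a}=\wedge^{a}pr^{*}W\otimes \mc{O}_{\mb{B}l_{\mf{g}}}(-a)$ placed in cohomological degree $-a$ and differential given by contraction with $\bar{h}^{\vee}$. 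I would equip $\mc{K}^{\bullet}$ with the stupid filtration $\sigma^{\geq -a}\mc{K}^{\bullet}$, producing triangles $\sigma^{\geq -(a-1)}\mc{K}^{\bullet}\to\sigma^{\geq -a}\mc{K}^{\bullet}\to \mc{K}^{-a}[a]$, and apply $\mbf{R}(pr_{\mf{g}})_{*}$ to obtain a tower of triangles on $Y$ (using that the image is naturally supported on $Y\subset Z$). The objects $D_{a}$ will be defined as the appropriate pieces of this tower, indexed so that $D_{-r-1}$ corresponds to the "tame" truncation at level $a=v-1$ and $D_{0}$ to the full complex.

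For the tame range $0\leq a\leq v-1$, \cref{020655} applied to $\mf{g}$ gives $\mbf{R}(pr_{\mf{g}})_{*}\mc{O}_{\mb{B}l_{\mf{g}}}(-a)\cong \mc{O}_{Z}$, so by the projection formula the graded pieces pushforward to $\wedge^{a}W$ in cohomological degree $-a$. Assembling these yields the truncated Koszul complex $[\wedge^{v-1}W\to \cdots \to W\to \mc{O}_{Z}]$ for the section $h^{\vee}$; comparing with the full Koszul resolution $[\wedge^{w}W\to \cdots\to \mc{O}_{Z}]\simeq \mc{O}_{Y}$ of $\mc{O}_{Y}$, I can identify the partial pushforward with $\mc{O}_{Y}$ up to the "cut-off" subcomplex $[\wedge^{w}W\to\cdots\to \wedge^{v}W]$, thereby verifying $D_{-r-1}\cong \mc{O}_{Y}$ as required.

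For $a\geq v$ the graded pieces $\mbf{R}(pr_{\mf{g}})_{*}\mc{K}^{-a}$ provide the nontrivial successive quotients in the desired triangles. Using the embedding $\mb{B}l_{\mf{g}}\hookrightarrow \mb{P}_{Z}(V)$ as the derived zero locus of $\bar{g}^{\vee}$, taking its Koszul resolution on $\mb{P}_{Z}(V)$ and applying \cref{thm:serre} term by term, I would show that $\mbf{R}(pr_{\mf{g}})_{*}\mc{O}_{\mb{B}l_{\mf{g}}}(-a)$ is concentrated in cohomological degree $v-1$ and is built from $\wedge^{a-v}V\otimes \det(V)^{-1}$ (only the terms with twist exactly $-v$ contribute through Serre, and a single spectral sequence collapse identifies the remainder). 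Combining with the $\wedge^{a}W$ factor and exploiting the triangle $W|_{X}\to V|_{X}\to C_{f}$ together with standard exterior algebra identities for shifted perfect complexes, each such contribution rewrites as $\mbf{R}f_{*}(\det(C_{f})^{-1}\wedge^{a-r}(C_{f}^{\vee}[1]))[1-a]$, producing the triangles \cref{eq:triangle}.

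The main obstacle is matching the differentials: after pushforward, the connecting morphisms between consecutive $D_{a}$'s are induced by the Koszul differential of $\mc{K}^{\bullet}$ combined with the edge maps of the spectral sequence computing $\mbf{R}(pr_{\mf{g}})_{*}\mc{O}_{\mb{B}l_{\mf{g}}}(-a)$, and I must verify that these coincide with the boundary maps predicted by comultiplication/contraction in the exterior algebra of $C_{f}^{\vee}[1]$. I expect this to follow from a functorial Koszul--Serre computation carried out in families with respect to $\phi:W\to V$, using the projection formula together with the compatibility of Jiang's projectivization with base change to propagate the desired morphisms through the pushforward.
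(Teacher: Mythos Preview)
Your overall strategy—resolving $\mc{O}_{\mb{B}l_f}$ by the Koszul complex of $\bar{h}^\vee$ on $Bl_g$, filtering, and pushing forward via Serre—is the right one, but the specific filtration you choose does not yield the stated tower. There are two related problems.

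First, you have imported the notation $V,W,v,w,\phi$ and the embedding $\mb{B}l_{\mf{g}}\hookrightarrow\mb{P}_Z(V)$ from the local model of Section~C, whereas the theorem sits in the setup of Section~D: there $X\subset Z$ is a smooth closed subvariety (not a derived zero locus), $Y$ is the derived zero locus of $\mf{h}^\vee:\mc{O}_Z\to V^\vee$, and the Koszul resolution of $\mc{O}_{\mb{B}l_f}$ has terms $\wedge^a pr_g^*V\otimes\mc{O}_{Bl_g}(-a)$. The ``tame'' range in which $\mbf{R}pr_{g*}\mc{O}_{Bl_g}(-a)\cong\mc{O}_Z$ is $0\le a\le c-1$ with $c=\mathrm{codim}(X,Z)$, not $0\le a\le v-1$.

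Second, and this is the genuine gap, the stupid truncation $\sigma^{\geq -a}\mc{K}^\bullet$ never produces $\mc{O}_Y$. Pushing forward $\sigma^{\geq -(c-1)}\mc{K}^\bullet$ yields the \emph{truncated} Koszul complex $[\wedge^{c-1}V\to\cdots\to V\to\mc{O}_Z]$ of $\mf{h}$, which for $r>0$ is not quasi-isomorphic to $\mc{O}_Y$; saying ``up to the cut-off subcomplex $[\wedge^{c+r}V\to\cdots\to\wedge^{c}V]$'' does not fix this, since that subcomplex carries cohomology. Moreover, for each non-tame index $a\ge c$ the pushforward $\mbf{R}pr_{g*}\mc{O}_{Bl_g}(-a)$ contains \emph{both} an $\mc{O}_Z$-piece (continuing the Koszul differential of $\mf{h}$) and an $X$-supported piece, so your graded pieces are not of the pure form $\mbf{R}f_*(\cdots)$ required by the theorem.

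The paper's filtration is different. Rather than dropping terms, it interpolates between the twisted Koszul complex of $\bar{h}$ and the untwisted Koszul complex of $pr_g^*\mf{h}$: the hybrid complex $\mc{D}^a$ (on $\bar{Y}=Y\times_Z^{\mb{L}}Bl_g$) has its terms in degrees $0,\ldots,a$ untwisted and the remaining terms twisted by successive powers of $\mc{O}(-1)$. At $a=0$ this is the full twisted Koszul complex, recovering $\mc{O}_{\mb{B}l_f}$. At $a=-r-1$ every twist lies in the Serre-vanishing range $\{0,\ldots,-(c-1)\}$, so the pushforward is the \emph{full} Koszul complex of $\mf{h}$, i.e.\ $\mc{O}_Y$. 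The successive quotients $\mc{A}^a=\mc{D}^a/\mc{D}^{a-1}$ are then automatically supported on the exceptional divisor $\mb{P}_X(C_g)$, and Serre's theorem there gives the graded pieces $\mbf{R}f_*(\det(C_f)^{-1}\wedge^{a+r}(C_f^\vee[1]))[1-a]$ directly, with no leftover $\mc{O}_Z$-contribution to track. This ``sliding twist'' filtration, not the stupid one, is the missing device.
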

\begin{proof}
  Let $\bar{Y}:=Y\times_{Z}^{\mb{L}}Bl_{g}$.  Then $Bl_{f}$ is a closed derived scheme of $\bar{Y}$, and let $\bar{g}$ as the projection from $\bar{Y}$ to $Y$.  Given an integer $a\leq 0$, we define the complex of coherent sheaves $\mc{A}^{a}$ such that the cohomological degree $m$ element is 
  \begin{equation}
    (\mc{A}^{a})_{m}=
    \begin{cases}
      0 & m\geq a,\\
      \wedge^{-m}(pr_{g}^{*}V)\otimes \mc{O}_{\mb{P}_{Y}(C_{g})}(m-a) & m<a,
    \end{cases}
  \end{equation}
  where the differential is the wedge powers of $\bar{h}$ restricted to $\mb{P}_{Y}(C_{g})$. Moreover, $\mc{A}^{a}$ is a dg-coherent sheaf over $\bar{Y}$.
  Given an integer $a\leq 0$, we also define the complex of coherent sheaves such that the cohomological degree $m$ element is 
  \begin{equation}
    (\mc{D}^{a})_{m}= 
    \begin{cases}
      0 & m>1, \\
      \wedge^{-m}(pr_{g}^{*}V) & 0\geq m\geq a, \\
      \wedge^{-m}(pr_{g}^{*}V)\otimes \mc{O}_{\mb{B}l_{f}}(m-a) & m<a,
    \end{cases}
  \end{equation}
  where the differential is the wedge power of $\bar{h}$ when $m<a$ and the differential is the wedge power of $pr_{g}^{*}\mf{h}$ when $m\geq a$. Then $\mc{D}^{a}$ is also a dg-coherent sheaf over $\bar{Y}$. We define
  $$D^{a}:=\mbf{R}\bar{g}_{*}\mc{D}^{a}.$$
  By Serre's theorem, we have
  $$D^{0}\cong \mbf{R}pr_{f*}\mc{O}_{\mbf{R}Bl_{f}},\quad D^{-r-1}\cong \mc{O}_{Y}.$$
  We notice that $\mc{A}^{a}\cong \mc{D}^{a}/\mc{D}^{a-1}$. Moreover, by Serre's theorem, we have
  $$\mbf{R}\bar{g}_{*}\mc{A}^{a}\cong \mbf{R}f_{*}(det(C_{f})^{-1}\wedge^{a+r}(C_{f}^{\vee}[1]))[1-a].$$
\end{proof}
\begin{corollary}
  \label{app2}
  Let $[Y]_{K}^{vir}$ and $[\mb{B}l_{f}]_{K}^{vir}$ be the $K$-theoretic virtual fundamental class of $\pi_{0}(Y)$ and $\pi_{0}(\mb{B}l_{f})$ respectively. Then we have
  $$\pi_{0}(pr_{f})_{*}[\mb{B}l_{f}]_{K}^{vir}=[Y]_{K}^{vir}+\pi_{0}(f)_{*}(\sum_{a=-r}^{0}(-1)^{1-a}det(C_{f})^{-1}\wedge^{a+r}(C_{f}^{\vee}[1]))$$
\end{corollary}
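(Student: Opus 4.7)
The plan is to derive the formula as a direct $K$-theoretic consequence of the filtration constructed in \cref{app1}. The starting point is the identification discussed at the beginning of the appendix: since both $Y$ and $\mb{B}l_{f}$ are quasi-smooth, the standard $t$-structure on $D^{b}(-)$ induces an isomorphism $K_{0}(D^{b}(Y))\cong G_{0}(\pi_{0}(Y))$ sending $[\mc{O}_{Y}]$ to $[Y]_{K}^{vir}$, and analogously for $\mb{B}l_{f}$. Under these identifications, the proper derived pushforward $\mbf{R}pr_{f*}$ (resp.\ $\mbf{R}f_{*}$) descends to the classical $K$-theoretic pushforward $\pi_{0}(pr_{f})_{*}$ (resp.\ $\pi_{0}(f)_{*}$), so that $\pi_{0}(pr_{f})_{*}[\mb{B}l_{f}]_{K}^{vir} = [\mbf{R}pr_{f*}\mc{O}_{\mb{B}l_{f}}]$.

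First I would invoke \cref{app1} to obtain the chain of objects $D_{-r-1},D_{-r},\ldots,D_{0}$ in $D^{b}(Y)$, with boundary values $D_{-r-1}\cong \mc{O}_{Y}$ and $D_{0}\cong \mbf{R}pr_{f*}\mc{O}_{\mb{B}l_{f}}$. For each $-r\leq a\leq 0$ the distinguished triangle
$$D_{a-1}\to D_{a}\to \mbf{R}f_{*}\bigl(\det(C_{f})^{-1}\wedge^{a+r}(C_{f}^{\vee}[1])\bigr)[1-a]$$
yields in $K$-theory the identity
$$[D_{a}] \;=\; [D_{a-1}] \;+\; (-1)^{1-a}\bigl[\mbf{R}f_{*}(\det(C_{f})^{-1}\wedge^{a+r}(C_{f}^{\vee}[1]))\bigr],$$
where the sign $(-1)^{1-a}$ arises from the cohomological shift. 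Telescoping these identities from $a=-r$ to $a=0$ gives
$$[\mbf{R}pr_{f*}\mc{O}_{\mb{B}l_{f}}] \;-\; [\mc{O}_{Y}] \;=\; \sum_{a=-r}^{0}(-1)^{1-a}\bigl[\mbf{R}f_{*}(\det(C_{f})^{-1}\wedge^{a+r}(C_{f}^{\vee}[1]))\bigr].$$

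Finally, translating each side back through the identifications $K_{0}(D^{b}(-))\cong G_{0}(\pi_{0}(-))$, together with the compatibility of $\mbf{R}pr_{f*}$ and $\mbf{R}f_{*}$ with their classical counterparts on Grothendieck groups, yields the stated formula. There is essentially no technical obstacle here: once \cref{app1} is granted, the proof is a purely formal telescoping in $K_{0}$. The only point that deserves a moment of care is bookkeeping the sign from $[1-a]$ and confirming that the endpoints $a=-r$ and $a=0$ of the telescoping produce exactly the claimed range of summands; this is immediate from the indexing in \cref{app1}.
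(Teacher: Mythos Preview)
Your proposal is correct and is exactly the intended argument: the paper states \cref{app2} as an immediate corollary of \cref{app1} without further proof, and your telescoping of the distinguished triangles in $K_{0}$ is precisely the unpacking of that implication. The sign bookkeeping from the shift $[1-a]$ and the index range $-r\leq a\leq 0$ match the filtration in \cref{app1}.
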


\bibliography{Quadruple}
\bibliographystyle{plain}
\end{document}